\DeclareFontFamily{OT1}{rsfs}{}
\DeclareFontShape{OT1}{rsfs}{n}{it}{<-> rsfs10}{}
\DeclareMathAlphabet{\mathscr}{OT1}{rsfs}{n}{it}
\DeclareMathOperator{\mo}{\,mod}
\DeclareMathOperator{\err}{err}
\newcommand{\sume}{\mathop{\sum\nolimits^{*\mkern-12mu}}\limits}
\newtheorem{prop}{Proposition}[section]
\newtheorem{thm}[prop]{Theorem}
\newtheorem*{main}{Main Theorem}
\newtheorem{cor}[prop]{Corollary}
\newtheorem{lem}[prop]{Lemma}
\newtheorem*{defn*}{Definition}
\newenvironment{Rem}{{\bf Remark.}}{}
\numberwithin{equation}{section}
\title{The ternary Goldbach conjecture is true}
\author{H. A. Helfgott}
\address{Harald Helfgott, 
\'Ecole Normale Sup\'erieure, D\'epartement de Math\'ematiques, 45 rue d'Ulm, F-75230 Paris, France}
\email{harald.helfgott@ens.fr}
\begin{document}
\begin{abstract}
The ternary Goldbach conjecture, or three-primes problem, asserts that
every odd integer $n$ greater than $5$ is the sum of three primes. 
The present paper proves this conjecture.

Both the ternary Goldbach conjecture
and the binary, or strong, Goldbach conjecture had their origin in an
exchange of letters between Euler and Goldbach in 1742. We will follow
an approach based on the circle method, the large sieve and exponential
sums. Some ideas coming from Hardy, Littlewood and Vinogradov
are reinterpreted from a modern perspective. While all work here has to be
explicit, the focus is on qualitative gains.

The improved estimates on exponential sums are proven in the author's papers
on major and minor arcs for Goldbach's problem. One of the highlights of
the present paper is an optimized large sieve for primes. Its ideas get reapplied
to the circle method to give an improved estimate for the minor-arc integral.
\end{abstract}
\maketitle
\tableofcontents
\section{Introduction}
\subsection{Results}
The ternary Goldbach conjecture (or {\em three-prime problem}) 
states that every odd number $n$ greater than $5$ can be written as
the sum of three primes. Both the ternary Goldbach conjecture and
the (stronger) binary Goldbach conjecture (stating that every even number
greater than $2$ can be written as the sum of two primes) have their
origin in the correspondence between Euler and Goldbach (1742).
See \cite[Ch.~XVIII]{MR0245499} for the early history of the problem.

I. M. Vinogradov \cite{Vin} showed in 1937 that the ternary Goldbach conjecture
 is true for all $n$ above
a large constant $C$. Unfortunately, while the value of $C$ has been
improved several times since then, it has always remained much too large
($C=e^{3100}$, \cite{MR1932763}) for a mechanical verification up to $C$ to
be even remotely feasible. The situation was paradoxical: the conjecture was
known above an explicit $C$, but, even after seventy years of improvements,
this $C$ was so large that it could not be said that the problem could be
attacked by any conceivable computational means within our physical universe.
(The number of picoseconds since the beginning of the universe is less than 
$10^{30}$, whereas the number of protons in the observable universe is currently
estimated at $\sim 10^{80}$ \cite{EB}, thereby making even parallel computers somewhat
limited.) Thus, the only way forward was a series of drastic improvements in 
the mathematical, rather than computational, side.

The present paper proves the ternary Goldbach conjecture.
\begin{main}
Every odd integer $n$ greater than $5$ can be expressed as the sum of three
primes.
\end{main}
The proof given here works for all $n\geq C = 10^{27}$. (It is typical
of analytic proofs to work for all $n$ larger than a constant; see 
\S \ref{subs:hisres}.) Verifying the main theorem
 for $n<10^{27}$ is really a minor
computational task; it was already done for all
$n\leq 8.875\cdot 10^{30}$ in \cite{HelPlat}. (Appendix \ref{sec:appa}
provides an alternative approach.) This finishes the proof of the main theorem 
for all $n$.


We are able to set 
major arcs to be few and narrow because the minor-arc estimates in \cite{Helf} 
are very strong; we are 
forced to take them to be few and narrow because of the kind of $L$-function bounds
we will rely upon. (``Major arcs'' are small intervals around rationals of
small denominator; ``minor arcs'' are everything else. See the
definitions at the beginning of \S \ref{subs:sastre}.)

As has been the case since Hardy and Littlewood \cite{MR1555183}, the
approach is based on Fourier analysis, and, more particularly, on 
a study of exponential sums $\sum_p e(\alpha p) \eta(p/x)$, where
$\eta$ is a weight of our choice (a ``smoothing function'', or simply
a ``smoothing''). Such exponential
sums are estimated in \cite{HelfMaj} and \cite{Helf}
 for $\alpha$ lying in the major and minor arcs, respectively. Here we will
focus on the efficient use of such estimates to solve the main problem.

One of the main lessons of the proof -- also present in \cite{Helf} -- is
the close relation between the circle method and the large sieve; rather
than see large-sieve methods as a black box, we will use them as a source
for ideas. This applies, in particular, to the ideas behind an improved
large sieve for primes, which we derive here following and completing
Ramar\'e's ideas
on the subject \cite{MR2493924}.

Another guiding thought is really a relativization of a common dictum 
(``{\em always smooth}''). Smoothing is more useful for some tasks than for
others, and different kinds of 
smoothing functions may be appropriate for different parts of one problem.
The main results in \cite{HelfMaj} and \cite{Helf} are stated in terms
of different smoothing functions. Here, we will see how to coordinate the
use of different smoothings. We will also discuss how to choose smoothings
so as to make the main term as large as possible with respect to the error
term. (The emphasis elsewhere is, of course, on giving upper
bounds for the error term that are as small as possible.)

\subsection{History}
The following brief remarks are here to provide some background; no claim to 
completeness is made. Results on exponential sums over the primes
are discussed more specifically in \cite[\S 1]{Helf}.

\subsubsection{Results towards the ternary Goldbach conjecture}\label{subs:hisres}

Hardy and Littlewood \cite{MR1555183} proved that every odd number larger than
a constant $C$ is the sum of three primes, conditionally on the generalized
Riemann Hypothesis. This showed, as they said, that the problem was not
{\em unangreifbar} (as it had been called by Landau in \cite{Land}).

Vinogradov \cite{Vin} made the result unconditional. An explicit value for
$C$ (namely, $C = 3^{3^{15}}$) was first found by Borodzin in 1939.
This value was improved to $C=3.33\cdot 10^{43000}$ by J.-R. Chen and T. Z.
Wang \cite{MR1046491} and to $C= 2\cdot 10^{1346}$ by M.-Ch. Liu and T. Wang
\cite{MR1932763}. (J.-R. Chen had also proven that every large enough
even number is either the sum of two primes or the sum $p_1 + p_2 p_3$
of a prime $p_1$ and the product $p_2 p_3$ of two primes.)

There is a good reason why analytic proofs generally establish a result
only for integers $n$ larger than a constant $C$.
An analytic proof, such as the one in this paper, gives not only the
existence of a way to express a number $n$ in a certain form (say, as the sum
of three primes), but also an estimate on the (weighted) number of ways
to do so. Such an estimate is of the form 
\[\text{main term} + \text{error term},\]
where the main term is a precise function $f(n)$ and the error term is
shown to be bounded from above by a function $g(n)$; the proof works
if $g(n)<f(n)$ asymptotically as $n\to \infty$. 
 Of course, this means that such a proof works only once $g(n)\leq f(n)$,
that is, once $n$ is greater than some constant $C$, thus leaving
 small $n$ to be verified by 
direct computation. 

In \cite{MR1469323}, 
the ternary Goldbach conjecture was proven for all $n$ conditionally on the
generalized Riemann hypothesis. There, as here, the theorem was proven
analytically for all $n$ larger than a moderate constant $C$, and then
the task was completed by a numerical check for all odd $n<C$.

\subsubsection{Checking Goldbach for small $n$}\label{subs:checkgold}

Numerical verifications
of the binary Goldbach conjecture for small $n$
were published already in the late nineteenth century; see 
\cite[Ch.~XVIII]{MR0245499}. Richstein \cite{MR1836932} showed that every 
even integer $4\leq n\leq 4 \cdot 10^{14}$ is the sum of two primes. Oliveira 
e Silva, Herzog and Pardi \cite{OSHP} have proven that every even integer
$4\leq n\leq 4 \cdot 10^{18}$ is the sum of two primes. 

Clearly, if one can show that every interval of 
length $\geq 4\cdot 10^{18}-4$ within $\lbrack 1,N\rbrack$ contains a prime,
then \cite{OSHP} implies that every odd number between $7$ and $N$ can be
written as the sum of three primes: we let $p$ be the largest prime 
$\leq N-4$, and observe that $p-N$ is an even number $\leq 4\cdot 10^{18}$, and 
thus 
can be written as the sum of two primes.

Appendix \ref{sec:appa} proves that every interval 
of 
length $\geq 4\cdot 10^{18}-4$ within $\lbrack 1,N\rbrack$ contains a prime
for $N=1.23\cdot 10^{27}$ using a rigourous verification \cite{Plattpi}
of the fact
that the first $1.1\cdot 10^{11}$ zeros of the Riemann zeta function lie
on the critical line. Alternatively, one can simply construct a sequence
of primes up to $N$ such that any two consecutive primes in the list differ
by at most $4\cdot 10^{18}-4$. This was done in \cite{HelPlat}
for $N = 8.875694 \cdot 10^{30}$; thus, the ternary Goldbach conjecture
has been verified up to that value of $N$. 

The task of constructing the sequence of primes
up to $10^{27}$ -- enough to complete the proof of  the main theorem -- takes only
about 25 hours on a single processor core on a modern computer
(or five hours on five cores, since the algorithm is trivially 
parallelizable), provided that \cite{OSHP} is taken as a given. In other
words, verifying the theorem up to the point where the analytic proof in the
present paper starts working is a small, easily replicable task well within 
home-computing range.
 
\subsubsection{Work on Schnirelman's constant}

``Schnirelman's constant'' is a term for the smallest $k$ such that
{\em every} integer $n>1$ is the sum of at most $k$ primes. (Thus, Goldbach's
binary and ternary conjecture, taken together, are equivalent to the statement
that Schnirelman's constant is $3$.)  In 1930, Schnirelman \cite{MR1512821} 
showed that Schnirelman's constant $k$ is finite, developing in the process some
of the bases of what is now called additive or arithmetic combinatorics. 

In 1969, Klimov proved that $k\leq 6\cdot 10^9$; he later
improved this result to $k\leq 115$ \cite{MR0414506} (with G. Z. Piltay
and T. A. Sheptiskaya) and $k\leq 55$.
 Results by Vaughan \cite{MR0437478} ($k=27$), 
Deshouillers \cite{MR0466050} 
($k=26$) and Riesel-Vaughan \cite{MR706639} ($k=19$) then followed.

Ramar\'e showed in 1995 that every even $n>1$ is the sum of at most $6$ primes
\cite{MR1375315}. Recently, Tao \cite{Tao} established that every odd number
$n>1$ is the sum of at most $5$ primes. These results imply that
$k\leq 6$ and $k\leq 5$, respectively. The present paper implies that $k\leq 4$.

\begin{cor}[to Main Theorem]
Every integer $n>1$ is the sum of at most $4$ primes.
\end{cor}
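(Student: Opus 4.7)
The corollary is essentially an immediate consequence of the Main Theorem together with a trivial case analysis, so my plan is to split on the parity of $n$ and handle a short list of small cases directly.

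First I would dispose of the odd case. If $n$ is odd and $n>5$, the Main Theorem writes $n$ as a sum of three primes, which is already at most four. If $n\in\{3,5\}$, then $n$ is itself prime, so it is a sum of a single prime.

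Next I would handle the even case by reducing it to the odd case. For even $n\geq 10$ I would write $n=3+(n-3)$, where $n-3$ is odd and $n-3\geq 7>5$. The Main Theorem then expresses $n-3$ as a sum of three primes, so $n$ is a sum of four primes. The only remaining values are the even integers $n\in\{2,4,6,8\}$, which I would verify by inspection: $2$, $4=2+2$, $6=3+3$, and $8=3+5$, each of which is a sum of at most two primes.

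There is really no obstacle here; the only thing to be careful about is not to attempt to apply the Main Theorem to odd integers $\leq 5$ or, in the even case, to $n$ with $n-3\leq 5$, which is precisely why a handful of small cases have to be checked by hand. Assembling the three cases yields the stated bound $k\leq 4$ on Schnirelman's constant.
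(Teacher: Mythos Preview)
Your proof is correct and follows essentially the same approach as the paper: split on parity, apply the Main Theorem to $n$ (odd) or to $n-3$ (even), and check the finitely many small cases directly.
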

\begin{proof}
If $n$ is odd and $>5$, the main theorem applies. If $n$ is even and $>8$,
apply the main theorem to $n-3$. Do the cases $n\leq 8$ separately.
\end{proof}
\subsubsection{Other approaches}

Since \cite{MR1555183} and \cite{Vin}, the main line of attack on the
problem has gone through exponential sums. There are proofs
based on cancellation in other kinds of sums
(\cite{MR834356}, \cite[\S 19]{MR2061214}), but they have
not been made to yield practical estimates. The same goes for proofs based
on other principles, such as that of Schnirelman's result or the recent
work of X. Shao \cite{XShao}. (It deserves to be underlined that
 \cite{XShao} establishes Vinogradov's three-prime result without using
$L$-function estimates at all; its constant $C$ is, however, extremely large.)

\subsection{Main ideas}\label{subs:sastre}

We will limit the discussion here to the general setup and to the use
of exponential-sum estimates. The derivation
of new exponential-sum estimates is the subject of \cite{Helf} and 
\cite{HelfMaj}.

In the circle method, the number of representations of a number $N$ 
as the sum of three primes is represented as an integral over the
``circle'' $\mathbb{R}/\mathbb{Z}$, which is partitioned into major
arcs $\mathfrak{M}$ and minor arcs $\mathfrak{m} = (\mathbb{R}/\mathbb{Z})\setminus \mathfrak{M}$:
\begin{equation}\label{eq:korto}\begin{aligned}
\sum_{n_1+n_2+n_3 = N} &\Lambda(n_1) \Lambda(n_2) \Lambda(n_3) =
 \int_{\mathbb{R}/\mathbb{Z}} (S(\alpha,x))^3 e(-N \alpha) d\alpha\\
&= \int_{\mathfrak{M}} (S(\alpha,x))^3 e(-N \alpha) d\alpha +
\int_{\mathfrak{m}} (S(\alpha,x))^3 e(-N \alpha) d\alpha ,
\end{aligned}\end{equation}
where $S(\alpha,x) = \sum_{n\leq x} \Lambda(n) e(\alpha n)$, 
$e(t) = e^{2\pi i t}$ and $\Lambda$ is the von Mangoldt function
($\Lambda(n) = \log p$ if $n = p^\alpha$, $\alpha\geq 1$,
and $\Lambda(n) = 0$ if $n$ is not a power of a prime).
The aim is to show that the sum of the integral over $\mathfrak{M}$
and the integral over $\mathfrak{m}$ is positive; this will prove
the three-primes theorem.

The major arcs $\mathfrak{M}=\mathfrak{M}_{r_0}$ 
consist of intervals $(a/q- c r_0/q x, a/q + c r_0/q x)$ around the rationals
$a/q$, $q\leq r_0$, where $c$ is a constant. In previous work\footnote{Ramar\'e's work
\cite{MR2607306} is in principle strong enough to allow $r_0$ to be an 
unspecified large constant. Tao's work \cite{Tao} reaches this
standard only for $x$ of moderate size.}, $r_0$ grew with $x$; in
 our setup, $r_0$ is a constant. Smoothing changes the left side of
(\ref{eq:korto}) into a weighted sum, but, since we aim at an existence
result rather than at an asymptotic for the number of
representations
$p_1+p_2+p_3$ of $N$, this is obviously acceptable. 

Typically, work on major arcs yields rather precise estimates on the
integral over $\int_\mathfrak{M}$ in (\ref{eq:korto}), whereas work
on minor arcs gives upper bounds on the absolute value of the integral
over $\int_\mathfrak{m}$ in (\ref{eq:korto}).

\subsubsection{Using major arc bounds}

We will be working with smoothed sums
\begin{equation}\label{eq:mork}
S_{\eta}(\alpha,x) = \sum_{n=1}^\infty \Lambda(n) \chi(n) e(\delta n/x)
\eta(n/x).\end{equation}
Our integral will actually be of the form
\begin{equation}\label{eq:kamon}\int_{\mathfrak{M}} S_{\eta_+}(\alpha,x)^2 S_{\eta_*}(\alpha,x) e(-N\alpha) d\alpha,\end{equation}
where $\eta_+$ and $\eta_*$ are two different smoothing functions.

Estimating the sums (\ref{eq:mork}) on $\mathfrak{M}$ reduces to estimating
the sums 
\begin{equation}\label{eq:mindy}
S_{\eta}(\delta/x,x) = \sum_{n=1}^\infty \Lambda(n) \chi(n) e(\delta n/x)
\eta(n/x)\end{equation}
for $\chi$ varying among
all Dirichlet characters modulo $q\leq r_0$ and
for $|\delta|\leq c r_0/q$, i.e., $|\delta|$ small.
The estimation of 
(\ref{eq:mindy}) for such $\chi$ and $\delta$ is the subject of
\cite{HelfMaj}.

 (It is in \cite{HelfMaj}, and not elsewhere, that the major 
$L$-function computation
in \cite{Plattfresh} gets used; it allows to give good estimates on sums
such as (\ref{eq:mindy}).)

The results in \cite{HelfMaj} allow us to use any smoothing based on the
Gaussian $\eta_\heartsuit(t) = e^{-t^2/2}$; this leaves us
some freedom in choosing $\eta_+$ and $\eta_*$. The main term in our
estimate for (\ref{eq:kamon}) is of the form
\begin{equation}\label{eq:liz}
C_0  \int_0^\infty \int_0^\infty \eta_+(t_1) \eta_+(t_2) 
\eta_*\left(\frac{N}{x}-(t_1+t_2)\right) dt_1 dt_2,\end{equation}
where $C_0$ is a constant. Our upper bound for the minor-arc integral,
on the other hand, will be proportional to $|\eta_+|_2^2 |\eta_*|_1$.
(Here, as is usual, we write $|f|_p$ for the $\ell_p$ norm of a function $f$.)
The question is then how to make (\ref{eq:liz}) divided by
$|\eta_+|_2^2 |\eta_*|_1$ as large as possible. A little thought will show
that it is best for $\eta_+$ to be symmetric, or nearly symmetric, 
around $t=1$ (say), and for $\eta_*$ be concentrated on a much shorter 
interval than $\eta_+$, while $x$ is set to be $x/2$ or slightly less.

It is easy to construct a function of the form $t\mapsto h(t)
\eta_\heartsuit(t)$ symmetric around $t=1$, with support on $\lbrack
0,2\rbrack$. We will define $\eta_+(t) = h_H(t) \eta_\heartsuit(t)$, where
$h_H$ is an approximation to $h$ that is band-limited in the Mellin sense.
This will mean that we will be able to use the estimates in 
\cite{HelfMaj}.

How to choose $\eta_*$? The bounds in \cite{Helf} were derived
for $\eta_2 = (2 I_{\lbrack 1/2,1\rbrack})\ast_M (2 I_{\lbrack
  1/2,1\rbrack})$, which is nice to deal with in the context of
combinatorially flavored analytic number theory, but it has a Mellin
transform that decays much too slowly.\footnote{This parallels the
  situation in the transition from Hardy and Littlewood \cite{MR1555183}
to Vinogradov \cite{Vin}. Hardy and Littlewood used the smoothing
$\eta(t)=e^{-t}$, whereas Vinogradov used the brusque (non-)smoothing 
$\eta(t) = I_{\lbrack 0,1\rbrack}$. Arguably, this is not just a case of
technological decay; $I_{\lbrack 0,1\rbrack}$ has compact support and is
otherwise easy to deal with in the minor-arc regime.} The solution
is to use a smoothing that is, so to speak, Janus-faced, viz.,
 $\eta_* = (\eta_2\ast_M \phi)(\varkappa t)$, where
$\phi(t) = t^2 e^{-t^2/2}$ and $\varkappa$ is a large constant. We
estimate sums of type $S_\eta(\alpha,x)$ by estimating
$S_{\eta_2}(\alpha,x)$ if $\alpha$ lies on a
minor arc, or by estimating $S_{\phi}(\alpha,x)$ if $\alpha$ lies on a
major arc. (The Mellin transform of $\phi$ is just a shift of that of
$\eta_\heartsuit$.) This is possible because $\eta_2$ has support bounded
away from zero, while $\phi$ is also concentrated away from $0$.

Now that the smoothing functions have been chosen, it remains 
to actually estimate (\ref{eq:kamon}) using the results from
\cite{HelfMaj},
which are estimates on (\ref{eq:mindy}) (and hence on 
(\ref{eq:mork})) for individual $\alpha$. Doing so well is a delicate task.
Some of the main features are the use of cancellation to prove a rather precise
estimate for the $\ell_2$ norm over the major arcs, and the arrangement of
error terms so that they are multiplied by the said $\ell_2$ norm. (The norm
will appear again later, in that it will be substracted from the integral
over a union of somewhat larger arcs, as in (\ref{eq:dadaj}).) We will actually
start by finding the main term, namely, (\ref{eq:notspel}); 
it is what one would expect, but extracting it
at the cost of only a small error term 
 will require some careful use of a smoothing $\eta_+$ 
 approximated by other smoothing $\eta_\circ$. (The main term is obtained
by completing several sums and integrals, whose terms must be shown to decrease
rapidly.)

\subsubsection{Minor arc bounds: exponential sums and the large sieve}
Let $\mathfrak{m}_{r}$ be the complement of $\mathfrak{M}_r$. 
In particular, $\mathfrak{m}=\mathfrak{m}_{r_0}$ is the complement of
$\mathfrak{M}=\mathfrak{M}_{r_0}$.
Exponential sum-estimates, such as those in \cite{Helf}, give
bounds on $\max_{\alpha \in \mathfrak{m}_r} |S(\alpha,x)|$ that decrease with
$r$. 

We need to do better than
\begin{equation}\label{eq:onedot}\begin{aligned}
\int_{\mathfrak{m}} \left|S(\alpha,x)^3 e(-N\alpha)\right| d\alpha &\leq
(\max_{\alpha\in \mathfrak{m}} |S(\alpha,x)|_\infty)\cdot 
\int_{\mathfrak{m}} |S(\alpha,x)|^2 d\alpha\\ &\leq
(\max_{\alpha \in \mathfrak{m}} |S(\alpha,x)|_\infty)\cdot 
\left(|S|_2^2 - \int_{\mathfrak{M}} |S(\alpha,x)|^2 d\alpha\right),
\end{aligned}\end{equation}
as this inequality involves a loss of a factor of $\log x$
(because $|S|_2^2 \sim x \log x$). Fortunately, minor arc estimates
are valid not just for a fixed $r_0$, but for the complement of
$\mathfrak{M}_{r}$, where $r$ can vary within a broad range. By
partial summation, these estimates can be combined with upper bounds for
\begin{equation}\label{eq:dadaj}
\int_{\mathfrak{M}_{r}} |S(\alpha,x)|^2 d\alpha - 
 \int_{\mathfrak{M}_{r_0}} |S(\alpha,x)|^2 d\alpha.
\end{equation}
Giving an estimate for the integral over $\mathfrak{M}_{r_0}$
($r_0$ a constant) will be
part of our task over the major arcs. The question is how to give an
upper bound for the integral over $\mathfrak{M}_r$ that is valid and
non-trivial over a broad range of $r$. 

The answer lies in the deep relation between the circle method and
the large sieve. (This was obviously not available to Vinogradov in 1937; 
the large sieve is a slightly later development (Linnik \cite{MR0004266}, 1941)
 that was optimized and fully understood later still.) 
A large sieve is, in essence, an inequality giving a 
discretized version of Plancherel's identity. Large sieves for primes
show that the inequality can be sharpened for sequences of prime support,
provided that, on the Fourier side, the sum over frequencies is shortened.
The idea here is that this kind of improvement can be adapted back to the
continuous context, so as to give upper bounds on the $L_2$ norms of
exponential sums with prime support when $\alpha$ is restricted to special
subsets of the circle. Such an $L_2$ norm is nothing other than
$\int_{\mathfrak{M}_{r}} |S(\alpha,x)|^2 d\alpha$.

The first version of \cite{Helf} used an idea of
Heath-Brown's\footnote{Communicated by Heath-Brown to the author, and by
the author to Tao, as acknowledged in \cite{Tao}. The idea is based on 
a lemma by Montgomery (as in, e.g., \cite[Lemma 7.15]{MR2061214}).} that can indeed be
understood in this framework. In \S \ref{subs:ramar}, we shall prove
a better bound, based on a large sieve for primes due to Ramar\'e 
\cite{MR2493924}. We will re-derive this sieve using an idea of Selberg's.
We will then make it fully explicit in the crucial range (\ref{subs:boquo}).
(This, incidentally, also gives fully explicit estimates for Ramar\'e's
large sieve in its original discrete context, making it the best large
sieve for primes in a wide range.)

The outcome is that $\int_{\mathfrak{M}_{r}} |S(\alpha,x)|^2 d\alpha$ is
bounded roughly by $2 x \log r$, rather than by $x\log x$ (or
by $2 e^\gamma x \log r$, as was the case when Heath-Brown's idea was
used). The lack of a factor of $\log x$ makes it possible to work with
$r_0$ equal to a constant, as we have done; the factor of $e^\gamma$
reduces the need for computations by more than an order of magnitude.








\subsection{Dependency diagram}
As usual, if two sections on the diagram are connected by a line, the upper
one
depends on the lower one. We use only the main results in \cite{HelfMaj}
and \cite{Helf}, namely, \cite[Main~Thm.]{HelfMaj}
and the statements in \cite[\S 1.1]{Helf}; these are labelled ``majarcs''
and
``minarcs'', respectively.

\begin{center}
  \includegraphics[height=2.2in]{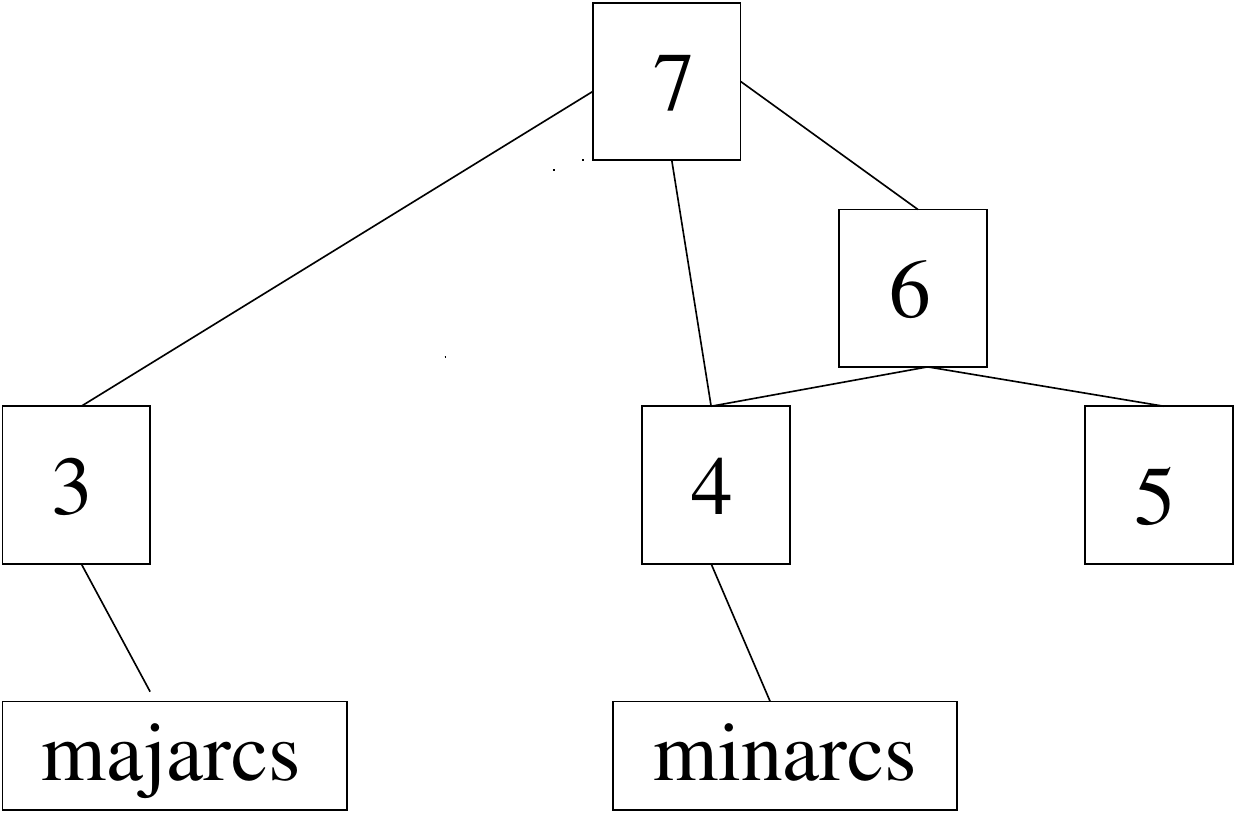}
\end{center}

\subsection{Acknowledgments}
The author is very thankful
to O. Ramar\'e for his help and feedback, especially regarding
\S \ref{sec:intri} and Appendix \ref{sec:sumphiq}. 
He is also much 
indebted to A. Booker, B. Green, 
H. Kadiri, D. Platt,
 T. Tao and M. Watkins for many discussions on Goldbach's problem and 
related issues. Thanks are also due to 
B. Bukh, A. Granville and P. Sarnak for their valuable advice.

Travel and other expenses were funded in part by 
the Adams Prize and the Philip Leverhulme Prize.
The author's work on the problem started at the
Universit\'e de Montr\'eal (CRM) in 2006; he is grateful to both the Universit\'e
de Montr\'eal and the \'Ecole Normale Sup\'erieure for providing pleasant
working environments.

The present work would most likely not have been possible without free and 
publicly available
software: PARI, Maxima, Gnuplot, VNODE-LP, PROFIL / BIAS, SAGE, and, of course, \LaTeX, Emacs,
the gcc compiler and GNU/Linux in general. Some exploratory work was done
in SAGE and Mathematica. Rigorous calculations used either D. Platt's
interval-arithmetic package (based in part on Crlibm) or the PROFIL/BIAS interval arithmetic package
underlying VNODE-LP.

\section{Preliminaries}
\subsection{Notation}
As is usual, we write $\mu$ for the Moebius function, $\Lambda$ for
the von Mangoldt function. We let $\tau(n)$ be the number of divisors of an
integer $n$ and $\omega(n)$ the number of prime divisors.
For $p$ prime, $n$ a non-zero integer,
we define $v_p(n)$ to be the largest non-negative integer $\alpha$ such
that $p^\alpha|n$.

We write $(a,b)$ for the greatest common divisor of $a$ and $b$. If there
is any risk of confusion with the pair $(a,b)$, we write $\gcd(a,b)$.
Denote by $(a,b^\infty)$ the divisor $\prod_{p|b} p^{v_p(a)}$ of $a$.
(Thus, $a/(a,b^\infty)$ is coprime to $b$, and is in fact the maximal
divisor of $a$ with this property.)

As is customary, we write $e(x)$ for $e^{2\pi i x}$.
We write $|f|_r$ for the $L_r$ norm of a function $f$.

We write $O^*(R)$ to mean a quantity at most $R$ in absolute value. 

\subsection{Dirichlet characters and $L$ functions}\label{subs:durian}
A {\em Dirichlet character} $\chi:\mathbb{Z}\to \mathbb{C}$ of modulus $q$
is a character $\chi$ of $(\mathbb{Z}/q \mathbb{Z})^*$ lifted to 
$\mathbb{Z}$ with the convention that $\chi(n)=0$ when
$(n,q)\ne 1$. Again by convention, there is a Dirichlet character of modulus
$q=1$, namely, the {\em trivial character} $\chi_T:\mathbb{Z}\to \mathbb{C}$
defined by $\chi_T(n)=1$ for every $n\in \mathbb{Z}$. 

If $\chi$ is a character modulo $q$ and $\chi'$ is a
character modulo $q'|q$ such that $\chi(n)=\chi'(n)$ for all $n$ coprime to $q$,
we say that $\chi'$ {\em induces} $\chi$. A character is 
{\em primitive} if it is not induced by any character of smaller modulus.
Given a character $\chi$, we write $\chi^*$ for the (uniquely defined)
primitive character inducing $\chi$. If a character $\chi$ mod $q$ is induced
by the trivial character $\chi_T$, we say that $\chi$ is {\em principal}
and write $\chi_0$ for $\chi$ (provided the modulus $q$ is clear from the 
context). In other words, $\chi_0(n)=1$ when $(n,q)=1$ and $\chi_0(n)=0$ when
$(n,q)=0$.

A Dirichlet $L$-function $L(s,\chi)$ ($\chi$ a Dirichlet character) is
defined as the analytic continuation of $\sum_n \chi(n) n^{-s}$ to the
entire complex plane; there is a pole at $s=1$ if $\chi$ is principal. 

A non-trivial zero of $L(s,\chi)$ is any $s\in \mathbb{C}$
such that $L(s,\chi)=0$ and $0 < \Re(s) < 1$. (In particular, a zero
at $s=0$ is called ``trivial'', even though its contribution can be a little
tricky to work out. The same would go for the other zeros with $\Re(s)=0$
occuring for $\chi$ non-primitive, though we will avoid this issue by
working mainly with $\chi$ primitive.) The zeros that occur at (some) negative 
integers are called {\em trivial zeros}.

The {\em critical line} is the line $\Re(s)=1/2$ in the complex
plane. Thus, the generalized Riemann hypothesis for Dirichlet $L$-functions
reads: for every Dirichlet character $\chi$,
all non-trivial zeros of $L(s,\chi)$ lie on the critical line. 
Verifiable finite versions of the generalized Riemann hypothesis generally 
read: for every Dirichlet character $\chi$ of modulus $q\leq Q$,
all non-trivial zeros of $L(s,\chi)$ with $|\Im(s)|\leq f(q)$ lie
on the critical line (where $f:\mathbb{Z}\to \mathbb{R}^+$ is some given 
function).

\subsection{Fourier transforms}
The Fourier transform on $\mathbb{R}$ is normalized as follows:
\[\widehat{f}(t) = \int_{-\infty}^\infty e(-xt) f(x) dx\]
for $f:\mathbb{R}\to \mathbb{C}$.

The trivial bound is $|\widehat{f}|_\infty \leq |f|_1$.
Integration by parts gives that, if $f$ is 
differentiable $k$ times outside finitely many points,
then
\begin{equation}\label{eq:madge}\begin{aligned}
\widehat{f}(t) &= 
O^*\left(\frac{|\widehat{f^{(k)}}|_\infty}{2\pi t}\right) =
O^*\left(\frac{|f^{(k)}|_1}{(2\pi t)^k}\right).
\end{aligned} \end{equation}
It could happen that $|f^{(k)}|_1=\infty$, in which case (\ref{eq:madge})
is trivial (but not false). In practice, we require $f^{(k)}\in L_1$. 
In a typical situation, $f$ is differentiable $k$ times except at
$x_1,x_2,\dotsc,x_k$, where it is differentiable only $(k-2)$ times;
the contribution of $x_i$ (say) to $|f^{(k)}|_1$ is then 
$|\lim_{x\to x_i^+} f^{(k-1)}(x) - \lim_{x\to x_i^-} f^{(k-1)}(x)|$.

\subsection{Mellin transforms}\label{subs:milly}

The {\em Mellin transform} of a function $\phi:(0,\infty)\to \mathbb{C}$ is
\begin{equation}\label{eq:souv}
M \phi(s) := \int_0^{\infty} \phi(x) x^{s-1} dx .\end{equation}

In general, $M (f\ast_M g) = Mf \cdot Mg$ and 
\begin{equation}\label{eq:mouv}
M(f\cdot g)(s) = \frac{1}{2\pi i}\int_{\sigma-i\infty}^{\sigma+i\infty}
Mf(z) Mg(s-z) dz\;\;\;\;\;\;\;\;\text{\cite[\S 17.32]{MR1773820}}\end{equation}
provided that $z$ and $s-z$ are within the strips on which $Mf$ and $Mg$
(respectively) are well-defined.

The Mellin transform is an isometry, in the sense that
\begin{equation}\label{eq:victi}
\int_0^\infty |f(t)|^2 t^{2\sigma} \frac{dt}{t} = \frac{1}{2\pi} \int_{-\infty}^\infty
|Mf(\sigma+it)|^2 dt.\end{equation}
provided that $\sigma+i\mathbb{R}$ is within the strip on which $Mf$ is
defined.
We also know that, for general $f$,
\begin{equation}\label{eq:harva}\begin{aligned}
M(t f'(t))(s) &= - s\cdot  Mf(s),\\
M((\log t) f(t))(s) &= (Mf)'(s)\end{aligned}\end{equation}
(as in, e.g., \cite[Table 1.11]{Mellin}).

 Since (see, e.g., \cite[Table 11.3]{Mellin}
or \cite[\S 16.43]{MR1773820})
\[(M I_{\lbrack a,b\rbrack})(s) = \frac{b^s - a^s}{s},\]
we see that
\begin{equation}\label{eq:envy}
M\eta_2(s) = \left(\frac{1 - 2^{-s}}{s}\right)^2,\;\;\;\;\;
M\eta_4(s) = \left(\frac{1 - 2^{-s}}{s}\right)^4 .\end{equation}

Let $f_z = e^{-zt}$, where $\Re(z)>0$. Then
\[\begin{aligned}
(Mf)(s) &= \int_0^\infty e^{-zt} t^{s-1} dt = \frac{1}{z^s} \int_0^\infty
e^{-t} dt\\&= \frac{1}{z^s} \int_0^{z\infty} e^{-u} u^{s-1} du = 
\frac{1}{z^s} \int_0^\infty e^{-t} t^{s-1} dt = \frac{\Gamma(s)}{z^s},
\end{aligned}\]
where the next-to-last step holds by contour integration,
and the last step holds by the definition of the Gamma function $\Gamma(s)$.

\section{The integral over the major arcs}\label{subs:prewo}
Let
\begin{equation}\label{eq:fellok}
S_\eta(\alpha,x) = \sum_n \Lambda(n) e(\alpha n) \eta(n/x),\end{equation}
where $\alpha \in \mathbb{R}/\mathbb{Z}$, $\Lambda$ is the von Mangoldt
function and $\eta:\mathbb{R}\to \mathbb{C}$ is of fast enough decay
for the sum to converge.

Our ultimate goal is to bound from below
\begin{equation}\label{eq:tripsum}
\sum_{n_1+n_2+n_3 = N} \Lambda(n_1) \Lambda(n_2) \Lambda(n_3)
\eta_1(n_1/x) \eta_2(n_2/x) \eta_3(n_3/x),\end{equation}
where $\eta_1, \eta_2, \eta_3:\mathbb{R}\to \mathbb{C}$. As can be readily seen, (\ref{eq:tripsum}) equals
\begin{equation}\label{eq:osto}\int_{\mathbb{R}/\mathbb{Z}} S_{\eta_1}(\alpha,x) 
S_{\eta_2}(\alpha,x) S_{\eta_3}(\alpha,x) 
e(-N \alpha) d\alpha .\end{equation}
In the circle method, the set $\mathbb{R}/\mathbb{Z}$ gets partitioned into
the set of {\em major arcs} $\mathfrak{M}$ and the set of {\em minor arcs}
$\mathfrak{m}$; the contribution of each of the two sets to the integral
(\ref{eq:osto}) is evaluated separately.

Our object here is to treat the major arcs: we wish to estimate
\begin{equation}\label{eq:russie}\int_{\mathfrak{M}} 
S_{\eta_1}(\alpha,x) S_{\eta_2}(\alpha,x) S_{\eta_3}(\alpha,x)
 e(-N \alpha) d\alpha\end{equation}
for $\mathfrak{M} = \mathfrak{M}_{\delta_0,r}$, where
\begin{equation}\label{eq:majdef}
\mathfrak{M}_{\delta_0,r} = \mathop{\bigcup_{q\leq r}}_{\text{$q$ odd}} \mathop{\bigcup_{a \mo q}}_{(a,q)=1}
 \left(\frac{a}{q} - \frac{\delta_0 r}{2 q x}, \frac{a}{q} + \frac{\delta_0 r}{2 q x}\right) \cup 
\mathop{\bigcup_{q\leq 2 r}}_{\text{$q$ even}} \mathop{\bigcup_{a \mo q}}_{(a,q)=1}
\left(\frac{a}{q} - \frac{\delta_0 r}{q x}, 
      \frac{a}{q} + \frac{\delta_0 r}{q x}\right) 
\end{equation}
and $\delta_0>0$, $r\geq 1$ are given.

In other words, our major arcs will be few (that is, a constant number)
and narrow. While \cite{MR1932763} used relatively narrow major arcs
as well, their number, as in all previous proofs of Vinogradov's result,
is not bounded by a constant. (In his proof of the five-primes theorem, 
\cite{Tao} is able to take a single major arc around $0$; this is not possible 
here.)


What we are about to see is the general framework of the major arcs. This is
naturally the place where the overlap with the existing literature is largest.
Two important differences can nevertheless be singled out.
\begin{itemize}
\item
 The most obvious one
is the presence of smoothing. At this point, it improves and simplifies error
terms, but it also means that we will later need estimates for exponential sums
on major arcs, and not just at the middle of each major arc. (If there is
smoothing, we cannot use summation by parts to reduce the problem of estimating 
sums to a problem of counting primes in arithmetic progressions, or weighted
by characters.)
\item Since our $L$-function estimates for exponential sums will give bounds
that are better than the trivial one by only a constant -- even if it is a 
rather large constant -- we need to be especially careful when estimating error
terms, finding cancellation when possible.
\end{itemize}

\subsection{Decomposition of $S_\eta(\alpha,x)$ by characters} 
What follows is largely classical; compare to 
 \cite{MR1555183} or, say, \cite[\S 26]{MR0217022}. The only difference
from the literature lies in the treatment of $n$ non-coprime to $q$,
and the way in which we show that our exponential sum (\ref{eq:beatit}) 
is equal to a linear combination of twisted sums $S_{\eta,\chi^*}$
over {\em primitive} characters $\chi^*$. (Non-primitive characters would give
us $L$-functions with some zeroes inconveniently placed on the line $\Re(s)=0$.)

Write $\tau(\chi,b)$ for the Gauss sum 
\begin{equation}\label{eq:caundal}
\tau(\chi,b) = \sum_{a \mo q} \chi(a) e(ab/q)\end{equation}
associated to a $b\in \mathbb{Z}/q\mathbb{Z}$ and a
Dirichlet character $\chi$ with modulus $q$.
 We let $\tau(\chi) = \tau(\chi,1)$.
If $(b,q)=1$, then $\tau(\chi,b) = \chi(b^{-1}) \tau(\chi)$.

Recall that $\chi^*$ denotes the primitive character inducing a given Dirichlet
character $\chi$.
Writing $\sum_{\chi \mo q}$ for a sum over all characters $\chi$ of
$(\mathbb{Z}/q \mathbb{Z})^*$), we see that,
for any $a_0\in \mathbb{Z}/q
\mathbb{Z}$,
\begin{equation}\label{eq:mouche}\begin{aligned}
\frac{1}{\phi(q)} &\sum_{\chi \mo q} \tau(\overline{\chi},b) \chi^*(a_0) =
\frac{1}{\phi(q)} 
\sum_{\chi \mo q} \mathop{\sum_{a \mo q}}_{(a,q)=1}
\overline{\chi(a)} e(ab/q) \chi^*(a_0) \\&= 
\mathop{\sum_{a \mo q}}_{(a,q)=1}
\frac{e(ab/q)}{\phi(q)}  \sum_{\chi \mo q} \chi^*(a^{-1} a_0) = 
\mathop{\sum_{a \mo q}}_{(a,q)=1}
\frac{e(ab/q)}{\phi(q)}  \sum_{\chi \mo q'} \chi(a^{-1} a_0),
\end{aligned}\end{equation}
where $q'=q/\gcd(q,a_0^\infty)$. Now,
$\sum_{\chi \mo q'} \chi(a^{-1} a_0)=0$ unless $a = a_0$ (in which case
$\sum_{\chi \mo q'} \chi(a^{-1} a_0)=\phi(q')$). Thus, (\ref{eq:mouche})
equals 
\[\begin{aligned}
\frac{\phi(q')}{\phi(q)}
&\mathop{\mathop{\sum_{a \mo q}}_{(a,q)=1}}_{a\equiv a_0 \mo q'}
e(ab/q) =  
\frac{\phi(q')}{\phi(q)}
\mathop{\sum_{k \mo q/q'}}_{(k,q/q')=1}
e\left(\frac{(a_0+kq') b}{q}\right)\\
&=  
\frac{\phi(q')}{\phi(q)} e\left(\frac{a_0 b}{q}\right)
\mathop{\sum_{k \mo q/q'}}_{(k,q/q')=1}
e\left(\frac{k b}{q/q'}\right) = 
\frac{\phi(q')}{\phi(q)} e\left(\frac{a_0 b}{q}\right)
\mu(q/q')
\end{aligned}\]
provided that $(b,q)=1$. (We are evaluating a {\em Ramanujan sum} in the
last step.)
Hence, for $\alpha = a/q +\delta/x$, $q\leq x$, $(a,q)=1$,
\[
\frac{1}{\phi(q)} \sum_\chi \tau(\overline{\chi},a) 
\sum_n \chi^*(n) \Lambda(n) e(\delta n/x) \eta(n/x)\]
equals
\[
\sum_n \frac{\mu((q,n^\infty))}{\phi((q,n^\infty))}
\Lambda(n) e(\alpha n) \eta(n/x).\]
Since $(a,q)=1$, $\tau(\overline{\chi},a)= \chi(a) \tau(\overline{\chi})$.
The factor $\mu((q,n^\infty))/\phi((q,n^\infty))$ equals $1$ when $(n,q)=1$;
the absolute value of the factor is at most $1$ for every $n$.
Clearly
\[\mathop{\sum_n}_{(n,q)\ne 1} \Lambda(n) \eta\left(\frac{n}{x}\right) = 
\sum_{p|q} \log p \sum_{\alpha\geq 1} \eta\left(\frac{p^\alpha}{x}\right).\]
Recalling the definition (\ref{eq:fellok}) of
$S_\eta(\alpha,x)$, we conclude that

\begin{equation}\label{eq:beatit}\begin{aligned}
S_{\eta}(\alpha,x) =  \frac{1}{\phi(q)} \sum_{\chi \mo q} 
\chi(a) \tau(\overline{\chi})
 S_{\eta,\chi^*}\left(\frac{\delta}{x},x\right) + O^*\left(
2 \sum_{p|q} \log p \sum_{\alpha\geq 1} \eta\left(\frac{p^\alpha}{x}\right)
\right),
\end{aligned}\end{equation}
where
\begin{equation}\label{eq:shangh}S_{\eta,\chi}(\beta,x) = 
\sum_n \Lambda(n) \chi(n) e(\beta n) \eta(n/x) .\end{equation}

Hence
$S_{\eta_1}(\alpha,x) S_{\eta_2}(\alpha,x) S_{\eta_3}(\alpha,x) e(-N\alpha)$ equals
\begin{equation}\label{eq:orgor}\begin{aligned}
\frac{1}{\phi(q)^3} \sum_{\chi_1} \sum_{\chi_2} \sum_{\chi_3} 
 &\tau(\overline{\chi_1}) \tau(\overline{\chi_2})
 \tau(\overline{\chi_3}) \chi_1(a) \chi_2(a) \chi_3(a) e(-N a/q)\\
 &\cdot S_{\eta_1,\chi_1^*}(\delta/x,x) S_{\eta_2,\chi_2^*}(\delta/x,x) 
S_{\eta_3,\chi_3^*}(\delta/x,x) e(-\delta N/x) \end{aligned}\end{equation}
plus an error term of absolute value at most
\begin{equation}\label{eq:joko}
2 \sum_{j=1}^3 \prod_{j'\ne j} |S_{\eta_{j'}}(\alpha,x)|  
\sum_{p|q} \log p \sum_{\alpha\geq 1} \eta_j\left(\frac{p^\alpha}{x}\right)
.\end{equation}
We will later see that the integral of (\ref{eq:joko}) over $S^1$
 is negligible -- for our choices of $\eta_j$, it will, in fact, be
of size $O(x (\log x)^A)$, $A$ a constant. (In (\ref{eq:orgor}), we
have reduced our problems to estimating $S_{\eta,\chi}(\delta/x,x)$
for $\chi$ {\em primitive}; a more obvious way of reaching the same goal
would have made (\ref{eq:joko}) worse
by a factor of about $\sqrt{q}$. The error term $O(x (\log x)^A)$ should
be compared to the main term, which will be
 of size about a constant times $x^2$.)



\subsection{The integral over the major arcs: the main term}
We are to estimate the integral (\ref{eq:russie}), where the
major arcs $\mathfrak{M}_{\delta_0,r}$ are defined as in (\ref{eq:majdef}).
We will use $\eta_1 = \eta_2 = \eta_+$, $\eta_3(t) = \eta_\ast(\varkappa t)$, 
where $\eta_+$ and $\eta_\ast$ will be set later. 


We can write 
\begin{equation}\label{eq:glenkin}\begin{aligned}
S_{\eta,\chi}(\delta/x,x) = S_{\eta}(\delta/x,x) &= 
\int_0^\infty \eta(t/x) e(\delta t/x) dt + O^*(\err_{\eta,\chi}(\delta,x))\cdot x\\
&= \widehat{\eta}(- \delta) \cdot x + O^*(\err_{\eta,\chi_T}(\delta,x))\cdot x
\end{aligned} \end{equation}
for $\chi=\chi_T$ the trivial character, and
\begin{equation}\label{eq:brahms}
S_{\eta,\chi}(\delta/x) = O^*(\err_{\eta,\chi}(\delta,x)) \cdot x\end{equation}
for $\chi$ primitive and non-trivial. The estimation of the error 
terms $\err$ will come later; let us focus on (a) obtaining the contribution
of the main term, (b) using estimates on the error terms efficiently.

{\em The main term: three principal characters.}
The main contribution will be given by the term in (\ref{eq:orgor})
with
$\chi_1 = \chi_2 = \chi_3 = \chi_0$, where $\chi_0$ is the principal
character mod $q$.

The sum $\tau(\chi_0,n)$ is a {\em Ramanujan sum};
as is well-known (see, e.g., \cite[(3.2)]{MR2061214}),
\begin{equation}\label{eq:selb}\tau(\chi_0,n) = \sum_{d|(q,n)} \mu(q/d) d.
\end{equation}
This simplifies to $\mu(q/(q,n)) \phi((q,n))$ for $q$ square-free.
The special case $n=1$ gives us that $\tau(\chi_0) = \mu(q)$.

Thus, the term in (\ref{eq:orgor}) with 
$\chi_1 = \chi_2 = \chi_3 = \chi_0$ equals
\begin{equation}\label{eq:lookatme}\frac{e(-N a/q)}{\phi(q)^3}  
\mu(q)^3 S_{\eta_+,\chi_0^*}(\delta/x,x)^2 S_{\eta_*,\chi_0^*}(\delta/x,x) 
e(-\delta N/x),\end{equation}
where, of course, $S_{\eta,\chi_0^*}(\alpha,x) = S_\eta(\alpha,x)$ (since
$\chi_0^*$ is the trivial character). 
Summing (\ref{eq:lookatme}) for $\alpha = a/q+\delta/x$
and $a$ going over all residues mod $q$ coprime to $q$, 
we obtain
\[\frac{\mu\left(\frac{q}{(q,N)}\right) \phi((q,N))}{\phi(q)^3} \mu(q)^3
S_{\eta_+, \chi_0^*}(\delta/x,x)^2 S_{\eta_*, \chi_0^*}(\delta/x,x) 
e(-\delta N/x).\]
The integral of (\ref{eq:lookatme}) over all of $\mathfrak{M} =
\mathfrak{M}_{\delta_0,r}$ (see (\ref{eq:majdef})) thus equals
\begin{equation}\label{eq:henki}\begin{aligned}
&\mathop{\sum_{q\leq r}}_{\text{$q$ odd}} \frac{\phi((q,N))}{\phi(q)^3}  \mu(q)^2 \mu((q,N))
\int_{-\frac{\delta_0 r}{2 q x}}^{\frac{\delta_0 r}{2 q x}} 
S_{\eta_+, \chi_0^*}^2(\alpha,x) S_{\eta_*, \chi_0^*}(\alpha,x) 
e(-\alpha N) d\alpha \\
+ &\mathop{\sum_{q\leq 2 r}}_{\text{$q$ even}} 
\frac{\phi((q,N))}{\phi(q)^3}  \mu(q)^2 \mu((q,N))
\int_{-\frac{\delta_0 r}{q x}}^{\frac{\delta_0 r}{q x}} 
S_{\eta_+, \chi_0^*}^2(\alpha,x) S_{\eta_*, \chi_0^*}(\alpha,x) 
e(-\alpha N) d\alpha .
\end{aligned}\end{equation}

The main term in (\ref{eq:henki}) is
\begin{equation}\label{eq:arger}\begin{aligned}
&x^3 \cdot \mathop{\sum_{q\leq r}}_{\text{$q$ odd}} 
\frac{\phi((q,N))}{\phi(q)^3}  \mu(q)^2 \mu((q,N))
\int_{-\frac{\delta_0 r}{2 q x}}^{\frac{\delta_0 r}{2 q x}} 
(\widehat{\eta_+}(-\alpha x))^2 \widehat{\eta_*}(-\alpha x) e(-\alpha N) d\alpha\\
+ &x^3 \cdot \mathop{\sum_{q\leq 2 r}}_{\text{$q$ even}}
 \frac{\phi((q,N))}{\phi(q)^3}  \mu(q)^2 \mu((q,N))
\int_{-\frac{\delta_0 r}{q x}}^{\frac{\delta_0 r}{q x}} 
(\widehat{\eta_+}(-\alpha x))^2 \widehat{\eta_*}(-\alpha x) e(-\alpha N) d\alpha
.\end{aligned}
\end{equation}

We would like to complete both the sum and the integral. Before, we should
say that we will want to be able to use smoothing functions $\eta_+$ whose
Fourier transforms are not easy to deal with directly. All we want to require
is that there be a smoothing function $\eta_\circ$, easier to deal with,
such that $\eta_\circ$ be close to $\eta_+$ in $\ell_2$ norm.

Assume, then, that 
\[|\eta_+-\eta_\circ|_2\leq \epsilon_0 |\eta_\circ|,\]
where $\eta_\circ$ is thrice differentiable outside finitely many points
and satisfies $\eta_\circ^{(3)} \in L_1$. Then (\ref{eq:arger})
equals
\begin{equation}\label{eq:pasture}\begin{aligned}
&x^3 \cdot \mathop{\sum_{q\leq r}}_{\text{$q$ odd}} 
\frac{\phi((q,N))}{\phi(q)^3}  \mu(q)^2 \mu((q,N))
\int_{-\frac{\delta_0 r}{2 q x}}^{\frac{\delta_0 r}{2 q x}} 
(\widehat{\eta_\circ}(-\alpha x))^2 \widehat{\eta_*}(-\alpha x) 
e(-\alpha N) 
d\alpha\\
+ &x^3 \cdot \mathop{\sum_{q\leq 2 r}}_{\text{$q$ even}} 
\frac{\phi((q,N))}{\phi(q)^3}  \mu(q)^2 \mu((q,N))
\int_{-\frac{\delta_0 r}{q x}}^{\frac{\delta_0 r}{q x}} 
(\widehat{\eta_\circ}(-\alpha x))^2 \widehat{\eta_*}(-\alpha x) 
e(-\alpha N) 
d\alpha.
\end{aligned}\end{equation}
plus 
\begin{equation}\label{eq:pommes}
O^*\left(x^2 \cdot \sum_{q} \frac{\mu(q)^2}{\phi(q)^2}
\int_{-\infty}^{\infty} |(\widehat{\eta_+}(-\alpha))^2 - 
(\widehat{\eta_\circ}(-\alpha))^2| |\widehat{\eta_*}(-\alpha)| d\alpha\right).
\end{equation}
Here (\ref{eq:pommes}) is bounded by $2.82643 x^2$ (by (\ref{eq:massacre}))
times
\[\begin{aligned}
|\widehat{\eta_*}(-\alpha)|_\infty &\cdot
\sqrt{\int_{-\infty}^{\infty} |\widehat{\eta_+}(-\alpha)-
\widehat{\eta_\circ}(-\alpha)|^2 d\alpha \cdot 
\int_{-\infty}^{\infty} |\widehat{\eta_+}(-\alpha) +
\widehat{\eta_\circ}(-\alpha)|^2 d\alpha}\\
&\leq |\eta_*|_1 \cdot |\widehat{\eta_+}-\widehat{\eta_\circ}|_2
|\widehat{\eta_+}+\widehat{\eta_\circ}|_2 =
|\eta_*|_1 \cdot |\eta_+-\eta_\circ|_2
|\eta_++\eta_\circ|_2 \\ &\leq
|\eta_*|_1 \cdot |\eta_+-\eta_\circ|_2
(2|\eta_\circ|_2 + |\eta_+-\eta_\circ|_2) =
|\eta_*|_1 |\eta_\circ|_2^2 \cdot (2 + \epsilon_0) \epsilon_0.
\end{aligned}\]
Now, (\ref{eq:pasture}) equals
\begin{equation}\label{eq:rusko}\begin{aligned}
x^3 
&\int_{-\infty}^{\infty}
(\widehat{\eta_\circ}(-\alpha x))^2 \widehat{\eta_*}(-\alpha x) 
e(-\alpha N) \mathop{\sum_{\frac{q}{(q,2)}\leq \min\left(
\frac{\delta_0 r}{2 |\alpha| x},r\right)}}_{\mu(q)^2 = 1}
\frac{\phi((q,N))}{\phi(q)^3} \mu((q,N))
d\alpha \\
= x^3 &\int_{-\infty}^\infty
(\widehat{\eta_\circ}(-\alpha x))^2 \widehat{\eta_*}(-\alpha x) 
e(-\alpha N) d\alpha \cdot \left(
\sum_{q\geq 1}
\frac{\phi((q,N))}{\phi(q)^3}  \mu(q)^2 \mu((q,N))
\right)\\
- x^3 &\int_{-\infty}^{\infty}
(\widehat{\eta_\circ}(-\alpha x))^2 \widehat{\eta_*}(-\alpha x) 
e(-\alpha N) 
\mathop{\sum_{\frac{q}{(q,2)}> \min\left(
\frac{\delta_0 r}{2 |\alpha| x},r\right)}}_{\mu(q)^2=1}
\frac{\phi((q,N))}{\phi(q)^3}  \mu((q,N))
d\alpha .\end{aligned}\end{equation}
The last line in (\ref{eq:rusko}) is bounded\footnote{This is obviously
crude, in that we are bounding $\phi((q,N))/\phi(q)$ by $1$. We are doing so
in order to avoid a potentially harmful dependence on $N$.}
 by
\begin{equation}\label{eq:boussole}x^2 |\widehat{\eta_*}|_\infty
\int_{-\infty}^{\infty}
|\widehat{\eta_\circ}(-\alpha)|^2  \sum_{\frac{q}{(q,2)}
> \min\left(\frac{\delta_0 r}{2|\alpha|},r\right)}
\frac{\mu(q)^2}{\phi(q)^2} d\alpha .\end{equation}
By (\ref{eq:madge}) (with $k=3$), (\ref{eq:gat1o}) and (\ref{eq:gat1e}), 
this is at most
\[\begin{aligned}x^2 &|\eta_*|_1 
\int_{-\delta_0/2}^{\delta_0/2} |\widehat{\eta_\circ}(-\alpha)|^2
\frac{4.31004}{r} d\alpha\\ &+ 
2 x^2 |\eta_*|_1 
\int_{\delta_0/2}^{\infty} \left(\frac{|\eta_\circ^{(3)}|_1}{
(2\pi \alpha)^3}\right)^2  
\frac{8.62008 |\alpha|}{\delta_0 r} d\alpha\\
&\leq |\eta_*|_1 \left(4.31004 |\eta_\circ|_2^2 +
0.00113 \frac{|\eta_\circ^{(3)}|_1^2}{\delta_0^5}\right)
 \frac{x^2}{r}
.\end{aligned}\]

It is easy to see that
\[\sum_{q\geq 1} \frac{\phi((q,N))}{\phi(q)^3}  \mu(q)^2 \mu((q,N))
= \prod_{p|N} \left(1 - \frac{1}{(p-1)^2}\right)
\cdot \prod_{p\nmid N} \left(1 + \frac{1}{(p-1)^3}\right).\]

Expanding the integral implicit in the definition of $\widehat{f}$,
\begin{equation}\label{eq:hosto}\begin{aligned}
\int_\infty^\infty &(\widehat{\eta_\circ}(-\alpha x))^2 \widehat{\eta_*}(- \alpha x)
 e(-\alpha N) d\alpha =\\ 
&\frac{1}{x} \int_0^\infty \int_0^\infty \eta_\circ(t_1)  \eta_\circ(t_2) 
\eta_*\left(\frac{N}{x}-(t_1+t_2)\right) 
dt_1 dt_2.
\end{aligned}\end{equation}
(This is standard. One rigorous way to obtain (\ref{eq:hosto}) is to approximate the integral over
$\alpha\in (-\infty,\infty)$ by an integral with a smooth weight, at
different scales; as the scale becomes broader, the Fourier transform
of the weight approximates (as a distribution) the $\delta$ function.
Apply Plancherel.)



Hence, (\ref{eq:arger}) equals
\begin{equation}\label{eq:notspel}\begin{aligned}
x^2 &\cdot \int_0^\infty \int_0^\infty \eta_\circ(t_1) \eta_\circ(t_2) 
\eta_*\left(\frac{N}{x}-(t_1+t_2)\right) 
dt_1 dt_2 \\ &\cdot \prod_{p|N} \left(1 - \frac{1}{(p-1)^2}\right)
\cdot \prod_{p\nmid N} \left(1 + \frac{1}{(p-1)^3}\right).\end{aligned}
\end{equation}
(the main term) plus  
\begin{equation}\label{eq:stev}
\left(2.82643 |\eta_\circ|_2^2 (2+\epsilon_0)\cdot \epsilon_0
+ \frac{4.31004 |\eta_\circ|_2^2 +
0.00113 \frac{|\eta_\circ^{(3)}|_1^2}{\delta_0^5}}{r} \right) |\eta_*|_1 x^2
\end{equation}

Here (\ref{eq:notspel}) is
 just as in the classical case \cite[(19.10)]{MR2061214}, except
for the fact that a factor of $1/2$ has been replaced by a double integral.
We will later see how to choose our smoothing functions (and $x$,
in terms of $N$) so as to make the double integral as large as possible.

What remains to estimate is the contribution of all the terms of the
form $\err_{\eta,\chi}(\delta,x)$ in (\ref{eq:glenkin}) and (\ref{eq:brahms}).
Let us first deal with another matter -- bounding the $\ell_2$ norm
of $|S_\eta(\alpha,x)|^2$ over the major arcs.

\subsection{The $\ell_2$ norm over the major arcs} 
We can always bound the integral of
$|S_\eta(\alpha,x)|^2$ on the whole circle by Plancherel. If we only want
the integral on certain arcs, we use the bound in Prop.~\ref{prop:bellen}
(based on work by Ramar\'e).
If these arcs are really the major arcs -- that is,
the arcs on which we have useful analytic estimates -- then we can hope
to get better bounds using $L$-functions. This will be useful both
to estimate the error terms in this section
 and to make the use of Ramar\'e's bounds more efficient later.

By (\ref{eq:beatit}),
\[\begin{aligned}
&\mathop{\sum_{a \mo q}}_{\gcd(a,q)=1} \left|S_{\eta}\left(
\frac{a}{q}+ \frac{\delta}{x},\chi\right)\right|^2 \\
&= \frac{1}{\phi(q)^2} \sum_\chi \sum_{\chi'} \tau(\overline{\chi})
\overline{\tau(\overline{\chi'})} \left(\mathop{\sum_{a \mo q}}_{\gcd(a,q)=1} \chi(a) \overline{\chi'(a)}
\right)\cdot S_{\eta,\chi^*}(\delta/x,x) \overline{S_{\eta,\chi'^*}(\delta/x,x)}
\\ &+ O^*\left(2 (1+\sqrt{q}) (\log x)^2 |\eta|_\infty 
\max_\alpha |S_\eta(\alpha,x)| + \left((1+\sqrt{q}) (\log x)^2 |\eta|_\infty\right)^2\right)\\
&= \frac{1}{\phi(q)} \sum_\chi |\tau(\overline{\chi})|^2
|S_{\eta,\chi^*}(\delta/x,x)|^2 + K_{q,1} (2 |S_\eta(0,x)| + K_{q,1}),
\end{aligned}\]
where
\[K_{q,1} = (1+\sqrt{q}) (\log x)^2 |\eta|_\infty .\]
As is well-known (see, e.g., \cite[Lem. 3.1]{MR2061214})
\[\tau(\chi) = \mu\left(\frac{q}{q^*}\right)
\chi^*\left(\frac{q}{q^*}\right)
\tau(\chi^*),\]
where $q^*$ is the modulus of $\chi^*$ (i.e., the conductor of $\chi$), and
 \[|\tau(\chi^*)| = \sqrt{q^*}. 
\]
Using the expressions (\ref{eq:glenkin}) and (\ref{eq:brahms}), we obtain
 \[\begin{aligned}&\mathop{\sum_{a \mo q}}_{(a,q)=1}
 \left|S_{\eta}\left(
\frac{a}{q}+ \frac{\delta}{x},x\right)\right|^2 =  
\frac{\mu^2(q)}{\phi(q)} 
\left|\widehat{\eta}(-\delta) x
 + O^*\left(\err_{\eta,\chi_T}(\delta,x) \cdot x\right)\right|^2\\ &+
\frac{1}{\phi(q)} \left(\sum_{\chi\ne \chi_T}  \mu^2\left(\frac{q}{q^*}\right) q^*
\cdot O^*\left(|\err_{\eta,\chi}(\delta,x)|^2 x^2\right)\right)
 + K_{q,1} (2 |S_\eta(0,x)| + K_{q,1})
\\ &= \frac{\mu^2(q) x^2}{\phi(q)} 
\left(|\widehat{\eta}(-\delta)|^2 +
O^*\left(\left|\err_{\eta,\chi_T}(\delta,x)
(2 |\eta|_1 + \err_{\eta,\chi_T}(\delta,x))\right|\right)
\right) \\ &+ 
O^*\left(q \max_{\chi\ne \chi_T} |\err_{\eta,\chi^*}(\delta,x)|^2 x^2 + K_{q,2} x\right),\end{aligned}\]
where $K_{q,2} = K_{q,1} (2 |S_\eta(0,x)|/x + K_{q,1}/x)$.

Thus, the integral of $|S_\eta(\alpha,x)|^2$ over
$\mathfrak{M}$ (see (\ref{eq:majdef})) is
\begin{equation}\label{eq:juto}\begin{aligned}
&\mathop{\sum_{q\leq r}}_{\text{$q$ odd}} \mathop{\sum_{a \mo q}}_{(a,q)=1}
\int_{\frac{a}{q}-\frac{\delta_0 r}{2 q x}}^{\frac{a}{q}+\frac{\delta_0
    r}{2 q x}}
\left|S_\eta(\alpha,x)\right|^2 d\alpha +
\mathop{\sum_{q\leq 2r}}_{\text{$q$ even}} \mathop{\sum_{a \mo q}}_{(a,q)=1}
\int_{\frac{a}{q}-\frac{\delta_0 r}{q x}}^{\frac{a}{q}+\frac{\delta_0 r}{q x}}
\left|S_\eta(\alpha,x)\right|^2 d\alpha \\
&=
\mathop{\sum_{q\leq r}}_{\text{$q$ odd}} \frac{\mu^2(q) x^2}{\phi(q)} 
\int_{-\frac{\delta_0 r}{2 q x}}^{\frac{\delta_0 r}{2 q x}}
\left|\widehat{\eta}(-\alpha x)\right|^2 d\alpha
+ 
\mathop{\sum_{q\leq 2 r}}_{\text{$q$ even}} \frac{\mu^2(q) x^2}{\phi(q)} 
\int_{-\frac{\delta_0 r}{q x}}^{\frac{\delta_0 r}{q x}}
\left|\widehat{\eta}(-\alpha x)\right|^2 d\alpha\\
&+
O^*\left(\sum_q \frac{\mu^2(q) x^2}{\phi(q)} \cdot 
\frac{\gcd(q,2) \delta_0 r}{qx} \left(ET_{\eta,\frac{\delta_0 r}{2}} (2|\eta|_1 + 
ET_{\eta,\frac{\delta_0 r}{2}})\right)\right) 
 \\ &+ 
\mathop{\sum_{q\leq r}}_{\text{$q$ odd}} \frac{\delta_0 r x}{q} \cdot O^*\left(
q \mathop{\mathop{\max_{\chi \mo q}}_{\chi \ne \chi_T}}_{|\delta|\leq
  \delta_0 r/2 q}
 |\err_{\eta,\chi^*}(\delta,x)|^2  + \frac{K_{q,2}}{x}\right)\\
&+ 
\mathop{\sum_{q\leq 2 r}}_{\text{$q$ even}} \frac{2 \delta_0 r x}{q} \cdot O^*\left(
q \mathop{\mathop{\max_{\chi \mo q}}_{\chi \ne \chi_T}}_{|\delta|\leq
  \delta_0 r/q}
 |\err_{\eta,\chi^*}(\delta,x)|^2  + \frac{K_{q,2}}{x}\right)
,\end{aligned}\end{equation}
where \[ET_{\eta,s} = \max_{|\delta|\leq s} |\err_{\eta,\chi_T}(\delta,x)|\]
and $\chi_T$ is the trivial character. If all we want is an upper bound,
we can simply remark that

\[\begin{aligned}
&x \mathop{\sum_{q\leq r}}_{\text{$q$ odd}} \frac{\mu^2(q)}{\phi(q)} 
\int_{-\frac{\delta_0 r}{2 q x}}^{\frac{\delta_0 r}{2 qx }}
\left|\widehat{\eta}(-\alpha x)\right|^2 d\alpha
+ 
x \mathop{\sum_{q\leq 2 r}}_{\text{$q$ even}} \frac{\mu^2(q)}{\phi(q)} 
\int_{-\frac{\delta_0 r}{q x}}^{\frac{\delta_0 r}{q x}}
\left|\widehat{\eta}(-\alpha x)\right|^2 d\alpha\\
&\leq \left(\mathop{\sum_{q\leq r}}_{\text{$q$ odd}} \frac{\mu^2(q)}{\phi(q)} +
\mathop{\sum_{q\leq 2 r}}_{\text{$q$ even}} \frac{\mu^2(q)}{\phi(q)}
\right) |\widehat{\eta}|_2^2 =
2 |\eta|_2^2 
\mathop{\sum_{q\leq r}}_{\text{$q$ odd}} \frac{\mu^2(q)}{\phi(q)}. 
\end{aligned}\]
If we also need a lower bound, we proceed as follows.

Again, we will work with an approximation $\eta_\circ$ such that (a)
$|\eta-\eta_\circ|_2$ is small, (b) $\eta_\circ$ is thrice
differentiable outside finitely many points, (c) $\eta_\circ^{(3)}\in L_1$.
Clearly,
\[\begin{aligned}
&x \mathop{\sum_{q\leq r}}_{\text{$q$ odd}} \frac{\mu^2(q)}{\phi(q)}
 \int_{-\frac{\delta_0 r}{2 q x}}^{\frac{\delta_0 r}{2 q x}}
\left|\widehat{\eta}(-\alpha x)\right|^2 d\alpha \\ &\leq
\mathop{\sum_{q\leq r}}_{\text{$q$ odd}} \frac{\mu^2(q)}{\phi(q)}
 \left(\int_{-\frac{\delta_0 r}{2 q}}^{\frac{\delta_0 r}{2 q}}
\left|\widehat{\eta_\circ}(-\alpha)\right|^2 d\alpha 
+ 2 \langle \left|\widehat{\eta_\circ}\right|,
  \left|\widehat{\eta} - \widehat{\eta_\circ}\right|\rangle 
+ \left|\widehat{\eta} - 
        \widehat{\eta_\circ}\right|_2^2 \right)\\
&=\mathop{\sum_{q\leq r}}_{\text{$q$ odd}} \frac{\mu^2(q)}{\phi(q)}
\int_{-\frac{\delta_0 r}{2 q}}^{\frac{\delta_0 r}{2 q}}
\left|\widehat{\eta_\circ}(-\alpha)\right|^2 d\alpha \\ &+
O^*\left(\frac{1}{2} \log r + 0.85\right) \left(2 
\left|\eta_\circ\right|_2 \left| \eta - \eta_\circ\right|_2 +
 \left|\eta_\circ - \eta\right|_2^2\right),
\end{aligned}\]
where we are using (\ref{eq:marmo}) and isometry.
Also,
\[\mathop{\sum_{q\leq 2 r}}_{\text{$q$ even}} \frac{\mu^2(q)}{\phi(q)}
 \int_{-\frac{\delta_0 r}{q x}}^{\frac{\delta_0 r}{q x}}
\left|\widehat{\eta}(-\alpha x)\right|^2 d\alpha =
\mathop{\sum_{q\leq r}}_{\text{$q$ odd}} \frac{\mu^2(q)}{\phi(q)}
 \int_{-\frac{\delta_0 r}{2 q x}}^{\frac{\delta_0 r}{2 q x}}
\left|\widehat{\eta}(-\alpha x)\right|^2 d\alpha.\]
By (\ref{eq:madge}) and Plancherel,
\[\begin{aligned}\int_{-\frac{\delta_0 r}{2 q}}^{\frac{\delta_0 r}{2 q}}
\left|\widehat{\eta_\circ}(-\alpha)\right|^2 d\alpha
&= \int_{-\infty}^{\infty} \left|\widehat{\eta_\circ}(-\alpha)\right|^2 d\alpha
- O^*\left(2 \int_{\frac{\delta_0 r}{2 q}}^{\infty} \frac{|\eta_\circ^{(3)}|_1^2}{
(2 \pi \alpha)^6} d\alpha\right)\\
&= |\eta_\circ|_2^2 + 
O^*\left(\frac{|\eta_\circ^{(3)}|_1^2 q^5}{5 \pi^6 (\delta_0 r)^5}\right), 
\end{aligned}\] 
Hence
\[
\mathop{\sum_{q\leq r}}_{\text{$q$ odd}} \frac{\mu^2(q)}{\phi(q)}
 \int_{-\frac{\delta_0 r}{2 q}}^{\frac{\delta_0 r}{2 q}}
\left|\widehat{\eta_\circ}(-\alpha)\right|^2 d\alpha = 
|\eta_\circ|_2^2 \cdot \mathop{\sum_{q\leq r}}_{\text{$q$ odd}}
\frac{\mu^2(q)}{\phi(q)} +
O^*\left(\mathop{\sum_{q\leq r}}_{\text{$q$ odd}}
\frac{\mu^2(q)}{\phi(q)} 
\frac{|\eta_\circ^{(3)}|_1^2 q^5}{5 \pi^6 (\delta_0 r)^5}\right).
\]
Using (\ref{eq:gatosbuenos}), we get that
\[\begin{aligned}\mathop{\sum_{q\leq r}}_{\text{$q$ odd}}
\frac{\mu^2(q)}{\phi(q)} 
\frac{|\eta_\circ^{(3)}|_1^2 q^5}{5 \pi^6 (\delta_0 r)^5}
&\leq \frac{1}{r} \mathop{\sum_{q\leq r}}_{\text{$q$ odd}}
\frac{\mu^2(q) q}{\phi(q)} 
\cdot \frac{|\eta_\circ^{(3)}|_1^2}{5 \pi^6 \delta_0^5}\\
&\leq \frac{|\eta_\circ^{(3)}|_1^2}{5 \pi^6 \delta_0^5} \cdot 
\left(0.64787 + \frac{\log r}{4 r} + \frac{0.425}{r}\right).
\end{aligned}\]

Going back to (\ref{eq:juto}), we use (\ref{eq:nagasa}) to bound 
\[\sum_q \frac{\mu^2(q) x^2}{\phi(q)} \frac{\gcd(q,2) \delta_0 r}{q x} \leq
2.59147 \cdot \delta_0 r x.\] We also note that
\[\begin{aligned}
&\mathop{\sum_{q\leq r}}_{\text{$q$ odd}} \frac{1}{q} +
\mathop{\sum_{q\leq 2r}}_{\text{$q$ even}} \frac{2}{q} =
\sum_{q\leq r} \frac{1}{q} - \sum_{q\leq \frac{r}{2}} \frac{1}{2q} +
\sum_{q\leq r} \frac{1}{q} \\ 
&\leq 2 \log e r - \log \frac{r}{2} \leq \log 2 e^2 r.
\end{aligned}\]

We have proven the following result.
\begin{lem}\label{lem:drujal}
Let $\eta:\lbrack 0,\infty) \to \mathbb{R}$ be in $L_1 \cap L_\infty$.
Let $S_\eta(\alpha,x)$ be as in (\ref{eq:fellok}) and
let $\mathfrak{M}=\mathfrak{M}_{\delta_0,r}$ be as in (\ref{eq:majdef}).
Let $\eta_\circ:\lbrack 0,\infty)\to \mathbb{R}$ be thrice differentiable outside finitely
many points. Assume $\eta_\circ^{(3)}\in L_1$.

Assume $r\geq 182$. Then
\begin{equation}\label{eq:bfpink}\begin{aligned}
\int_{\mathfrak{M}} |S_{\eta}(\alpha,x)|^2 d\alpha &= 
L_{r,\delta_0} x +
O^*\left(5.19 \delta_0 x r \left(ET_{\eta,\frac{\delta_0 r}{2}} \cdot \left(|\eta|_1 + 
\frac{ET_{\eta,\delta_0 r/2}}{2}\right)\right)\right) \\
&+ O^*\left(\delta_0 r (\log 2 e^2 r) \left( x\cdot
E_{\eta,r,\delta_0}^2 
+ K_{r,2}\right)\right),\end{aligned}\end{equation}
where \begin{equation}\label{eq:sreda}\begin{aligned}
E_{\eta,r,\delta_0} &= 
\mathop{\mathop{\max_{\chi \mo q}}_{q\leq r\cdot \gcd(q,2)}}_{|\delta|\leq
  \gcd(q,2) \delta_0 r/2 q}
\sqrt{q} |\err_{\eta,\chi^*}(\delta,x)|,\;\;\;\;\;\;\;
ET_{\eta,s} = \max_{|\delta|\leq s} |\err_{\eta,\chi_T}(\delta,x)|,\\
K_{r,2} &= (1+\sqrt{2 r}) (\log x)^2 |\eta|_\infty
(2 |S_{\eta}(0,x)|/x + (1+\sqrt{2 r}) (\log x)^2 |\eta|_\infty/x)
\end{aligned}\end{equation}
and $L_{r,\delta_0}$ satisfies both 
\begin{equation}\label{eq:mardi}
L_{r,\delta_0}\leq 2 |\eta|_2^2 
\mathop{\sum_{q\leq r}}_{\text{$q$ odd}} \frac{\mu^2(q)}{\phi(q)}
\end{equation} 
and 
\begin{equation}\label{eq:chetvyorg}\begin{aligned}
L_{r,\delta_0} &= 2 |\eta_\circ|_2^2 
  \mathop{\sum_{q\leq r}}_{\text{$q$ odd}} \frac{\mu^2(q)}{\phi(q)}
+ O^*(\log r + 1.7) \cdot 
\left(2 
\left|\eta_\circ\right|_2 \left| \eta - \eta_\circ\right|_2 +
 \left|\eta_\circ - \eta\right|_2^2\right)\\
&+ O^*\left(\frac{2 |\eta_\circ^{(3)}|_1^2}{5 \pi^6 \delta_0^5}\right)
\cdot \left(0.64787 + \frac{\log r}{4 r} + \frac{0.425}{r}\right).
\end{aligned}\end{equation}
\end{lem}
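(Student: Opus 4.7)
The proof is essentially a careful bookkeeping of the computation already sketched in the preceding subsection; the plan is to collect those steps and put the error terms into the form required.

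First, I would expand $|S_\eta(a/q + \delta/x, x)|^2$ using the character decomposition (\ref{eq:beatit}): square the expression, sum over $a \mod q$ coprime to $q$, and invoke orthogonality of Dirichlet characters to kill all cross-terms except the diagonal $\chi = \chi'$. This converts the sum into $\phi(q)^{-1} \sum_\chi |\tau(\overline{\chi})|^2 |S_{\eta,\chi^*}(\delta/x,x)|^2$ plus an $O^*$ error coming from the $2\sum_{p\mid q}\log p\sum_{\alpha\ge1}\eta(p^\alpha/x)$ term in (\ref{eq:beatit}), which contributes the $K_{q,1}$-quantities. Using $|\tau(\chi^*)| = \sqrt{q^*}$ together with the relation between $\tau(\chi)$ and $\tau(\chi^*)$, and then applying (\ref{eq:glenkin}) when $\chi = \chi_T$ and (\ref{eq:brahms}) otherwise, produces the main term $\mu^2(q)\phi(q)^{-1} x^2 |\widehat{\eta}(-\delta)|^2$ plus the error $\err_{\eta,\chi_T}$-terms and the $q \max_\chi |\err_{\eta,\chi^*}|^2$-terms that match the formula displayed before (\ref{eq:juto}).

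Next I would integrate this pointwise-in-$\delta$ identity over each major-arc interval and sum over $q$, yielding the decomposition (\ref{eq:juto}). The $\err$-contributions readily collapse into the $ET_{\eta,\delta_0 r/2}$ and $E_{\eta,r,\delta_0}$ shapes appearing in (\ref{eq:bfpink}), using that the Lebesgue measure of the arcs at level $q$ is $\gcd(q,2)\delta_0 r/(qx)$; the sum $\sum_q \mu^2(q)\gcd(q,2)/(q\phi(q))$ is bounded by (\ref{eq:nagasa}) to give the constant $5.19$ (i.e.\ $2\cdot 2.59147$), and $\sum \gcd(q,2)/q \le \log(2e^2 r)$ furnishes the other prefactor.

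For the quantity $L_{r,\delta_0}$, which is the main-term integral $\sum_q \mu^2(q)\phi(q)^{-1} x \int |\widehat{\eta}(-\alpha x)|^2 d\alpha$ on the arcs, the cheap upper bound (\ref{eq:mardi}) follows by extending the integral to all of $\mathbb{R}$ and applying Plancherel together with the elementary identity $\sum_{q\le r,\,q\text{ odd}}\mu^2/\phi = \sum_{q\le 2r,\,q\text{ even}}\mu^2/\phi$. For the more refined expansion (\ref{eq:chetvyorg}), I would replace $\widehat{\eta}$ by $\widehat{\eta_\circ}$, paying a Cauchy--Schwarz-based error $2|\eta_\circ|_2 |\eta-\eta_\circ|_2 + |\eta-\eta_\circ|_2^2$ multiplied by the sum $\sum_{q\le r,\text{ odd}}\mu^2/\phi$ (bounded by $\tfrac{1}{2}\log r + 0.85$ via (\ref{eq:marmo})). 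Then I would complete the integral of $|\widehat{\eta_\circ}|^2$ to all of $\mathbb{R}$, using (\ref{eq:madge}) with $k=3$ to control the tail by $|\eta_\circ^{(3)}|_1^2 q^5/(5\pi^6(\delta_0 r)^5)$, and close out using the bound (\ref{eq:gatosbuenos}) on $\sum \mu^2(q) q/\phi(q)$.

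The main obstacle is purely organisational: ensuring that each error produced along the way (character-sum truncation, Gauss-sum conversion, tail of the $\widehat{\eta_\circ}$ integral, switch from $\eta$ to $\eta_\circ$, bound on $\sum \mu^2(q)\gcd(q,2)/(q\phi(q))$) lands with the precise numerical constant and functional form demanded in (\ref{eq:bfpink})--(\ref{eq:chetvyorg}), while the assumption $r \ge 182$ is used only to make the constants in (\ref{eq:marmo}), (\ref{eq:nagasa}) and (\ref{eq:gatosbuenos}) valid. Once this accounting is done uniformly, the lemma follows.
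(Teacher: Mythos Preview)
Your proposal is correct and follows essentially the same route as the paper: the character decomposition via (\ref{eq:beatit}), orthogonality to isolate the diagonal, the integration yielding (\ref{eq:juto}), the constants $5.19$ and $\log(2e^2 r)$ from (\ref{eq:nagasa}) and the elementary sum over $1/q$, and the two-sided treatment of $L_{r,\delta_0}$ via Plancherel, the $\eta\to\eta_\circ$ switch with (\ref{eq:marmo}), the tail bound from (\ref{eq:madge}) with $k=3$, and (\ref{eq:gatosbuenos}). One very minor quibble: (\ref{eq:nagasa}) is an unconditional convergent-sum estimate, so the role of the hypothesis on $r$ is really only to validate (\ref{eq:marmo}) and (\ref{eq:gatosbuenos}).
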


The error term $xr ET_{\eta,\delta_0 r}$ 
will be very small, since it will be estimated
using the Riemann zeta function; the error term involving $K_{r,2}$ will
be completely negligible. The term involving
$x r (r+1) E_{\eta,r,\delta_0}^2$; we see that it constrains us to have $|\err_{\eta,\chi}(x,N)|$ less than
a constant times $1/r$ if we do not want the main term in the bound (\ref{eq:bfpink}) to be overwhelmed.

\subsection{The integral over the major arcs: error terms. Conclusion}
There are at least two ways we can evaluate
(\ref{eq:russie}). One is to substitute (\ref{eq:orgor}) into 
(\ref{eq:russie}). The disadvantages here are that (a) this can give
rise to pages-long formulae, 
 (b)
this gives error terms proportional to $x r |\err_{\eta,\chi}(x,N)|$, meaning
that, to win, we would have to show that $|\err_{\eta,\chi}(x,N)|$ is much
smaller than $1/r$.
What we will do instead is to use our $\ell_2$ estimate (\ref{eq:bfpink})
in order to bound the contribution of non-principal terms. This will give
us a gain of almost $\sqrt{r}$ on the error terms; in other words,
to win, it will be enough to show later that $|\err_{\eta,\chi}(x,N)|$ is
much smaller than $1/\sqrt{r}$.

The contribution of the error terms in $S_{\eta_3}(\alpha,x)$ (that is,
all terms involving the quantities $\err_{\eta,\chi}$ in expressions 
(\ref{eq:glenkin}) and (\ref{eq:brahms})) to (\ref{eq:russie}) is
\begin{equation}\label{eq:huppert}\begin{aligned}
\mathop{\sum_{q\leq r}}_{\text{$q$ odd}} \frac{1}{\phi(q)} \sum_{\chi_3 \mo q}
 &\tau(\overline{\chi_3}) 
\mathop{\sum_{a \mo q}}_{(a,q)=1}
\chi_3(a) e(-Na/q)\\
&\int_{-\frac{\delta_0 r}{2 q x}}^{\frac{\delta_0 r}{2 q x}}
 S_{\eta_+}(\alpha+a/q,x)^2 \err_{\eta_*,\chi_3^*}(\alpha x,x) e(-N \alpha)
 d\alpha\\
+ \mathop{\sum_{q\leq 2 r}}_{\text{$q$ even}} \frac{1}{\phi(q)} \sum_{\chi_3 \mo q}
 &\tau(\overline{\chi_3}) 
\mathop{\sum_{a \mo q}}_{(a,q)=1}
\chi_3(a) e(-Na/q)\\
&\int_{-\frac{\delta_0 r}{q x}}^{\frac{\delta_0 r}{q x}}
 S_{\eta_+}(\alpha+a/q,x)^2 \err_{\eta_*,\chi_3^*}(\alpha x,x) e(-N \alpha)
 d\alpha.
\end{aligned}\end{equation}
We should also remember 
 the terms in (\ref{eq:joko}); we can integrate them over all of $\mathbb{R}/\mathbb{Z}$,
and obtain that they contribute at most
\[\begin{aligned}
\int_{\mathbb{R}/\mathbb{Z}} &2 \sum_{j=1}^3 \prod_{j'\ne j} |S_{\eta_{j'}}(\alpha,x)|  
\cdot \max_{q\leq r}
\sum_{p|q} \log p \sum_{\alpha\geq 1} \eta_j\left(\frac{p^\alpha}{x}\right)
d\alpha
\\
&\leq 2 \sum_{j=1}^3 \prod_{j'\ne j} |S_{\eta_{j'}}(\alpha,x)|_2  
\cdot \max_{q\leq r} 
\sum_{p|q} \log p \sum_{\alpha\geq 1} \eta_j\left(\frac{p^\alpha}{x}\right)
\\
&=
2 \sum_n \Lambda^2(n) \eta_+^2(n/x) \cdot
\log r \cdot \max_{p\leq r}
\sum_{\alpha\geq 1} \eta_*\left(\frac{p^\alpha}{x}\right)\\
&+
4 \sqrt{ \sum_n \Lambda^2(n) \eta_+^2(n/x)
 \cdot \sum_n \Lambda^2(n) \eta_*^2(n/x)} \cdot
\log r \cdot \max_{p\leq r}
\sum_{\alpha\geq 1} \eta_*\left(\frac{p^\alpha}{x}\right)\\
\end{aligned}\]
by Cauchy-Schwarz and Plancherel.

The absolute value of (\ref{eq:huppert}) is at most
\begin{equation}\label{eq:frainf}\begin{aligned}
\mathop{\sum_{q\leq r}}_{\text{$q$ odd}} \mathop{\sum_{a \mo q}}_{(a,q)=1} &\sqrt{q}
\int_{-\frac{\delta_0 r}{2 q x}}^{\frac{\delta_0 r}{2 q x}}
 \left|S_{\eta_+}(\alpha+a/q,x)\right|^2 d\alpha \cdot
\mathop{\max_{\chi \mo q}}_{|\delta|\leq \delta_0 r/2q}
|\err_{\eta_*,\chi^*}(\delta,x)| \\ +
\mathop{\sum_{q\leq 2 r}}_{\text{$q$ even}} \mathop{\sum_{a \mo q}}_{(a,q)=1} &\sqrt{q}
\int_{-\frac{\delta_0 r}{q x}}^{\frac{\delta_0 r}{q x}}
 \left|S_{\eta_+}(\alpha+a/q,x)\right|^2 d\alpha \cdot
\mathop{\max_{\chi \mo q}}_{|\delta|\leq \delta_0 r/q}
|\err_{\eta_*,\chi^*}(\delta,x)| \\
&\leq \int_{\mathfrak{M}_{\delta_0,r}}
\left|S_{\eta_+}(\alpha)\right|^2 d\alpha \cdot
\mathop{\mathop{\max_{\chi \mo q}}_{q\leq r\cdot \gcd(q,2)}}_{
|\delta|\leq \gcd(q,2) \delta_0 r/q}
\sqrt{q} |\err_{\eta_*,\chi^*}(\delta,x)| 
.\end{aligned}\end{equation}
We can bound the integral of $|S_{\eta_+}(\alpha)|^2$ by 
(\ref{eq:bfpink}).

What about the contribution of the error part of $S_{\eta_2}(\alpha,x)$?
We can obviously proceed in the same way, except that, to avoid double-counting,
$S_{\eta_3}(\alpha,x)$ needs to be replaced by 
\begin{equation}\label{eq:massac}\frac{1}{\phi(q)}
\tau(\overline{\chi_0}) \widehat{\eta_3}(- \delta) \cdot x
= \frac{\mu(q)}{\phi(q)} \widehat{\eta_3}(- \delta) \cdot x,\end{equation}
which is its main term (coming from (\ref{eq:glenkin})).
Instead of having an $\ell_2$ norm as in (\ref{eq:frainf}), we have the
square-root of a product of two squares of $\ell_2$ norms (by Cauchy-Schwarz),
namely, $\int_\mathfrak{M} |S_{\eta_+}^*(\alpha)|^2 d\alpha$ and
\begin{equation}\label{eq:thaddeus}\begin{aligned}
\mathop{\sum_{q\leq r}}_{\text{$q$ odd}} 
&\frac{\mu^2(q)}{\phi(q)^2} \int_{-\frac{\delta_0 r}{2 q x}}^{
\frac{\delta_0 r}{2 q x}} \left|\widehat{\eta_*}(- \alpha x) x\right|^2
d \alpha +
\mathop{\sum_{q\leq 2 r}}_{\text{$q$ even}} 
\frac{\mu^2(q)}{\phi(q)^2} \int_{-\frac{\delta_0 r}{q x}}^{
\frac{\delta_0 r}{q x}} \left|\widehat{\eta_*}(- \alpha x) x\right|^2
d \alpha\\ 
&\leq 
x |\widehat{\eta_*}|_2^2 \cdot \sum_{q} \frac{\mu^2(q)}{\phi(q)^2} .
\end{aligned}\end{equation}
By (\ref{eq:massacre}), the sum over $q$ is at most $2.82643$.

As for the contribution of the error part of $S_{\eta_1}(\alpha,x)$, we
bound it in the same way, using solely the $\ell_2$ norm in 
(\ref{eq:thaddeus}) (and replacing both $S_{\eta_2}(\alpha,x)$ 
and $S_{\eta_3}(\alpha,x)$ by expressions as in (\ref{eq:massac})).

The total of the error terms is thus
\begin{equation}\label{eq:teresa}\begin{aligned}
&x \cdot
\mathop{\mathop{\max_{\chi \mo q}}_{q\leq r\cdot \gcd(q,2)}}_{|\delta|\leq 
\gcd(q,2) \delta_0 r/q}
\sqrt{q} \cdot |\err_{\eta_*,\chi^*}(\delta,x)| \cdot A\\ +\; &x \cdot 
\mathop{\mathop{\max_{\chi \mo q}}_{q\leq r\cdot \gcd(q,2)}}_{|\delta|\leq 
\gcd(q,2) \delta_0 r/q}
\sqrt{q} \cdot |\err_{\eta_+,\chi^*}(\delta,x)| 
(\sqrt{A} + \sqrt{B_+}) \sqrt{B_*},
\end{aligned}\end{equation}
where $A = (1/x) \int_{\mathfrak{M}} |S_{\eta_+}(\alpha,x)|^2  d\alpha$ (bounded as in
(\ref{eq:bfpink})) and
\begin{equation}
B_* = 2.82643 |\eta_*|_2^2,
 \;\;\;\;\;\;\;\; B_+ = 2.82643 |\eta_+|_2^2.
\end{equation} 

In conclusion, we have proven
\begin{prop}\label{prop:nefumo}
Let $x\geq 1$. Let $\eta_+, \eta_*:\lbrack 0,\infty) \to \mathbb{R}$.
Assume $\eta_+ \in C^2$, $\eta_+''\in L_2$ and $\eta_+, \eta_* \in L^1 \cap
L^2$. Let $\eta_\circ:\lbrack 0,\infty)\to \mathbb{R}$ be thrice differentiable outside finitely
many points. Assume $\eta_\circ^{(3)}\in L_1$ and
$|\eta_+-\eta_\circ|_2<\epsilon_0 |\eta_\circ|_2$, where $\epsilon_0\geq 0$.

Let $S_{\eta}(\alpha,x) = \sum_{n} \Lambda(n) e(\alpha n) \eta(n/x)$.
Let $\err_{\eta,\chi}$, $\chi$ primitive, be given as in (\ref{eq:glenkin})
and (\ref{eq:brahms}). Let  $\delta_0>0$, $r\geq 1$.
Let $\mathfrak{M}=\mathfrak{M}_{\delta_0,r}$ be as in (\ref{eq:majdef}).

Then, for any $N\geq 0$,
\[
\int_{\mathfrak{M}} 
S_{\eta_+}(\alpha,x)^2 S_{\eta_*}(\alpha,x)
 e(-N \alpha) d\alpha\]
 equals
\begin{equation}\label{eq:opus111}\begin{aligned}
C_0 C_{\eta_\circ,\eta_*} x^2 &+ 
\left(
2.82643 |\eta_\circ|_2^2 (2+\epsilon_0)\cdot \epsilon_0
+ \frac{4.31004  |\eta_\circ|_2^2 +
0.0012 \frac{|\eta_\circ^{(3)}|_1^2}{\delta_0^5}}{r} \right) |\eta_*|_1 x^2\\
&+O^*(E_{\eta_*,r,\delta_0} A_{\eta_+} + 
E_{\eta_+,r,\delta_0} \cdot 
1.6812 (\sqrt{A_{\eta_+}} + 1.6812 |\eta_+|_2) |\eta_*|_2)
\cdot x^2\\
\\&+ O^*\left(2 Z_{\eta_+^2,2}(x) LS_{\eta_*}(x,r) \cdot x + 
4 \sqrt{Z_{\eta_+^2,2}(x) Z_{\eta_*^2,2}(x)}  LS_{\eta_+}(x,r)\cdot x\right),
\end{aligned}\end{equation}
where
\begin{equation}\label{eq:vulgo}\begin{aligned}
C_0 &= \prod_{p|N} \left(1 - \frac{1}{(p-1)^2}\right)
\cdot \prod_{p\nmid N} \left(1 + \frac{1}{(p-1)^3}\right),\\
C_{\eta_\circ,\eta_*} &=
 \int_0^\infty \int_0^\infty \eta_\circ(t_1) \eta_\circ(t_2) 
\eta_*\left(\frac{N}{x}-(t_1+t_2)\right) dt_1 dt_2 
,\end{aligned}\end{equation}
\begin{equation}\label{eq:vulgato}\begin{aligned}
E_{\eta,r,\delta_0} &= 
\mathop{\mathop{\max_{\chi \mo q}}_{q\leq \gcd(q,2) \cdot r}}_{
|\delta|\leq \gcd(q,2) \delta_0 r/ 2 q}
\sqrt{q}\cdot |\err_{\eta,\chi^*}(\delta,x)|,\;\;\;\;\;\;\;
ET_{\eta,s} = \max_{|\delta|\leq s/q} |\err_{\eta,\chi_T}(\delta,x)|,\\
A_{\eta} &= \frac{1}{x} \int_{\mathfrak{M}} \left|S_{\eta_+}(\alpha,x)\right|^2 d\alpha,
\;\;\;\;\;\;\;\;\;
L_{\eta,r,\delta_0} \leq 2 |\eta|_2^2 \mathop{\sum_{q\leq r}}_{\text{$q$
    odd}}  \frac{\mu^2(q)}{\phi(q)},\\
K_{r,2} &= (1+\sqrt{2 r}) (\log x)^2 |\eta|_\infty
(2 Z_{\eta,1}(x)/x + (1+\sqrt{2 r}) (\log x)^2 |\eta|_\infty/x),\\
Z_{\eta,k}(x) &= \frac{1}{x} \sum_n \Lambda^k(n) \eta(n/x),\;\;\;\;\;\;\;
LS_{\eta}(x,r) = \log r \cdot \max_{p\leq r}
 \sum_{\alpha\geq 1} \eta\left(\frac{p^\alpha}{
x}\right)
,\end{aligned}\end{equation}
and $\err_{\eta,\chi}$ is as in (\ref{eq:glenkin}) and (\ref{eq:brahms}).
\end{prop}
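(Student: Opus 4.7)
The plan is to collect and package the computations that have already been developed in the preceding subsections. The starting point is the character decomposition (\ref{eq:beatit}), which expresses $S_{\eta_j}(\alpha,x)$ at $\alpha = a/q + \delta/x$ as a weighted sum over primitive characters plus the prime-power error in (\ref{eq:joko}). Substituting into the triple product, I would arrive at (\ref{eq:orgor}) and organize the contributions according to three disjoint cases: (i) the fully principal term $\chi_1=\chi_2=\chi_3=\chi_0$, which carries the main term; (ii) the cross terms in which at least one $\chi_j$ is non-principal, which contribute to the error; and (iii) the residual prime-power terms from (\ref{eq:joko}).

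For case (i), I would evaluate the Ramanujan sum in (\ref{eq:selb}) and the $a$-sum exactly as in the passage leading to (\ref{eq:henki})--(\ref{eq:arger}), then replace $\eta_+$ by its smoother surrogate $\eta_\circ$. Completing the $q$-sum and the $\delta$-integral as in (\ref{eq:rusko})--(\ref{eq:hosto}) yields the main term $C_0 C_{\eta_\circ,\eta_*} x^2$, while the costs of both completions, together with the $L^2$-replacement $\eta_+\to\eta_\circ$, are exactly what fills the second line of (\ref{eq:opus111}): the $\epsilon_0$-term comes from (\ref{eq:pommes}) and the $1/r$-terms from (\ref{eq:boussole}) and the tail bound using (\ref{eq:madge}) with $k=3$.

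For case (ii), rather than substituting (\ref{eq:orgor}) term-by-term (which would lose a factor of roughly $\sqrt{r}$), I would use the $\ell^2$ strategy. Isolating the contribution of $\err_{\eta_*,\chi^*}$ in $S_{\eta_3}$ (or, symmetrically, in $S_{\eta_1}$ or $S_{\eta_2}$), I would pull the maximum of $\sqrt{q}\,|\err_{\eta,\chi^*}(\delta,x)|$ out of the $a$-sum and the $\delta$-integral, leaving behind an integral of $|S_{\eta_+}|^2$ over $\mathfrak{M}$ which is controlled by Lemma \ref{lem:drujal} (giving the factor $A_{\eta_+}$). For the error in $S_{\eta_1}$ or $S_{\eta_2}$, one of the remaining factors $S_{\eta_3}$ must be replaced by its principal-character main term (\ref{eq:massac}); Cauchy--Schwarz then produces a geometric mean of $A_{\eta_+}$ (from $S_{\eta_+}$) and the elementary $\ell^2$-bound (\ref{eq:thaddeus}) (producing the $|\eta_*|_2$ factor and the constant $1.6812 \approx \sqrt{2.82643}$).

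For case (iii), I would integrate (\ref{eq:joko}) over all of $\mathbb{R}/\mathbb{Z}$ and apply Cauchy--Schwarz followed by Plancherel, bounding $|S_{\eta_j}|_2^2 = \sum_n \Lambda^2(n)\eta_j^2(n/x) = x\cdot Z_{\eta_j^2,2}(x)$ and $\max_{q\le r}\sum_{p\mid q}\log p\,\sum_\alpha \eta_j(p^\alpha/x) \le LS_{\eta_j}(x,r)$; this reproduces the last line of (\ref{eq:opus111}). The only delicate point in the whole assembly is the bookkeeping of constants in case (ii): one must ensure that the normalization of $E_{\eta,r,\delta_0}$ in (\ref{eq:vulgato}) (with its $\gcd(q,2)$ factor) matches the range of $(q,\delta)$ arising from both odd and even moduli in $\mathfrak{M}_{\delta_0,r}$, and that the Cauchy--Schwarz step correctly accounts for the cross term between $A_{\eta_+}$ and $B_+$. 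Once these are aligned, adding the three contributions and identifying them with the terms of (\ref{eq:opus111}) completes the proof.
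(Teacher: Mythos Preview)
Your proposal is correct and follows essentially the same route as the paper: the three-way split into (i) the fully principal term yielding $C_0 C_{\eta_\circ,\eta_*}x^2$ plus the completion/replacement errors (\ref{eq:stev}), (ii) the $\ell^2$-based treatment of the $\err$-terms via (\ref{eq:frainf}) and (\ref{eq:thaddeus}) giving (\ref{eq:teresa}), and (iii) the prime-power remainder (\ref{eq:joko}) handled by Cauchy--Schwarz and Plancherel. One small point of bookkeeping to watch in case (ii): to avoid double-counting, when you extract the error from $S_{\eta_2}$ you replace only $S_{\eta_3}$ by its main term (\ref{eq:massac}), but when you extract the error from $S_{\eta_1}$ you replace \emph{both} $S_{\eta_2}$ and $S_{\eta_3}$ by their main terms --- this is what produces the two summands $\sqrt{A_{\eta_+}}$ and $\sqrt{B_+}=1.6812\,|\eta_+|_2$ inside the parenthesis in the second line of (\ref{eq:opus111}).
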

Here is how to read these expressions. The error term in the first line of
(\ref{eq:opus111}) will be small provided that $\epsilon_0$ is small
and $r$ is large.
The third line of
(\ref{eq:opus111}) will be negligible, as will be the term 
$2 \delta_0 r (\log e r) K_{r,2}$
in the definition of $A_\eta$.
(Clearly,
$Z_{\eta,k}(x) \ll_\eta (\log x)^{k-1}$ and 
$LS_{\eta}(x,q) \ll_\eta \tau(q) \log x$
for any $\eta$ of rapid decay.)

It remains to estimate the second line of (\ref{eq:opus111}).
This includes estimating $A_{\eta}$ -- a task that was already
accomplished in Lemma \ref{lem:drujal}.
We see that we will have to give very good bounds for $E_{\eta,r,\delta_0}$
 when $\eta=\eta_+$ or $\eta=\eta_*$.
We also see that we want to make $C_0 C_{\eta_+,\eta_*} x^2$ as large as possible;
it will be competing not just with the error terms here, but, more
importantly, with the bounds from the minor arcs, which will be proportional
to $|\eta_+|_2^2 |\eta_*|_1$.

\section{Optimizing and coordinating smoothing functions}\label{sec:rossini}

One of our goals is to maximize the quantity $C_{\eta_\circ,\eta_*}$ in 
(\ref{eq:vulgo}) 
relative to $|\eta_\circ|_2^2 |\eta_*|_1$. One way to do this is to ensure that
(a) $\eta_*$ is concentrated on a very short\footnote{This is an idea
due to Bourgain in a related context
\cite{MR1726234}.} interval $\lbrack 0,\epsilon)$,
(b) $\eta_\circ$ is supported on the interval $\lbrack 0,2\rbrack$, and is
symmetric around $t=1$, meaning that $\eta_\circ(t) \sim \eta_\circ(2-t)$.
Then, for $x \sim N/2$, the integral
\[
\int_0^\infty \int_0^\infty \eta_\circ(t_1) \eta_\circ(t_2) \eta_\ast\left(
 \frac{N}{x} - (t_1 + t_2)\right) dt_1 dt_2\]
in (\ref{eq:vulgo}) should be approximately equal to
\begin{equation}\label{eq:mana}|\eta_*|_1 \cdot 
\int_0^\infty \eta_\circ(t) \eta_\circ\left(\frac{N}{x} - t\right) dt =
 |\eta_*|_1 \cdot  \int_0^\infty \eta_\circ(t)^2 dt = |\eta_*|_1 \cdot |\eta_\circ|_2^2,
\end{equation}
provided that $\eta_0(t)\geq 0$ for all $t$.
It is easy to check (using Cauchy-Schwarz in the second step)
that this is essentially optimal. (We will redo this rigorously in a little
while.)

At the same time, the fact is that major-arc estimates are best for smoothing
functions $\eta$ of a particular form, and we have minor-arc estimates
from \cite{Helf} for
a different specific smoothing $\eta_2$. The issue, then, is how do we choose $\eta_\circ$
and $\eta_*$ as above so that we can 
\begin{itemize}
\item $\eta_*$ is concentrated on $\lbrack 0,\epsilon)$,
\item $\eta_\circ$ is supported on $\lbrack 0,2\rbrack$ and symmetric
around $t=1$,
\item we can give 
 minor-arc and major-arc estimates for $\eta_*$, 
\item we can give
major-arc estimates for a function $\eta_+$ close
to $\eta_\circ$ in $\ell_2$ norm?
\end{itemize} 
\subsection{The symmetric smoothing function $\eta_\circ$}\label{subs:charme}
We will later work with a smoothing function $\eta_\heartsuit$ 
whose Mellin transform
decreases very rapidly. Because of this rapid decay, we will be able to give
strong results based on an explicit formula for $\eta_\heartsuit$.
The issue is how to define $\eta_\circ$,
given $\eta_\heartsuit$, so that $\eta_\circ$ is symmetric around $t=1$
(i.e., $\eta_\circ(2 - x) \sim \eta_\circ(x)$) and is very small for $x>2$.

We will later set $\eta_{\heartsuit}(t) = e^{-t^2/2}$.
 Let
\begin{equation}\label{eq:clog}
h:t\mapsto \begin{cases}
t^3 (2-t)^3 e^{t-1/2} &\text{if $t\in \lbrack 0,2\rbrack$,}\\ 0 &\text{otherwise}\end{cases}
\end{equation}
We define $\eta_\circ:\mathbb{R}\to \mathbb{R}$ by
\begin{equation}\label{eq:cleo}\eta_\circ(t) = h(t) \eta_{\heartsuit}(t) = 
\begin{cases}
 t^3 (2-t)^3 e^{-(t-1)^2/2} &\text{if $t\in \lbrack 0,2\rbrack$,}\\ 
0 &\text{otherwise.}\end{cases}
\end{equation}
It is clear that $\eta_\circ$
is symmetric around $t=1$ for $t\in \lbrack 0,2\rbrack$.

\subsubsection{The product $\eta_\circ(t) \eta_\circ(\rho-t)$.}
We now should go back and redo rigorously what we discussed informally
around (\ref{eq:mana}). More precisely, we wish to estimate
\begin{equation}\label{eq:weor}
\eta_\circ(\rho) = 
\int_{-\infty}^\infty \eta_\circ(t) \eta_\circ(\rho-t) dt
= \int_{-\infty}^{\infty} \eta_\circ(t) \eta_\circ(2-\rho+t) dt
\end{equation}
for $\rho\leq 2$ close to $2$. In this, it will be useful that the
Cauchy-Schwarz inequality degrades slowly, in the following sense.
\begin{lem}\label{lem:gosor}
Let $V$ be a real vector space with an inner product 
$\langle \cdot,\cdot\rangle$. Then, for any $v,w\in V$ with $|w-v|_2 \leq
|v|_2/2$,
\[\langle v,w\rangle = |v|_2 |w|_2 + O^*(2.71 |v-w|_2^2).\]
\end{lem}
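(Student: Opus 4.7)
The plan is to reduce the inequality to a direct algebraic identity via the polarization identity, and then invoke the reverse triangle inequality. Write $u = w - v$. The polarization identity gives
\[\langle v,w\rangle = \tfrac{1}{2}\bigl(|v|_2^2 + |w|_2^2 - |u|_2^2\bigr),\]
while the AM--GM identity for $|v|_2$ and $|w|_2$ reads
\[\tfrac{1}{2}\bigl(|v|_2^2 + |w|_2^2\bigr) - |v|_2 |w|_2 = \tfrac{1}{2}\bigl(|v|_2 - |w|_2\bigr)^2.\]
Subtracting these two identities yields the key formula
\[\langle v,w\rangle - |v|_2 |w|_2 = \tfrac{1}{2}\Bigl((|v|_2 - |w|_2)^2 - |u|_2^2\Bigr).\]

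The next step is to bound the right-hand side. The reverse triangle inequality $\bigl||v|_2 - |w|_2\bigr| \leq |v-w|_2$ forces $(|v|_2 - |w|_2)^2 \leq |u|_2^2$. Consequently the right-hand side lies in the interval $\bigl[-\tfrac{1}{2}|v-w|_2^2,\,0\bigr]$, which gives the sharper conclusion
\[\langle v, w\rangle = |v|_2 |w|_2 + O^*\bigl(\tfrac{1}{2}|v-w|_2^2\bigr),\]
comfortably inside the claimed bound with constant $2.71$.

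There is essentially no obstacle here: the argument is a two-line consequence of two standard identities, and in fact the hypothesis $|v-w|_2 \leq |v|_2/2$ is not needed at all for the polarization route. The generous constant $2.71 \approx e$ in the statement suggests the author may instead be preparing for a route via Taylor expansion of $|w|_2 = |v|_2\sqrt{1 + 2\langle v,u\rangle/|v|_2^2 + |u|_2^2/|v|_2^2}$ around the small parameter $|u|_2/|v|_2 \leq 1/2$, which does require the smallness hypothesis and which produces a somewhat larger (but still quadratic) remainder. Either way, the underlying inequality is tight at the quadratic order and the proof is essentially one line of polarization plus the reverse triangle inequality.
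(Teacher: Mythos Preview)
Your proof is correct and genuinely simpler than the paper's. You use the polarization identity $\langle v,w\rangle = \tfrac{1}{2}(|v|_2^2+|w|_2^2-|w-v|_2^2)$ together with the reverse triangle inequality to obtain the sharp constant $\tfrac{1}{2}$ in place of $2.71$, and your argument needs no smallness hypothesis on $|w-v|_2$. The paper instead proceeds exactly as you anticipated: it writes $|w|_2/|v|_2 = \sqrt{1+x}$ with $x = (2\langle w-v,v\rangle + |w-v|_2^2)/|v|_2^2$, applies a second-order Taylor bound $\sqrt{1+x} = 1 + x/2 + O^*(x^2/2^{3/2})$ valid for $|x|\leq 1/2$, and then multiplies through by $|v|_2^2$. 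This route genuinely requires the hypothesis $|w-v|_2\leq |v|_2/2$ (to keep $|x|$ small enough for the Taylor remainder to be controlled) and loses a constant factor in the process, arriving at $2.71$. Your polarization argument is both shorter and strictly stronger; the paper's approach buys nothing extra here, though it is perhaps the more ``analytic'' reflex when one thinks of $w$ as a perturbation of $v$.
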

\begin{proof}
By a truncated Taylor expansion,
\[\begin{aligned}
\sqrt{1+x} &= 1 + \frac{x}{2} + \frac{x^2}{2} \max_{0\leq t\leq 1}
 \frac{1}{4 (1-(tx)^2)^{3/2}}\\
 &= 1 + \frac{x}{2} + O^*\left(\frac{x^2}{2^{3/2}}\right)
\end{aligned}\]
for $|x|\leq 1/2$. Hence, for $\delta = |w-v|_2/|v|_2$,
\[\begin{aligned}
\frac{|w|_2}{|v|_2} &= \sqrt{1 + \frac{2 \langle w-v,v\rangle
+ |w-v|_2^2}{|v|_2^2}}
= 1 + \frac{2 \frac{\langle w-v,v\rangle}{|v|_2^2} + \delta^2}{2} + O^*\left(\frac{(2\delta+\delta^2)^2}{2^{3/2}}\right)\\ &= 1 + \delta + O^*\left(\left(
\frac{1}{2} + \frac{(5/2)^2}{2^{3/2}}\right) \delta^2\right)
= 1 + \frac{\langle w-v,v\rangle}{|v|_2^2} + O^*\left(2.71 \frac{|w-v|_2^2}{|v|_2^2}\right).
\end{aligned}\]
Multiplying by $|v|_2^2$, we obtain that
\[|v|_2 |w|_2 = |v|_2^2 + \langle w-v,v\rangle + O^*\left(2.71 |w-v|_2^2\right)
= \langle v,w\rangle + O^*\left(2.71 |w-v|_2^2\right).\]
\end{proof}
Applying Lemma \ref{lem:gosor} to (\ref{eq:weor}), we obtain that
\begin{equation}\label{eq:espri}\begin{aligned}
(\eta_\circ \ast \eta_\circ)(\rho) &=
\int_{-\infty}^{\infty} \eta_\circ(t) \eta_\circ((2-\rho)+t) dt \\ &=
\sqrt{\int_{-\infty}^\infty |\eta_\circ(t)|^2 dt} \sqrt{\int_{-\infty}^\infty
 |\eta_\circ((2-\rho)+t)|^2 dt}
\\ &+ O^*\left(2.71 \int_{-\infty}^\infty \left|\eta_\circ(t)-
\eta_\circ((2-\rho)+t)\right|^2 dt\right)\\
&= |\eta_\circ|_2^2 + O^*\left(2.71 \int_{-\infty}^{\infty}
\left(\int_0^{2-\rho} \left|\eta_\circ'(r+t)\right|
dr\right)^2 dt\right)\\
&= |\eta_\circ|_2^2 + O^*\left(2.71 (2-\rho)
\int_0^{2-\rho}
 \int_{-\infty}^{\infty} \left|\eta_\circ'(r+t)\right|^2
dt dr\right)\\
&= |\eta_\circ|_2^2 + O^*(2.71 (2-\rho)^2 |\eta_\circ'|_2^2).
\end{aligned}\end{equation}

We will be working with $\eta_*$ supported on the non-negative
reals; we recall that $\eta_{\circ}$ is supported on $\lbrack 0,2\rbrack$.
Hence \begin{equation}\label{eq:jaram}\begin{aligned}
\int_0^\infty &\int_0^\infty
\eta_\circ(t_1) \eta_\circ(t_2) \eta_*\left(\frac{N}{x} - (t_1+t_2)\right)
dt_1 dt_2 
= \int_0^{\frac{N}{x}}  (\eta_\circ \ast \eta_\circ)(\rho)
\eta_*\left(\frac{N}{x} - \rho\right) d\rho\\
&= \int_0^{\frac{N}{x}} (|\eta_\circ|_2^2 + O^*(2.71 (2-\rho)^2 |\eta_\circ'|_2^2))
\cdot \eta_*\left(\frac{N}{x} - \rho\right) d\rho \\ &= 
|\eta_\circ|_2^2 \int_0^{\frac{N}{x}} \eta_*(\rho) d\rho + 
2.71 |\eta_\circ'|_2^2 \cdot O^*\left(
\int_0^{\frac{N}{x}} ((2-N/x)+\rho)^2 \eta_*(\rho) d\rho\right)
,\end{aligned}\end{equation}
provided that $N/x\geq 2$. 
We see that it will be wise to set $N/x$ very
slightly larger than $2$. As we said before, $\eta_*$ will be scaled so that it is concentrated
on a small interval $\lbrack 0,\epsilon)$.
\subsection{The smoothing function $\eta_*$: adapting minor-arc bounds}\label{subs:reddo}
Here the challenge is to define a smoothing function $\eta_*$ that is good
both for minor-arc estimates and for major-arc estimates. The two regimes
tend to favor different kinds of smoothing function. For minor-arc
estimates, both \cite{Tao} and \cite{Helf} use
\begin{equation}\label{eq:eta2}
\eta_2(t) = 4 \max(\log 2 - |\log 2 t|,0) = 
((2 I_{\lbrack 1/2,1\rbrack}) \ast_M (2 I_{\lbrack 1/2,1\rbrack}))(t),
\end{equation}
where $I_{\lbrack 1/2,1\rbrack}(t)$ is $1$ if $t\in \lbrack 1/2,1\rbrack$ and
$0$ otherwise. For major-arc estimates, we will use a function based on
\[\eta_{\heartsuit} = e^{-t^2/2}.\]
We will actually use here 
the function $t^2 e^{-t^2/2}$, whose Mellin transform is
$M\eta_{\heartsuit}(s+2)$ (by, e.g., \cite[Table 11.1]{Mellin}).)

We will follow the simple expedient of convolving the two smoothing functions,
one good for minor arcs, the other one for major arcs.
 In general, let $\varphi_1,\varphi_2:\lbrack 0,\infty)\to \mathbb{C}$.
It is easy to use bounds on sums of the form
\begin{equation}\label{eq:kostor}
S_{f,\varphi_1}(x) = \sum_n f(n) \varphi_1(n/x)\end{equation}
to bound sums of the form $S_{f,\varphi_1 \ast_M \varphi_2}$:
\begin{equation}\label{eq:chemdames}\begin{aligned}
S_{f,\varphi_1 \ast_M \varphi_2} &=
\sum_n f(n) (\varphi_1 \ast_M \varphi_2)\left(\frac{n}{x}\right) \\
&= 
 \int_0^{\infty} \sum_n f(n) \varphi_1\left(\frac{n}{w x}\right) \varphi_2(w)
   \frac{dw}{w}
= \int_{0}^{\infty} S_{f,\varphi_1}(w x) 
\varphi_2(w) \frac{dw}{w}.\end{aligned}\end{equation}
The same holds, of course, if $\varphi_1$ and $\varphi_2$ are switched, since
$\varphi_1 \ast_M \varphi_2 = \varphi_2 \ast_M \varphi_1$.
The only objection is that the bounds on (\ref{eq:kostor}) that we input
might not be valid, or non-trivial, when the argument $wx$ of $S_{f,\varphi_1}(wx)$ is very small.  Because of this, it is important that the functions
$\varphi_1$, $\varphi_2$ vanish at $0$,  and desirable that their first
 derivatives do so as well.


Let us see how this works out in practice for $\varphi_1 = \eta_2$. Here
$\eta_2:\lbrack 0,\infty)\to
\mathbb{R}$ is given by
\begin{equation}\label{eq:meichu}
\eta_2 = \eta_1 \ast_M \eta_1 = 4 \max(\log 2 - |\log 2 t|,0),\end{equation}
where $\eta_1 = 2 \cdot I_{\lbrack 1/2,1\rbrack}$. Bounding the
 sums $S_{\eta_2}(\alpha,x)$ on the minor arcs was the main subject of 
\cite{Helf}.

Before we use \cite[Main Thm.]{Helf}, we need an easy lemma so as to simplify
its statement.

\begin{lem}\label{lem:merkel}
For any $q\geq 1$ and any $r\geq \max(3,q)$,
\[\frac{q}{\phi(q)} < \digamma(r),\]
where
\begin{equation}\label{eq:koop}\begin{aligned}
\digamma(r) &= e^{\gamma} \log \log r + \frac{2.50637}{\log \log r}.
\end{aligned}\end{equation}
\end{lem}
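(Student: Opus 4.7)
The plan is to invoke the classical explicit Rosser--Schoenfeld bound
\[
(\star)\qquad \frac{n}{\phi(n)} < e^{\gamma}\log\log n + \frac{2.50637}{\log\log n} \;=\; \digamma(n) \qquad (n\geq 3),
\]
and then to transfer from $\digamma(q)$ to $\digamma(r)$ for $r\geq q$. The second step is the only real subtlety, because $\digamma$ is not monotonic on $[3,\infty)$.

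To control the shape of $\digamma$, I would set $L=\log\log r$, so that $\digamma = e^{\gamma}L + 2.50637/L$, with derivative in $L$ equal to $e^{\gamma} - 2.50637/L^{2}$. This vanishes at $L_{0}=\sqrt{2.50637\,e^{-\gamma}}\approx 1.186$, corresponding to $r_{0}=\exp(\exp L_{0})\approx 27$. Hence $\digamma$ is strictly decreasing on $[3,r_{0}]$ and strictly increasing on $[r_{0},\infty)$, with minimum value $\digamma(r_{0}) = 2\sqrt{2.50637\,e^{\gamma}}\approx 4.22$.

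The proof then splits on the size of $q$. If $q\geq r_{0}$, then $r\geq q\geq r_{0}$, so monotonicity of $\digamma$ on $[r_{0},\infty)$ gives $\digamma(r)\geq \digamma(q)$; combining with $(\star)$ applied to $q$ yields $q/\phi(q) < \digamma(q) \leq \digamma(r)$. If $q<r_{0}$, a direct inspection of integers $q\leq 27$ shows $\max q/\phi(q) = 3$ (attained at $q = 6,12,18,24$), while $\digamma(r)\geq \digamma(r_{0}) > 4.22 > 3$ for every $r\geq 3$. The cases $q=1,2$, which are not covered by $(\star)$, fall into this range with $q/\phi(q)\leq 2$ and so are disposed of in the same step. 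The only actual obstacle is tracking the non-monotonicity of $\digamma$ near $r\approx 27$, and the case split above neutralizes it; nothing else is needed beyond Rosser--Schoenfeld and a trivial finite check.
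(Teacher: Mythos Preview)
Your proof is correct and follows essentially the same route as the paper: invoke Rosser--Schoenfeld's inequality $q/\phi(q)<\digamma(q)$ for $q\geq 3$, use the monotonicity of $\digamma$ on $[r_0,\infty)$ (with $r_0\approx 27$) to pass from $\digamma(q)$ to $\digamma(r)$ when $q\geq 27$, and handle $q\leq 27$ by the direct check $q/\phi(q)\leq 3<\min_{r\geq 3}\digamma(r)$. If anything, your write-up is slightly more careful than the paper's in locating the minimum of $\digamma$ explicitly via $L_0=\sqrt{2.50637\,e^{-\gamma}}$.
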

\begin{proof}
Since $\digamma(r)$ is increasing for $r\geq 27$, the statement follows
immediately for $q\geq 27$ by \cite[Thm. 15]{MR0137689}:
\[\frac{q}{\phi(q)} < \digamma(q) \leq \digamma(r).\]
For $r<27$, it is clear that $q/\phi(q) \leq 2\cdot 3/(1\cdot 2) = 3$; it
is also easy to see that $\digamma(r)>e^\gamma\cdot 2.50637>3$ for all $r>e$.
\end{proof}

It is time to quote the main theorem in \cite{Helf}.
Let $x\geq x_0$, $x_0 = 2.16\cdot 10^{20}$. 
Let $2 \alpha = a/q +\delta/x$,
$q\leq Q$, $\gcd(a,q)=1$, $|\delta/x|\leq 1/q Q$, where
$Q = (3/4) x^{2/3}$. Then, if $3\leq q\leq x^{1/3}/6$, \cite[Main Thm.]{Helf}
gives us that
\begin{equation}\label{eq:monoro}
|S_{\eta_2}(\alpha,x)| \leq g_{x}\left(\max\left(1, \frac{|\delta|}{8}
\right) \cdot q\right) x,\end{equation}
where 
\begin{equation}\label{eq:syryza}
g_x(r) = \frac{(R_{x,2r} \log 2r + 0.5) \sqrt{\digamma(r)} + 2.5}{
\sqrt{2 r}} + \frac{L_r}{r} + 3.2 x^{-1/6},\end{equation}
with
\begin{equation}\label{eq:veror}\begin{aligned}
R_{x,t} &=  0.27125 \log 
\left(1 + \frac{\log 4 t}{2 \log \frac{9 x^{1/3}}{2.004 t}}\right)
 + 0.41415 \\
L_{t} &=  \digamma(t) \left(\log 2^{\frac{7}{4}} t^{\frac{13}{4}}
+ \frac{80}{9}\right) +
\log 2^{\frac{16}{9}} t^{\frac{80}{9}} + \frac{111}{5},
\end{aligned}\end{equation}
(We are using Lemma \ref{lem:merkel}
to bound all terms $1/\phi(q)$ appearing in \cite[Main Thm.]{Helf};
we are also using the obvious fact that, for $\delta_0 q$ fixed and $0<a<b$,
$\delta_0^a q^b$ is maximal when $\delta_0$ is minimal.)
If $q > x^{1/3}/6$, then, again by \cite[Main Thm.]{Helf},
\begin{equation}\label{eq:horm}
|S_{\eta_2}(\alpha,x)| \leq h(x) x,
\end{equation}
where
\begin{equation}\label{eq:flou}
h(x) = 0.2727 x^{-1/6} (\log x)^{3/2} + 1218 x^{-1/3} \log x .
\end{equation}

We will work with $x$ varying within a range, and so we must pay some
attention to the dependence of (\ref{eq:monoro}) and (\ref{eq:horm}) on $x$.
Let us prove two auxiliary lemmas on this.
\begin{lem}\label{lem:convet}
Let $g_x(r)$ be as in (\ref{eq:syryza}) and
$h(x)$ as in (\ref{eq:flou}). Then 
\[x\mapsto \begin{cases} h(x) &\text{if $x < (6 r)^3$}\\
g_{x}(r) &\text{if $x\geq (6 r)^3$}\end{cases}\]
is a decreasing function of $x$ for $r\geq 3$ fixed and $x\geq 21$.
\end{lem}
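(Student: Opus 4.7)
The plan is to verify monotonicity in three steps: (A) $h(x)$ is decreasing for $21 \leq x < (6r)^3$, (B) $g_x(r)$ is decreasing in $x$ for $x \geq (6r)^3$ (with $r$ fixed), and (C) the function does not jump up at the transition point, i.e., $h((6r)^3) \geq g_{(6r)^3}(r)$.

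For step (A), I would differentiate
\[
h(x) = 0.2727\, x^{-1/6}(\log x)^{3/2} + 1218\, x^{-1/3}\log x
\]
and obtain
\[
h'(x) = \frac{0.2727\,(\log x)^{1/2}(9 - \log x)}{6\,x^{7/6}} + \frac{1218\,(3-\log x)}{3\,x^{4/3}}.
\]
For $x\geq 21 > e^3$ the second summand is already negative, while the first is positive only up to $x = e^9$. A direct comparison of the two summands over the range $21 \leq x \leq e^9$ (the first is dominated by the second because of the extra factor $x^{-1/6}$ and the large constant $1218$) gives $h'(x)\leq 0$; past $x = e^9$ both summands are $\leq 0$. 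A small numerical check at a few intermediate points suffices to certify the inequality rigorously.

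For step (B), note that $g_x(r)$ depends on $x$ only through $R_{x,2r}$ and the tail term $3.2\,x^{-1/6}$. The latter is obviously decreasing. For the former, the quantity $\log\bigl(9 x^{1/3}/(2.004\cdot 2r)\bigr)$ is increasing in $x$, and (once $x\geq (6r)^3$, so this quantity is $\geq \log(54/4.008) > 0$) it sits in the denominator of an expression whose outer log is monotone; hence $R_{x,2r}$ is decreasing in $x$. Summing, $g_x(r)$ is decreasing in $x$ on $[(6r)^3,\infty)$.

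The real obstacle is step (C). Setting $x = (6r)^3$ gives $x^{1/3}=6r$, $x^{1/6}=\sqrt{6r}$, $\log x = 3\log 6r$, and
\[
h((6r)^3) = \frac{0.2727\cdot 3\sqrt{3}}{\sqrt{6}}\cdot\frac{(\log 6r)^{3/2}}{\sqrt{r}} + \frac{609\,\log 6r}{r},
\]
while
\[
g_{(6r)^3}(r) = \frac{(R^\star \log 2r + 0.5)\sqrt{\digamma(r)} + 2.5}{\sqrt{2r}} + \frac{L_r}{r} + \frac{3.2}{\sqrt{6r}},
\]
with $R^\star = 0.27125\log\bigl(1 + (\log 8r)/\log(54^2/4.008^2)\bigr) + 0.41415 = 0.27125\log(1+(\log 8r)/5.2008)+0.41415$. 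I would bound $R^\star$ and $\digamma(r)$ from above using their explicit forms and compare coefficients term by term, showing that the $609\,\log 6r /r$ contribution to $h$ alone dominates $L_r/r$ (since $L_r = O(\log r \cdot \log\log r)$ with moderate constants) and that the $(\log 6r)^{3/2}/\sqrt{r}$ part of $h$ dominates the leading $\log r\cdot\sqrt{\digamma(r)}/\sqrt{r}$ part of $g$ for all $r\geq 3$. The hardest sub-step will be confirming the inequality at small values of $r$ (say $3\leq r \leq 100$), where the constants matter most; this I would handle by a finite table of numerical values with interval-arithmetic rounding, of the same flavour used elsewhere in the paper. Once (A), (B), (C) are in place, the piecewise function is decreasing on $[21,\infty)$, completing the proof.
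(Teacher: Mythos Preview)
Your three-step plan—monotonicity of each branch plus no upward jump at the join—is exactly the paper's approach; the paper simply declares (A) and (B) ``clear from the definitions'' and concentrates on (C). For (C), rather than a term-by-term comparison in $r$, the paper reparametrizes by $x_1=(6r)^3$, bounds $R_{x_1,2r}\leq 0.27215\log\log x_1$ and $\digamma(r)\leq e^{\gamma}\log\log x_1$ uniformly, and reduces everything to the single inequality
\[
0.21\,(\log x_1)(\log\log x_1)^{3/2}+6.67+\tfrac{13}{12}e^{\gamma}x_1^{-1/6}\log x_1\,\log\log x_1<0.2727\,(\log x_1)^{3/2},
\]
verified analytically for $x_1\geq e^{34}$ and by bisection on $[5832,e^{34}]$; this is cleaner than your plan but equivalent in spirit. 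One small slip in your (A): it is the \emph{second} summand of $h'$ that carries the extra $x^{-1/6}$ (it has $x^{-4/3}$ versus $x^{-7/6}$), so that factor works against you, not for you—the domination is entirely due to the constant $1218$ versus $0.2727$.
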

\begin{proof}
It is clear from the definitions 
that $x\mapsto h(x)$ (for $x\geq 21$) 
and $x\mapsto g_{x,0}(r)$ are both decreasing.
Thus, we simply have to show that $h(x_1) \geq g_{x_1,0}(r)$ for $x_1=
(6 r)^3$. Since $x_1\geq (6 \cdot 11)^3 > e^{12.5}$,
\[\begin{aligned}R_{x_1,2r} &\leq 0.27125 \log( 0.065 \log x_1 + 1.056) +0.41415\\
&\leq 0.27125 \log((0.065 + 0.0845) \log x_1) + 0.41415 \leq
0.27215 \log \log x_1.\end{aligned}\]
Hence \[\begin{aligned}
R_{x_1,2r} \log 2r + 0.5 &\leq 0.27215 \log \log x_1 \log x_1^{1/3} -
0.27215 \log 12.5 \log 3 + 0.5 \\ &\leq 0.09072 \log \log x_1 \log x_1
- 0.255.\end{aligned}\]
At the same time,
\begin{equation}\label{eq:pust}\begin{aligned}
\digamma(r) &= e^{\gamma} \log \log \frac{x_1^{1/3}}{6} + \frac{2.50637}{\log
  \log r}\leq 
e^{\gamma} \log \log x_1 - e^{\gamma} \log 3 + 1.9521 \\
&\leq e^{\gamma} \log \log x_1\end{aligned}\end{equation}
for $r\geq 37$, and we also get $\digamma(r)\leq e^{\gamma} \log \log x_1$
for $r\in \lbrack 11, 37\rbrack$ by the bisection method with $10$ iterations. 
Hence
\[\begin{aligned}
(R_{x_1,2r} &\log 2r + 0.5) \sqrt{\digamma(r)} +2.5 \\&\leq
(0.09072 \log \log x_1 \log x_1 - 0.255) \sqrt{e^\gamma \log \log x_1}
+2.5\\
&\leq 0.1211 \log x_1 (\log \log x_1)^{3/2} + 2,\end{aligned}\]
and so
\[\frac{(R_{x_1,2r} \log 2r + 0.5) \sqrt{\digamma(r)} +2.5}{\sqrt{2 r}}
\leq (0.21 \log x_1 (\log \log x_1)^{3/2} + 3.47) x_1^{-1/6}.\]

Now, by (\ref{eq:pust}),
\[\begin{aligned}
L_{r} &\leq e^{\gamma} \log \log x_1 \cdot \left(
\log 2^{\frac{7}{4}} (x_1^{1/3}/6)^{13/4} + \frac{80}{9}\right) + 
\log 2^{\frac{16}{9}} (x_1^{1/3}/6)^{\frac{80}{9}} + \frac{111}{5}\\
&\leq e^{\gamma} \log \log x_1 \cdot \left(\frac{13}{12} \log x_1 + 
4.28\right) + \frac{80}{27} \log x + 7.51.\end{aligned}\]
It is clear that
\[\frac{4.28 e^{\gamma} \log \log x_1  + \frac{80}{27} \log x_1 + 7.51}{
x_1^{1/3}/6} < 1218 x_1^{-1/3} \log x_1.\]
for $x_1\geq e$.

It remains to show that
\begin{equation}\label{eq:humo}
0.21 \log x_1 (\log \log x_1)^{3/2} + 3.47 + 3.2 + \frac{13}{12} e^{\gamma}
 x_1^{-1/6} \log x_1 \log \log x_1 
\end{equation}
is less than $0.2727 (\log x_1)^{3/2}$ for $x_1$ large enough.
Since $t\mapsto (\log t)^{3/2}/t^{1/2}$ is decreasing for $t>e^3$, we see
that 
\[\frac{0.21 \log x_1 (\log \log x_1)^{3/2} + 6.67
+ \frac{13}{12} e^{\gamma} x_1^{-1/6} \log x_1 \log \log x_1}{0.2727 (\log x_1)^{3/2}} < 1\]
for all $x_1\geq e^{34}$, simply because it is true for $x=e^{34}>e^{e^3}$.

We conclude that $h(x_1) \geq g_{x_1,0}(r) = g_{x_1,0}(x_1^{1/3}/6)$ for 
$x_1\geq e^{34}$. We check that 
$h(x_1)\geq g_{x_1,0}(x_1^{1/3}/6)$ for all $x_1\in \lbrack 5832,
e^{34}\rbrack$ as well by the bisection method (applied
to $\lbrack 5832,583200\rbrack$ and to $\lbrack 583200,e^{34}\rbrack$
with $30$ iterations -- in the latter interval, with $20$ initial iterations).
\end{proof}


\begin{lem}\label{lem:convog}
Let $R_{x,r}$ be as in (\ref{eq:syryza}). Then
$t\to R_{e^t,r}(r)$ is convex-up for $t\geq 3 \log 6 r$.
\end{lem}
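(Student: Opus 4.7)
The plan is to reduce the statement to a direct second-derivative computation by exploiting the fact that, after setting $x=e^t$, the argument of the outer logarithm in $R_{x,r}$ becomes $1 + c/u(t)$ where $u(t)$ is an affine function of $t$.

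First I would substitute $x = e^t$ into the definition in (\ref{eq:veror}) with the second argument equal to $r$, so that
\[
R_{e^t,r} = 0.27125 \log\!\left(1 + \frac{\log 4r}{\tfrac{2}{3}t - 2\log(2.004 r/9)}\right) + 0.41415.
\]
Setting $c := \log 4r$ and $u := u(t) = \tfrac{2}{3}t - 2\log(2.004 r/9)$, the function becomes $R_{e^t,r} = 0.27125\,\phi(u(t)) + 0.41415$ with $\phi(u) = \log(1+c/u) = \log(u+c) - \log u$. Note that $u(t)$ is affine in $t$ with $u'(t) = 2/3$ and $u''(t) = 0$, so by the chain rule $\tfrac{d^2}{dt^2} R_{e^t,r} = 0.27125 \cdot (2/3)^2 \cdot \phi''(u(t))$, and convexity of $t \mapsto R_{e^t,r}$ is equivalent to positivity of $\phi''(u(t))$.

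Next I would compute directly
\[
\phi''(u) = \frac{1}{u^2} - \frac{1}{(u+c)^2}.
\]
For any $u>0$ and $c > 0$ one has $u < u+c$, so $1/u^2 > 1/(u+c)^2$ and therefore $\phi''(u) > 0$. Since $r \geq 1$ gives $c = \log 4r > 0$, it remains only to verify that the hypothesis $t \geq 3\log 6r$ forces $u(t) > 0$. This reduces to checking $\tfrac{2}{3}\cdot 3\log 6r > 2 \log(2.004 r/9)$, i.e.\ $\log 6r > \log(2.004 r/9)$, which holds because $6 > 2.004/9$. Combining these observations yields $\tfrac{d^2}{dt^2} R_{e^t,r} > 0$ throughout the claimed range of $t$.

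There is no real obstacle: the only content is the algebraic identification of $R_{e^t,r}$ as a composition of $\phi(u) = \log(1+c/u)$ with an affine map, after which convexity follows from the elementary inequality $1/u^2 > 1/(u+c)^2$ for $u,c>0$. The only bookkeeping step that warrants care is verifying that the threshold $t \geq 3\log 6r$ in the hypothesis is sufficient (and not merely necessary) to keep $u(t)$ in the region $u>0$ where $\phi$ is smooth and the derivative computation is valid.
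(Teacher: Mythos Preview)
Your proof is correct and takes essentially the same approach as the paper: both reduce to showing that the second derivative of $\log(1+c/u)$ is positive for $u>0$, $c>0$, and both verify that the hypothesis $t\geq 3\log 6r$ ensures the affine denominator is positive. Your execution via $\phi(u)=\log(u+c)-\log u$ and the immediate computation $\phi''(u)=1/u^2-1/(u+c)^2$ is slightly cleaner than the paper's route through the identity $(\log f)''=(f''f-(f')^2)/f^2$ applied to $f(t)=1+a/(t-b)$, but the underlying idea is identical.
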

\begin{proof}
Since $t\to e^{-t/6}$ and $t\to t$ are clearly convex-up, all we have to do
is to show that $t\to R_{e^t,r}$ is convex-up. In general,
since \[(\log f)'' = \left(\frac{f'}{f}\right)' = \frac{f'' f - (f')^2}{f^2},\]
a function of the form $(\log f)$ is convex-up exactly when $f'' f - (f')^2\geq 0$.
If $f(t) = 1 + a/(t-b)$, we have $f'' f - (f')^2\geq 0$
whenever
\[(t+a-b) \cdot (2 a) \geq  a^2,\]
i.e., $a^2 + 2 a t \geq 2 a b$,
and that certainly happens when $t\geq b$. In our case,
$b = 3 \log (2.004 r/9)$, and so $t\geq 3 \log 6 r$ implies $t\geq b$.
\end{proof}

Now we come to the point where we prove bounds on the exponential sums
$S_{\eta_*}(\alpha,x)$ (that is, sums based on the smoothing $\eta_*$)
based on our bounds (from \cite{Helf}) on the exponential sums 
$S_{\eta_2}(\alpha,x)$. This is straightforward, as promised.
\begin{prop}\label{prop:gorsh}
Let $x\geq K x_0$, $x_0=2.16\cdot 10^{20}$, $K\geq 1$.
Let $S_\eta(\alpha,x)$ be as in (\ref{eq:fellok}). 
Let
$\eta_* = \eta_2 \ast_M \varphi$, where $\eta_2$ is as in (\ref{eq:meichu})
and $\varphi: \lbrack 0,\infty)\to \lbrack 0, \infty)$ is continuous and in $L^1$.

Let $2\alpha = a/q + \delta/x$, $q\leq Q$, $\gcd(a,q)=1$,
$|\delta/x|\leq 1/qQ$, where $Q = (3/4) x^{2/3}$. If 
$q\leq (x/K)^{1/3}/6$, then
\begin{equation}\label{eq:kroz}
S_{\eta_*}(\alpha,x) \leq g_{x,\varphi}\left(\max\left(1,\frac{|\delta|}{8}
\right) q \right) \cdot |\varphi|_1 x,
\end{equation}
where
\begin{equation}\label{eq:basia}\begin{aligned}
g_{x,\varphi}(r) &=
\frac{(R_{x,K,\varphi,2 r} \log 2 r + 0.5) \sqrt{\digamma(r)} + 2.5}{\sqrt{2
    r}}  + 
\frac{L_r}{r} +3.2 K^{1/6} x^{-1/6},\\
R_{x,K,\varphi,t} &= R_{x,t} + (R_{x/K,t} - R_{x,t})
\frac{C_{\varphi,2,K}/|\varphi|_1}{\log K}
\end{aligned}\end{equation}
with $R_{x,t}$ and $L_{r}$ are as in (\ref{eq:veror}),
and
\begin{equation}\label{eq:cecidad}
C_{\varphi,2,K} = - \int_{1/K}^1 \varphi(w) \log w\; dw.\end{equation}

If $q>(x/K)^{1/3}/6$, then
\[|S_{\eta_*}(\alpha,x)| \leq h_\varphi(x/K)\cdot |\varphi|_1 x,\]
where 
\begin{equation}\label{eq:midin}\begin{aligned}
h_{\varphi}(x) &= h(x) + C_{\varphi,0,K}/|\varphi|_1,\\
C_{\varphi,0,K} &= 1.04488 \int_0^{1/K} |\varphi(w)| dw 
\end{aligned}\end{equation}
and $h(x)$ is as in (\ref{eq:flou}).
\end{prop}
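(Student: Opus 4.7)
The plan is to apply the Mellin convolution identity~(\ref{eq:chemdames}) with $\varphi_1 = \eta_2$ and $\varphi_2 = \varphi$, giving
\[
S_{\eta_*}(\alpha, x) \;=\; \int_0^\infty S_{\eta_2}(\alpha, wx)\,\varphi(w)\,\frac{dw}{w},
\]
and then to split at $w = 1/K$. For $w \geq 1/K$ we have $wx \geq x/K \geq x_0$, so the bounds from \cite{Helf} apply at scale $wx$: either~(\ref{eq:monoro}) (if $q \leq (wx)^{1/3}/6$) or~(\ref{eq:horm}) (otherwise). For $w < 1/K$ the scale $wx$ may be too small for those bounds to be useful, and we fall back on the trivial estimate $|S_{\eta_2}(\alpha, wx)| \leq 1.04488\cdot wx$; integrated against $\varphi(w)\,dw/w$ this contributes exactly the $C_{\varphi,0,K}\cdot x$ term appearing in $h_\varphi(x/K)$.

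In the main case $q \leq (x/K)^{1/3}/6$, the inequality $q \leq (wx)^{1/3}/6$ holds for every $w \geq 1/K$, so~(\ref{eq:monoro}) applies uniformly and yields $|S_{\eta_2}(\alpha, wx)| \leq g_{wx}(r')\cdot wx$ with $r' := \max(1,|\delta|/8)\cdot q$ independent of $w$. The only $w$-dependent pieces of $g_{wx}(r')$ are the tail $3.2(wx)^{-1/6}$ and the factor $R_{wx, 2r'}$. The first is decreasing in $w$ and therefore bounded by its value at $w = 1/K$, namely $3.2\,K^{1/6}\,x^{-1/6}$. For the second I invoke Lemma~\ref{lem:convog}, which says that $t \mapsto R_{e^t, 2r'}$ is convex-up for $t \geq 3\log(12 r')$; under our hypothesis on $q$ this range contains $[\log(x/K), \log x]$. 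Convexity then bounds $R_{wx, 2r'}$, for $w \in [1/K, 1]$, above by the linear interpolant between $R_{x/K, 2r'}$ and $R_{x, 2r'}$, with weight $(-\log w)/\log K$ at the lower endpoint; for $w > 1$, monotonicity gives $R_{wx, 2r'} \leq R_{x, 2r'}$ directly. Integrating this interpolant against $\varphi(w)\,dw$ collapses to $R_{x, K, \varphi, 2r'}\cdot |\varphi|_1$, with the constant $C_{\varphi, 2, K}$ emerging exactly as defined in~(\ref{eq:cecidad}).

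In the complementary case $q > (x/K)^{1/3}/6$, the bound~(\ref{eq:monoro}) cannot be used for every $w \geq 1/K$: for small enough $w$ the condition $q \leq (wx)^{1/3}/6$ fails and we must substitute~(\ref{eq:horm}). The clean way to combine the two regimes is via Lemma~\ref{lem:convet}, which tells us that the piecewise envelope (equal to $h(y)$ for $y < (6q)^3$ and to $g_y(q)$ for $y \geq (6q)^3$) is decreasing in $y$. Hence for every $w \geq 1/K$ its value at $wx$ is at most its value at $x/K$, which under the present hypothesis is $h(x/K)$. Integrating against $\varphi(w)\,dw/w$ and adding the trivial contribution from $w < 1/K$ produces the claimed $h_\varphi(x/K)\cdot|\varphi|_1\,x$.

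The main technical obstacle is checking that the convexity range in Lemma~\ref{lem:convog} genuinely contains $[\log(x/K), \log x]$ under the hypothesis $q \leq (x/K)^{1/3}/6$, and then verifying that integrating the convex upper bound against $\varphi(w)\,dw$ reproduces the precise linear combination $R_{x,2r'} + (R_{x/K,2r'} - R_{x,2r'})\cdot C_{\varphi, 2, K}/(|\varphi|_1 \log K)$ with no additional error. Everything else is routine bookkeeping around the split at $w = 1/K$ and the trivial bound used there.
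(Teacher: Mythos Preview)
Your proposal follows the paper's proof essentially line for line: the same Mellin-convolution decomposition~(\ref{eq:chemdames}), the same split at $w=1/K$ with the trivial bound from Cor.~\ref{cor:austeria} on $[0,1/K]$, the same appeal to Lemma~\ref{lem:convet} for the decreasing envelope in the case $q>(x/K)^{1/3}/6$, and the same use of Lemma~\ref{lem:convog} (convexity of $t\mapsto R_{e^t,2r'}$) to bound $R_{wx,2r'}$ by the linear interpolant on $[1/K,1]$ and by $R_{x,2r'}$ for $w>1$, so that integrating against $\varphi(w)\,dw$ produces exactly $R_{x,K,\varphi,2r'}\,|\varphi|_1$. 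The technical obstacle you flag---verifying that the convexity range in Lemma~\ref{lem:convog} actually covers $[\log(x/K),\log x]$---is handled in the paper simply by invoking the hypothesis $q\leq (x/K)^{1/3}/6$ without further comment, so your caution there is well placed but does not represent a divergence in method.
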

\begin{proof}
By (\ref{eq:chemdames}),
\[\begin{aligned}
S_{\eta_*}(\alpha,x) &= \int_0^{1/K} S_{\eta_2}(\alpha,w x) 
\varphi(w) \frac{dw}{w} +
\int_{1/K}^\infty S_{\eta_2}(\alpha,w x) \varphi(w) \frac{dw}{w}.
\end{aligned}\]
We bound the first integral by the trivial estimate 
$|S_{\eta_2}(\alpha,w x)|\leq |S_{\eta_2}(0,w x)|$ and 
Cor.~\ref{cor:austeria}:
\[\begin{aligned}\int_0^{1/K} |S_{\eta_2}(0,w x)| 
\varphi(x) \frac{dw}{w} &\leq 1.04488 \int_0^{1/K} w x \varphi(w)
\frac{dw}{w} \\ &= 1.04488 x \cdot \int_0^{1/K} \varphi(w)
dw.\end{aligned}\]

If $w\geq 1/K$, then $w x\geq x_0$, and we can use 
(\ref{eq:monoro}) or (\ref{eq:horm}). 
If $q>(x/K)^{1/3}/6$, then $|S_{\eta_2}(\alpha,w x)|\leq h(x/K) w x$
by (\ref{eq:horm}); moreover, $|S_{\eta_2}(\alpha,y)|\leq h(y) y$ for
$x/K \leq y < (6 q)^3$ (by (\ref{eq:horm})) and
$|S_{\eta_2}(\alpha,y)|\leq g_{y,1}(r)$ for $y\geq (6 q)^3$ 
(by (\ref{eq:monoro})). Thus, Lemma \ref{lem:convet} gives us that
\[\begin{aligned}\int_{1/K}^\infty |S_{\eta_2}(\alpha,w x)| 
\varphi(w) \frac{dw}{w} &\leq 
\int_{1/K}^\infty h(x/K) w x \cdot \varphi(w) \frac{dw}{w} \\ &= 
h(x/K) x \int_{1/K}^\infty \varphi(w) dw \leq h(x/K) |\varphi|_1 \cdot x.
\end{aligned}\]

If $q\leq (x/K)^{1/3}/6$, we always use (\ref{eq:monoro}).
We can use the coarse bound
\[\int_{1/K}^\infty 3.2 x^{-1/6} \cdot w x \cdot \varphi(w) \frac{dw}{w}
\leq  3.2 K^{1/6} |\varphi|_1 x^{5/6}\]
Since $L_{r}$ does not depend on $x$,
\[
\int_{1/K}^{\infty} \frac{L_{r}}{r} \cdot w x\cdot 
\varphi(w) \frac{dw}{w} \leq 
\frac{L_{r}}{r} |\varphi|_1 x.\]

By Lemma \ref{lem:convog} and $q\leq (x/K)^{1/3}/6$, 
$y\mapsto R_{e^y,t}$ is convex-up and decreasing for $y\in \lbrack \log(x/K),\infty)$.
Hence
\[R_{w x,t} \leq \begin{cases} \frac{\log w}{\log \frac{1}{K}} R_{x/K,t} 
+ \left(1 - \frac{\log w}{\log \frac{1}{K}}\right) R_{x,t}
&\text{if $w<1$,}\\ R_{x,t} &\text{if $w\geq 1$.}\end{cases}\]
Therefore
\[\begin{aligned}
\int_{1/K}^{\infty} &R_{w x,t} \cdot w x \cdot \varphi(w) \frac{dw}{w} \\ &\leq
\int_{1/K}^1 \left(\frac{\log w}{\log \frac{1}{K}} R_{x/K,t} 
+ \left(1 - \frac{\log w}{\log \frac{1}{K}}\right) R_{x,t}\right) x \varphi(w)
dw  + \int_1^{\infty} R_{x,t} \varphi(w) x dw\\
&\leq R_{x,t} x \cdot \int_{1/K}^\infty \varphi(w) dw + 
(R_{x/K,t} - R_{x,t}) \frac{x}{\log K} \int_{1/K}^1  \varphi(w) \log w dw\\
&\leq \left(R_{x,t} |\varphi|_1 +
(R_{x/K,t} - R_{x,t}) \frac{C_{\varphi,2}}{\log K}\right) \cdot x,
\end{aligned}\]
where
\[C_{\varphi,2,K} = - \int_{1/K}^1 \varphi(w) \log w\; dw.\]
\end{proof}

We finish by proving a couple more lemmas.

\begin{lem}\label{lem:vinc}
Let $x> K\cdot  (6 e)^3$, $K>1$. 
Let $\eta_* = \eta_2 \ast_M \varphi$, where $\eta_2$ is as in (\ref{eq:meichu})
and $\varphi: \lbrack 0,\infty)\to \lbrack 0, \infty)$ is continuous and in
$L^1$. Let $g_{x,\varphi}$ be as in (\ref{eq:basia}).

Then $g_{x,\varphi}(r)$ is a decreasing function of $r$ for $r\geq 175$.
\end{lem}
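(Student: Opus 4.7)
The constant summand $3.2 K^{1/6} x^{-1/6}$ of $g_{x,\varphi}(r)$ contributes nothing, so it suffices to show the other two pieces are decreasing in $r$ for $r\geq 175$.

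For the term $L_r/r$, I would substitute $\digamma(r)=e^\gamma \log\log r + 2.50637/\log\log r$ into $L_r$ from (\ref{eq:veror}). The resulting expression has the shape $\digamma(r)\cdot P(\log r)+Q(\log r)+111/5$ with $P,Q$ explicit linear polynomials, giving $L_r=O((\log r)(\log\log r))$ and $L_r'=O((\log r)(\log\log r)/r)$. Then $(L_r/r)'=L_r'/r - L_r/r^2$ reduces to a one-variable inequality that is straightforward to verify numerically for $r\in[175,R_1]$ (some moderate $R_1$) and analytically for $r\geq R_1$ by comparing leading growth rates.

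For the first piece $T(r):=((R\log 2r+0.5)\sqrt{\digamma(r)}+2.5)/\sqrt{2r}$ with $R:=R_{x,K,\varphi,2r}$, I split
\[
T(r) \;=\; \frac{R\,\log(2r)\,\sqrt{\digamma(r)}}{\sqrt{2r}} \;+\; \frac{0.5\sqrt{\digamma(r)}}{\sqrt{2r}} \;+\; \frac{2.5}{\sqrt{2r}}.
\]
The third summand is obviously decreasing; the second is decreasing because $\digamma(r)/r$ is (take the derivative and use $\digamma'(r)=O(1/(r\log r))$, which is $o(\digamma(r)/r)$). For the first summand, which is the main obstacle, I would first bound $R$ uniformly. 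From (\ref{eq:basia}), $R_{x,K,\varphi,2r}$ is a convex combination of $R_{x,2r}$ and $R_{x/K,2r}$ (the coefficient $C_{\varphi,2,K}/(|\varphi|_1\log K)$ lies in $[0,1]$ by definition of $C_{\varphi,2,K}$ and the fact that $-\log w\leq \log K$ on $[1/K,1]$). Each $R_{y,2r}$ has the form $0.27125\log(1+A_y(r))+0.41415$ with $A_y(r)=\log(8r)/(2\log(9y^{1/3}/(4.008\,r)))$; the hypothesis $x>K(6e)^3$ and the regime $r\leq y^{1/3}/6$ (which is where our use of the bound lies) keep the denominator bounded away from $0$, yielding an explicit constant $R^*$ with $R\leq R^*$ and, by differentiating the inner logarithm, $R'\leq c/(r\log r)$ for an explicit $c$.

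With these bounds, the derivative of $T_1(r):=R\log(2r)\sqrt{\digamma(r)}/\sqrt{2r}$ expands by the product rule into three positive contributions (from $R',\ (\log 2r)',\ \digamma'$) plus one negative contribution from $(\sqrt{2r})^{-1}$'s derivative, which carries the full factor $R\log(2r)\sqrt{\digamma(r)}$. Plugging in the uniform bounds $R\leq R^*$, $R'\leq c/(r\log r)$, $(\log 2r)'=1/r$, $\digamma'(r)\ll 1/(r\log r\log\log r)$, each positive term is majorized by a small fraction of the negative term once $r\geq 175$. The hardest sub-step is the explicit verification that these fractions sum to less than $1$ at the threshold $r=175$; this is an elementary (if tedious) one-variable inequality that I would check by the bisection method on $[175,R_2]$ and asymptotically for $r\geq R_2$, paralleling the style of Lemma~\ref{lem:convet}.
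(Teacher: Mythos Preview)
Your plan is sound and follows essentially the same architecture as the paper: drop the constant, treat $L_r/r$ and the trivial pieces of $T(r)$ separately, and focus the real work on $T_1(r)=R\log(2r)\sqrt{\digamma(r)}/\sqrt{2r}$, using that $R_{x,K,\varphi,2r}$ is a convex combination of $R_{x,2r}$ and $R_{x/K,2r}$ to reduce to the two endpoints $y=x$ and $y=x/K$. Your use of the implicit domain restriction $r\leq y^{1/3}/6$ to keep the inner logarithm well-defined is exactly what the paper also invokes.

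Where the paper is sharper is in two simplifying devices you may want to adopt. First, rather than differentiating $L_r/r$ and the $0.5\sqrt{\digamma(r)}/\sqrt{2r}$ term directly, the paper observes once and for all that $\digamma(r)/\log\log r$ is decreasing for $r\geq e$; since functions like $(\log\log r)/r$, $(\log r)/r$, $(\log r)^2(\log\log r)/r$ are already decreasing for $r\geq 20$, every piece of $L_r/r$ and of $T(r)$ except the $R$-factor is handled in one stroke as a product of two positive decreasing functions. Second, for the remaining factor (\ref{eq:horko}), the paper takes the \emph{logarithmic} derivative instead of bounding $R$ and $R'$ separately. This collapses your product-rule bookkeeping into the single clean inequality
\[
\frac{\frac{\ell+\log 8r}{2r\ell^2}}{\Big(1+\tfrac{\log 8r}{2\ell}\Big)\log\Big(1+\tfrac{\log 8r}{2\ell}\Big)} + \frac{1}{2r\log r\log\log r} < \frac{1}{2r},\qquad \ell=\log\frac{9y^{1/3}}{4.008\,r},
\]
which is then checked at $r=175$ (using $\ell\geq\log(54/4.008)>2.6$) and is manifestly decreasing in $r$. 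This avoids having to produce explicit constants $R^*$, $c$ for $R$ and $R'$ and the subsequent summing of fractions; the log-derivative packages everything into one line.
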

\begin{proof}
Taking derivatives, we can easily see that
\begin{equation}\label{eq:hopo}r\mapsto \frac{\log \log r}{r},\;\;\;
r\mapsto \frac{\log r}{r},\;\;\; 
r\mapsto \frac{(\log r)^2 \log \log r}{r}\end{equation}
are decreasing for $r\geq 20$. The same is true if 
$\log \log r$ is replaced by $\digamma(r)$, since $\digamma(r)/\log \log r$
is a decreasing function for $r\geq e$.
Since $(C_{\varphi,2}/|\phi|_1)/\log K \leq 1$, we see that it is enough
to prove that $r\mapsto R_{y,t} \log 2r \sqrt{\log \log r}/\sqrt{2 r}$ is
decreasing on $r$ for $y=x$ and $y=x/K$ (under the assumption that $r\geq 175$).

Looking at (\ref{eq:veror}) and at (\ref{eq:hopo}), it remains only to check that
\begin{equation}\label{eq:horko}
r\mapsto \log \left(1 + \frac{\log 8 r}{2 \log \frac{9 x^{1/3}}{4.008 r}}
\right) \sqrt{\frac{\log \log r}{r}}\end{equation}
is decreasing on $r$ for $r\geq 175$.
Taking logarithms, and then derivatives, we see that we have to show that
\[\frac{\frac{\frac{1}{r} \ell +
\frac{\log 8 r}{r}}{2\ell^2}}{\left(1 + \frac{\log 8 r}{2 \ell}\right)
\log \left(1 + \frac{\log 8 r}{2 \ell}\right)}
+ \frac{1}{2 r \log r \log \log r} 
< \frac{1}{2 r},\]
where $\ell = \log \frac{9 x^{1/3}}{4.008 r}$. Since $r\leq x^{1/3}/6$,
$\ell \geq \log 54/4.008 > 2.6$. Thus, it is enough to ensure that
\begin{equation}\label{eq:hut}
\frac{2/2.6}{\left(1 + \frac{\log 8 r}{2 \ell}\right)
\log \left(1 + \frac{\log 8 r}{2 \ell}\right)}
  + \frac{1}{\log r \log \log r} < 1.\end{equation}
Since this is true for $r = 175$ and the left side is decreasing on $r$,
the inequality is true for all $r\geq 175$.

\end{proof}

\begin{lem}\label{lem:gosia}
Let $x\geq 10^{25}$. Let $\phi:\lbrack 0,\infty)\to \lbrack 0,\infty)$
be continuous and in $L^1$. Let $g_{x,\phi}(r)$ and $h(x)$ be as in (\ref{eq:basia}) and (\ref{eq:flou}), respectively. Then 
\[g_{x,\phi}\left(\frac{3}{8} 
x^{4/15}\right)\geq h(2 x/\log x).\]
\end{lem}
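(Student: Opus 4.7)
The plan is to show that the first summand of $g_{x,\phi}(r)$ at $r=(3/8)x^{4/15}$ already exceeds $h(2x/\log x)$, by tracking the exponent of $x$ carefully. The other two summands of $g_{x,\phi}$ then only help.

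First I would substitute $r=(3/8)x^{4/15}$ and record the basic sizes:
\[
\sqrt{2r}=\sqrt{3/4}\,x^{2/15},\qquad \log 2r=\tfrac{4}{15}\log x+\log(3/4),\qquad
\digamma(r)=e^{\gamma}\log\log r+\tfrac{2.50637}{\log\log r}.
\]
For the numerator of the first summand of $g_{x,\phi}(r)$ I would use a clean lower bound for $R_{x,K,\phi,2r}$: since $R_{x,t}$ is a decreasing function of $x$ (the denominator $\log(9x^{1/3}/(2.004t))$ in (\ref{eq:veror}) is increasing in $x$), we have $R_{x/K,2r}\ge R_{x,2r}$, and the interpolation formula in (\ref{eq:basia}) gives $R_{x,K,\phi,2r}\ge R_{x,2r}\ge 0.41415$. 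Combined with the bound on $\digamma(r)$, the first summand of $g_{x,\phi}((3/8)x^{4/15})$ is at least of order $x^{-2/15}(\log x)(\log\log x)^{1/2}$ with an explicit constant.

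Next I would upper bound $h(2x/\log x)$. Set $y=2x/\log x$, so $y^{-1/6}=2^{-1/6}x^{-1/6}(\log x)^{1/6}$ and $\log y\le \log x$. Then from (\ref{eq:flou})
\[
h(y)\le 0.2727\cdot 2^{-1/6}\,x^{-1/6}(\log x)^{5/3}+1218\cdot 2^{-1/3}\,x^{-1/3}(\log x)^{4/3}.
\]
The second term is negligible compared to the first once $x\ge 10^{25}$.

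Now the heart of the proof is the comparison
\[
\frac{C_1\,x^{-2/15}(\log x)\sqrt{\log\log x}}{C_2\,x^{-1/6}(\log x)^{5/3}}
= C\,x^{1/30}\,(\log x)^{-2/3}\,\sqrt{\log\log x}.
\]
I would verify numerically that this ratio exceeds $1$ at $x=10^{25}$ (where $x^{1/30}\approx 6.8$, $(\log x)^{2/3}\approx 14.9$, $\sqrt{\log\log x}\approx 2.0$, so the $x^{1/30}$ factor is just enough to beat the logarithmic factors once the explicit constants have been inserted), and then show monotonicity in $t=\log x$: taking the logarithmic derivative of $t\mapsto e^{t/30}\,t^{-2/3}\sqrt{\log t}$ yields $\tfrac{1}{30}-\tfrac{2}{3t}+\tfrac{1}{2t\log t}$, which is positive for $t\ge \log 10^{25}\approx 57.6$. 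Hence once the inequality holds at $x=10^{25}$, it holds for all larger $x$.

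The main obstacle is the tightness of the numerical check near the threshold: the gain $x^{1/30}$ only barely overcomes the logarithmic penalty $(\log x)^{2/3}/\sqrt{\log\log x}$ at $x=10^{25}$, so one must be careful to use the actual lower bound on $R_{x,2r}$ (not just a constant), to keep $\digamma(r)$ with its $e^{\gamma}$ factor rather than a weaker surrogate, and to remember that the remaining pieces $L_r/r$ and $3.2K^{1/6}x^{-1/6}$ of $g_{x,\phi}$ contribute in the right direction. I would not bother trying to prove a strong separation; a single bisection-style numerical verification at $x=10^{25}$ combined with the monotonicity computation above finishes the lemma.
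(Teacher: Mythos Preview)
Your overall strategy is the paper's own: drop to the first summand of $g_{x,\phi}(r)$ (using $R_{x,K,\phi,2r}\ge R_{x,2r}$, which is legitimate since $R_{\cdot,t}$ is decreasing in its first argument and the interpolation coefficient in (\ref{eq:basia}) lies in $[0,1]$), then compare an $x^{-2/15}$ quantity against the $x^{-1/6}$ terms of $h(2x/\log x)$, finishing by monotonicity plus a numerical check at $x=10^{25}$.

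The gap is that the specific simplifications you make on the $h$-side are too coarse for the numerics to close. At $x=10^{25}$ one has $\log x\approx 57.6$ but $\log(2x/\log x)\approx 54.2$, so replacing $(\log y)^{3/2}$ by $(\log x)^{3/2}$ inflates the first term of $h(y)$ by a factor $(57.6/54.2)^{3/2}\approx 1.094$; and the second term $1218(\log y)/y^{1/3}$, which you dismiss as ``negligible'', is about $9.4\times 10^{-4}$, i.e.\ roughly $7\%$ of $h(y)\approx 1.39\times 10^{-2}$. Against this, the actual margin is only about $2\%$: with $R_{x,2r}\approx 0.659$ and $\digamma(r)\approx 5.69$ the first summand of $g_{x,\phi}(r)$ is $\approx 1.47\times 10^{-2}$, whereas your stated upper bound for $h(y)$ works out to $\approx 1.52\times 10^{-2}$. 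Adding back $L_r/r\approx 2.7\times 10^{-4}$ and $3.2K^{1/6}x^{-1/6}$ (at $K=1$, about $2.2\times 10^{-4}$) still does not close the gap. The paper avoids this by never replacing $\log(2x/\log x)$ with $\log x$: it keeps $h(2x/\log x)$ in its exact form and splits the lower bound
\[
gm_x(r)\;\ge\;\frac{0.40298\log x+3.25765}{\sqrt{3/4}\,x^{2/15}}
\]
into a $\log x$-piece that dominates $0.2727(\log(2x/\log x))^{3/2}/(2x/\log x)^{1/6}$ and a constant piece that dominates $1218\log(2x/\log x)/(2x/\log x)^{1/3}$, checking each at $x=10^{25}$ after a derivative test. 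Your plan becomes correct once you drop the bound $\log y\le \log x$ and do the comparison with $\log(2x/\log x)$ kept intact.
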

\begin{proof}
We can bound $g_{x,\phi}(r)$ from below by
\[gm_x(r) = \frac{(R_{x,r} \log 2 r + 0.5) \sqrt{\digamma(r)} + 2.5}{
\sqrt{2 r}}.\]
Let $r = (3/8) x^{4/15}$.
Using the assumption that $x\geq 10^{25}$, we see that
\[\begin{aligned}
R_{x,r} &= 0.27125 \log \left(1 + \frac{\log\left(\frac{3 x^{4/15}}{2}
\right)}{2 \log \left(\frac{9}{2.004 \cdot \frac{3}{8}}
\cdot x^{\frac{1}{3} - 4/15}\right)}
\right) + 0.41415\geq 0.63368.\end{aligned}\]
Using $x\geq 10^{25}$ again, we get that
\[\digamma(r) = e^\gamma \log \log r + \frac{2.50637}{\log \log r} \geq
5.68721.\]
Since $\log 2 r = (4/15) \log x + \log(3/4)$,
we conclude that
\[gm_x(r) \geq
\frac{ 0.40298 \log x + 3.25765}{\sqrt{3/4}\cdot x^{2/15}}
.\]
Recall that
\[h(x) = \frac{0.2727 (\log x)^{3/2}}{x^{1/6}} + \frac{1218 \log x}{x^{1/3}} .\]
A simple derivative test gives us that
\[x\mapsto
 \frac{(\log x + 3)/x^{2/15}}{
(\log(x/\log x))^{3/2}/(x/\log x)^{1/6}}\]
is increasing for $x\geq 10^{25}$ (and indeed for 
$x\geq e^{28}$, or even well before then)
and that $(1/x^{2/15})/((\log(x/\log x))/(x/\log x)^{1/3})$ is increasing for $x\geq e^{7}$.
Since
\[\begin{aligned}\frac{0.40298 (\log x+3)}{\sqrt{3/4} \cdot x^{2/15}} 
&\geq \frac{0.2727 (\log(2 x/\log x))^{3/2}}{(2 x/\log x)^{1/6}},\\
\frac{3.25765 - 3\cdot 0.40298}{\sqrt{3/4}\cdot x^{2/15}} &\geq
\frac{1218 \log (2 x/\log(x)) }{(2 x/\log(x))^{1/3}}\end{aligned}\]
for $x\geq 10^{25}$, we are done.
%
\end{proof}

\section{The $\ell_2$ norm and the large sieve}\label{sec:intri}

Our aim here is to give a bound on the $\ell_2$ norm of an
exponential sum over the minor arcs. While we care
about an exponential sum in particular, we will prove a result valid
for all exponential sums $S(\alpha,x) = \sum_n a_n e(\alpha n)$ with 
$a_n$ of prime support.

We start by adapting ideas from Ramar\'e's version of the large sieve
for primes to estimate $\ell_2$ norms over parts of
the circle (\S \ref{subs:ramar}). 
We are left with the task of giving an
explicit bound on the factor in Ramar\'e's work; this we do in
\S \ref{subs:boquo}. As a side effect, this finally gives a fully explicit
large sieve for primes that is asymptotically optimal, meaning a sieve that
does not have a spurious factor of $e^\gamma$ in front; this was
an arguably important gap in the literature.

\subsection{The $\ell_2$ norm over arcs: variations on the large
sieve for primes}\label{subs:ramar}

We are trying to estimate an integral 
$\int_{\mathbb{R}/\mathbb{Z}} |S(\alpha)|^3 d\alpha$. Rather than bound it by
$|S|_\infty |S|_2^2$, we can use the fact that large (``major'') values
of $S(\alpha)$ have to be multiplied only by 
$\int_\mathfrak{M} |S(\alpha)|^2 d\alpha$,
where $\mathfrak{M}$ is a union (small in measure) of minor arcs. Now,
can we give an upper bound for $\int_\mathfrak{M} |S(\alpha)|^2 d\alpha$ 
better than
$|S|_2^2 = \int_{\mathbb{R}/\mathbb{Z}} |S(\alpha)|^2 d\alpha$?

The first version of \cite{Helf} gave an estimate on that integral
using a technique due to Heath-Brown, which in turn rests
on an inequality of Montgomery's (\cite[(3.9)]{MR0337847}; see also, e.g.,
\cite[Lem.~7.15]{MR2061214}).
 The technique was 
communicated by Heath-Brown to the present author, who communicated it to Tao 
 (\cite[Lem.~4.6]{Tao} and adjoining comments).
 We will be able to do better than that estimate here.

The role played by Montgomery's inequality in Heath-Brown's method is played
here by a result of Ramar\'e's (\cite[Thm. 2.1]{MR2493924}; see also
\cite[Thm. 5.2]{MR2493924}). The following proposition is based on Ramar\'e's 
result, or rather on one possible proof of it. Instead of using
the result as stated in \cite{MR2493924}, we will actually be using elements of
the proof of \cite[Thm. 7A]{MR0371840}, credited to Selberg. Simply integrating
Ramar\'e's inequality would give a non-trivial if slightly worse bound.
\begin{prop}\label{prop:ramar}
Let $\{a_n\}_{n=1}^\infty$, $a_n\in \mathbb{C}$, be supported on the primes.
Assume that $\{a_n\}$ is in $\ell_1\cap \ell_2$ and that $a_n=0$ for $n\leq \sqrt{x}$.
Let $Q_0\geq 1$, $\delta_0\geq 1$ be such that $\delta_0 Q_0^2 \leq x/2$; set
$Q = \sqrt{x/2 \delta_0} \geq Q_0$. Let
\begin{equation}\label{eq:jokor}
\mathfrak{M} = \bigcup_{q\leq Q_0} \mathop{\bigcup_{a \mo q}}_{(a,q)=1}
 \left(\frac{a}{q} - \frac{\delta_0 r}{q x}, 
\frac{a}{q} + \frac{\delta_0 r}{q x}\right)
.\end{equation}

Let $S(\alpha) = \sum_n a_n e(\alpha n)$ for $\alpha \in \mathbb{R}/\mathbb{Z}$.
Then
\[\int_{\mathfrak{M}} \left|S(\alpha)\right|^2 d\alpha \leq
\left(\max_{q\leq Q_0} \max_{s\leq Q_0/q} \frac{G_q(Q_0/sq)}{G_q(Q/sq)}\right)
\sum_n |a_n|^2,\]
where
\begin{equation}\label{eq:malbo}
G_q(R) = \mathop{\sum_{r\leq R}}_{(r,q)=1} \frac{\mu^2(r)}{\phi(r)}.\end{equation}
\end{prop}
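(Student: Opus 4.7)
I shall use Selberg's method, adapted as in Ramar\'e's proof of the large sieve for primes. The goal is to construct a nonnegative function $F(\alpha)$ such that $F(\alpha)\ge 1$ on $\mathfrak{M}$, and then bound
$$\int_{\mathfrak{M}} |S(\alpha)|^2\,d\alpha \;\le\; \int_{\mathbb{R}/\mathbb{Z}} F(\alpha)\,|S(\alpha)|^2\,d\alpha.$$
The gain over the trivial bound $|S|_2^2$ will come entirely from the prime support of $(a_n)$: since $a_n=0$ for $n\le\sqrt{x}$ and $Q=\sqrt{x/2\delta_0}\le\sqrt{x}$ (using $\delta_0\ge 1$), every $n$ in the support of $(a_n)$ is coprime to every $r\le Q$.

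First, I record the arithmetic identity underlying Ramar\'e's approach. For $(n,r)=1$ one has $\sum_{(c,r)=1} e(cn/r) = \mu(r)$; combined with the definition $G_q(R)=\sum_{s\le R,\,(s,q)=1}\mu^2(s)/\phi(s)$ and restricted to $r=sq$ with $q\le Q_0$, $s\le Q/q$, $(s,q)=1$, this yields an exact decomposition of unity as a $G_q$-weighted average of Ramanujan-type sums, valid for every $n$ coprime to all $r\le Q$.

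Second, I build the majorant. I take $F$ to be a weighted sum over $r=sq\le Q$ and $(c,r)=1$ of Fej\'er-type kernels $K_r(\alpha-c/r)$, nonnegative and with $\widehat{K_r}$ supported in $|t|\le x$. The Farey fractions $c/r$ with $r\le Q$ are separated by at least $1/Q^2 = 2\delta_0/x$, so the kernels attached to distinct fractions contribute with controlled overlap. Choosing the weights as $\mu^2(s)/\phi(s)\cdot 1/G_q(Q/sq)$ on the $s\le Q_0/q$ subsum forces $F(\alpha)\ge 1$ on each arc of $\mathfrak{M}$ around $a/q$ with $q\le Q_0$, by the identity of step one; the value of $F$ off the arcs plays no role in the upper bound.

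Third, I evaluate $\int F\,|S|^2\,d\alpha$ by Plancherel. The $n=m$ diagonal terms give $\sum_n|a_n|^2$ multiplied by a sum of weights that, after taking the supremum over $(q,s)$, yields exactly the factor $\max_{q,s}G_q(Q_0/sq)/G_q(Q/sq)$ claimed. The off-diagonal contributions $a_n\overline{a_m}$ with $n\ne m$ produce Ramanujan sums $c_r(n-m)$ which, averaged against $\mu^2/\phi$, cancel by M\"obius inversion on the divisor sum defining $G_q$; this Selberg-duality cancellation, rather than the construction of $F$, is the main technical obstacle, since one must use not only $(n,r)=1$ (from the prime support) but also the combinatorial structure of the weights. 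Carrying this step out cleanly amounts to reproducing, in the present continuous setting, the duality argument in the proof of Thm.~7A of \cite{MR0371840}, after which the stated bound follows.
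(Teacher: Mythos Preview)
Your proposal is a sketch rather than a proof, and the sketch has a real gap at its third step. You claim the off-diagonal terms $a_n\overline{a_m}$ with $n\ne m$ ``cancel by M\"obius inversion on the divisor sum defining $G_q$.'' But expanding $\int F|S|^2$ with $F$ a sum of Fej\'er kernels centered at Farey points gives, for fixed $k=n-m\ne 0$, a sum of the form $\sum_r w_r\,\widehat{K_r}(k)\,c_r(k)$, and Ramanujan sums $c_r(k)$ do \emph{not} cancel under a M\"obius-weighted sum over $r$: they depend on $\gcd(r,k)$ in a way that has nothing to do with the prime support of $(a_n)$. The large sieve does not work by arithmetic cancellation of off-diagonal terms; it works by Farey spacing and kernel localization, which produces a bound, not a vanishing. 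Your step two is also not fully specified: a Fej\'er kernel is not $\ge 1$ on an interval, so ``$F\ge 1$ on $\mathfrak{M}$'' does not follow from the Ramanujan-sum identity you quote, which is a pointwise identity at rationals, not on arcs. Finally, your description of where the factor $\max_{q,s} G_q(Q_0/sq)/G_q(Q/sq)$ enters is muddled: it cannot arise as a supremum taken \emph{after} computing the diagonal of a fixed bilinear form.

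The paper's argument is structurally different and avoids all of this. It never builds a majorant. Instead it uses an exact identity (from the proof of Thm.~7A in \cite{MR0371840}) to write $\sum_{(a,q)=1}|S(a/q+\alpha)|^2$ as a sum over primitive characters $\chi$ of conductor $q^*\mid q$ of $|\sum_n a_n\chi(n)e(\alpha n)|^2$, weighted by $q^*/\phi(q)$. One then reorganizes the double sum over $q\le Q_0$ by conductor $q^*$ and cofactor $r=q/q^*$; the inner $r$-sum is exactly $G_{q^*}(\cdot)$ evaluated at a radius depending on $|\alpha|$. Pulling out the worst ratio $G_{q^*}(Q_0/sq^*)/G_{q^*}(Q/sq^*)$ replaces the $Q_0$-truncated $r$-sum by the $Q$-truncated one. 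The same Gauss-sum identity, run in reverse, converts the character sums back into $|S(b/(qr)+\alpha)|^2$ with $qr\le Q$. Because $Q=\sqrt{x/2\delta_0}$, the arcs of half-length $\delta_0 Q/(qx)=1/(2qQ)$ around distinct $b/(qr)$ are disjoint, so the total is $\le\int_{\mathbb{R}/\mathbb{Z}}|S|^2=\sum_n|a_n|^2$ by Plancherel. The prime-support hypothesis is used only to ensure $(n,q)=1$ whenever $a_n\ne 0$ and $q\le Q\le\sqrt{x}$, which is what makes the character identity exact.
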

\begin{proof}
By (\ref{eq:jokor}),
\begin{equation}\label{eq:malkr}\int_{\mathfrak{M}} \left|S(\alpha)\right|^2 d\alpha = 
\sum_{q\leq Q_0} 
\int_{-\frac{\delta_0 Q_0}{q x}}^{\frac{\delta_0 Q_0}{q x}}
\mathop{\sum_{a \mo q}}_{(a,q)=1} 
\left|S\left(\frac{a}{q}+\alpha\right)\right|^2 d\alpha.\end{equation}
Thanks to the last equations of \cite[p. 24]{MR0371840} and
\cite[p. 25]{MR0371840},
\[\mathop{\sum_{a \mo q}}_{(a,q)=1} \left|S\left(\frac{a}{q}\right)\right|^2 =
\frac{1}{\phi(q)} \mathop{\mathop{\sum_{q^*|q}}_{(q^*,q/q^*)=1}}_{\mu^2(q/q^*) = 1}
q^* \cdot \sume_{\chi \mo q^*} \left|\sum_n a_n \chi(n)\right|^2\]
for every $q\leq \sqrt{x}$, where we use the assumption that 
$n$ is prime and $>\sqrt{x}$ (and thus coprime to $q$) when $a_n\ne 0$.
Hence
\[\begin{aligned}
&\int_{\mathfrak{M}} \left|S(\alpha)\right|^2 d\alpha = 
\sum_{q\leq Q_0} 
 \mathop{\mathop{\sum_{q^*|q}}_{(q^*,q/q^*)=1}}_{\mu^2(q/q^*) = 1}
q^* \int_{-\frac{\delta_0 Q_0}{q x}}^{\frac{\delta_0 Q_0}{q x}} \frac{1}{\phi(q)}
\left|\sum_n a_n e(\alpha n) \chi(n)\right|^2 d\alpha
\\
&= \sum_{q^*\leq Q_0} \frac{q^*}{\phi(q^*)} 
\mathop{\sum_{r\leq Q_0/q^*}}_{(r,q^*)=1}
\frac{\mu^2(r)}{\phi(r)} 
\int_{-\frac{\delta_0 Q_0}{q^* r x}}^{\frac{\delta_0 Q_0}{q^* r x}}
\sume_{\chi \mo q^*}
\left|\sum_n a_n e(\alpha n) \chi(n)\right|^2 d\alpha\\
&= \sum_{q^*\leq Q_0} \frac{q^*}{\phi(q^*)} 
\int_{-\frac{\delta_0 Q_0}{q^* x}}^{\frac{\delta_0 Q_0}{q^* x}}
\mathop{\sum_{r\leq \frac{Q_0}{q^*} \min\left(1,\frac{\delta_0}{|\alpha| x}\right)}}_{(r,q^*)=1}
\frac{\mu^2(r)}{\phi(r)} 
\sume_{\chi \mo q^*}
\left|\sum_n a_n e(\alpha n) \chi(n)\right|^2 d\alpha
\end{aligned}\]
Here $|\alpha|\leq \delta_0 Q_0/q^* x$ implies $(Q_0/q) \delta_0/|\alpha| x
\geq 1$. Therefore,
\begin{equation}\label{eq:ail}\begin{aligned}
\int_{\mathfrak{M}} &\left|S(\alpha)\right|^2 d\alpha \leq
\left(\max_{q^*\leq Q_0} \max_{s\leq Q_0/q^*} \frac{G_{q^*}(Q_0/sq^*)}{G_{q^*}(Q/sq^*)}\right)
\cdot \Sigma,\end{aligned}\end{equation}
where \[\begin{aligned}
\Sigma &= \sum_{q^*\leq Q_0} \frac{q^*}{\phi(q^*)} 
\int_{-\frac{\delta_0 Q_0}{q^* x}}^{\frac{\delta_0 Q_0}{q^* x}}
\mathop{\sum_{r\leq \frac{Q}{q^*} \min\left(1,\frac{\delta_0}{|\alpha| x}\right)}}_{(r,q^*)=1}
\frac{\mu^2(r)}{\phi(r)} 
\sume_{\chi \mo q^*}
\left|\sum_n a_n e(\alpha n) \chi(n)\right|^2 d\alpha\\
&\leq 
\sum_{q\leq Q} \frac{q}{\phi(q)} 
\mathop{\sum_{r\leq Q/q}}_{(r,q)=1}
\frac{\mu^2(r)}{\phi(r)} 
\int_{-\frac{\delta_0 Q}{q r x}}^{\frac{\delta_0 Q}{q r x}}
\sume_{\chi \mo q}
\left|\sum_n a_n e(\alpha n) \chi(n)\right|^2 d\alpha.
\end{aligned}\]
As stated in the proof of \cite[Thm. 7A]{MR0371840},
\[
\overline{\chi}(r) \chi(n) \tau(\overline{\chi}) c_r(n) =
\mathop{\sum_{b=1}^{qr}}_{(b,qr)=1} \overline{\chi}(b) e^{2\pi i n \frac{b}{q r}}
\]
for $\chi$ primitive of modulus $q$. Here $c_r(n)$ stands for the
Ramanujan sum \[c_r(n) = \mathop{\sum_{u \mo r}}_{(u,r)=1} e^{2\pi n u/r}.\] 
For $n$ coprime to $r$, $c_r(n) = \mu(r)$. Since $\chi$ is primitive,
$|\tau(\overline{\chi})| = \sqrt{q}$.
Hence, for $r\leq \sqrt{x}$ coprime to $q$,
\[q \left|\sum_n a_n e(\alpha n) \chi(n)\right|^2  =
\left| \mathop{\sum_{b=1}^{qr}}_{(b,qr)=1} \overline{\chi}(b) S\left(\frac{b}{q r}
+\alpha\right)\right|^2 .
\]
Thus,
\[\begin{aligned} \Sigma &=
\sum_{q\leq Q} \mathop{\sum_{r\leq Q/q}}_{(r,q)=1}
\frac{\mu^2(r)}{\phi(r q)} 
\int_{-\frac{\delta_0 Q}{q r x}}^{\frac{\delta_0 Q}{q r x}}
\sume_{\chi \mo q} 
\left| \mathop{\sum_{b=1}^{qr}}_{(b,qr)=1} \overline{\chi}(b) S\left(\frac{b}{q r}
+\alpha\right)\right|^2 d\alpha\\
&\leq 
\sum_{q\leq Q} \frac{1}{\phi(q)}
\int_{-\frac{\delta_0 Q}{q x}}^{\frac{\delta_0 Q}{q x}}
\sum_{\chi \mo q} 
\left| \mathop{\sum_{b=1}^{q}}_{(b,q)=1} \overline{\chi}(b) S\left(\frac{b}{q}
+\alpha\right)\right|^2 d\alpha\\
&= \sum_{q\leq Q} 
\int_{-\frac{\delta_0 Q}{q x}}^{\frac{\delta_0 Q}{q x}}
\mathop{\sum_{b=1}^{q}}_{(b,q)=1} \left|S\left(\frac{b}{q}+\alpha\right)\right|^2 
d\alpha.
\end{aligned}\]
Let us now check that the intervals $(b/q-\delta_0 Q/qx,b/q+\delta_0 Q/q x)$
do not overlap. Since $Q = \sqrt{x/2\delta_0}$, we see that
$\delta_0 Q/q x = 1/2q Q$.
The difference between two distinct fractions $b/q$,
$b'/q'$ is at least $1/q q'$. For $q,q'\leq Q$, $1/q q'\geq 1/2 q Q + 
1/2 Q q'$. Hence the intervals around $b/q$ and $b'/q'$ do not overlap.
We conclude that
\[\Sigma \leq \int_{\mathbb{R}/\mathbb{Z}} \left|S(\alpha)\right|^2 
= \sum_n |a_n|^2,\]
and so, by (\ref{eq:ail}), we are done. 
\end{proof}

We will actually use Prop.~\ref{prop:ramar} in the slightly modified form
given by the following statement.
\begin{prop}\label{prop:bellen}
Let $\{a_n\}_{n=1}^\infty$, $a_n\in \mathbb{C}$, be supported on the primes.
Assume that $\{a_n\}$ is in $\ell_1\cap \ell_2$ and that $a_n=0$ for $n\leq \sqrt{x}$.
Let $Q_0\geq 1$, $\delta_0\geq 1$ be such that $\delta_0 Q_0^2 \leq x/2$; set
$Q = \sqrt{x/2 \delta_0} \geq Q_0$. Let $\mathfrak{M}=\mathfrak{M}_{\delta_0,Q_0}$ be
as in (\ref{eq:majdef}).

Let $S(\alpha) = \sum_n a_n e(\alpha n)$ for $\alpha \in \mathbb{R}/\mathbb{Z}$.
Then
\[\int_{\mathfrak{M}_{\delta_0,Q_0}} \left|S(\alpha)\right|^2 d\alpha \leq
\left(\mathop{\max_{q\leq 2 Q_0}}_{\text{$q$ even}} \max_{s\leq 2 Q_0/q} \frac{G_{q}(2 Q_0/sq)}{G_{q}(2 Q/sq)}\right)
\sum_n |a_n|^2,\]
where
\begin{equation}\label{eq:malar}
G_q(R) = \mathop{\sum_{r\leq R}}_{(r,q)=1} \frac{\mu^2(r)}{\phi(r)}.\end{equation}
\end{prop}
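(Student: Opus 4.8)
The plan is to retrace the proof of Proposition~\ref{prop:ramar} essentially line by line, the one new ingredient being the bookkeeping forced by the asymmetry between odd and even moduli in the definition \eqref{eq:majdef} of $\mathfrak{M}=\mathfrak{M}_{\delta_0,Q_0}$. First I would write, by subadditivity, $\int_{\mathfrak{M}}|S(\alpha)|^2\,d\alpha\le I_{\mathrm o}+I_{\mathrm e}$, where $I_{\mathrm o}$ is the sum of the integrals over the arcs about $a/q$ with $q$ odd (so $q\le Q_0$ and the half-width is $\delta_0Q_0/(2qx)$) and $I_{\mathrm e}$ that over the arcs about $a/q$ with $q$ even (so $q\le 2Q_0$ and the half-width is $\delta_0Q_0/(qx)$). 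To each I apply the character-decomposition identity used in Proposition~\ref{prop:ramar},
\[\mathop{\sum_{a \mo q}}_{(a,q)=1}\Bigl|S\Bigl(\tfrac aq+\alpha\Bigr)\Bigr|^2=\frac1{\phi(q)}\mathop{\mathop{\sum_{q^*\mid q}}_{(q^*,q/q^*)=1}}_{\mu^2(q/q^*)=1}q^*\sume_{\chi \mo q^*}\Bigl|\sum_na_n\chi(n)e(\alpha n)\Bigr|^2,\]
which holds because $a_n$ is supported on primes $>\sqrt x$, hence on integers coprime to $q$; here $q=q^*r$ with $(q^*,r)=1$ and $\mu^2(r)=1$.

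The key point is that $I_{\mathrm o}$ equals, term by term, the part of $I_{\mathrm e}$ coming from odd conductors $q^*$. Indeed, in $I_{\mathrm o}$ one has $q=q^*r$ with $q^*$ and $r$ odd and $q^*r\le Q_0$, whereas the odd-conductor part of $I_{\mathrm e}$ has $q=2q^*r$ with $q^*$ and $r$ odd and $q^*r\le Q_0$, and the two contributions carry the same character sum (it depends only on $q^*$), the same weight $\mu^2(r)/\phi(r)=\mu^2(2r)/\phi(2r)$, and the same arc half-width $\delta_0Q_0/(2q^*rx)$. Hence $\int_{\mathfrak{M}}|S|^2\le A+2B$, with $A$ collecting the even conductors and $B=I_{\mathrm o}$ the odd ones. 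For a fixed conductor the inner sum over the squarefree part runs over integers odd and coprime to $q^*$: coprime to $q^*$ when $q^*$ is even (evenness of the index forces oddness of the part) and coprime to $2q^*$ when $q^*$ is odd; so it is $G_{q^*}(\cdot)$, respectively $G_{2q^*}(\cdot)$ --- in both cases $G_q(\cdot)$ with $q$ even and $q\le 2Q_0$. Interchanging summation and integration exactly as in Proposition~\ref{prop:ramar} then produces, for each such $q$, the factor $G_q\bigl(2Q_0/(sq)\bigr)$ with $s=\max(1,2|\alpha|x/\delta_0)$ lying in $[1,2Q_0/q]$.

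The rest is the argument of Proposition~\ref{prop:ramar}. Bounding $G_q(2Q_0/sq)$ by $G_q(2Q/sq)$ times the maximum, over even $q\le 2Q_0$ and over $s'\le 2Q_0/q$, of $G_q(2Q_0/s'q)/G_q(2Q/s'q)$, and pulling that maximum out, reduces the claim to showing that the residual sum $\Sigma$ --- the same expression but with $G_q$ now evaluated at $2Q/(sq)$ --- is at most $\int_{\mathbb{R}/\mathbb{Z}}|S|^2=\sum_n|a_n|^2$. Expanding the primitive-character sums via $\overline\chi(r)\chi(n)\tau(\overline\chi)c_r(n)=\sum_{(b,q^*r)=1}\overline\chi(b)e\bigl(nb/(q^*r)\bigr)$ and $|\tau(\overline\chi)|^2=q^*$, exactly as in the proof of \cite[Thm.~7A]{MR0371840}, rewrites $\Sigma$ as an integral of $\sum_{(b,D)=1}|S(b/D+\alpha)|^2$ over narrow arcs about fractions $b/D$: a conductor coming from an even modulus $q=q^*$ gives $D=q^*r$ even with $D\le 2Q$, while one coming from the factor $q=2q^*$ with $q^*$ odd gives $D=q^*\rho$ odd with $D\le Q$. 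The doubled range $2Q$ is admissible in the first case because two distinct fractions $b/D\ne b'/D'$ with $D,D'$ even differ by at least $2/(DD')$; since each arc there has half-width at most $\delta_0Q/(Dx)=1/(2DQ)$, this factor of $2$ in the spacing exactly offsets the enlargement of the range from $Q$ to $2Q$, and the arcs remain pairwise disjoint --- as do the odd-denominator arcs of the second case, which have half-width $\le 1/(4DQ)$ with $D\le Q$. Thus $\Sigma\le\int_{\mathbb{R}/\mathbb{Z}}|S|^2=\sum_n|a_n|^2$, which is the assertion.

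I expect the parity bookkeeping of the second paragraph to be the real work: correctly identifying $I_{\mathrm o}$ with the odd-conductor part of $I_{\mathrm e}$, and then tracking how the two distinct arc-widths propagate through the interchange of summation and integration and into the final disjointness check, where the gain of a factor $2$ in the spacing of fractions with even denominator has to be balanced against the enlarged range $2Q$ under the single hypothesis $\delta_0Q_0^2\le x/2$ (equivalently $Q_0\le Q$). Everything else is a verbatim repetition of Proposition~\ref{prop:ramar}.
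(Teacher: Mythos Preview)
Your approach is the paper's, and the parity bookkeeping you outline is right in spirit, but there is one genuine gap at the end. You correctly arrive at $\int_{\mathfrak M}|S|^2\le(\max\cdots)\cdot\Sigma$ with $\Sigma=2\Sigma_1+\Sigma_2$, where $\Sigma_1$ carries the odd conductors $q^*\le Q$ (summed against odd $r\le Q/q^*$ coprime to $q^*$) and $\Sigma_2$ the even conductors. After the Gauss-sum step, $\Sigma_2$ produces arcs around $b/D$ with $D$ even, $D\le 2Q$, while $\Sigma_1$ produces arcs with $D=q^*r$ odd, $D\le Q$. Disjointness of these arcs yields only $\Sigma_1+\Sigma_2\le\sum_n|a_n|^2$; you have not said where the \emph{second} copy of $\Sigma_1$ goes. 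As written, your sketch loses a factor of~$2$ on the odd-conductor contribution.

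The paper's device is to rewrite that second copy of $\Sigma_1$ via the substitution $r\mapsto 2r$, so that it becomes a sum over odd $q^*$ and \emph{even} squarefree $r\le 2Q/q^*$ coprime to $q^*$, with the arc half-width unchanged. In this form its Gauss-sum expansion yields even denominators $D=q^*r\le 2Q$; combined with $\Sigma_2$ (even $q^*$, odd $r$) it is bounded by the full even-denominator sum
\[
\mathop{\sum_{D\le 2Q}}_{D\text{ even}}\frac1{\phi(D)}\int_{-\delta_0 Q/(Dx)}^{\delta_0 Q/(Dx)}\mathop{\sum_{b=1}^{D}}_{(b,D)=1}\Bigl|S\Bigl(\tfrac bD+\alpha\Bigr)\Bigr|^2\,d\alpha.
\]
One then writes $2\Sigma_1+\Sigma_2=\Sigma_1+(\Sigma_1+\Sigma_2)$, bounds the first piece by the odd-$D$ arcs and the second by the even-$D$ arcs, and checks that \emph{all} these arcs together are pairwise disjoint --- which also requires the mixed-parity case you omitted: for $D\le Q$ odd and $D'\le 2Q$ even, $|b/D-b'/D'|\ge 1/(DD')\ge 1/(4DQ)+1/(2D'Q)$. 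With this re-expression and the mixed disjointness check inserted, your argument is complete and identical to the paper's.
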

\begin{proof}
By (\ref{eq:majdef}),
\[\begin{aligned}
\int_{\mathfrak{M}} \left|S(\alpha)\right|^2 d\alpha &= 
\mathop{\sum_{q\leq Q_0}}_{\text{$q$ odd}} 
\int_{-\frac{\delta_0 Q_0}{2 q x}}^{\frac{\delta_0 Q_0}{2 q x}}
\mathop{\sum_{a \mo q}}_{(a,q)=1} 
\left|S\left(\frac{a}{q}+\alpha\right)\right|^2 d\alpha\\
&+ \mathop{\sum_{q\leq Q_0}}_{\text{$q$ even}} 
\int_{-\frac{\delta_0 Q_0}{q x}}^{\frac{\delta_0 Q_0}{q x}}
\mathop{\sum_{a \mo q}}_{(a,q)=1} 
\left|S\left(\frac{a}{q}+\alpha\right)\right|^2 d\alpha .
\end{aligned}\]
We proceed as in the proof of Prop.~\ref{prop:ramar}.
We still have (\ref{eq:malkr}). Hence
$\int_{\mathfrak{M}} \left|S(\alpha)\right|^2 d\alpha$ equals
\[\begin{aligned}
&\mathop{\sum_{q^*\leq Q_0}}_{\text{$q^*$ odd}} \frac{q^*}{\phi(q^*)} 
\int_{-\frac{\delta_0 Q_0}{2 q^* x}}^{\frac{\delta_0 Q_0}{2 q^* x}}
\mathop{\sum_{r\leq \frac{Q_0}{q^*} \min\left(1,\frac{\delta_0}{2 |\alpha| x}\right)}}_{(r,2 
q^*)=1}
\frac{\mu^2(r)}{\phi(r)} 
\sume_{\chi \mo q^*}
\left|\sum_n a_n e(\alpha n) \chi(n)\right|^2 d\alpha\\
+
&\mathop{\sum_{q^*\leq 2 Q_0}}_{\text{$q^*$ even}} \frac{q^*}{\phi(q^*)} 
\int_{-\frac{\delta_0 Q_0}{q^* x}}^{\frac{\delta_0 Q_0}{q^* x}}
\mathop{\sum_{r\leq \frac{2 Q_0}{q^*} \min\left(1,\frac{\delta_0}{2 |\alpha| x}\right)}}_{(r,
q^*)=1}
\frac{\mu^2(r)}{\phi(r)} 
\sume_{\chi \mo q^*}
\left|\sum_n a_n e(\alpha n) \chi(n)\right|^2 d\alpha.
\end{aligned}\]
(The sum with $q$ odd and $r$ even is equal to the first sum; 
hence the factor of $2$ in front.)
Therefore,
\begin{equation}\label{eq:nimaduro}\begin{aligned}
\int_{\mathfrak{M}} \left|S(\alpha)\right|^2 d\alpha
&\leq
\left(\mathop{\max_{q^*\leq Q_0}}_{\text{$q^*$ odd}} \max_{s\leq Q_0/q^*} \frac{G_{2 q^*}(Q_0/sq^*)}{G_{2 q^*}(Q/sq^*)}\right)
\cdot 2 \Sigma_1\\
&+
\left(\mathop{\max_{q^*\leq 2 Q_0}}_{\text{$q^*$ even}} \max_{s\leq 2 Q_0/q^*} 
\frac{G_{q^*}(2Q_0/sq^*)}{G_{q^*}(2Q/sq^*)}\right)
\cdot \Sigma_2,\end{aligned}\end{equation}
where
\[\begin{aligned}
\Sigma_1 &= \mathop{\sum_{q\leq Q}}_{\text{$q$ odd}} \frac{q}{\phi(q)} 
\mathop{\sum_{r\leq Q/q}}_{(r,2 q)=1}
\frac{\mu^2(r)}{\phi(r)} 
\int_{-\frac{\delta_0 Q}{2 q r x}}^{\frac{\delta_0 Q}{2 q r x}}
\sume_{\chi \mo q}
\left|\sum_n a_n e(\alpha n) \chi(n)\right|^2 d\alpha\\
&= \mathop{\sum_{q\leq Q}}_{\text{$q$ odd}} \frac{q}{\phi(q)} 
\mathop{\mathop{\sum_{r\leq 2 Q/q}}_{(r,q)=1}}_{\text{$r$ even}}
 \frac{\mu^2(r)}{\phi(r)} 
\int_{-\frac{\delta_0 Q}{q r x}}^{\frac{\delta_0 Q}{q r x}}
\sume_{\chi \mo q}
\left|\sum_n a_n e(\alpha n) \chi(n)\right|^2 d\alpha.
\\
\Sigma_2 &=
\mathop{\sum_{q\leq 2 Q}}_{\text{$q$ even}} \frac{q}{\phi(q)} 
\mathop{\sum_{r\leq 2 Q/q}}_{(r,q)=1}
\frac{\mu^2(r)}{\phi(r)} 
\int_{-\frac{\delta_0 Q}{q r x}}^{\frac{\delta_0 Q}{q r x}}
\sume_{\chi \mo q}
\left|\sum_n a_n e(\alpha n) \chi(n)\right|^2 d\alpha.
\end{aligned}\]
The two expressions within parentheses in (\ref{eq:nimaduro}) are actually equal.

Much as before, using \cite[Thm.~7A]{MR0371840}, we obtain that
\[\begin{aligned}
\Sigma_1 &\leq \mathop{\sum_{q\leq Q}}_{\text{$q$ odd}} \frac{1}{\phi(q)}
\int_{-\frac{\delta_0 Q}{2 q x}}^{\frac{\delta_0 Q}{2 q x}}
\mathop{\sum_{b=1}^{q}}_{(b,q)=1} \left|S\left(\frac{b}{q} + \alpha\right)\right|^2 
d\alpha,\\
\Sigma_1 + \Sigma_2 &\leq \mathop{\sum_{q\leq 2 Q}}_{\text{$q$ even}} \frac{1}{\phi(q)}
\int_{-\frac{\delta_0 Q}{q x}}^{\frac{\delta_0 Q}{q x}}
\mathop{\sum_{b=1}^{q}}_{(b,q)=1} \left|S\left(\frac{b}{q} + \alpha\right)\right|^2 
d\alpha.
\end{aligned}\]
Let us now check that the intervals of integration 
$(b/q-\delta_0 Q/2 q x,b/q+\delta_0 Q/2 q x)$ (for $q$ odd),
$(b/q-\delta_0 Q/q x,b/q+\delta_0 Q/q x)$ (for $q$ even)
do not overlap. Recall that $\delta_0 Q/q x = 1/2 q Q$.
The absolute value of the difference between two distinct
 fractions $b/q$, $b'/q'$ is at least $1/q q'$.
For $q, q'\leq Q$ odd, this is larger than
$1/4 q Q + 1/4 Q q'$, and so the intervals do not overlap. For $q\leq Q$ odd
and $q'\leq 2 Q$ even (or vice versa), $1/q q'\geq 1/4 q Q + 1/2 Q q'$,
and so, again the intervals do not overlap. If $q\leq Q$ and $q'\leq Q$
are both even, then $|b/q - b'/q'|$ is actually $\geq 2/q q'$. Clearly,
$2/q q' \geq 1/2 q Q + 1/2 Q q'$, and so again there is no overlap.
We conclude that
\[2 \Sigma_1 + \Sigma_2 \leq \int_{\mathbb{R}/\mathbb{Z}} \left|S(\alpha)\right|^2 
= \sum_n |a_n|^2.\]
\end{proof}

\subsection{Bounding the quotient in the large sieve for primes}\label{subs:boquo}

The estimate given by Proposition \ref{prop:ramar} involves the quotient
\begin{equation}\label{eq:gusto}
\max_{q\leq Q_0} \max_{s\leq Q_0/q}
\frac{G_q(Q_0/sq)}{G_q(Q/sq)},\end{equation}
where $G_q$ is as in (\ref{eq:malbo}).
The appearance of such a quotient (at least for $s=1$) is typical of
Ramar\'e's version of the large sieve for primes; see, e.g., \cite{MR2493924}.
We will see how to bound such a quotient in a way that is essentially
optimal, not just asymptotically, but also in the ranges that are most
relevant to us. (This includes, for example, $Q_0\sim 10^6$, $Q\sim
10^{15}$.)

As the present work shows, Ramar\'e's work gives bounds that are, 
in some contexts, better than those of other large sieves for primes by a 
constant factor (approaching $e^\gamma = 1.78107\dotsc$).
Thus, giving a fully explicit
and nearly optimal bound for (\ref{eq:gusto}) is a task of clear
general relevance, besides being needed for our main goal.

We will obtain bounds for $G_q(Q_0/sq)/G_q(Q/sq)$ when $Q_0\leq 2\cdot 10^{10}$,
$Q\geq Q_0^2$. As we shall see, our bounds will be best when $s=q=1$
-- or, sometimes, when $s=1$ and $q=2$ instead.

Write $G(R)$ for $G_1(R) = \sum_{r\leq R} \mu^2(r)/\phi(r)$.
We will need several estimates for $G_q(R)$ and $G(R)$.
As stated in \cite[Lemma 3.4]{MR1375315},
\begin{equation}\label{eq:cante}
G(R) \leq \log R + 1.4709 \end{equation}
for $R\geq 1$.
By \cite[Lem.~7]{MR0374060}, 
\begin{equation}\label{eq:jondo}
G(R)\geq \log R + 1.07
\end{equation}
for $R\geq 6$. 
There is also the trivial bound
\begin{equation}\label{eq:coro}
\begin{aligned}G(R) &= \sum_{r\leq R} \frac{\mu^2(r)}{\phi(r)} =
\sum_{r\leq R} \frac{\mu^2(r)}{r} \prod_{p|r} \left(1
  -\frac{1}{p}\right)^{-1}\\
&= \sum_{r\leq R} \frac{\mu^2(r)}{r} \prod_{p|r} \sum_{j\geq 1}
\frac{1}{p^j} \geq \sum_{r\leq R} \frac{1}{r} > \log R.\end{aligned}
\end{equation}
The following bound, also well-known and easy,
\begin{equation}\label{eq:hosmo}
G(R)\leq \frac{q}{\phi(q)} G_q(R)\leq G(R q),
\end{equation}
can be obtained by multiplying $G_q(R) = \sum_{r\leq R: (r,q)=1}
\mu^2(r)/\phi(r)$ term-by-term by $q/\phi(q) = \prod_{p|q} (1 +
1/\phi(p))$. 

We will also use
Ramar\'e's estimate from \cite[Lem.~3.4]{MR1375315}:
\begin{equation}\label{eq:malito}
G_d(R) = \frac{\phi(d)}{d} \left(\log R + c_E + \sum_{p|d}
  \frac{\log p}{p}\right) + O^*\left(7.284 R^{-1/3} f_1(d)\right)
\end{equation}
for all $d\in \mathbb{Z}^+$ and all $R\geq 1$, where
\begin{equation}\label{eq:assur}
f_1(d) = \prod_{p|d} (1+p^{-2/3}) \left(1+\frac{p^{1/3}+p^{2/3}}{p
(p-1)}\right)^{-1}\end{equation}
and
\begin{equation}\label{eq:garno}
c_E = \gamma + \sum_{p\geq 2} \frac{\log p}{p (p-1)} = 1.3325822\dotsc
\end{equation}
by \cite[(2.11)]{MR0137689}. 

If $R\geq 182$, then
\begin{equation}\label{eq:charpy}
\log R + 1.312 \leq G(R) \leq \log R + 1.354,\end{equation}
where the upper bound is valid for $R\geq 120$.
This is true by (\ref{eq:malito}) for $R\geq 4\cdot 10^7$; we check 
(\ref{eq:charpy}) for $120\leq R\leq 4\cdot 10^7$ by a numerical 
computation.\footnote{Using D. Platt's implementation \cite{Platt}
of double-precision interval arithmetic based on Lambov's \cite{Lamb} ideas.}
Similarly, for $R\geq 200$,
\begin{equation}\label{eq:charpas}
\frac{\log R + 1.661}{2} \leq G_2(R) \leq \frac{\log R + 1.698}{2}
\end{equation}
by (\ref{eq:malito}) for $R\geq 1.6\cdot 10^8$,
and by a numerical computation for $200\leq R\leq 1.6\cdot 10^8$.

Write $\rho = (\log Q_0)/(\log Q)\leq 1$. 
We obtain immediately from (\ref{eq:charpy}) and 
(\ref{eq:charpas}) that
\begin{equation}\label{eq:rabatt}\begin{aligned}
\frac{G(Q_0)}{G(Q)} &\leq \frac{\log Q_0 + 1.354}{\log Q + 1.312} \\
\frac{G_2(Q_0)}{G_2(Q)} 
&\leq \frac{\log Q_0 + 1.698}{\log Q + 1.661}\end{aligned}\end{equation}
for $Q,Q_0\geq 200$.
What is hard is to approximate $G_q(Q_0)/G_q(Q)$ for $q$ large and $Q_0$ small.



Let us start by giving an easy bound, off from the truth by a factor of
about $e^\gamma$. (Specialists will recognize this as a factor that appears 
often in first attempts at estimates based on either large or small sieves.)
First, we need a simple explicit lemma.
\begin{lem}\label{lem:suspiro}
Let $m\geq 1$, $q\geq 1$. Then
\begin{equation}\label{eq:ostor}
\prod_{p|q \vee p\leq m} \frac{p}{p-1} \leq e^{\gamma} (\log (m+\log q) + 0.65771).\end{equation}
\end{lem}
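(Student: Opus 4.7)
The plan is to rewrite the left-hand side as $N/\phi(N)$ for a carefully chosen squarefree integer $N$, then invoke the Mertens-type bound on $N/\phi(N)$ already packaged as Lemma \ref{lem:merkel}. Set $N = \prod_{p\mid q \,\vee\, p\leq m} p$, which is squarefree by construction. Since $p/(p-1)=(1-1/p)^{-1}$ is multiplicative over distinct primes,
\[
\prod_{p\mid q \,\vee\, p\leq m}\frac{p}{p-1} \;=\; \frac{N}{\phi(N)}.
\]
Splitting the defining product of $N$ into primes $\leq m$ and primes dividing $q$ that exceed $m$, we immediately get
\[
\log N \;=\; \theta(m) + \sum_{\substack{p\mid q\\ p>m}}\log p \;\leq\; \theta(m) + \log q.
\]
An explicit Chebyshev-type bound of Rosser--Schoenfeld shape ($\theta(m)\leq (1+\epsilon)m$ with a very small $\epsilon$) then yields $\log N \leq (1+\epsilon)m+\log q$, whence $\log\log N \leq \log(m+\log q)+\delta$ with $\delta$ tiny.

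Next I would apply Lemma \ref{lem:merkel} to $N$ itself: provided $N\geq 3$,
\[
\frac{N}{\phi(N)} \;<\; \digamma(N) \;=\; e^\gamma \log\log N + \frac{2.50637}{\log\log N}.
\]
Substituting the previous estimate for $\log\log N$ gives
\[
\frac{N}{\phi(N)} \;\leq\; e^\gamma\bigl(\log(m+\log q)+\delta\bigr) + \frac{2.50637}{\log\log N},
\]
and it remains to show that $e^\gamma \delta + 2.50637/\log\log N \leq 0.65771\,e^\gamma$.

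This last inequality is the main technical point and the main obstacle. Because $2.50637/\log\log N$ decays only very slowly, the absorption is automatic once $\log\log N$ exceeds a modest threshold (corresponding to $N$ above a few thousand). For $N$ below that threshold---equivalently, for pairs $(m,q)$ where both $m$ and the squarefree kernel of $q$ are bounded by an absolute constant---I would close the argument by a finite numerical check, listing the finitely many relevant $(m,q)$ and confirming the inequality directly. The edge cases $N\in\{1,2\}$, which fall outside the scope of Lemma \ref{lem:merkel}, are trivial: the left-hand side is at most $2$, whereas the right-hand side already exceeds $0.65771\,e^\gamma\approx 1.17$ when $m=q=1$ and only grows as $m$ or $\log q$ increases. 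Thus the work is the bookkeeping of calibrating the small constants $\delta$ and $2.50637/\log\log N$ against $0.65771\,e^\gamma$ in the asymptotic regime, supplemented by a short direct computation in the bounded regime.
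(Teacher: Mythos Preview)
Your approach is essentially the paper's: write the left side as $\mathscr{P}/\phi(\mathscr{P})$ for $\mathscr{P}=\prod_{p\mid q\,\vee\,p\leq m}p$, bound $\log\mathscr{P}\leq (1+\epsilon_0)m+\log q$ via an explicit $\theta$-bound, feed this into the Rosser--Schoenfeld inequality $n/\phi(n)\leq e^\gamma\log\log n+2.50637/\log\log n$ (which is exactly what Lemma~\ref{lem:merkel} packages), and finish the small range by a finite check. Two remarks. First, your edge case $N=2$ is mishandled: the left side equals $2$, and your lower bound $0.65771\,e^\gamma\approx 1.17$ for the right side is not enough. You must use that $N=2$ forces $m\geq 2$ or $2\mid q$, either of which gives $m+\log q\geq 1+\log 2$ and hence a right side exceeding $2$. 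Second, the paper handles the term $2.50637/\log\log N$ more cleanly than you do: rather than seeking a separate lower bound on $\log\log N$, it uses that $\digamma$ is increasing on $[27,\infty)$ to replace $\mathscr{P}$ by its upper bound $qe^{(1+\epsilon_0)m}$ in \emph{both} terms simultaneously, and then parametrizes the residual finite check by the condition $m+\log q\leq 8.53$ rather than by $N$. This sidesteps the awkwardness that small $N$ does not correspond to a bounded set of $(m,q)$ (e.g.\ $m=1$, $q=2^k$ gives $N=2$ for all $k$).
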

\begin{proof}
Let $\mathscr{P}=\prod_{p\leq m \vee p|q} p$. Then, by
\cite[(5.1)]{MR0457373},
\[\mathscr{P}\leq q \prod_{p\leq m} p =
q e^{\sum_{p\leq m} \log p} \leq 
q e^{(1+\epsilon_0) m},\]
where $\epsilon_0 = 0.001102$. Now, by \cite[(3.42)]{MR0137689},
\[ \frac{n}{\phi(n)} \leq e^\gamma \log \log n + \frac{2.50637}{\log \log n}
\leq e^\gamma \log \log x + \frac{2.50637}{\log \log x}\]
for all $x\geq n\geq 27$ (since, given $a,b>0$, the function
 $t\mapsto a+b/t$ is increasing on $t$ 
for $t\geq \sqrt{b/a}$). Hence, if $q e^m \geq 27$,
\[\begin{aligned}
\frac{\mathscr{P}}{\phi(\mathscr{P})} &\leq e^\gamma
\log ((1+\epsilon_0) m + \log q) + \frac{2.50637}{\log (m+\log q)}\\
&\leq e^{\gamma} \left(\log (m+\log q) + \epsilon_0 +
\frac{2.50637/e^{\gamma}}{\log (m+\log q)}\right).
\end{aligned}\]
Thus (\ref{eq:ostor}) holds when $m+\log q \geq 8.53$, since then
$\epsilon_0 + (2.50637/e^{\gamma})/\log (m+\log q) \leq 0.65771$.
We verify all choices of $m,q\geq 1$ with $m+\log q\leq 8.53$ 
computationally; the worst case is that of $m=1$, $q=6$, which give the value
$0.65771$ in (\ref{eq:ostor}).
\end{proof}

Here is the promised easy bound.
\begin{lem}\label{lem:trivo}
Let $Q_0\geq 1$, $Q\geq 182 Q_0$. Let $q\leq Q_0$, $s\leq Q_0/q$, $q$ an integer.
Then \[\frac{G_q(Q_0/sq)}{G_q(Q/sq)} \leq
\frac{e^\gamma \log \left(\frac{Q_0}{sq} + \log q\right) + 1.172}
{\log \frac{Q}{Q_0} + 1.312} \leq
\frac{e^\gamma \log Q_0 + 1.172}
{\log \frac{Q}{Q_0} + 1.312} .\]
\end{lem}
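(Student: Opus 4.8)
The plan is to set $R_0=Q_0/sq$ and $R=Q/sq$, reducing the task to bounding $G_q(R_0)$ from above and $G_q(R)$ from below. The one structural input needed at the start is that $sq\le (Q_0/q)\cdot q=Q_0$, so $R=Q/sq\ge Q/Q_0\ge 182$; in particular $R$ lies in the range where (\ref{eq:charpy}) applies, and $\log R=\log(Q/Q_0)+\log R_0\ge\log(Q/Q_0)$ since $R_0\ge 1$ (which holds because $s\le Q_0/q$).

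For the numerator, the idea is to bound $G_q(R_0)$ by an Euler product: every squarefree $r\le R_0$ coprime to $q$ has all its prime divisors $\le R_0$ and coprime to $q$, so a term-by-term comparison gives $G_q(R_0)\le\prod_{p\le R_0,\,p\nmid q}\frac{p}{p-1}$. I would then multiply through by $q/\phi(q)=\prod_{p\mid q}\frac{p}{p-1}$; since the two products run over disjoint sets of primes whose union is $\{p:p\mid q\text{ or }p\le R_0\}$, this yields $\frac{q}{\phi(q)}G_q(R_0)\le\prod_{p\mid q\,\vee\,p\le R_0}\frac{p}{p-1}$. Now Lemma~\ref{lem:suspiro} with $m=R_0\ge 1$ bounds the right-hand side by $e^\gamma(\log(R_0+\log q)+0.65771)$, and since $e^\gamma\cdot 0.65771<1.172$ this is at most $e^\gamma\log(Q_0/sq+\log q)+1.172$.

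For the denominator, I would invoke the left inequality of (\ref{eq:hosmo}), $\frac{q}{\phi(q)}G_q(R)\ge G(R)$, followed by (\ref{eq:charpy}) (valid since $R\ge 182$) and the bound $\log R\ge\log(Q/Q_0)$ noted above, to get $\frac{q}{\phi(q)}G_q(R)\ge\log(Q/Q_0)+1.312$. Dividing the two bounds, the factors $q/\phi(q)$ cancel and
\[
\frac{G_q(R_0)}{G_q(R)}=\frac{\tfrac{q}{\phi(q)}G_q(R_0)}{\tfrac{q}{\phi(q)}G_q(R)}\le\frac{e^\gamma\log\!\big(\tfrac{Q_0}{sq}+\log q\big)+1.172}{\log\tfrac{Q}{Q_0}+1.312},
\]
which is the first claimed inequality. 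The second one reduces to $Q_0/sq+\log q\le Q_0$: this is immediate for $q=1$, while for $q\ge 2$ one uses $s\ge 1$ together with $\log q\le q-1\le Q_0(q-1)/q$ (the last step being exactly the hypothesis $q\le Q_0$) to get $Q_0/sq+\log q\le Q_0/q+(Q_0-Q_0/q)=Q_0$.

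The only point requiring care — which is really the content of the lemma rather than an obstacle — is that one must fold the primes dividing $q$ into the Euler product by multiplying $G_q(R_0)$ by $q/\phi(q)$ \emph{before} applying Lemma~\ref{lem:suspiro}, instead of using the coarser estimate $\frac{q}{\phi(q)}G_q(R_0)\le G(R_0q)$ from (\ref{eq:hosmo}). The latter would bring in all primes up to $R_0q=Q_0/s$ and hence replace $\log(Q_0/sq+\log q)$ by $\log(Q_0/s)$, which is far too large when $q$ is big; the refined bound keeps only primes up to $R_0$ together with those dividing $q$, and the factor $q/\phi(q)$ it costs is repaid exactly by the $\phi(q)/q$ loss in the lower bound for $G_q(R)$, so there is no net loss. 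Everything else is a routine chain of elementary inequalities.
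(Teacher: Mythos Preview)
Your proof is correct and uses the same ingredients as the paper (Lemma~\ref{lem:suspiro}, (\ref{eq:hosmo}), and (\ref{eq:charpy})), but the organization differs slightly. The paper introduces $\mathscr{P}=\prod_{p\le Q_0/sq\ \vee\ p\mid q}p$ and uses the multiplicative inequality $G_q(Q_0/sq)\,G_{\mathscr{P}}(Q/Q_0)\le G_q(Q/sq)$ (an injection $(r_1,r_2)\mapsto r_1r_2$) to reduce everything to bounding $1/G_{\mathscr{P}}(Q/Q_0)$ from above, and only then invokes (\ref{eq:hosmo}) and Lemma~\ref{lem:suspiro}. You instead bound numerator and denominator separately after multiplying both by $q/\phi(q)$: your Euler-product bound $\tfrac{q}{\phi(q)}G_q(R_0)\le\prod_{p\mid q\,\vee\,p\le R_0}\tfrac{p}{p-1}=\mathscr{P}/\phi(\mathscr{P})$ is exactly the same quantity the paper ends up with, and your lower bound uses $G(Q/sq)$ in place of the paper's $G(Q/Q_0)$ (a slightly sharper intermediate step that you then discard). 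For the second inequality, the paper takes the derivative in $q$ with $s$ fixed to see the expression is maximized at $q=1$; your argument via $\log q\le q-1\le Q_0(q-1)/q$ is an equally valid alternative. Overall your route is marginally more direct, since it avoids the combinatorial inequality $G_q(R_0)G_{\mathscr{P}}(Q/Q_0)\le G_q(R)$, but the two arguments are essentially equivalent.
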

\begin{proof}
Let $\mathscr{P}=\prod_{p\leq Q_0/sq \vee p|q} p$. Then
\[ G_q(Q_0/sq) G_{\mathscr{P}}(Q/Q_0) \leq G_q(Q/sq)\]
and so
\begin{equation}\label{eq:bete}
\frac{G_q(Q_0/sq)}{G_q(Q/sq)} \leq \frac{1}{G_{\mathscr{P}}(Q/Q_0)}.
\end{equation}

Now the lower bound in (\ref{eq:hosmo}) gives us that,
for $d=\mathscr{P}$, $R=Q/Q_0$, 
\[
G_{\mathscr{P}}(Q/Q_0) \geq \frac{\phi(\mathscr{P})}{\mathscr{P}}
G(Q/Q_0) .\]
By Lem.~\ref{lem:suspiro},
\[\frac{\mathscr{P}}{\phi(\mathscr{P})} \leq e^{\gamma} \left(\log \left(
\frac{Q_0}{s q} + \log q\right) + 0.658\right).\]
Hence, using (\ref{eq:charpy}), we get that
\begin{equation}\label{eq:mogan}
\begin{aligned}\frac{G_q(Q_0/sq)}{G_q(Q/sq)} &\leq 
\frac{\mathscr{P}/\phi(\mathscr{P})}{G(Q/Q_0)} \leq
\frac{e^\gamma \log \left(\frac{Q_0}{sq} + \log q\right) + 1.172}
{\log \frac{Q}{Q_0} + 1.312},
\end{aligned}\end{equation}
since $Q/Q_0\geq 184$. Since
\[\left(\frac{Q_0}{sq} + \log q\right)' = - \frac{Q_0}{sq^2} + \frac{1}{q}
= \frac{1}{q} \left(1 - \frac{Q_0}{sq}\right) \leq 0,\]
the rightmost expression of (\ref{eq:mogan}) is maximal for $q=1$.
\end{proof}

Lemma \ref{lem:trivo} will play a crucial role in reducing 
to a finite computation the problem
of bounding $G_q(Q_0/sq)/G_q(Q/sq)$.
As we will now
see, we can use Lemma \ref{lem:trivo} to 
obtain a bound that is useful when $sq$ is large compared to $Q_0$ --
precisely the case in which asymptotic estimates such as (\ref{eq:malito})
are relatively weak.
\begin{lem}\label{lem:paniz}
Let $Q_0\geq 1$, $Q\geq 200 Q_0$. Let $q\leq Q_0$, $s\leq Q_0/q$.
Let $\rho = (\log Q_0)/\log Q\leq 2/3$. 
Then, for any $\sigma\geq 1.312 \rho$, 
\begin{equation}\label{eq:martinu}
\frac{G_q(Q_0/sq)}{G_q(Q/sq)} \leq \frac{\log Q_0 + \sigma}{
\log Q + 1.312}\end{equation}
holds provided that
\[\frac{Q_0}{sq} \leq c(\sigma) \cdot Q_0^{(1-\rho) e^{-\gamma}} - \log q,\]
where $c(\sigma) = \exp(\exp(-\gamma)\cdot 
(\sigma - \sigma^2/5.248 - 1.172))$.
\end{lem}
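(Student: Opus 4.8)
The plan is to start from the crude bound in Lemma~\ref{lem:trivo} and then reduce the desired estimate to an elementary quadratic inequality. Since $Q\ge 200 Q_0 \ge 182 Q_0$, Lemma~\ref{lem:trivo} applies and gives
\[\frac{G_q(Q_0/sq)}{G_q(Q/sq)} \leq \frac{e^\gamma \log\left(\frac{Q_0}{sq} + \log q\right) + 1.172}{\log \frac{Q}{Q_0} + 1.312}.\]
By hypothesis, $\frac{Q_0}{sq} + \log q \leq c(\sigma)\, Q_0^{(1-\rho)e^{-\gamma}}$; since $q\ge 1$ and $Q_0\ge 1$ both sides are positive, so taking logarithms and recalling $\log c(\sigma) = e^{-\gamma}(\sigma - \sigma^2/5.248 - 1.172)$ and $\log Q_0 = \rho\log Q$ we obtain
\[e^\gamma \log\left(\frac{Q_0}{sq} + \log q\right) + 1.172 \leq (1-\rho)\log Q_0 + \sigma - \frac{\sigma^2}{5.248}.\]
Hence it suffices to prove that
\[\frac{(1-\rho)\log Q_0 + \sigma - \sigma^2/5.248}{\log(Q/Q_0) + 1.312} \leq \frac{\log Q_0 + \sigma}{\log Q + 1.312}.\]

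Both denominators are positive, as $\log Q + 1.312>0$ and $\log(Q/Q_0) + 1.312 \geq \log 200 + 1.312 > 0$. Put $L = \log Q > 0$, so that $\log Q_0 = \rho L$ and $\log(Q/Q_0) = (1-\rho)L$. Cross-multiplying, the inequality above is equivalent to
\[\left((1-\rho)\rho L + \sigma - \frac{\sigma^2}{5.248}\right)(L + 1.312) \leq (\rho L+\sigma)\bigl((1-\rho)L + 1.312\bigr).\]
Expanding both sides and cancelling the common term $(1-\rho)\rho L^2$, one finds that the right-hand side minus the left-hand side equals
\[\frac{L\,(\sigma - 2.624\,\rho)^2}{5.248} + \frac{1.312\,\sigma^2}{5.248},\]
where we have used the numerical identity $5.248 = 4\cdot 1.312$, so that $\sigma^2 - 5.248\,\rho\sigma + 5.248\cdot 1.312\,\rho^2 = (\sigma - 2.624\,\rho)^2$. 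This quantity is manifestly $\geq 0$, which proves the claim.

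The argument is entirely elementary; the only genuinely necessary ingredients are Lemma~\ref{lem:trivo} (which supplies the constants $1.172$ and $1.312$) and the observation that $5.248$ was chosen to be $4\cdot 1.312$, which is exactly what forces the relevant $\rho$-quadratic to be a perfect square, so that no restriction on $\sigma$ beyond positivity is used in the algebra. The side conditions $\sigma\ge 1.312\rho$ and $\rho\le 2/3$ are not needed for the inequality itself; they serve only to ensure that the hypothesis on $Q_0/sq$ is non-vacuous and that the resulting bound~(\ref{eq:martinu}) is of the intended quality. The "main obstacle," such as it is, is simply carrying out the expansion carefully enough to recognize the perfect square.
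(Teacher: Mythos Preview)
Your proof is correct. Both your argument and the paper's start from Lemma~\ref{lem:trivo} and reduce~(\ref{eq:martinu}) to an elementary inequality; the difference lies in how that inequality is dispatched. The paper first invokes the hypothesis $\sigma\ge 1.312\rho$ to bound $\frac{1.312\rho(\log Q_0+\sigma)}{\log Q+1.312}\ge 1.312\rho^2$, and then minimizes the resulting expression $(1-\rho)\sigma+1.312\rho^2$ over $\rho$ (the minimum occurring at $\rho=\sigma/2.624$) to arrive at $\sigma-\sigma^2/5.248$. You instead cross-multiply the exact inequality and recognize the difference as $\frac{L(\sigma-2.624\rho)^2}{5.248}+\frac{1.312\sigma^2}{5.248}\ge 0$, using that $5.248=4\cdot 1.312$ makes the $\rho$-quadratic a perfect square. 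Your route is tighter: it shows that the algebraic inequality holds for \emph{all} real $\sigma$, so the side condition $\sigma\ge 1.312\rho$ is genuinely superfluous to the proof (as you note, it only guarantees that the conclusion is not vacuous and that~(\ref{eq:martinu}) is at most $1$). The paper's route, by contrast, uses that condition in an essential way at the bounding step, and then recovers the same constant $5.248$ through the minimization --- so the two arguments converge on the same numerology by different paths.
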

\begin{proof}
By Lemma \ref{lem:trivo}, we see that
 (\ref{eq:martinu}) will hold provided that
\begin{equation}\label{eq:murey}\begin{aligned}
e^\gamma &\log \left(\frac{Q_0}{sq} + \log q\right) + 1.172
\leq
\frac{\log \frac{Q}{Q_0} + 1.312}{\log Q + 1.312} \cdot (\log Q_0 + \sigma).
\end{aligned}\end{equation}
The expression on the right of (\ref{eq:murey}) equals
\[\begin{aligned}
\log Q_0 + \sigma &- \frac{(\log Q_0 + \sigma) \log Q_0}{
\log Q + 1.312}\\
&=
(1-\rho) (\log Q_0 + \sigma) + \frac{1.312 \rho (\log Q_0 + \sigma)}{
\log Q + 1.312}\\ &\geq (1-\rho) (\log Q_0 + \sigma) +
1.312\rho^2\end{aligned}\]
and so
(\ref{eq:murey}) will
 hold provided that
\[\begin{aligned}
e^\gamma &\log \left(\frac{Q_0}{sq} + \log q\right) +1.172\leq
 (1-\rho) (\log Q_0) + (1-\rho) \sigma +
1.312\rho^2 .\end{aligned}\]
Taking derivatives, we see that
\[\begin{aligned}
(1-\rho) \sigma + 1.312 \rho^2 - 1.172 &\geq 
\left(1 - \frac{\sigma}{2.624}\right) \sigma + 1.312
\left(\frac{\sigma}{2.624}\right)^2 - 1.172\\
&= \sigma - \frac{\sigma^2}{4\cdot 1.312} - 1.172 .
\end{aligned}\]
Hence it is enough that
\[\frac{Q_0}{sq} + \log q \leq e^{e^{-\gamma} 
\left((1-\rho) \log Q_0 + \sigma - \frac{\sigma^2}{4\cdot 1.312} - 1.172\right)} =
c(\sigma)\cdot Q_0^{(1-\rho) e^{-\gamma}},\]
where $c(\sigma) = \exp(\exp(-\gamma)\cdot 
(\sigma - \sigma^2/5.248 - 1.172))$.
\end{proof}

\begin{prop}\label{prop:espagn}
Let $Q\geq 20000 Q_0$, $Q_0\geq Q_{0,\min}$, where $Q_{0,\min} = 10^5$. 
Let $\rho = (\log Q_0)/\log Q$. Assume $\rho\leq 0.6$. Then, for every 
$1\leq q\leq Q_0$ and every $s\in \lbrack 1, Q_0/q\rbrack$,
\begin{equation}\label{eq:martinos}
\frac{G_q(Q_0/sq)}{G_q(Q/sq)} \leq \frac{\log Q_0 + c_+}{
\log Q + c_E},\end{equation}
where $c_E$ is as in (\ref{eq:garno}) and $c_+ = 1.36$.
\end{prop}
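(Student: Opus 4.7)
My strategy is to split the range of $sq$ into two complementary regimes, matching the two tools already at hand: Lemma~\ref{lem:paniz} handles large $sq$, while Ramaré's explicit asymptotic (\ref{eq:malito}) handles small $sq$.

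\textbf{Regime 1 (large $sq$, so $R_0 := Q_0/(sq)$ is small).} Here I would invoke Lemma~\ref{lem:paniz} with a constant $\sigma$ slightly smaller than $c_+$. A short computation comparing the two fractions shows that
\[
\frac{\log Q_0 + \sigma}{\log Q + 1.312} \;\leq\; \frac{\log Q_0 + c_+}{\log Q + c_E}
\]
holds whenever $\sigma \leq c_+ - \eta_0$ for a small margin $\eta_0 > 0$ that depends only on $\rho$ (explicitly, $\eta_0 \approx (c_E - 1.312)\rho \approx 0.012$ suffices for $\rho \leq 0.6$). Thus the conclusion of Lemma~\ref{lem:paniz} implies (\ref{eq:martinos}), provided its hypothesis $R_0 \leq c(\sigma) Q_0^{(1-\rho) e^{-\gamma}} - \log q$ holds; this inequality defines the threshold between the two regimes.

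\textbf{Regime 2 (small $sq$, so $R_0$ is large).} Here I would apply (\ref{eq:malito}) directly to both $G_q(R_0)$ and $G_q(Q/(sq))$. Writing $S_q = \sum_{p \mid q} (\log p)/p$ and $v = c_E + S_q - \log(sq)$, the main terms give
\[
\frac{G_q(R_0)}{G_q(Q/(sq))} \;=\; \frac{\log Q_0 + v + \epsilon_1}{\log Q + v + \epsilon_2},
\]
where the errors $\epsilon_i$ are controlled by the $O^*(7.284\, R^{-1/3} f_1(q))$ term in (\ref{eq:malito}) and become negligible once $R_0$ is sufficiently large. At the main-term level, the inequality $(\log Q_0 + v)/(\log Q + v) \leq (\log Q_0 + c_+)/(\log Q + c_E)$ rearranges to
\[
(c_+ - c_E)(\log Q_0 + v) + (c_+ - v)(\log Q - \log Q_0) \;\geq\; 0.
\]
Since $S_q \leq \log q$ (as $(\log p)/p \leq \log p$) and $s \geq 1$, we have $v \leq c_E - \log s \leq c_E < c_+$, so both summands are nonnegative and the main-term inequality holds unconditionally in this regime.

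\textbf{Main obstacle.} The tightness of the transition is delicate: the margin $c_+ - c_E \approx 0.03$ is very small, so one must control the $R^{-1/3}$ error in (\ref{eq:malito}) sharply within Regime~2, and calibrate Regime~1's threshold accordingly. The hypotheses $Q_0 \geq 10^5$, $Q \geq 20000\, Q_0$, and $\rho \leq 0.6$ are used precisely to guarantee that the error terms fit within this margin over the entire parameter range. For the finitely many residual edge cases near the transition (in particular small $q$ with $sq$ close to the threshold), a direct interval-arithmetic computation --- of the same flavor as the one already invoked for (\ref{eq:charpy}) and (\ref{eq:charpas}) --- would be used to close the remaining gap.
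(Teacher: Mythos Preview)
Your two-regime strategy is exactly the paper's approach: Lemma~\ref{lem:paniz} for small $R_0 = Q_0/(sq)$ and Ramar\'e's asymptotic (\ref{eq:malito}) for large $R_0$. Your main-term computation in Regime~2 is correct, and your observation that one should take $\sigma$ slightly below $c_+$ to absorb the discrepancy between $1.312$ and $c_E$ is a valid refinement.

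The genuine gap is in the transition. You describe the residual computation as ``finitely many edge cases near the transition (in particular small $q$)'', but this drastically underestimates what is required. The Regime~2 threshold --- the minimal $R_0$ for which the $7.284\,R_0^{-1/3} f_1(q)$ error fits inside the margin $c_+ - c_E \approx 0.027$ --- grows with $q$ (through $f_1(q)$ and $q/\phi(q)$), while the Regime~1 threshold from Lemma~\ref{lem:paniz} grows only like $q^{(1-\rho)e^{-\gamma}}$. Showing that these two thresholds eventually cross, so that the computation is finite in $q$ at all, is a substantial piece of analytic work that you have not sketched; in the paper this is the content of the $\varpi(q)$ vs.\ $\lambda(q)$ comparison, and the crossover does not occur until $q \approx 2.2\cdot 10^{10}$ (or $3.3\cdot 10^9$ under the auxiliary restriction $210\nmid q$). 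Below that, one must check, for each of several billion values of $q$, every integer $R$ in an interval --- a computation the paper describes as taking roughly two weeks on a single core. Your proposal also omits a third restriction on $sq$ (the paper's (\ref{eq:miasmar}), derived from the crude bounds (\ref{eq:cante})--(\ref{eq:hosmo}) rather than from Lemma~\ref{lem:paniz}), which contributes one of the three branches of $\varpi(q)$ and is needed to keep the computation tractable.

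In short: the architecture is right, but the ``residual edge cases'' are the bulk of the proof, and you have not indicated either how to bound them in $q$ or how large the resulting verification actually is.
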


An ideal result would have $c_+$ instead of $c_E$, but this is not actually
possible: error terms do exist, even if they are in reality smaller than
the bound given in (\ref{eq:malito}); this means that a bound such as (\ref{eq:martinos}) with $c_+$ instead of $c_E$ would be false for $q=1$, $s=1$.

There is nothing special about the assumptions $Q\geq 20000 Q_0$, 
$Q_0\geq 10^5$, $(\log Q_0)/(\log Q)\leq 0.6$: they can all be relaxed
at the cost of an increase in $c_+$.

\begin{proof}
Define $\err_{q,R}$ so that
\begin{equation}\label{eq:mero}
G_q(R) = \frac{\phi(q)}{q} \left(\log R + c_E + \sum_{p|q} \frac{\log p}{p} \right)+ \err_{q,R}.\end{equation}
Then (\ref{eq:martinos}) will hold if
\begin{equation}\label{eq:elsyn}\begin{aligned}
\log \frac{Q_0}{s q} &+ c_E + \sum_{p|q} \frac{\log p}{p} +
\frac{q}{\phi(q)} \err_{q,\frac{Q_0}{sq}}\\ &\leq
\left(\log \frac{Q}{s q} + c_E + \sum_{p|q} \frac{\log p}{p} +
\frac{q}{\phi(q)} \err_{q,\frac{Q}{sq}}\right) \frac{\log Q_0 + c_+}{\log Q + c_E}.\end{aligned}\end{equation}
This, in turn, happens if 
\[\begin{aligned}
\left(\log sq - \sum_{p|q} \frac{\log p}{p}\right)
&\left(1 - \frac{\log Q_0 + c_+}{\log Q + c_E}\right)
+ c_+ - c_E
\\
\geq &\frac{q}{\phi(q)} \left(\err_{q,\frac{Q_0}{sq}} 
- \frac{\log Q_0 + c_+}{\log Q + c_E} \err_{q,\frac{Q}{sq}}\right).
\end{aligned}\]
Define
\[\omega(\rho) = \frac{\log Q_{0,\min} + c_+}{\frac{1}{\rho}
\log Q_{0,\min} + c_E} = \rho + \frac{c_+ - \rho c_E}{
\frac{1}{\rho} \log Q_{0,\min} + c_E}.\]
Then $\rho \leq (\log Q_0+c_+)/(\log Q + c_E) \leq \omega(\rho)$
(because $c_+ \geq \rho c_E$).
We conclude that (\ref{eq:elsyn}) (and hence (\ref{eq:martinos}))
holds provided that
\begin{equation}\label{eq:karka}
(1-\omega(\rho)) \left(\log s q - \sum_{p|q} \frac{\log p}{p}\right)
+ c_\Delta \geq \frac{q}{\phi(q)}\left(\err_{q,\frac{Q_0}{sq}} 
+ \omega(\rho) \max\left(0, - \err_{q,\frac{Q}{sq}}\right)\right),\end{equation}
where $c_\Delta = c_+ - c_E$.
Note that $1-\omega(\rho)>0$.

First, let us give some easy bounds on the error terms; these bounds
will yield upper bounds for $s$. By (\ref{eq:cante}) and (\ref{eq:hosmo}),
\[
\err_{q,R} \leq \frac{\phi(q)}{q} \left(\log q -
 \sum_{p|q} \frac{\log p}{p} + (1.4709 - c_E)\right)\]
for $R\geq 1$; by (\ref{eq:charpy}) and (\ref{eq:hosmo}),
\[\err_{q,R} \geq - \frac{\phi(q)}{q} \left(
\sum_{p|q} \frac{\log p}{p} + (c_E - 1.312)\right)\]
for $R\geq 182$. Therefore, the right side of (\ref{eq:karka})
is at most
\[\log q -
 (1-\omega(\rho)) \sum_{p|q} \frac{\log p}{p} + 
((1.4709 - c_E) + \omega(\rho) (c_E - 1.312)),\]
and so (\ref{eq:karka}) holds provided that
\begin{equation}\label{eq:miasmar}
(1-\omega(\rho)) \log s q\geq \log q +
(1.4709 - c_E) + \omega(\rho) (c_E - 1.312) - c_\Delta.\end{equation}
We will thus be able to assume from now on that (\ref{eq:miasmar})
does not hold, or, what is the same, that
\begin{equation}\label{eq:koklo}
sq < \left(c_{\rho,2} q\right)^{\frac{1}{1-\omega(\rho)}}\end{equation}
holds, where 
$c_{\rho,2} = \exp((1.4709 - c_E) + \omega(\rho) (c_E - 1.312) - c_\Delta)$.

What values of $R = Q_0/sq$ must we consider for $q$ given? First,
by (\ref{eq:koklo}), 
we can assume $R>Q_{0,\min}/(c_{\rho,2} q)^{1/(1-\omega(\rho))}$.
We can also assume
\begin{equation}\label{eq:joho}
R >
c(c_+) \cdot \max(R q,Q_{0,\min})^{(1-\rho) e^{-\gamma}} - \log q 
\end{equation}
for $c(c_+)$ is as in Lemma \ref{lem:paniz}, since
all smaller $R$ are covered by that Lemma.
Clearly, (\ref{eq:joho}) implies that
\[R^{1-\tau} > c(c_+) \cdot q^{\tau} -
\frac{\log q}{R^{\tau}} > 
c(c_+) q^{\tau} -\log q,
\]
where $\tau = (1-\rho) e^{-\gamma}$,
and also that $R> c(c_+) Q_{0,\min}^{(1-\rho) e^{-\gamma}} - \log q$.
Iterating, we obtain that we can assume that
$R>\varpi(q)$, where
\begin{equation}\label{eq:armor}
\varpi(q) = \max\left(\varpi_0(q),
c(c_+) Q_{0,\min}^{\tau} - \log q,
\frac{Q_{0,\min}}{(c_{\rho,2} q)^{\frac{1}{1-\omega(\rho)}}}\right)\end{equation}
and
\[\varpi_0(q) = 
\begin{cases}
\left(c(c_+) q^{\tau} - \frac{\log q}{
(c(c_+) q^{\tau} -\log q)^{\frac{\tau}{1- \tau}}}
\right)^{\frac{1}{1-\tau}} &\text{if 
$c(c_+) q^{\tau} > \log q + 1$},\\
0 
&\text{otherwise.}\end{cases}\]

Looking at (\ref{eq:karka}), we see that
it will be enough to show that, for all $R$ satisfying $R>\varpi(q)$, we have
\begin{equation}\label{eq:luce}
\err_{q,R} + \omega(\rho) \max\left(0,-\err_{q,tR}\right)
 \leq \frac{\phi(q)}{q} \kappa(q)
\end{equation} for all $t\geq 20000$, where
\[\kappa(q) = 
(1 - \omega(\rho)) \left(\log q - \sum_{p|q} \frac{\log p}{p}\right) + 
c_\Delta.\]

Ramar\'e's bound (\ref{eq:malito}) implies that
\begin{equation}\label{eq:agammen}
|\err_{q,R}| \leq 7.284 R^{-1/3} f_1(q),\end{equation}
with $f_1(q)$ as in (\ref{eq:assur}), and so
\[\err_{q,R} + \omega(\rho) \max\left(0,-\err_{q,tR}\right) \leq
(1 + \beta_\rho) \cdot 7.284 R^{-1/3} f_1(q),\]
where $\beta_\rho = \omega(\rho)/20000^{1/3}$.
This is enough when 
\begin{equation}\label{eq:sosor}
R\geq \lambda(q) = 
 \left(\frac{q}{\phi(q)} \frac{7.284 (1+\beta_\rho) f_1(q)}{\kappa(q)}\right)^3.
\end{equation}

It remains to do two things. First, we have to compute how large $q$
has to be for $\varpi(q)$ to be guaranteed to be greater than 
$\lambda(q)$. (For such $q$, there is no checking to be done.) Then,
we check the inequality (\ref{eq:luce}) for all smaller $q$, letting
$R$ range through the integers in $\lbrack \varpi(q),\lambda(q)\rbrack$.
We bound $\err_{q,t R}$ using (\ref{eq:agammen}), but we compute
$\err_{q,R}$ directly.

{\em How large must $q$ be for $\varpi(q)>\lambda(q)$ to hold?} 
We claim that $\varpi(q)>\lambda(q)$ whenever
 $q\geq 2.2\cdot 10^{10}$.
Let us show this. 

It is easy to see that $(p/(p-1)) \cdot f_1(p)$ and
$p\to (\log p)/p$ are decreasing functions of $p$ for
$p\geq 3$; moreover, for both functions, the value at 
$p\geq 7$ is smaller than for $p=2$. 
Hence, we have that, for $q < \prod_{p\leq p_0} p$, $p_0$ a prime,
\begin{equation}\label{eq:modo}
\kappa(q) \geq (1-\omega(\rho)) \left(\log q - 
\sum_{p<p_0} \frac{\log p}{p}\right) + c_\Delta
\end{equation}
and
\begin{equation}\label{eq:hipo}
\lambda(q) \leq \left(\prod_{p<p_0} \frac{p}{p-1} \cdot
\frac{7.284 (1+\beta_\rho) \prod_{p<p_0} f_1(p)}{
(1-\omega(\rho)) \left(\log q - 
\sum_{p<p_0} \frac{\log p}{p}\right) + c_\Delta}\right)^3.\end{equation}
If we also assume that $2\cdot 3\cdot 5\cdot 7 \nmid q$, we obtain
\begin{equation}\label{eq:modowo}
\kappa(q) \geq (1-\omega(\rho)) \left(\log q - 
\mathop{\sum_{p<p_0}}_{p\ne 7} \frac{\log p}{p}\right) + c_\Delta
\end{equation}
and
\begin{equation}\label{eq:hipowo}
\lambda(q) \leq \left(\mathop{\prod_{p<p_0}}_{p\ne 7} \frac{p}{p-1} \cdot
\frac{7.284 (1+\beta_\rho) \prod_{p<p_0, p \ne 7} f_1(p)}{
(1-\omega(\rho)) \left(\log q - 
\sum_{p<p_0, p\ne 7} \frac{\log p}{p}\right) + c_\Delta}\right)^3\end{equation}
for $q< \prod_{p\leq p_0}$.
(We are taking out $7$ because it is the ``least helpful'' prime to omit
among all primes from $2$ to $7$, again by the fact that
 $(p/(p-1)) \cdot f_1(p)$ and
$p\to (\log p)/p$ are decreasing functions for $p\geq 3$.)

We know how to give upper bounds for the expression on the right
of (\ref{eq:hipo}).
The task is in essence simple: we can base our bounds on the classic
explicit work in \cite{MR0137689}, except that we also have to optimize matters
so that they are close to tight for $p_1=29$, $p_1=31$ and other low $p_1$.

By \cite[(3.30)]{MR0137689} and a numerical computation for $29\leq p_1\leq 43$,
\[\prod_{p\leq p_1} \frac{p}{p-1} <  
1.90516 \log p_1\]
for $p_1\geq 29$.
Since $\omega(\rho)$ is increasing on $\rho$ and we are assuming
$\rho\leq 0.6$, $Q_{0,\min} = 100000$,
\[\omega(\rho)\leq 0.627312,\;\;\;\;\;
\beta_{\rho} \leq 0.023111.\]
For $x>a$, where $a>1$ is any constant, we obviously have
\[\sum_{a< p\leq x} \log \left(1 + p^{-2/3}\right) \leq
\sum_{a< p\leq x} (\log p) \frac{p^{-2/3}}{\log a}.
\]
by Abel summation (see (\ref{eq:jokors})) and the estimates
\cite[(3.32)]{MR0137689} for $\theta(x) = \sum_{p\leq x} \log p$,
\[\begin{aligned}
\sum_{a<p\leq x}& (\log p) p^{-2/3} = (\theta(x) - \theta(a)) x^{- \frac{2}{3}} -
\int_a^x (\theta(u) -\theta(a)) \left(-\frac{2}{3} u^{-\frac{5}{3}}\right) du \\
&\leq (1.01624 x- \theta(a)) x^{-\frac{2}{3}} +
\frac{2}{3} \int_a^x \left(1.01624 u - \theta(a)\right) u^{-\frac{5}{3}} du\\
&= (1.01624 x- \theta(a)) x^{-\frac{2}{3}} +
2\cdot 1.01624 (x^{1/3}-a^{1/3}) + \theta(a) (x^{-2/3} - a^{-2/3})\\
&= 3\cdot 1.01624 \cdot x^{1/3}
- (2.03248 a^{1/3} + \theta(a) a^{-2/3}).
\end{aligned}\]
We conclude that
$\sum_{10^4<p\leq x} \log (1 + p^{-2/3}) \leq 0.33102 x^{1/3} - 7.06909$
for $x>10^4$. Since $\sum_{p\leq 10^4} \log p \leq 10.09062$, this means that
\[\sum_{p\leq x} \log (1 + p^{-2/3}) \leq \left(0.33102 +
\frac{10.09062 - 7.06909}{10^{4/3}}\right) x^{1/3} 
\leq 0.47126 x^{1/3}\] for $x>10^4$;
a direct computation for all $x$ prime between $29$ and $10^4$ then 
confirms that \[\sum_{p\leq x} \log (1 + p^{-2/3}) \leq 
0.74914 x^{1/3}\]
for all $x\geq 29$. Thus,
\[\prod_{p\leq x} f_1(p) \leq \frac{e^{\sum_{p\leq x} \log (1+ p^{-2/3})}}{
\prod_{p\leq 29} \left(1 + \frac{p^{1/3} + p^{2/3}}{p (p-1)}\right)}
\leq \frac{e^{0.74914 x^{1/3}}}{6.62365} 
\]
for $x\geq 29$.
Finally, by \cite[(3.24)]{MR0137689},
$\sum_{p\leq p_1} \frac{\log p}{p} < \log p_1$. 

We conclude that, for $q<\prod_{p\leq p_0} p_0$, $p_0$ a prime, and 
$p_1$ the prime immediately preceding $p_0$,
\begin{equation}\label{eq:victo}\begin{aligned}
\lambda(q) &\leq \left(
1.90516 \log p_1 \cdot \frac{7.45235 \cdot \left(
 \frac{e^{0.74914 p_1^{1/3}}}{6.62365}\right)}{
0.37268 (\log q - \log p_1) + 0.02741}
\right)^3\\
&\leq \frac{190.272 (\log p_1)^3 e^{2.24742 p_1^{1/3}}}{
(\log q - \log p_1 + 0.07354)^3}.
\end{aligned}\end{equation}

It is clear from (\ref{eq:armor})  
that $\varpi(q)$ is increasing 
as soon as \[q\geq \max(Q_{0,\min},Q_{0,\min}^{1-\omega(\rho)}/c_{\rho,2})\] 
and $c(c_+) q^\tau > \log q + 1$, since then $\varpi_0(q)$ is increasing
and $\varpi(q) = \varpi_0(q)$. Here it is useful to recall that
$c_{\rho,2}\geq \exp(1.4709-c_+)$, and to note that
$c(c_+) q^{\tau} - (\log q+1)$ is increasing for 
$q\geq 1/(\tau\cdot  c(c_+))^{1/\tau}$; we see also that 
$1/(\tau \cdot
 c(c_+))^{1/\tau} \leq 1/((1-0.6) e^{-\gamma} c(c_+))^{1/((1-0.6) e^{-\gamma})}$
for $\rho\leq 0.6$. A quick computation for our value of $c_+$ makes
us conclude that $q>1.12 Q_{0,\min}=112000$ is a sufficient condition for
$\varpi(q)$ to be equal to $\varpi_0(q)$ and for $\varpi_0(q)$ to be increasing.

Since (\ref{eq:victo}) is decreasing on $q$ for $p_1$ fixed, and
$\varpi_0(q)$ is decreasing on $\rho$ and increasing on $q$, we set 
$\rho = 0.6$ and check that then
\[\varpi_0\left(2.2\cdot 10^{10}\right) \geq 846.765,\]
whereas, by (\ref{eq:victo}),
\[\lambda(2.2\cdot 10^{10}) \leq 838.227 < 846.765;\]
this is enough to ensure that $\lambda(q)<\varpi_0(q)$ for 
$2.2\cdot 10^{10} \leq q < \prod_{p\leq 31} p$.

Let us now give some rough bounds that will be enough to cover the
case $q\geq \prod_{p\leq 31} p$. First, as we already discussed,
$\varpi(q)= \varpi_0(q)$ and, since $c(c_+) q^\tau > \log q + 1$,
\begin{equation}\label{eq:drolo}
\varpi_0(q) \geq (c(c_+) q^\tau - \log q)^{\frac{1}{1-\tau}} \geq
(0.911 q^{0.224} - \log q)^{1.289} \geq q^{0.2797}\end{equation}
by $q\geq \prod_{p\leq 31} p$. We are in the range $\prod_{p\leq p_1} p \leq
q\leq \prod_{p\leq p_0} p$, where $p_1<p_0$ are two consecutive primes with
$p_1\geq 31$. By \cite[(3.16)]{MR0137689} and a computation for 
$31\leq q< 200$, we know that $\log q \geq \prod_{p\leq p_1} \log p \geq
0.8009 p_1$. By (\ref{eq:victo}) and (\ref{eq:drolo}), 
it follows that we just have to show
that
\[e^{0.224 t} > \frac{190.272 (\log t)^3 e^{2.24742 t^{1/3}}}{
(0.8009 t - \log t + 0.07354)^3}\]
for $t\geq 31$. Now, $t\geq 31$ implies 
$0.8009 t - \log t + 0.07354 \geq 0.6924 t$, and so, taking logarithms
we see that we just have to verify
\begin{equation}\label{eq:mutuso}
0.224 t -  2.24742 t^{1/3} > 3 \log \log t - 3 \log t + 6.3513\end{equation}
for $t\geq 31$, and, since the left side is increasing and the right
side is decreasing for $t\geq 31$, this is trivial to check.

We conclude that $\varpi(q)>\lambda(q)$ whenever $q\geq 2.2\cdot 10^{10}$.

It remains to see how we can relax this assumption if we assume
that $2\cdot 3\cdot 5\cdot 7\nmid q$.  We repeat the same
analysis as before, using (\ref{eq:modowo}) and (\ref{eq:hipowo}) instead
of (\ref{eq:modo}) and (\ref{eq:hipo}). For $p_1\geq 29$,
\[\mathop{\prod_{p\leq p_1}}_{p\ne 7}
 \frac{p}{p-1} < 1.633 \log p_1,\;\;\;\;
\mathop{\prod_{p\leq p_1}}_{p\ne 7}
f_1(p) \leq \frac{e^{0.74914 x^{1/3} - \log(1+7^{-2/3})}}{5.8478} \leq
\frac{e^{0.74914 x^{1/3}}}{7.44586}\]
and $\sum_{p\leq p_1: p\ne 7} (\log p)/p < \log p_1 - (\log 7)/7$. 
So, for $q< \prod_{p\leq p_0: p\ne 7} p$, and $p_1\geq 29$ 
the prime immediately preceding $p_0$,
\[\begin{aligned}
\lambda(q) 
&\leq \left(
1.633 \log p_1 \cdot \frac{7.45235 \cdot \left(
 \frac{e^{0.74914 p_1^{1/3}}}{7.44586}\right)}{
0.37268 \left(\log q - \log p_1 + \frac{\log 7}{7}\right) + 0.02741}
\right)^3\\
&\leq \frac{84.351 (\log p_1)^3 e^{2.24742 p_1^{1/3}}}{
(\log q - \log p_1 + 0.35152)^3}.
\end{aligned}\] Thus we obtain, just like before, that
\[\varpi_0(3.3\cdot 10^9)\geq 477.465,\;\;\;\;\;\;\;
\lambda(3.3\cdot 10^9) \leq 475.513 < 477.465.\]
We also check that $\varpi_0(q_0)\geq 916.322$ is greater than
$\lambda(q_0)\leq 429.731$ for $q_0= \prod_{p\leq 31: p\ne 7} p$.
The analysis for $q\geq \prod_{p\leq 37: p\ne 7} p$ is also just like before:
since $\log q \geq 0.8009 p_1 - \log 7$, we have to show that
\[\frac{e^{0.224 t}}{7} > \frac{84.351 (\log t)^3 e^{2.24742 t^{1/3}}}{(0.8009 t - \log t + 0.07354)^3}\]
for $t\geq 37$, and that, in turn, follows from
\[0.224 t - 2.24742 t^{1/3} > 3 \log \log t - 3 \log t + 6.74849,\]
which we check for $t\geq 37$ just as we checked (\ref{eq:mutuso}).

We conclude that 
$\varpi(q)>\lambda(q)$ if $q\geq 3.3\cdot 10^{9}$ and $210\nmid q$.

{\em Computation.}
Now, for $q < 3.3 \cdot 10^9$ (and also for $3.3\cdot 10^9 \leq q <
2.2\cdot 10^{10}$, $210|q$),
 we need to check that
the maximum $m_{q,R,1}$ of $\err_{q,R}$ over all $\varpi(q)\leq
R<\lambda(q)$ satisfies (\ref{eq:luce}).
Note that there is a term $\err_{q,tR}$ in (\ref{eq:luce}); we bound
it using (\ref{eq:agammen}).

Since $\log R$ is increasing on $R$
and $G_q(R)$ depends only on $\lfloor R\rfloor$, we can tell from 
(\ref{eq:mero}) that, since we are taking the maximum of $\err_{q,R}$, 
it is enough to check integer values of $R$. We check all integers $R$
in $\lbrack \varpi(q),\lambda(q))$ for all $q<3.3\cdot 10^9$
(and all  $3.3\cdot 10^9 \leq q <
2.2\cdot 10^{10}$, $210|q$) by an explicit computation.\footnote{This is
by far the heaviest computation in the present paper, though it is still
rather minor (about two weeks of computing on a single core of a fairly new
(2010) desktop computer carrying out other tasks as well; this is next
to nothing compared to the computations in \cite{Plattfresh}, or even
those in \cite{HelPlat}). For the applications in the present paper,
we could have assumed $\rho\leq 8/15$, and that would have
reduced computation time drastically; the lighter assumption $\rho\leq 0.6$
was made with views to general applicability in the future.
As elsewhere in this section, 
numerical computations were carried out by the author 
in C; all floating-point operations used 
D. Platt's interval arithmetic package.}
\end{proof}

Finally, we have the trivial bound
\begin{equation}\label{eq:triwia}
\frac{G_q(Q_0/sq)}{G_q(Q/sq)} \leq 1,\end{equation}
which we shall use for $Q_0$ close to $Q$.


\begin{cor}\label{cor:coeur}
Let $\{a_n\}_{n=1}^\infty$, $a_n\in \mathbb{C}$, be supported on the primes.
Assume that $\{a_n\}$ is in $\ell_1\cap \ell_2$ and that $a_n=0$ for $n\leq \sqrt{x}$.
Let $Q_0\geq 10^5$, $\delta_0\geq 1$ be such that $(20000 Q_0)^2 \leq 
x/2 \delta_0$; set
$Q = \sqrt{x/2 \delta_0}$. 

Let $S(\alpha) = \sum_n a_n e(\alpha n)$ for $\alpha \in \mathbb{R}/\mathbb{Z}$.
Let $\mathfrak{M}$ as in (\ref{eq:jokor}). Then, if
$Q_0\leq Q^{0.6}$,
\[\int_{\mathfrak{M}} \left|S(\alpha)\right|^2 d\alpha \leq
\frac{\log Q_0 + c_+}{\log Q + c_E} \sum_n |a_n|^2,\]
where $c_+ = 1.36$ and $c_E = \gamma + \sum_{p\geq 2} (\log p)/(p(p-1)) = 
1.3325822\dotsc$.

Let $\mathfrak{M}_{\delta_0,Q_0}$ as in (\ref{eq:majdef}). Then, if
$(2 Q_0)\leq (2 Q)^{0.6}$,
\[\int_{\mathfrak{M}_{\delta_0,Q_0}} \left|S(\alpha)\right|^2 d\alpha \leq
\frac{\log 2 Q_0 + c_+}{\log 2 Q + c_E} \sum_n |a_n|^2.\]
\end{cor}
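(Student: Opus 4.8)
The plan is to combine the large-sieve bounds of Propositions~\ref{prop:ramar} and~\ref{prop:bellen} with the quotient estimate of Proposition~\ref{prop:espagn}; essentially all of the difficulty has already been absorbed into the last of these, so what remains is bookkeeping with hypotheses.

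For the first statement, I would begin by checking that Proposition~\ref{prop:ramar} applies: its requirement $\delta_0 Q_0^2 \leq x/2$ follows a fortiori from the assumed $(20000 Q_0)^2 \leq x/(2\delta_0)$, and with $Q := \sqrt{x/2\delta_0}$ that same assumption gives $Q \geq 20000 Q_0 \geq Q_0$. Proposition~\ref{prop:ramar} then bounds $\int_{\mathfrak{M}} |S(\alpha)|^2\, d\alpha$ by $\sum_n |a_n|^2$ times the maximum of $G_q(Q_0/sq)/G_q(Q/sq)$ over $q \leq Q_0$ and $s \leq Q_0/q$. To control this maximum I would invoke Proposition~\ref{prop:espagn}, whose three hypotheses all hold here: $Q \geq 20000 Q_0$ was just noted, $Q_0 \geq 10^5 = Q_{0,\min}$ is assumed, and $\rho = (\log Q_0)/(\log Q) \leq 0.6$ is exactly the hypothesis $Q_0 \leq Q^{0.6}$. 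Hence each quotient, and therefore the maximum, is at most $(\log Q_0 + c_+)/(\log Q + c_E)$ with $c_+ = 1.36$, which is the claimed inequality.

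For the second statement I would run the same argument with Proposition~\ref{prop:bellen} in place of Proposition~\ref{prop:ramar}: under the same hypotheses it bounds $\int_{\mathfrak{M}_{\delta_0,Q_0}} |S(\alpha)|^2\, d\alpha$ by $\sum_n |a_n|^2$ times the maximum of $G_q(2Q_0/sq)/G_q(2Q/sq)$ over \emph{even} $q \leq 2Q_0$ and $s \leq 2Q_0/q$. The point is then to apply Proposition~\ref{prop:espagn} with the pair $(2Q_0, 2Q)$ playing the role of $(Q_0, Q)$: the hypotheses become $2Q \geq 20000\cdot(2Q_0)$, i.e.\ $Q \geq 20000 Q_0$ (which holds), $2Q_0 \geq 10^5$ (which holds since $Q_0 \geq 10^5$), and $(\log 2Q_0)/(\log 2Q) \leq 0.6$, i.e.\ $2Q_0 \leq (2Q)^{0.6}$ (assumed). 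The proposition then bounds $G_q(2Q_0/sq)/G_q(2Q/sq)$ by $(\log 2Q_0 + c_+)/(\log 2Q + c_E)$ for all admissible $q$, in particular the even ones occurring in the maximum above, giving the second inequality.

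The step needing the most care is really just a consistency check rather than an obstacle: one must verify that substituting $(2Q_0, 2Q)$ for $(Q_0, Q)$ in the second part preserves all three hypotheses of Proposition~\ref{prop:espagn}. Since doubling both arguments leaves the ratio $Q/Q_0$ (hence the condition $Q \geq 20000 Q_0$) unchanged, only strengthens $Q_0 \geq 10^5$, and the exponent condition is hypothesized directly for $2Q_0, 2Q$, this is immediate, and the corollary follows at once.
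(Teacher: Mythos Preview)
Your proposal is correct and follows exactly the route the paper takes: the paper's proof reads in full ``Immediate from Prop.~\ref{prop:ramar}, Prop.~\ref{prop:bellen} and Prop.~\ref{prop:espagn},'' and you have supplied precisely the hypothesis-checking that this one-line proof elides, including the key observation that Prop.~\ref{prop:espagn} applies to the pair $(2Q_0,2Q)$ in the second part.
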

Here, of course, $\int_{\mathbb{R}/\mathbb{Z}} \left|S(\alpha)\right|^2 d\alpha
= \sum_n |a_n|^2$ (Plancherel). If $Q_0>Q^{0.6}$, we will use the trivial bound
\begin{equation}\label{eq:trivo}
\int_{\mathfrak{M}_{\delta_0,r}} \left|S(\alpha)\right|^2 d\alpha \leq
\int_{\mathbb{R}/\mathbb{Z}} \left|S(\alpha)\right|^2 d\alpha =
 \sum_n |a_n|^2.
\end{equation} 
\begin{proof}
Immediate from Prop.~\ref{prop:ramar}, Prop.~\ref{prop:bellen} and
Prop.~\ref{prop:espagn}.
\end{proof}
Obviously, one can also give a statement derived from
Prop.~\ref{prop:ramar}; the resulting bound is
\[\int_{\mathfrak{M}} |S(\alpha)|^2 d\alpha \leq 
\frac{\log Q_0 + c_+}{\log Q + c_E} \sum_n |a_n|^2,\]
where $\mathfrak{M}$ is as in (\ref{eq:jokor}).

We also record the large-sieve form of the result.
\begin{cor}\label{cor:carnatio}
Let $N\geq 1$.
Let $\{a_n\}_{n=1}^\infty$, $a_n\in \mathbb{C}$, be supported on the integers
$n\leq N$.
Let $Q_0\geq 10^5$, $Q\geq 20000 Q_0$. Assume that $a_n=0$ for every $n$ for
which there is a $p\leq Q$ dividing $n$.

Let $S(\alpha) = \sum_n a_n e(\alpha n)$ for $\alpha \in \mathbb{R}/\mathbb{Z}$.
Then, if $Q_0\leq Q^{0.6}$,
\[\sum_{q\leq Q_0} \mathop{\sum_{a \mod q}}_{(a,q)=1} 
       \left|S(a/q)\right|^2 d\alpha \leq
\frac{\log Q_0 + c_+}{\log Q + c_E} \cdot (N + Q^2) \sum_n |a_n|^2
,\]
where $c_+ = 1.36$ and $c_E = \gamma + \sum_{p\geq 2} (\log p)/(p(p-1)) = 
1.3325822\dotsc$.
\end{cor}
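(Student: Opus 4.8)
The plan is to establish Corollary~\ref{cor:carnatio} as a discrete shadow of Corollary~\ref{cor:coeur}: I would rerun the proof of Proposition~\ref{prop:ramar} with $\alpha$ set to $0$ throughout — that is, with every occurrence of $S(\beta+\alpha)$ replaced by $S(\beta)$ and every integral $\int(\cdots)\,d\alpha$ removed — and then replace the final appeal to the disjointness of the Farey arcs by the classical large-sieve inequality. The hypothesis ``$a_n=0$ whenever some prime $p\le Q$ divides $n$'' is what makes this legitimate: every modulus that occurs in the argument (namely $q\le Q_0$, and later $q'=q^*r$ with $q'\le Q$) is at most $Q$, so this hypothesis guarantees $(n,q)=1$ and $(n,r)=1$ whenever $a_n\ne 0$. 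This is precisely the role played in Proposition~\ref{prop:ramar} by the assumption that $\{a_n\}$ is supported on primes exceeding $\sqrt x$: it is all that the Selberg decomposition and the Ramanujan-sum evaluation $c_r(n)=\mu(r)$ require.

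First I would apply the identity of \cite[Thm.~7A]{MR0371840} (due to Selberg): for any $q$ and any $\{a_n\}$ supported on integers coprime to $q$,
\[
\mathop{\sum_{a \mo q}}_{(a,q)=1}\left|S\left(\frac{a}{q}\right)\right|^2 = \frac{1}{\phi(q)}\mathop{\mathop{\sum_{q^*|q}}_{(q^*,q/q^*)=1}}_{\mu^2(q/q^*)=1} q^*\,\sume_{\chi \mo q^*}\left|\sum_n a_n\chi(n)\right|^2 .
\]
Writing $q=q^*r$ with $(q^*,r)=1$ and $\mu^2(r)=1$, summing over $q\le Q_0$, and collecting the $r$-sum into $G_{q^*}$ as in (\ref{eq:malbo}) gives
\[
\sum_{q\le Q_0}\mathop{\sum_{a \mo q}}_{(a,q)=1}\left|S\left(\frac{a}{q}\right)\right|^2 = \sum_{q^*\le Q_0}\frac{q^*}{\phi(q^*)}\,G_{q^*}\!\left(\frac{Q_0}{q^*}\right)\sume_{\chi \mo q^*}\left|\sum_n a_n\chi(n)\right|^2 .
\]
Now I would write $G_{q^*}(Q_0/q^*)=\bigl(G_{q^*}(Q_0/q^*)/G_{q^*}(Q/q^*)\bigr)\,G_{q^*}(Q/q^*)$ and bound the quotient using Proposition~\ref{prop:espagn} in the case $s=1$ — its hypotheses $Q_0\ge 10^5$, $Q\ge 20000\,Q_0$, $(\log Q_0)/(\log Q)\le 0.6$ are exactly those assumed here — obtaining the factor $(\log Q_0+c_+)/(\log Q+c_E)$. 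Enlarging the range of $q^*$ from $q^*\le Q_0$ to $q^*\le Q$ then leaves precisely the quantity
\[
\Sigma = \sum_{q^*\le Q}\frac{q^*}{\phi(q^*)}\mathop{\sum_{r\le Q/q^*}}_{(r,q^*)=1}\frac{\mu^2(r)}{\phi(r)}\,\sume_{\chi \mo q^*}\left|\sum_n a_n\chi(n)\right|^2
\]
that appears, at $\alpha=0$, in the proof of Proposition~\ref{prop:ramar}.

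It remains to bound $\Sigma$, and here I would copy the remaining steps of that proof with $\alpha=0$: for $\chi$ primitive mod $q^*$, the Gauss-sum identity $\overline\chi(r)\chi(n)\tau(\overline\chi)c_r(n)=\sum_{(b,q^*r)=1}\overline\chi(b)e(nb/(q^*r))$ together with $c_r(n)=\mu(r)$ (valid since $(n,r)=1$) and $|\tau(\overline\chi)|=\sqrt{q^*}$ turns $q^*|\sum_n a_n\chi(n)|^2$ into $|\sum_{(b,q^*r)=1}\overline\chi(b)S(b/(q^*r))|^2$; summing over primitive $\chi \mo q^*$, passing to all $\chi \mo q^*$, applying orthogonality, and relabelling $q'=q^*r$ yields $\Sigma\le\sum_{q'\le Q}\mathop{\sum_{b \mo q'}}_{(b,q')=1}|S(b/q')|^2$. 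Since $\{a_n\}$ is supported on $n\le N$, the large-sieve inequality (see, e.g., \cite{MR2061214}) gives $\sum_{q'\le Q}\sum_{(b,q')=1}|S(b/q')|^2\le (N+Q^2)\sum_n|a_n|^2$; combining the three displays proves the corollary. The work here is essentially bookkeeping: the only points that genuinely need checking are that all the coprimality conditions are supplied by the hypothesis on $\{a_n\}$, and that $s=1$ lies in the range covered by Proposition~\ref{prop:espagn}. There is no arc-overlap issue to handle, because the large-sieve inequality already accounts for the full set of Farey fractions of order $Q$.
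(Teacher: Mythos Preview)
Your proposal is correct and follows essentially the same approach as the paper's proof. The paper simply cites Ramar\'e's \cite[Thm.~2.1 and Thm.~5.2]{MR2493924} as a black box and then invokes Proposition~\ref{prop:espagn}, whereas you spell out the Selberg decomposition explicitly (mirroring the proof of Proposition~\ref{prop:ramar} at $\alpha=0$) and finish with the classical large-sieve inequality in place of the arc-disjointness step; these are the same argument at different levels of detail.
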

\begin{proof}
Proceed as Ramar\'e does in the proof of \cite[Thm. 5.2]{MR2493924},
with $\mathscr{K}_q = \{a\in \mathbb{Z}/q\mathbb{Z}: (a,q)=1\}$ and $u_n = a_n$); in particular,
apply \cite[Thm. 2.1]{MR2493924}.
The proof of \cite[Thm. 5.2]{MR2493924} shows that
\[\sum_{q\leq Q_0} \mathop{\sum_{a \mod q}}_{(a,q)=1} 
       \left|S(a/q)\right|^2 d\alpha \leq
\max_{q\leq Q_0} \frac{G_q(Q_0)}{G_q(Q)} \cdot
\sum_{q\leq Q_0} \mathop{\sum_{a \mod q}}_{(a,q)=1} 
       \left|S(a/q)\right|^2 d\alpha.\]
Now, instead of using the easy inequality $G_q(Q_0)/G_q(Q)\leq G_1(Q_0)/G_1(Q/Q_0)$,
use Prop.~\ref{prop:espagn}.
\end{proof}

\begin{center}
* * *
\end{center}

It would seem desirable to prove a result such as Prop.~\ref{prop:espagn}
(or Cor.~\ref{cor:coeur}, or Cor.~\ref{cor:carnatio}).
without computations and with conditions that are as weak as possible.
Since, as we said, we cannot make $c_+$ equal to $c_E$, and since 
$c_+$ does have to increase when the conditions are weakened (as is shown by
computations; this is not an artifact of our method of proof)
 the right goal might be to show that 
the maximum of $G_q(Q_0/sq)/G_q(Q/sq)$ is reached when $s=q=1$.

However, this is also untrue without conditions. For instance, for
$Q_0=2$ and $Q$ large, the value of $G_q(Q_0/q)/G_q(Q/q)$ at $q=2$ is larger
than at $q=1$: by (\ref{eq:malito}),
\[\frac{G_2\left(\frac{Q_0}{2}\right)}{G_2\left(\frac{Q}{2}\right)} \sim
\frac{1}{\frac{1}{2} \left(\log \frac{Q}{2} + c_E + \frac{\log 2}{2}\right)}
= \frac{2}{\log Q + c_E - \frac{\log 2}{2}} > \frac{2}{\log Q + c_E}
\sim \frac{G(Q_0)}{G(Q)}.\]
The same holds for $Q_0=3$, $Q_0=5$ or $Q_0 = 30$, say, since in all these
cases $Q_0/\phi(Q_0) > \log Q_0$. Thus, it is clear that, at the very least,
a lower bound on $Q_0$ is needed as a condition. This also dims the hopes
somewhat for a combinatorial proof of $G_q(Q_0/q) G(Q) \leq G_q(Q/q)
G(Q_0)$; at any rate, while such a proof would be welcome, it could not be
extremely straightforward, since there are terms in $G_q(Q_0/q) G(Q)$
that do not appear in $G_q(Q/q) G(Q_0)$.


\section{The integral over the minor arcs}

The time has come to bound the part of our triple-product integral
(\ref{eq:osto}) that comes from the minor arcs $\mathfrak{m}\subset
\mathbb{R}/\mathbb{Z}$. We have an $\ell_\infty$ estimate (from
Prop.~\ref{prop:gorsh}, based on \cite{Helf}) and an $\ell_2$ estimate
(from \S \ref{sec:intri}). Now we must put them together.

There are two ways in which we must be careful. A trivial bound
of the form $\ell_3^3 = \int |S(\alpha)|^3 d\alpha \leq \ell_2^2 \cdot \ell_\infty$
would introduce a fatal factor of $\log x$ coming from $\ell_2$. We avoid
this by using the fact that we have $\ell_2$ estimates over 
$\mathfrak{M}_{\delta_0,Q_0}$ for varying $Q_0$. 

We must also remember
to substract the major-arc contribution from our estimate for
 $\mathfrak{M}_{\delta_0,Q_0}$; this is why we were careful to give
a lower bound in Lem.~\ref{lem:drujal}, as opposed to just the upper
bound (\ref{eq:mardi}).

\subsection{Putting together $\ell_2$ bounds over arcs and
$\ell_\infty$ bounds}

Let us start with a simple lemma -- essentially a way to obtain upper bounds
by means of summation by parts.
\begin{lem}\label{lem:jardinbota}
Let $f,g:\{a,a+1,\dotsc,b\}\to \mathbb{R}_0^+$, where $a,b\in
\mathbb{Z}^+$. Assume that, for all $x\in \lbrack a,b\rbrack$,
\begin{equation}\label{eq:gorto}
\sum_{a\leq n\leq x} f(n) \leq F(x),
\end{equation}
where $F:\lbrack a,b\rbrack\to \mathbb{R}$ is continuous,
piecewise differentiable and non-decreasing. Then
\[
\sum_{n=a}^b f(n) \cdot g(n) \leq (\max_{n\geq a} g(n))\cdot F(a) 
+ \int_a^{b} (\max_{n\geq u} g(n)) \cdot F'(u) du .
\]
\end{lem}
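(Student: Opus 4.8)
The plan is to prove this by summation by parts, after first replacing $g$ by a convenient monotone majorant. Set $g^*(u) = \max_{u \le n \le b} g(n)$ for $u \in [a,b]$ (the maximum over integers $n$). This $g^*$ is non-increasing and a step function with finitely many jumps; moreover $g(n) \ge 0$ and $g(n) \le g^*(n)$ for every $n$, so it is enough to bound $\sum_{n=a}^b f(n)\, g^*(n)$. Write $S(x) = \sum_{a \le n \le x} f(n)$, so that the hypothesis (\ref{eq:gorto}) reads $S(x) \le F(x)$, and note that $S$ is itself non-decreasing.

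First I would expand $g^*(n)$ via its own increments: writing $g^*(n) = g^*(b) - \int_n^b dg^*(t)$ as a Riemann--Stieltjes integral and interchanging the finite sum over $n$ with the integral, one obtains
\[
\sum_{n=a}^b f(n)\, g^*(n) = g^*(b)\, S(b) - \int_a^b S(t)\, dg^*(t).
\]
Since $g^*$ is non-increasing, $-dg^*$ is a non-negative measure; together with $g^*(b) \ge 0$ and $S(t) \le F(t)$ this gives
\[
\sum_{n=a}^b f(n)\, g^*(n) \le g^*(b)\, F(b) - \int_a^b F(t)\, dg^*(t).
\]

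Then I would integrate by parts in the opposite direction. Because $F$ is continuous (so it has no jumps to collide with those of $g^*$) and piecewise differentiable with $dF = F'(t)\,dt$, Riemann--Stieltjes integration by parts gives
\[
-\int_a^b F(t)\, dg^*(t) = -\big[F(t)\, g^*(t)\big]_a^b + \int_a^b g^*(t)\, F'(t)\, dt = F(a)\, g^*(a) - F(b)\, g^*(b) + \int_a^b g^*(t)\, F'(t)\, dt.
\]
Substituting this back, the terms $\pm\, g^*(b)\, F(b)$ cancel, and since $g^*(a) = \max_{n \ge a} g(n)$ and $g^*(t) = \max_{n \ge t} g(n)$, the claimed inequality follows after recalling $\sum f(n) g(n) \le \sum f(n) g^*(n)$.

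I expect the only real care to be needed in the two Stieltjes manipulations: one must pin down the boundary terms and be sure that no extra ``overlap'' correction appears (it does not, precisely because $F$ is continuous, so $\int_a^b F\, dg^* + \int_a^b g^*\, dF = [F g^*]_a^b$ holds exactly), and one must keep the half-open/closed conventions straight in $S(x) = \sum_{a \le n \le x} f(n)$ so that the interchange of sum and integral produces $S(t)$ paired against $dg^*(t)$ rather than a one-sided limit. A completely equivalent but Stieltjes-free route is to write the non-increasing step function $g^*$ as $g^*(b)$ plus a non-negative linear combination of indicators $I_{[a, u_j]}$ of its down-steps, apply $S(u_j) \le F(u_j)$ term by term, and reassemble; I would present whichever version turns out shorter.
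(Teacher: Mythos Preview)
Your proposal is correct and follows essentially the same route as the paper: replace $g$ by its non-increasing majorant $g^* = h$, sum by parts to bring in $S$, use $S \le F$ against the non-negative increments of $-dg^*$, and then sum by parts back to produce $F'$. The paper carries this out with discrete Abel summation followed by the continuous Abel formula (\ref{eq:jokors}), whereas you phrase both steps in Riemann--Stieltjes language; your ``Stieltjes-free'' alternative is exactly the paper's argument.
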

\begin{proof}
Let $S(n) = \sum_{m=a}^n f(m)$. Then, by partial summation,
\begin{equation}\label{eq:marshti}
\sum_{n=a}^b f(n) \cdot g(n) \leq S(b) g(b) + \sum_{n=a}^{b-1} S(n) (g(n) -
g(n+1)) .
\end{equation}
Let $h(x) = \max_{x\leq n\leq b} g(n)$. Then $h$ is non-increasing. Hence
(\ref{eq:gorto}) and (\ref{eq:marshti}) imply that
 \[\begin{aligned}
\sum_{n=a}^b f(n) g(n) &\leq \sum_{n=a}^b f(n) h(n)\\
&\leq S(b) h(b) + \sum_{n=a}^{b-1} S(n) (h(n) - h(n+1)) \\
&\leq F(b) h(b) + \sum_{n=a}^{b-1} F(n) (h(n) - h(n+1)) .
\end{aligned}\]
In general, for $\alpha_n\in \mathbb{C}$,  $A(x)=\sum_{a\leq n\leq x} \alpha_n$
and $F$ continuous and piecewise differentiable on $\lbrack a,x\rbrack$,
\begin{equation}\label{eq:jokors}
\sum_{a\leq n\leq x} \alpha_n F(x) = A(x) F(x) - \int_a^x A(u) F'(u) du .
\;\;\;\;\;\;\;\text{({\em Abel summation})}\end{equation}
Applying this with $\alpha_n = h(n) - h(n+1)$ and $A(x) = \sum_{a\leq n\leq
x} \alpha_n  = h(a) - h(\lfloor x\rfloor + 1)$, we obtain
\[\begin{aligned}
\sum_{n=a}^{b-1} &F(n) (h(n)-h(n+1)) \\&= (h(a)- h(b)) F(b-1) - 
\int_a^{b-1} (h(a) - h(\lfloor u\rfloor +1)) F'(u) du\\
&= h(a) F(a) - h(b) F(b-1) + \int_a^{b-1} h(\lfloor u\rfloor+1) F'(u) du\\
&= h(a) F(a) - h(b) F(b-1) + \int_a^{b-1} h(u) F'(u) du\\
&= h(a) F(a) - h(b) F(b) + \int_a^{b} h(u) F'(u) du,\end{aligned}\]
since $h(\lfloor u\rfloor + 1) = h(u)$ for $u\notin \mathbb{Z}$.
Hence
\[\sum_{n=a}^b f(n) g(n) \leq h(a) F(a) + \int_a^{b} h(u) F'(u) du .\] 
\end{proof}

We will now see our main application of Lemma \ref{lem:jardinbota}.
We have to bound an integral of the form
$\int_{\mathfrak{M}_{\delta_0,r}} |S_1(\alpha)|^2 |S_2(\alpha)|  d\alpha$,
where $\mathfrak{M}_{\delta_0,r}$ is 
a union of arcs defined as in (\ref{eq:majdef}). Our inputs are (a) a bound
on integrals of the form $\int_{\mathfrak{M}_{\delta_0,r}} |S_1(\alpha)|^2 d\alpha$,
(b) a bound on $|S_2(\alpha)|$ for $\alpha\in 
(\mathbb{R}/\mathbb{Z})\setminus \mathfrak{M}_{\delta_0,r}$. The input of type
(a) is what we derived in \S \ref{subs:ramar} and \S \ref{subs:boquo}; the
input of type (b) is a minor-arcs bound, and as such is the main subject
of \cite{Helf}.  

\begin{prop}\label{prop:palan}
Let $S_1(\alpha) = \sum_n a_n e(\alpha n)$, $a_n \in \mathbb{C}$,
$\{a_n\}$ in $L^1$. 
Let $S_2:\mathbb{R}/\mathbb{Z}\to \mathbb{C}$ be continuous.
Define $\mathfrak{M}_{\delta_0,r}$ as in (\ref{eq:majdef}).

Let $r_0$ be a positive integer not greater than $r_1$. 
Let $H:\lbrack r_0,r_1\rbrack
 \to \mathbb{R}^+$ be a continuous, piecewise differentiable, non-decreasing
function such that
\begin{equation}\label{eq:qewer}
\frac{1}{\sum |a_n|^2} \int_{\mathfrak{M}_{\delta_0,r+1}} |S_1(\alpha)|^2 d\alpha \leq 
H(r)
\end{equation}
for some $\delta_0 \leq x/2 r_1^2$ and
all $r\in \lbrack r_0,r_1\rbrack$. Assume, moreover, that $H(r_1)=1$.
Let $g:\lbrack r_0,
r_1\rbrack \to \mathbb{R}^+$ be a non-increasing function such that
\begin{equation}\label{eq:rien}
\max_{\alpha \in (\mathbb{R}/\mathbb{Z})\setminus \mathfrak{M}_{\delta_0,r}} 
|S_2(\alpha)| \leq g(r)
\end{equation}
for all $r\in \lbrack r_0,r_1\rbrack$ and $\delta_0$ as above. 

Then
\begin{equation}\label{eq:malmu}\begin{aligned}
\frac{1}{\sum_n |a_n|^2} 
&\int_{(\mathbb{R}/\mathbb{Z})\setminus 
\mathfrak{M}_{\delta_0,r_0}} |S_1(\alpha)|^2 |S_2(\alpha)|  d\alpha 
\\ &\leq
g(r_0) \cdot (H(r_0) - I_0) + \int_{r_0}^{r_1} g(r) H'(r) dr ,
\end{aligned}\end{equation}
where
\begin{equation}\label{eq:pentimento}
I_0 = \frac{1}{\sum_n |a_n|^2} \int_{\mathfrak{M}_{\delta_0,r_0}} |S_1(\alpha)|^2 d\alpha.\\
\end{equation}
\end{prop}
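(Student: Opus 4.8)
The plan is to establish (\ref{eq:malmu}) by summation by parts: one pairs the $L^2$-mass of $S_1$ on the successive shells $\mathfrak{M}_{\delta_0,n+1}\setminus\mathfrak{M}_{\delta_0,n}$, which $H$ controls, against the size of $|S_2|$ on those shells, which $g$ controls, and feeds the result into Lemma~\ref{lem:jardinbota}. Write $T=\sum_n|a_n|^2$ and $J(r)=\tfrac{1}{T}\int_{\mathfrak{M}_{\delta_0,r}}|S_1(\alpha)|^2\,d\alpha$. The one structural input is the immediate fact that $r\mapsto\mathfrak{M}_{\delta_0,r}$ is non-decreasing: in (\ref{eq:majdef}) both the ranges of admissible $q$ and the half-widths of the arcs increase with $r$. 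Hence $J$ is non-decreasing, $J(r_0)=I_0$, and, combining monotonicity with (\ref{eq:qewer}), $J(n)\le J(n+1)\le H(n)$ for $r_0\le n\le r_1$. Moreover, since $\{a_n\}\in\ell_1\subseteq\ell_2$, Parseval gives $\tfrac{1}{T}\int_{\mathbb{R}/\mathbb{Z}}|S_1|^2=1$, so $\tfrac{1}{T}\int_{(\mathbb{R}/\mathbb{Z})\setminus\mathfrak{M}_{\delta_0,r}}|S_1|^2=1-J(r)$.

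Assume first that $r_1$ is an integer. I would split $(\mathbb{R}/\mathbb{Z})\setminus\mathfrak{M}_{\delta_0,r_0}$ into the shells $\mathfrak{M}_{\delta_0,n+1}\setminus\mathfrak{M}_{\delta_0,n}$ for $r_0\le n\le r_1-1$ together with the outer region $(\mathbb{R}/\mathbb{Z})\setminus\mathfrak{M}_{\delta_0,r_1}$. Every $\alpha$ in the $n$-th shell lies outside $\mathfrak{M}_{\delta_0,n}$, so $|S_2(\alpha)|\le g(n)$ by (\ref{eq:rien}); likewise $|S_2(\alpha)|\le g(r_1)$ on the outer region. Therefore
\[
\tfrac{1}{T}\int_{(\mathbb{R}/\mathbb{Z})\setminus\mathfrak{M}_{\delta_0,r_0}}|S_1|^2|S_2|
\;\le\;\sum_{n=r_0}^{r_1-1}g(n)\bigl(J(n+1)-J(n)\bigr)+g(r_1)\bigl(1-J(r_1)\bigr).
\]
Now apply Lemma~\ref{lem:jardinbota} on $\{r_0,\dots,r_1\}$ with the given non-increasing $g$, with $f(n)=J(n+1)-J(n)$ for $n<r_1$ and $f(r_1)=1-J(r_1)$, and with $F(x)=H(x)-I_0$ (continuous, piecewise differentiable, non-decreasing). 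Hypothesis (\ref{eq:gorto}) holds: for $x<r_1$, $\sum_{r_0\le n\le x}f(n)=J(\lfloor x\rfloor+1)-I_0\le H(\lfloor x\rfloor)-I_0\le H(x)-I_0$, and for $x=r_1$ the sum telescopes to $1-I_0=H(r_1)-I_0$ since $H(r_1)=1$. As $g$ is non-increasing, $\max_{n\ge r_0}g(n)=g(r_0)$ and $\max_{n\ge u}g(n)\le g(u)$, so the lemma yields the bound $g(r_0)(H(r_0)-I_0)+\int_{r_0}^{r_1}g(u)H'(u)\,du$, which is (\ref{eq:malmu}).

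The point that needs care — essentially the only one — is the treatment of the boundary: the outer region $(\mathbb{R}/\mathbb{Z})\setminus\mathfrak{M}_{\delta_0,r_1}$ is not one of the shells, its $S_1$-mass $1-J(r_1)$ is furnished by Parseval rather than by $H$, and it is precisely the hypothesis $H(r_1)=1$ that makes the total mass equal $1-I_0$ and so lets this tail be absorbed into $F$; the ``$+1$'' shift in (\ref{eq:qewer}) is used only through the weaker consequence $J(n+1)\le H(n)$, so nothing sharper is needed. When $r_1\notin\mathbb{Z}$ one argues identically, inserting the extra partial shell $\mathfrak{M}_{\delta_0,r_1}\setminus\mathfrak{M}_{\delta_0,\lfloor r_1\rfloor}$, on which $|S_2|\le g(\lfloor r_1\rfloor)$; alternatively, and more cleanly, one runs the continuous analogue of Lemma~\ref{lem:jardinbota}: for $\alpha\notin\mathfrak{M}_{\delta_0,r_0}$ one has the layer-cake bound $|S_2(\alpha)|\le g(r_1)+\int_{r_0}^{r_1}\mathbf{1}[\alpha\in\mathfrak{M}_{\delta_0,r}]\,(-dg)(r)$ (valid since $g$ is continuous in all our applications; in general replace the right side by a one-sided limit), and then multiplying by $|S_1(\alpha)|^2/T$, integrating over $(\mathbb{R}/\mathbb{Z})\setminus\mathfrak{M}_{\delta_0,r_0}$, interchanging the order of integration (all integrands non-negative), using $J(r)\le H(r)$, and integrating by parts in the Riemann--Stieltjes sense produces (\ref{eq:malmu}), the $g(r_1)$ boundary terms again cancelling because $H(r_1)=1$. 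Possible jumps of $J$ at the $r$ where new denominators enter cause no trouble once all integrals are read as Riemann--Stieltjes integrals.
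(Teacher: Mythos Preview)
Your proof is correct and follows essentially the same route as the paper: define the shell masses $f(n)$, bound $|S_2|$ on each shell by $g(n)$ via (\ref{eq:rien}), verify the partial-sum hypothesis of Lemma~\ref{lem:jardinbota} with $F=H-I_0$ using (\ref{eq:qewer}) and $H(r_1)=1$, and read off (\ref{eq:malmu}). Your explicit treatment of non-integer $r_1$ (and the layer-cake alternative) is a welcome addition, as the paper tacitly applies Lemma~\ref{lem:jardinbota} with $b=r_1$ even though that lemma is stated for integer endpoints.
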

The condition $\delta_0\leq x/2 r_1^2$ is there just to ensure that the
arcs in the definition of $\mathfrak{M}_{\delta_0,r}$ do not overlap for $r\leq r_1$.
\begin{proof}
For $r_0\leq r< r_1$, let
\[f(r) = \frac{1}{\sum_n |a_n|^2}  \int_{\mathfrak{M}_{\delta_0,r+1}\setminus \mathfrak{M}_{\delta_0,r}} |S_1(\alpha)|^2 d\alpha.\]
Let
\[f(r_1) = \frac{1}{\sum_n |a_n|^2}  \int_{(\mathbb{R}/\mathbb{Z})\setminus
  \mathfrak{M}_{\delta_0,r_1}} |S_1(\alpha)|^2 d\alpha.\]
Then, by (\ref{eq:rien}),
\[\frac{1}{\sum_n |a_n|^2} \int_{(\mathbb{R}/\mathbb{Z})\setminus \mathfrak{M}_{\delta_0,r_0}} |S_1(\alpha)|^2 |S_2(\alpha)|
d\alpha \leq \sum_{r=r_0}^{r_1} f(r) g(r).\]

By (\ref{eq:qewer}),
\begin{equation}\label{eq:gogol}\begin{aligned}
\sum_{r_0\leq r\leq x} f(r) &= \frac{1}{\sum_n |a_n|^2} 
\int_{\mathfrak{M}_{\delta_0,x+1}\setminus \mathfrak{M}_{\delta_0,r_0}} |S_1(\alpha)|^2 d\alpha \\ &= 
\left(
\frac{1}{\sum_n |a_n|^2}  \int_{\mathfrak{M}_{\delta_0,x+1}} |S_1(\alpha)|^2 d\alpha
\right) - I_0 \leq H(x) - I_0
\end{aligned}\end{equation}
for $x\in \lbrack r_0,r_1)$. Moreover,
\[\begin{aligned}\sum_{r_0\leq r\leq r_1} f(r) &= 
\frac{1}{\sum_n |a_n|^2}  \int_{(\mathbb{R}/\mathbb{Z})\setminus \mathfrak{M}_{\delta_0,r_0}} |S_1(\alpha)|^2\\
&= \left(
\frac{1}{\sum_n |a_n|^2}  \int_{\mathbb{R}/\mathbb{Z}} |S_1(\alpha)|^2\right) - I_0
= 1 - I_0 = H(r_1) - I_0.\end{aligned}\]

We let $F(x) = H(x) - I_0$
and apply Lemma \ref{lem:jardinbota} with $a=r_0$, $b=r_1$. We obtain that
\[\begin{aligned}
\sum_{r=r_0}^{r_1} f(r) g(r) &\leq (\max_{r\geq r_0} g(r)) F(r_0) +
\int_{r_0}^{r_1} (\max_{r\geq u} g(r))  F'(u)\; du\\
&\leq g(r_0) (H(r_0)- I_0) + 
\int_{r_0}^{r_1} g(u) H'(u)\; du.\end{aligned}\]
\end{proof}

\subsection{The minor-arc total}

We now apply Prop.~\ref{prop:palan}.
Inevitably, the main statement
involves some integrals that will have to be evaluated
at the end of the section.

\begin{thm}\label{thm:ostop}
Let $x\geq 10^{25}\cdot \varkappa$, where
$\varkappa\geq 1$.
Let
\begin{equation}\label{eq:lalaz}
S_\eta(\alpha,x) = \sum_n \Lambda(n) e(\alpha n) \eta(n/x).\end{equation}
Let $\eta_*(t) = (\eta_2 \ast_M \varphi)(\varkappa t)$, where $\eta_2$ is as in (\ref{eq:meichu})
and $\varphi: \lbrack 0,\infty)\to \lbrack 0, \infty)$ is continuous and in $\ell^1$.
Let $\eta_+:\lbrack 0,\infty)\to \lbrack 0,\infty)$ 
be a bounded, piecewise differentiable function with $\lim_{t\to \infty}
\eta_+(t)=0$.
Let $\mathfrak{M}_{\delta_0,r}$ be as in (\ref{eq:majdef}) with $\delta_0=8$.
Let $10^5 \leq r_0 < r_1$, where $r_1 = (3/8) (x/\varkappa)^{4/15}$.

Let 
\[Z_{r_0} = \int_{(\mathbb{R}/\mathbb{Z})\setminus \mathfrak{M}_{8,r_0}} 
|S_{\eta_*}(\alpha,x)| |S_{\eta_+}(\alpha,x)|^2 d\alpha.\]
Then
\[Z_{r_0} \leq \left(\sqrt{\frac{|\varphi|_1 x}{\varkappa} (M + T) } + 
\sqrt{S_{\eta_*}(0,x) \cdot E}\right)^2,\]
where
\begin{equation}\label{eq:georgic}\begin{aligned}
S &= \sum_{p>\sqrt{x}} (\log p)^2 \eta_+^2(n/x),\\
T &= C_{\varphi,3}(\log x) \cdot
(S - (\sqrt{J} - \sqrt{E})^2),\\
J&= \int_{\mathfrak{M}_{8,r_0}} |S_{\eta_+}(\alpha,x)|^2\; d\alpha,\\
E &= 
\left((C_{\eta_+,0} + C_{\eta_+,2}) \log x + (2 C_{\eta_+,0} + C_{\eta_+,1})\right)
\cdot x^{1/2},
\end{aligned}\end{equation}
\begin{equation}\label{eq:malus}\begin{aligned}
C_{\eta_+,0} &= 0.7131 \int_0^\infty \frac{1}{\sqrt{t}}
(\sup_{r\geq t} \eta_+(r))^2 dt,\\
C_{\eta_+,1} &= 0.7131 \int_1^\infty \frac{\log t}{\sqrt{t}}
(\sup_{r\geq t} \eta_+(r))^2 dt,\\
C_{\eta_+,2} &= 0.51942 |\eta_+|_\infty^2,\\
C_{\varphi,3}(K) &= \frac{1.04488}{|\varphi|_1} \int_0^{1/K} |\varphi(w)| dw
\end{aligned}\end{equation}
and 
\begin{equation}\label{eq:gypo}\begin{aligned}
M &= 
g(r_0) \cdot 
\left(\frac{\log (r_0+1)+c^+}{\log \sqrt{x} + c^-} \cdot S -
(\sqrt{J}-\sqrt{E})^2 \right)
\\ &+  \left(\frac{2}{\log x + 2 c^-}
\int_{r_0}^{r_1} \frac{g(r)}{r}  dr + \left(
\frac{7}{15}
+ \frac{- 2.14938 + \frac{8}{15} \log \varkappa}{\log x + 2 c^-}\right) g(r_1)\right)
\cdot S 
\end{aligned}\end{equation}
where $g(r)=g_{x/\varkappa,\varphi}(r)$ with $K = \log(x/\varkappa)/2$
 (see (\ref{eq:basia})), $c_+ = 2.3912$ and
$c_- = 0.6294$.
\end{thm}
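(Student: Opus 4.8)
The plan is to split off the part of $S_{\eta_+}(\alpha,x)$ carried by primes $p>\sqrt{x}$, handle that part with the large sieve for primes (via Proposition~\ref{prop:palan} and Corollary~\ref{cor:coeur}) and with the minor-arc $\ell_\infty$ bound of Proposition~\ref{prop:gorsh}, and dispatch the remaining (``prime power'') part trivially. Concretely, write $\Lambda(n)\eta_+(n/x)=a_n^{(1)}+a_n^{(2)}$, where $a_n^{(1)}=(\log p)\eta_+(p/x)$ if $n=p>\sqrt{x}$ is a prime and $a_n^{(1)}=0$ otherwise, and $a_n^{(2)}$ collects the terms coming from primes $p\le\sqrt{x}$ and from proper prime powers. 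Let $\Sigma_1(\alpha)=\sum_n a_n^{(1)}e(\alpha n)$ and $\Sigma_2(\alpha)=\sum_n a_n^{(2)}e(\alpha n)$, so $S_{\eta_+}=\Sigma_1+\Sigma_2$, $\sum_n|a_n^{(1)}|^2=S$, and, by partial summation against explicit Chebyshev-type estimates for $\theta$ and $\psi$, $\sum_n|a_n^{(2)}|^2\le E$ with $E$ as in \eqref{eq:georgic}--\eqref{eq:malus}. Then, since $|S_{\eta_+}|^2\le(|\Sigma_1|+|\Sigma_2|)^2$ and $\int|S_{\eta_*}||\Sigma_1||\Sigma_2|\le(\int|S_{\eta_*}||\Sigma_1|^2)^{1/2}(\int|S_{\eta_*}||\Sigma_2|^2)^{1/2}$ by Cauchy--Schwarz with weight $|S_{\eta_*}|$, we get $Z_{r_0}\le(\sqrt{A}+\sqrt{B})^2$, where $A=\int_{(\mathbb{R}/\mathbb{Z})\setminus\mathfrak{M}_{8,r_0}}|S_{\eta_*}||\Sigma_1|^2$ and $B=\int_{(\mathbb{R}/\mathbb{Z})\setminus\mathfrak{M}_{8,r_0}}|S_{\eta_*}||\Sigma_2|^2$.

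\textbf{The term $B$.} Because $\eta_*\ge 0$ and $\Lambda\ge 0$, one has $|S_{\eta_*}(\alpha,x)|\le S_{\eta_*}(0,x)$ for all $\alpha$, so $B\le S_{\eta_*}(0,x)\int_{\mathbb{R}/\mathbb{Z}}|\Sigma_2|^2=S_{\eta_*}(0,x)\sum_n|a_n^{(2)}|^2\le S_{\eta_*}(0,x)\,E$; this accounts for the summand $\sqrt{S_{\eta_*}(0,x)\,E}$.

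\textbf{The term $A$.} Writing $\eta_*(t)=(\eta_2\ast_M\varphi)(\varkappa t)$ means $S_{\eta_*}(\alpha,x)=S_{\eta_2\ast_M\varphi}(\alpha,x/\varkappa)$, so Proposition~\ref{prop:gorsh} applies with $x$ replaced by $x/\varkappa$ and $K=\log(x/\varkappa)/2$. Following the proof of that proposition, the piece $\int_0^{1/K}S_{\eta_2}(\alpha,w x/\varkappa)\varphi(w)\,dw/w$ is bounded trivially by a quantity $C_{\varphi,0,K}(x/\varkappa)$ \emph{independent of $\alpha$} (crudely $\le C_{\varphi,3}(\log x)\,|\varphi|_1\,x/\varkappa$ since $\log x$ may replace $K$), while the remaining ``good'' piece is $\le g_{x/\varkappa,\varphi}(r)\,|\varphi|_1(x/\varkappa)$ for $\alpha\notin\mathfrak{M}_{8,r}$, $r_0\le r\le r_1$ (using the standard translation between membership in $\mathfrak{M}_{8,r}$ and Dirichlet approximation of $2\alpha$). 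The constant-in-$\alpha$ part contributes $C_{\varphi,0,K}(x/\varkappa)\int_{(\mathbb{R}/\mathbb{Z})\setminus\mathfrak{M}_{8,r_0}}|\Sigma_1|^2\le\frac{|\varphi|_1 x}{\varkappa}\,T$, because $\int_{\mathfrak{M}_{8,r_0}}|\Sigma_1|^2\ge(\sqrt{J}-\sqrt{E})^2$ by the reverse triangle inequality in $L^2(\mathfrak{M}_{8,r_0})$ applied to $S_{\eta_+}=\Sigma_1+\Sigma_2$ (here $J$ is the quantity in \eqref{eq:georgic}, whose needed lower bound is supplied by Lemma~\ref{lem:drujal} when the theorem is applied). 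For the good part, split at $r_1$: on $(\mathbb{R}/\mathbb{Z})\setminus\mathfrak{M}_{8,r_1}$ the minor-arc approximant has $q>r_1$, and by Lemma~\ref{lem:vinc} ($g_{x/\varkappa,\varphi}$ decreasing for $r\ge 175$, and $r_0\ge 10^5$) together with Lemma~\ref{lem:gosia} (which bounds the large-$q$ estimate $h$ by $g_{x/\varkappa,\varphi}(r_1)$) the good part is $\le g_{x/\varkappa,\varphi}(r_1)|\varphi|_1(x/\varkappa)$ there, contributing $g_{x/\varkappa,\varphi}(r_1)|\varphi|_1(x/\varkappa)(S-(\sqrt{J}-\sqrt{E})^2)$. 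On $\mathfrak{M}_{8,r_1}\setminus\mathfrak{M}_{8,r_0}$ we apply Proposition~\ref{prop:palan} with $S_1=\Sigma_1$ (prime support, $a_n=0$ for $n\le\sqrt{x}$, in $L^1$), $S_2=S_{\eta_*}$ (good part), $g(r)=g_{x/\varkappa,\varphi}(r)|\varphi|_1(x/\varkappa)$, $\delta_0=8$ (legitimate since $8\le x/2r_1^2$), $H(r)=\bigl(\log 2(r+1)+c_+\bigr)/\bigl(\log(\sqrt{x}/2)+c_E\bigr)$ from Corollary~\ref{cor:coeur} (its hypothesis $2(r_1+1)\le(\sqrt{x}/2)^{0.6}$ holds as $4/15<3/10$), extended to be continuous and equal to $1$ at $r_1$; the rise of $H$ to $1$ at $r_1$ produces the $g(r_1)$ terms of $M$, and $\int_{r_0}^{r_1}g(r)H'(r)\,dr$ produces, after multiplying back by $\sum_n|a_n^{(1)}|^2=S$ and absorbing $\log 2$ into the constants $c^\pm$, the $\frac{2}{\log x+2c^-}\int_{r_0}^{r_1}\frac{g(r)}{r}\,dr\cdot S$ term. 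Collecting everything, $A\le\frac{|\varphi|_1 x}{\varkappa}(M+T)$, and hence $Z_{r_0}\le\bigl(\sqrt{\frac{|\varphi|_1 x}{\varkappa}(M+T)}+\sqrt{S_{\eta_*}(0,x)\,E}\bigr)^2$.

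\textbf{Main obstacle.} The genuine difficulty is the bookkeeping at the transition $r=r_1$: one must check that $g_{x/\varkappa,\varphi}$ is monotone and applicable on all of $[r_0,r_1]$ (Lemma~\ref{lem:vinc}), that Lemma~\ref{lem:gosia} lets the ``large-$q$'' bound $h_\varphi$ be absorbed into $g_{x/\varkappa,\varphi}(r_1)$ beyond $r_1$, and that the $\alpha$-independent tail from the $\int_0^{1/K}$ piece is separated cleanly so that it is counted once (as $T$) rather than both inside the $g(r)$ used in Proposition~\ref{prop:palan} and again separately; alongside this, one must reconcile the arc conventions of Proposition~\ref{prop:palan} and Corollary~\ref{cor:coeur} (stated for $\mathfrak{M}_{\delta_0,r+1}$ and $Q=\sqrt{x/2\delta_0}=\sqrt{x}/4$) with the explicit constants $c^+$, $c^-$, and verify the side conditions ($H$ non-decreasing, $H(r_1)=1$, $\delta_0\le x/2r_1^2$, the large-sieve range $Q_0\le Q^{0.6}$). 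The remaining estimates — the Chebyshev-type bound yielding $E$, and the eventual evaluation of $\int_{r_0}^{r_1}g(r)\,dr/r$ — are routine and are deferred to the end of the section.
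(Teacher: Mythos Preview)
Your strategy and ingredients match the paper's exactly: the decomposition $S_{\eta_+}=\Sigma_1+\Sigma_2$, the trivial bound giving $B\le S_{\eta_*}(0,x)\,E$, Prop.~\ref{prop:gorsh} with the $C_{\varphi,3}$ piece split off (producing $T$), Prop.~\ref{prop:palan} fed by Cor.~\ref{cor:coeur}, Lemmas~\ref{lem:vinc} and~\ref{lem:gosia} for monotonicity and the large-$q$ regime, and the reverse triangle inequality yielding $I_0\cdot S\ge(\sqrt{J}-\sqrt{E})^2$.

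The one real slip is in how you treat the region beyond $r_1$. You simultaneously (i) bound $\int_{(\mathbb{R}/\mathbb{Z})\setminus\mathfrak{M}_{8,r_1}}$ directly by $g(r_1)\bigl(S-(\sqrt{J}-\sqrt{E})^2\bigr)$ and (ii) apply Prop.~\ref{prop:palan} ``on $\mathfrak{M}_{8,r_1}\setminus\mathfrak{M}_{8,r_0}$'' with $H$ extended to $1$ at $r_1$, attributing the $g(r_1)$ terms of $M$ to the jump of $H$. These are incompatible. Prop.~\ref{prop:palan} with $H(r_1)=1$ already bounds the integral over the \emph{full} complement of $\mathfrak{M}_{8,r_0}$ (that is precisely what $H(r_1)=1$ encodes; see the proof of the proposition), so adding (i) on top of it double-counts. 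Conversely, if you genuinely restrict the proposition to the annulus, $H(r_1)$ must be its large-sieve value rather than $1$, and then there is no jump term at all. The paper does \emph{not} split the region at $r_1$: it applies Prop.~\ref{prop:palan} once to all of $(\mathbb{R}/\mathbb{Z})\setminus\mathfrak{M}_{8,r_0}$, with $g$ extended to the constant $g_{x/\varkappa,\varphi}(r_1)$ for $r\ge r_1$ (Lemmas~\ref{lem:vinc} and~\ref{lem:gosia} are invoked only to verify that this extended $g$ satisfies \eqref{eq:rien} for every $r$) and with $H(r)=1$ for $r\ge r_1$. The $g(r_1)$ term in $M$ then comes entirely from the single jump $1-\lim_{r\to r_1^-}H(r)$, which the paper bounds by $\tfrac{7}{15}+\bigl(-2.14938+\tfrac{8}{15}\log\varkappa\bigr)/(\log x+2c^-)$. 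Drop the separate geometric contribution and apply Prop.~\ref{prop:palan} to the whole complement; then your argument coincides with the paper's.
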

\begin{proof}
Let $y = x/\varkappa$.
 Let $Q = (3/4) y^{2/3}$, as in \cite[Main Thm.]{Helf}
(applied with $y$ instead of $x$). 
Let $\alpha\in (\mathbb{R}/\mathbb{Z})\setminus \mathfrak{M}_{8,r}$,
where $r\geq r_0$ and $y$ is used instead of $x$ to define
$\mathfrak{M}_{8,r}$ (see (\ref{eq:majdef})). There
exists an approximation $2 \alpha =a/q + \delta/y$ with $q\leq Q$,
$|\delta|/y\leq 1/q Q$. Thus, $\alpha = a'/q' +\delta/2 y$, where
either $a'/q' = a/2q$ or $a'/q' = (a+q)/2q$ holds.
(In particular, if $q'$ is odd, then $q'=q$; if $q'$ is even, then 
$q'$ may be $q$ or $2q$.)

There are three cases:
\begin{enumerate}
\item $q\leq r$. Then either (a) $q'$ is odd and $q'\leq r$ or (b)
$q'$ is even and $q'\leq 2 r$.
Since $\alpha$ is not in $\mathfrak{M}_{8,r}$, then, by
definition (\ref{eq:majdef}), $|\delta|/2 y \geq
\delta_0 r/2 q y$, and so $|\delta|\geq  
\delta_0 r/q = 8 r/q$. In particular, $|\delta|\geq 8$.

Thus, by Prop.~\ref{prop:gorsh},
\begin{equation}\label{eq:gropa}
|S_{\eta_*}(\alpha,x)| = 
|S_{\eta_2\ast_M \phi}(\alpha,y)| 
\leq g_{y,\varphi}\left(\frac{|\delta|}{8} q\right)
\cdot |\varphi|_1 y \leq g_{y,\varphi}(r) \cdot |\varphi|_1 y,
\end{equation}
where we use the fact that $g(r)$ is a decreasing function 
(Lemma \ref{lem:vinc}).
\item $r < q \leq y^{1/3}/6$. Then, 
 by Prop.~\ref{prop:gorsh} and Lemma \ref{lem:vinc},
\begin{equation}\label{eq:grope}
|S_{\eta_*}(\alpha,x)| = 
|S_{\eta_2\ast_M \phi}(\alpha,y)| 
\leq g_{y,\varphi}\left(\max\left(\frac{|\delta|}{8},1\right) q
\right) \cdot |\varphi|_1 y\leq g_{y,\varphi}(r) \cdot |\varphi|_1 y.\end{equation}
\item $q> y^{1/3}/6$. Again by Prop.~\ref{prop:gorsh},
\begin{equation}\label{eq:gropi}
|S_{\eta_*}(\alpha,x)| = 
|S_{\eta_2\ast_M \phi}(\alpha,y)| 
\leq \left(h\left(\frac{y}{K}\right) + 
C_{\varphi,3}(K)\right) |\varphi|_1 y, 
\end{equation}
where $h(x)$ is as in (\ref{eq:flou}). (Note that $C_{\varphi,3}(K)$, as in
(\ref{eq:malus}), equals $C_{\varphi,0,K}/|\phi|_1$, where
$C_{\varphi,0,K}$ is as in (\ref{eq:midin}).)
 We set $K = (\log y)/2$. Since $y = 
x/\kappa\geq 10^{25}$, it follows that
$y/K = 2 y/\log y > 2.16 \cdot 10^{20}$.
\end{enumerate}
Let 
\[\begin{aligned}
r_1 = \frac{3}{8} y^{4/15},\;\;\;\;\;\;\;\;
g(r) = \begin{cases} g_{x,\varphi}(r) &\text{if $r\leq r_1$},\\
g_{x,\varphi}(r_1) &\text{if $r> r_1$.}\end{cases}\end{aligned}\]
By Lemma \ref{lem:vinc}, $g(r)$ is a decreasing function
for $r\geq 175$; moreover, by
Lemma \ref{lem:gosia}, $g_{y,\phi}(r_1) \geq h(2 y/\log y)$, where $h$
is as in (\ref{eq:flou}), and so
$g(r)\geq h(2 y/\log y)$ for all $r\geq r_0 > 175$. Thus, we
have shown that
\begin{equation}\label{eq:bertru}
|S_{\eta_*}(y,\alpha)|\leq 
\left(g(r) + C_{\varphi,3}\left(\frac{\log y}{2}\right)\right) \cdot |\varphi|_1 y \end{equation}
for all $\alpha\in (\mathbb{R}/\mathbb{Z})\setminus \mathfrak{M}_{8,r}$.

We first need to undertake the fairly
dull task of getting non-prime or small $n$ out of the sum defining
$S_{\eta_+}(\alpha,x)$. 
Write \[\begin{aligned}
S_{1,\eta_+}(\alpha,x) &= 
\sum_{p>\sqrt{x}} (\log p) e(\alpha p) \eta_+(p/x),\\
S_{2,\eta_+}(\alpha,x) &= 
\mathop{\sum_{\text{$n$ non-prime}}}_{n>\sqrt{x}} \Lambda(n) e(\alpha n) \eta_+(n/x) +
\sum_{n\leq \sqrt{x}} \Lambda(n) e(\alpha n)\eta_+(n/x).\end{aligned}\]
By the triangle inequality (with weights $|S_{\eta_+}(\alpha,x)|$),
\[\begin{aligned}&\sqrt{\int_{(\mathbb{R}/\mathbb{Z})\setminus \mathfrak{M}_{8,r_0}} 
|S_{\eta_*}(\alpha,x)| |S_{\eta_+}(\alpha,x)|^2 d\alpha}\\ &\leq \sum_{j=1}^2
\sqrt{\int_{(\mathbb{R}/\mathbb{Z})\setminus \mathfrak{M}_{8,r_0}} 
|S_{\eta_*}(\alpha,x)| |S_{j,\eta_+}(\alpha,x)|^2 d\alpha}.\end{aligned}\]
Clearly, \[\begin{aligned}
&\int_{(\mathbb{R}/\mathbb{Z)}\setminus \mathfrak{M}_{8,r_0}}
|S_{\eta_*}(\alpha,x)| |S_{2,\eta_+}(\alpha,x)|^2 d\alpha \\ &\leq
\max_{\alpha \in \mathbb{R}/\mathbb{Z}}
 \left|S_{\eta_*}(\alpha,x)\right| \cdot \int_{\mathbb{R}/\mathbb{Z}}
 |S_{2,\eta_+}(\alpha,x)|^2 d\alpha\\ &\leq
\sum_{n=1}^\infty \Lambda(n) \eta_*(n/x)
\cdot
\left(\sum_{\text{$n$ non-prime}} \Lambda(n)^2 \eta_+(n/x)^2 + 
\sum_{n\leq \sqrt{x}} \Lambda(n)^2 \eta_+(n/x)^2\right).\end{aligned}\]
Let $\overline{\eta_+}(z) = \sup_{t\geq z} \eta_+(t)$.
Since $\eta_+(t)$ tends to $0$ as $t\to \infty$, so does $\overline{\eta_+}$.
By  \cite[Thm. 13]{MR0137689}, partial summation
and integration by parts,
\[\begin{aligned}
\sum_{\text{$n$ non-prime}} &\Lambda(n)^2 \eta_+(n/x)^2 \leq
\sum_{\text{$n$ non-prime}} \Lambda(n)^2 \overline{\eta_+}(n/x)^2\\ &\leq
-\int_1^\infty \left(\mathop{\sum_{n\leq t}}_{\text{$n$ non-prime}} \Lambda(n)^2\right) 
\left( \overline{\eta_+}^2 (t/x) \right)' dt\\
&\leq
-\int_1^\infty (\log t) \cdot 1.4262 \sqrt{t} 
\left( \overline{\eta_+}^2 (t/x) \right)'
dt\\
&\leq  0.7131\int_1^{\infty} \frac{\log e^2 t}{\sqrt{t}} \cdot 
\overline{\eta_+}^2
\left(\frac{t}{x}\right) dt\\ &=
\left(0.7131 \int_{1/x}^\infty \frac{2 + \log t x}{\sqrt{t}} 
\overline{\eta_+}^2(t) dt \right) \sqrt{x},
\end{aligned}\]
while, by \cite[Thm. 12]{MR0137689},
\[\begin{aligned}
\sum_{n\leq \sqrt{x}} \Lambda(n)^2 \eta_+(n/x)^2 &\leq \frac{1}{2} 
|\eta_+|_\infty^2
(\log x) \sum_{n\leq \sqrt{x}} \Lambda(n)\\
&\leq 0.51942 |\eta_+|_\infty^2 \cdot \sqrt{x} \log x.
\end{aligned}\]
This shows that
\[\int_{(\mathbb{R}/\mathbb{Z)}\setminus \mathfrak{M}_{8,r_0}}
|S_{\eta_*}(\alpha,x)| |S_{2,\eta_+}(\alpha,x)|^2 d\alpha \leq 
\sum_{n=1}^\infty \Lambda(n) \eta_*(n/x) \cdot
E = S_{\eta_*}(0,x)\cdot E,\]
where $E$ is as in (\ref{eq:georgic}).

It remains to bound
\begin{equation}\label{eq:flashgo}
\int_{(\mathbb{R}/\mathbb{Z})\setminus \mathfrak{M}_{8,r_0}} 
|S_{\eta_*}(\alpha,x)| |S_{1,\eta_+}(\alpha,x)|^2 d\alpha .
\end{equation}
We wish to apply Prop.~\ref{prop:palan}.
Corollary \ref{cor:coeur} gives us an input of type (\ref{eq:qewer});
we have just derived a bound 
(\ref{eq:bertru}) that provides an input of type (\ref{eq:rien}).
More precisely, by Corollary \ref{cor:coeur},
(\ref{eq:qewer}) holds with
\[H(r) = \begin{cases}
\frac{\log (r+1) + c^+}{\log \sqrt{x} + c^-}
&\text{if $r< r_1$},\\ 1 &\text{if $r\geq r_1$,}\end{cases}\]
where $c^+ = 2.3912 > \log 2 + 1.698$ and $c^- = 0.6294
< \log(1/\sqrt{2\cdot 8}) + \log 2 + 1.3225822$.
(We can apply Corollary
\ref{cor:coeur} because $(2 (r_1+1))\leq  ((4/9) x^{4/15}+2)
\leq (2 \sqrt{x/16})^{0.6}$ for $x\geq 10^{25}$ 
(or even for $x\geq 1000$).)
Since $r_1 = (3/8) y^{4/15}$
and $x\geq 10^{25} \cdot
\varkappa$,
\[\begin{aligned}
\lim_{r\to r_1^+} H(r) &- \lim_{r\to r_1^-} H(r) = 1 - 
\frac{\log ((3/8) (x/\varkappa)^{4/15}+1)+c^+}{\log \sqrt{x} + c^-}\\
&\leq 1 - \left(\frac{4/15}{1/2} +
\frac{\log \frac{3}{8} + c^+ - \frac{4}{15} \log \varkappa - \frac{8}{15} c^-}{
\log \sqrt{x} + c^-}\right)\\
&\leq \frac{7}{15}
+ \frac{- 2.14938 + \frac{8}{15} \log \varkappa}{\log x + 2 c^-} .
\end{aligned}\]
We also have (\ref{eq:rien}) with \begin{equation}\label{eq:jorge}
\left(g(r) + C_{\varphi,3}\left(\frac{\log y}{2}\right)\right)
\cdot |\varphi|_1 y\end{equation}
instead of $g(r)$ (by (\ref{eq:bertru})). 
Here (\ref{eq:jorge}) is a decreasing function of $r$ because $g(r)$ is,
as we already checked.
Hence, Prop.~\ref{prop:palan} gives us that
 (\ref{eq:flashgo}) is at most
\begin{equation}\label{eq:lili}\begin{aligned}
g(r_0) \cdot &(H(r_0) - I_0) + (1-I_0) \cdot C_{\varphi,3}\left(\frac{\log y}{2}\right)
\\ &+  \frac{1}{\log \sqrt{x} + c^-}
\int_{r_0}^{r_1} \frac{g(r)}{r+1}  dr + 0.4156 g(r_1)
\end{aligned}\end{equation}
times $|\varphi|_1 y \cdot \sum_{p>\sqrt{x}} (\log p)^2
\eta_+^2(p/x)$, where
\begin{equation}
I_0 = \frac{1}{\sum_{p>\sqrt{x}} (\log p)^2 \eta_+^2(n/x)}
\int_{\mathfrak{M}_{8,r_0}} 
 |S_{1,\eta_+}(\alpha,x)|^2\; d\alpha.
\end{equation}
By the triangle inequality,
\[\begin{aligned}
 \sqrt{\int_{\mathfrak{M}_{8,r_0}} 
 |S_{1,\eta_+}(\alpha,x)|^2\; d\alpha} &=
\sqrt{\int_{\mathfrak{M}_{8,r_0}} 
 |S_{\eta_+}(\alpha,x) - S_{2,\eta_+}(\alpha,x)|^2\; d\alpha} \\ &\geq
 \sqrt{\int_{\mathfrak{M}_{8,r_0}} 
 |S_{\eta_+}(\alpha,x)|^2\; d\alpha} -
\sqrt{\int_{\mathfrak{M}_{8,r_0}} 
 |S_{2,\eta_+}(\alpha,x)|^2\; d\alpha} \\
&\geq  \sqrt{\int_{\mathfrak{M}_{8,r_0}} 
 |S_{\eta_+}(\alpha,x)|^2\; d\alpha} -
\sqrt{\int_{\mathbb{R}/\mathbb{Z}} 
 |S_{2,\eta_+}(\alpha,x)|^2\; d\alpha}. 
\end{aligned}\] 
As we already showed,
\[\int_{\mathbb{R}/\mathbb{Z}} 
 |S_{2,\eta_+}(\alpha,x)|^2\; d\alpha = 
\mathop{\sum_{\text{$n$ non-prime}}}_{\text{or $n\leq \sqrt{x}$}}
\Lambda(n)^2 \eta_+(n/x)^2\leq E.\]
Thus,
\[I_0\cdot S \geq (\sqrt{J}- \sqrt{E})^2,\]
and so we are done.


\end{proof}

We now should estimate the integral in (\ref{eq:gypo}).
It is easy to see that
\begin{equation}\label{eq:ostram}
\begin{aligned}
\int_{r_0}^\infty \frac{1}{r^{3/2}} dr &= \frac{2}{r_0^{1/2}},\;\;\;\;\;
\int_{r_0}^{\infty} \frac{\log r}{r^2} dr = \frac{\log e r_0}{r_0},\;\;\;\;\;
\int_{r_0}^{\infty} \frac{1}{r^2} dr = \frac{1}{r_0},\\
\int_{r_0}^{r_1} \frac{1}{r} dr = \log \frac{r_1}{r_0},\;\;\;\;\;
&\int_{r_0}^\infty \frac{\log r}{r^{3/2}} dr = \frac{2 \log e^2 r_0}{\sqrt{r_0}}
,\;\;\;\;\;
\int_{r_0}^\infty \frac{\log 2 r}{r^{3/2}} dr = 
\frac{2 \log 2 e^2 r_0}{\sqrt{r_0}},\\
\int_{r_0}^\infty \frac{(\log 2 r)^2}{r^{3/2}} dr = 
&\frac{2 P_2(\log 2 r_0)}{\sqrt{r_0}},\;\;\;\;\;\;
\int_{r_0}^\infty \frac{(\log 2 r)^3}{r^{3/2}} dr = 
\frac{2 P_3(\log 2 r_0)}{r_0^{1/2}},\end{aligned}\end{equation}
where 
\begin{equation}\label{eq:javich}
P_2(t) = t^2 + 4 t + 8,\;\;\;\;\;\;\;\;\;
P_3(t) = t^3 + 6 t^2 + 24 t + 48.\end{equation}
We also have
\begin{equation}\label{eq:maldicho}
\int_{r_0}^\infty \frac{dr}{r^2 \log r} = E_1(\log r_0)
\end{equation}
where $E_1$ is the {\em exponential integral}
\[E_1(z) = \int_z^\infty \frac{e^{-t}}{t} dt.\] 

We must also estimate the integrals
\begin{equation}\label{eq:kurica}
\int_{r_0}^{r_1} \frac{\sqrt{\digamma(r)}}{r^{3/2}} dr,\;\;\;\;\;
\int_{r_0}^{r_1} \frac{\digamma(r)}{r^2} dr,\;\;\;\;\;
\int_{r_0}^{r_1} \frac{\digamma(r) \log r}{r^2} dr,\;\;\;\;\;
\int_{r_0}^{r_1} \frac{\digamma(r)}{r^{3/2}} dr,
\end{equation}

Clearly, $\digamma(r) - e^\gamma \log \log r = 2.50637/\log \log r$
is decreasing on $r$. Hence, for $r\geq 10^5$, 
\[\digamma(r) \leq e^\gamma \log \log r + c_\gamma,\]
where $c_\gamma = 1.025742$. Let $F(t) = e^\gamma \log t + c_\gamma$.
Then $F''(t) = -e^\gamma/t^2 < 0$.
Hence
\[\frac{d^2 \sqrt{F(t)}}{dt^2} = \frac{F''(t)}{2 \sqrt{F(t)}} - 
\frac{(F'(t))^2}{4 (F(t))^{3/2}} < 0
\] 
for all $t>0$. In other words, 
$\sqrt{F(t)}$ is convex-down, and so we can bound $\sqrt{F(t)}$
from above by $\sqrt{F(t_0)} + \sqrt{F}'(t_0)\cdot (t-t_0)$, for any 
$t\geq t_0>0$.
Hence, for $r\geq r_0\geq 10^5$,
\[\begin{aligned}
\sqrt{\digamma(r)} &\leq \sqrt{F(\log r)} \leq
\sqrt{F(\log r_0)} + \frac{d \sqrt{F(t)}}{dt}|_{t= \log r_0} \cdot
\log \frac{r}{r_0}\\
&= \sqrt{F(\log r_0)} + \frac{e^\gamma}{\sqrt{F(\log r_0)}}
\cdot \frac{\log \frac{r}{r_0}}{2 \log r_0} .
\end{aligned}\]
Thus, by (\ref{eq:ostram}),
\begin{equation}\label{eq:jot}\begin{aligned}
\int_{r_0}^{\infty} \frac{\sqrt{\digamma(r)}}{r^{3/2}} dr &\leq
\sqrt{F(\log r_0)} \left(2 - \frac{e^\gamma}{F(\log r_0)}\right) 
\frac{1}{\sqrt{r_0}}+
\frac{e^\gamma}{\sqrt{F(\log r_0)} \log r_0}
\frac{\log e^2 r_0}{\sqrt{r_0}}
\\ &= \frac{2 \sqrt{F(\log r_0)}}{\sqrt{r_0}}
\left(1 + \frac{e^\gamma}{F(\log r_0) \log r_0}\right).
\end{aligned}\end{equation}

The other integrals in (\ref{eq:kurica}) are easier. Just as in
(\ref{eq:jot}), we extend
the range of integration to $\lbrack r_0, \infty\rbrack$. 
Using (\ref{eq:ostram}) and (\ref{eq:maldicho}), we obtain
\[\begin{aligned}
\int_{r_0}^\infty \frac{\digamma(r)}{r^2} dr &\leq
\int_{r_0}^{\infty} \frac{F(\log r)}{r^2} dr =
e^\gamma \left(\frac{\log \log r_0}{r_0} + E_1(\log r_0)\right) + \frac{c_\gamma}{r_0},\\
\int_{r_0}^{\infty} \frac{\digamma(r) \log r}{r^2} dr &\leq
e^\gamma \left(\frac{(1 + \log r_0) \log \log r_0 + 1}{r_0} +
E_1(\log r_0)\right)
+ \frac{ c_\gamma \log e r_0}{r_0},
\end{aligned}\]
By \cite[(6.8.2)]{MR2723248},
\[\begin{aligned}
\frac{1}{r (\log r + 1)} \leq E_1(\log r) &\leq \frac{1}{r \log r}.\\
\end{aligned}\] (The second inequality is obvious.) Hence
\[\begin{aligned}
\int_{r_0}^{\infty} \frac{\digamma(r)}{r^2} dr &\leq
\frac{e^\gamma (\log \log r_0 + 1/\log r_0) + c_\gamma}{r_0},\\
\int_{r_0}^{\infty} \frac{\digamma(r) \log r}{r^2} dr &\leq
\frac{e^\gamma \left(\log \log r_0 + \frac{1}{\log r_0}\right) +  
c_\gamma}{r_0} \cdot \log e r_0
.\end{aligned}\]
Finally,
\[\begin{aligned}
\int_{r_0}^{\infty} \frac{\digamma(r)}{r^{3/2}} &\leq
e^{\gamma}\left( \frac{2 \log \log r_0}{\sqrt{r_0}} + 2 E_1\left(\frac{\log
    r_0}{2}\right)\right) + 
\frac{2 c_\gamma}{\sqrt{r_0}}\\
&\leq \frac{2}{\sqrt{r_0}} \left(F(\log r_0) + \frac{2 e^\gamma}{\log r_0}
\right).
\end{aligned}\]


It is time to estimate
\begin{equation}\label{eq:caushwa}
\int_{r_0}^{r_1} \frac{R_{z,2r} \log 2r \sqrt{\digamma(r)}}{r^{3/2}} dr,
\end{equation}
where $z = y$ or $z=y/((\log y)/2)$ (and $y = x/\varkappa$, as before),
and where $R_{z,t}$ is as defined in (\ref{eq:veror}).
By Cauchy-Schwarz, (\ref{eq:caushwa}) is at most
\[
\sqrt{\int_{r_0}^{r_1} \frac{(R_{z,2r} \log 2r)^2}{r^{3/2}} dr} \cdot
\sqrt{\int_{r_0}^{r_1} \frac{\digamma(r)}{r^{3/2}} dr}.\]
We have already bounded the second integral. Let us look at the first one.
We can write $R_{z,t} = 0.27125 R_{z,t}^\circ + 0.41415$, where
\begin{equation}\label{eq:jamo}
R_{z,t}^\circ  = 
\log \left(1 + \frac{\log 4 t}{2 \log \frac{9 z^{1/3}}{2.004
      t}}\right).\end{equation}
Clearly,
\[R_{z,e^t/4}^\circ = \log \left(1 + \frac{t/2}{\log \frac{36
      z^{1/3}}{2.004}
- t}\right).\]
Now, for $f(t) = \log (c + a t/(b - t))$ and $t\in \lbrack 0,b)$,
\[
f'(t) = \frac{ab}{\left(c + \frac{a t}{b-t}\right) 
(b - t)^2},\;\;\;\;\;\;\;\;
f''(t) = \frac{- a b ((a-2 c) (b-2 t) - 2 c t)}{\left(c + \frac{a t}{b-t}\right)^2 
(b - t)^4}.\]
In our case, $a=1/2$, $c=1$ and $b = \log 36 z^{1/3} - \log(2.004)>0$.
Hence, for $t<b$,
\[-a b((a-2 c) (b-2 t)-2c t) = \frac{b}{2} \left(2 t + \frac{3}{2} (b - 2
  t)
\right)
= \frac{b}{2} \left(\frac{3}{2} b - t\right)>0,
\]
and so $f''(t)>0$. In other words, $t\to R_{z,e^t/4}^\circ$ is convex-up
for $t< b$, i.e., for $e^t/4 < 9 z^{1/3}/2.004$. 
It is easy to check that, since we are assuming $y\geq 10^{25}$,
\[2 r_1 = \frac{3}{16} y^{4/15} <
\frac{9}{2.004} \left(\frac{2 y}{\log y}\right)^{1/3} \leq \frac{9 z^{1/3}}{2.004}.\]
We conclude that $r\to R_{z,2 r}^\circ$ is convex-up on $\log 8 r$ 
for $r\leq r_1$, and hence so is $r\to R_{z,r}$, and so, in turn, is
$r\to R_{z,r}^2$.  Thus, 
for $r\in \lbrack r_0,r_1\rbrack$,
\begin{equation}\label{eq:strona}R_{z,2 r}^2 \leq R_{z,2 r_0}^2 
\cdot \frac{\log r_1/r}{\log r_1/r_0}
+ R_{z,2 r_1}^2 \cdot \frac{\log r/r_0}{\log r_1/r_0}
 .\end{equation}

Therefore, by (\ref{eq:ostram}),
\begin{equation}\label{eq:basmed}\begin{aligned}
&\int_{r_0}^{r_1} \frac{(R_{z,2r} \log 2r)^2}{r^{3/2}} dr
\leq \int_{r_0}^{r_1}  \left(R_{z,2 r_0}^2 
 \frac{\log r_1/r}{\log r_1/r_0}
+ R_{z,2 r_1}^2\frac{\log r/r_0}{\log r_1/r_0}\right) (\log 2 r)^2 
\frac{dr}{r^{3/2}}\\
= & \frac{2 R_{z,2 r_0}^2}{\log \frac{r_1}{r_0}} \left(\left(
\frac{P_2(\log 2 r_0)}{\sqrt{r_0}} - \frac{P_2(\log 2 r_1)}{\sqrt{r_1}}
\right) \log 2 r_1 - 
\left(\frac{P_3(\log 2 r_0)}{\sqrt{r_0}} - \frac{P_3(\log 2 r_1)}{\sqrt{r_1}}
\right)\right)\\
+ & \frac{2 R_{z,2 r_1}^2}{\log \frac{r_1}{r_0}} \left(
\left(\frac{P_3(\log 2 r_0)}{\sqrt{r_0}} - \frac{P_3(\log 2 r_1)}{\sqrt{r_1}}
\right) -
\left(
\frac{P_2(\log 2 r_0)}{\sqrt{r_0}} - \frac{P_2(\log 2 r_1)}{\sqrt{r_1}}
\right) \log 2 r_0
\right)\\
&= 
2 \left(R_{z,2 r_0}^2 - \frac{\log 2 r_0}{\log \frac{r_1}{r_0}}
(R_{z,2r_1}^2 - R_{z,2r_0}^2)\right)
\cdot 
 \left(\frac{P_2(\log 2 r_0)}{\sqrt{r_0}} - \frac{P_2(\log 2 r_1)}{\sqrt{r_1}}\right)\\
&+ 2 \frac{R_{z,2r_1}^2 - R_{z,2r_0}^2}{\log \frac{r_1}{r_0}}
 \left(\frac{P_3(\log 2 r_0)}{\sqrt{r_0}} - \frac{P_3(\log 2
     r_1)}{\sqrt{r_1}}\right)\\
&= 
2 R_{z,2 r_0}^2
\cdot 
 \left(\frac{P_2(\log 2 r_0)}{\sqrt{r_0}} - \frac{P_2(\log 2 r_1)}{\sqrt{r_1}}\right)\\
&+ 2 \frac{R_{z,2r_1}^2 - R_{z,2r_0}^2}{\log \frac{r_1}{r_0}}
 \left(\frac{P_2^-(\log 2 r_0)}{\sqrt{r_0}} - \frac{P_3(\log 2
     r_1)- (\log 2 r_0) P_2(\log 2 r_1)}{\sqrt{r_1}}\right) ,\end{aligned}
\end{equation}
where $P_2(t)$ and $P_3(t)$ are as in (\ref{eq:javich}),
and $P_2^-(t) = P_3(t) - t P_2(t) = 2 t^2 + 16 t + 48$.

Putting all terms together, we conclude that
\begin{equation}\label{eq:byrne}
\int_{r_0}^{r_1} \frac{g(r)}{r} dr \leq
f_0(r_0,y) + f_1(r_0) + f_2(r_0,y),\end{equation}
where 
\begin{equation}\label{eq:cymba}\begin{aligned}
f_0(r_0,y) &= 
\left(\left(1 - c_\varphi\right) \sqrt{I_{0,r_0,r_1,y}}
+ c_\varphi \sqrt{I_{0,r_0,r_1,\frac{2 y}{\log y}}} \right)
\sqrt{\frac{2}{\sqrt{r_0}}  I_{1,r_0}}\\
f_1(r_0) &= \frac{\sqrt{F(\log r_0)}}{\sqrt{2 r_0}} \left(1 
 + \frac{e^\gamma}{F(\log r_0) \log r_0}\right) + \frac{5}{\sqrt{2 r_0}}
\\ &+
\frac{1}{r_0} \left(\left(\frac{13}{4} \log e r_0 + 10.102\right) J_{r_0}
+ \frac{80}{9} \log er_0 + 23.433 
\right)\\
f_2(r_0,y) &= 3.2 \frac{((\log y)/2)^{1/6}}{y^{1/6}} \log \frac{r_1}{r_0},
\end{aligned}\end{equation}
where $F(t) = e^\gamma \log t + c_\gamma$, $c_\gamma = 1.025742$,
$y = x/\varkappa$ (as usual),
\begin{equation}\label{eq:waslight}
\begin{aligned}
I_{0,r_0,r_1,z} &= 
R_{z,2 r_0}^2
\cdot 
 \left(\frac{P_2(\log 2 r_0)}{\sqrt{r_0}} - \frac{P_2(\log 2 r_1)}{\sqrt{r_1}}\right)\\
&+ \frac{R_{z,2r_1}^2 - R_{z,2r_0}^2}{\log \frac{r_1}{r_0}}
 \left(\frac{P_2^-(\log 2 r_0)}{\sqrt{r_0}} - \frac{P_3(\log 2
     r_1)- (\log 2 r_0) P_2(\log 2 r_1)}{\sqrt{r_1}}\right) 
\\
J_{r} &= F(\log r) + \frac{e^\gamma}{\log r},\;\;\;\;\;\;
I_{1,r} = F(\log r) + \frac{2 e^{\gamma}}{\log r},\;\;\;\;\;\;\;
c_\varphi = \frac{C_{\varphi,2,\frac{\log y}{2}}/|\varphi|_1}{\log \frac{\log y}{2}}
\end{aligned}\end{equation}
and $C_{\varphi,2,K}$ is as in (\ref{eq:cecidad}). 

\section{Conclusion}

We now need to gather all results, using the smoothing functions
\[\eta_* = (\eta_2 \ast_M \varphi)(\varkappa t),
\]
where  $\varphi(t) = t^2 e^{-t^2/2}$, $\eta_2 = \eta_1\ast_M \eta_1$
and  $\eta_1 = 2\cdot I_{\lbrack -1/2,1/2\rbrack}$,
and
\[ \eta_+ = h_{200}(t) t e^{-t^2/2},\]
where 
\[\begin{aligned}
h_H(t) = \int_0^\infty h(t y^{-1}) F_H(y) \frac{dy}{y},&\\
h(t) = \begin{cases} t^2 (2-t)^3 e^{t-1/2} &\text{if $t\in \lbrack
    0,2\rbrack$,}\\ 0 &\text{otherwise,}\end{cases}
\;\;\;\;\;\;\;\;\;\;\;
&F_H(t) = \frac{\sin(H \log y)}{\pi \log y}.
\end{aligned}\]
Both $\eta_*$ and $\eta_+$ were studied in \cite{HelfMaj}. We also saw
 $\eta_*$ in Thm.~\ref{thm:ostop} (which actually works for
general $\varphi:\lbrack 0,\infty)\to \lbrack 0,\infty)$, as its statement says). We will set $\kappa$ soon.

We fix a value for $r$, namely, $r = 150000$. Our results will
have to be valid for any $x\geq x_+$, where $x_+$ is fixed. We set
$x_+ = 4.9\cdot 10^{26}$, since we want a result valid for $N\geq 10^{27}$,
and, as was discussed in (\ref{subs:charme}), we will work with $x_+$
slightly smaller than $N/2$.

\subsection{The $\ell_2$ norm over the major arcs: explicit version}

We apply Lemma \ref{lem:drujal} with $\eta=\eta_+$ and $\eta_\circ$
as in (\ref{eq:cleo}). Let us first work out the
error terms defined in (\ref{eq:sreda}).
 Recall that $\delta_0=8$. By \cite[Thm.~1.4]{HelfMaj},
\begin{equation}\label{eq:zakone1}
\begin{aligned}ET_{\eta_+,\delta_0 r/2} &=
\max_{|\delta|\leq \delta_0 r/2} |\err_{\eta,\chi_T}(\delta,x)|\\
&= 3.34\cdot 10^{-11} + \frac{251100}{\sqrt{x_+}}  
\leq 1.1377\cdot 10^{-8},
\end{aligned}\end{equation}
\begin{equation}\label{eq:zakone2}
\begin{aligned}E_{\eta_+,r,\delta_0} &=
\mathop{\mathop{\max_{\chi \mo q}}_{q\leq r\cdot \gcd(q,2)}}_{|\delta|\leq
  \gcd(q,2) \delta_0 r/2 q}
\sqrt{q} |\err_{\eta_+,\chi^*}(\delta,x)|\\ &\leq
6.18 \cdot 10^{-12} + \frac{1.14\cdot 10^{-10}}{\sqrt{2}} +
\frac{1}{\sqrt{x_+}} \left(499100 + 52 \sqrt{300000}\right)\\ &\leq
2.3921 \cdot 10^{-8},
\end{aligned}\end{equation}
where, in the latter case, we are using the fact that the stronger bound
for $q=1$ (namely, (\ref{eq:zakone1})) allows us to assume $q\geq 2$.

 We also need to bound
a few norms: by the estimates in \cite[App.~B.3 and B.5]{HelfMaj}
(applied with $H=200$),
\begin{equation}\label{eq:sazar}\begin{aligned}
\left|\eta_+\right|_1 &\leq 1.062319,\;\;\;\;\;\;\;
\left|\eta_+\right|_2 \leq 0.800129 + \frac{274.8569}{200^{7/2}} \leq 
0.800132\\
\left|\eta_+\right|_\infty &\leq 
1+ 2.06440727 \cdot \frac{1 + \frac{4}{\pi} \log H}{H} \leq
1.079955.
\end{aligned}\end{equation}
By (\ref{eq:glenkin}),
\[\begin{aligned}
S_{\eta_+}(0,x) &= \widehat{\eta_+}(0)\cdot x + O^*\left(\err_{\eta_+,\chi_T}(0,x)\right)\cdot x\\ &\leq (|\eta_+|_1 + ET_{\eta_+,\delta_0 r/2}) x
\leq 1.063 x.
\end{aligned}\]
This is far from optimal, but it will do, since all we wish to do with
this is to bound $K_{r,2}$ in (\ref{eq:sreda}): 
\[\begin{aligned}
K_{r,2} &= (1+ \sqrt{300000}) (\log x)^2 \cdot 1.079955\\ &\cdot 
(2\cdot 1.062319 + (1+ \sqrt{300000}) (\log x)^2 1.079955/x)\\
&\leq 1259.06 (\log x)^2 \leq 
9.71\cdot 10^{-21} x
\end{aligned}\]
for $x\geq x_+$. By (\ref{eq:zakone1}), we also have
\[5.19\delta_0 r \left(ET_{\eta_+,\frac{\delta_0 r}{2}}\cdot \left(|\eta_+|_1 + 
\frac{ET_{\eta_+,\frac{\delta_0 r}{2}}}{2}\right)\right)\leq 0.075272
\]
and
\[\delta_0 r (\log 2 e^2 r) 
\left(E_{\eta_+,r,\delta_0}^2
+ K_{r,2}/x\right)  \leq 1.0034\cdot 10^{-8}.\]

We know (see \cite[App.~B.2 and B.3]{HelfMaj})
that
\begin{equation}\label{eq:lopez}
0.8001287\leq |\eta_\circ|_2 \leq 0.8001288\end{equation} and
\begin{equation}\label{eq:sanchez}
|\eta_+- \eta_\circ|_2 \leq \frac{274.856893}{H^{7/2}} \leq 2.42942 \cdot
10^{-6}.
\end{equation}
Symbolic integration gives
\begin{equation}\label{eq:melancho}
|\eta_\circ'|_2^2 = 2.7375292\dotsc
\end{equation}
We bound $|\eta_\circ^{(3)}|_1$ using the fact that (as we can tell
by taking derivatives) $\eta_\circ^{(2)}(t)$ increases from $0$ at $t=0$
to a maximum within $\lbrack 0,1/2\rbrack$, and then decreases to 
$\eta_\circ^{(2)}(1) = -7$, only to increase to a maximum within
$\lbrack 3/2,2\rbrack$ (equal to that within $\lbrack 0,1/2\rbrack$)
and then decrease to $0$ at $t=2$:
\begin{equation}\label{eq:halr}\begin{aligned}
|\eta_\circ^{(3)}|_1 &= 2 \max_{t\in \lbrack 0,1/2\rbrack}
\eta_\circ^{(2)}(t) - 2 \eta_\circ^{(2)}(1) + 2 \max_{t\in \lbrack 3/2,2\rbrack}
\eta_\circ^{(2)}(t)\\
&= 4 \max_{t\in \lbrack 0,1/2\rbrack}
\eta_{\circ}^{(2)}(t) + 14 \leq 4\cdot 4.6255653 + 14 \leq 32.5023,
\end{aligned}\end{equation}
where we compute the maximum by the bisection method with $30$ iterations
(using interval arithmetic, as always).

We evaluate explicitly
\[\mathop{\sum_{q\leq r}}_{\text{$q$ odd}} \frac{\mu^2(q)}{\phi(q)} =
 6.798779\dotsc
\]
Looking at (\ref{eq:chetvyorg}) and 
(\ref{eq:mardi}), 
we conclude that
\[\begin{aligned}
L_{r,\delta_0} &\leq 2\cdot 6.798779\cdot 0.800132^2 \leq 8.70531
,\\
L_{r,\delta_0} &\geq 2\cdot 6.798779\cdot 0.8001287^2 
+ O^*((\log r + 1.7) \cdot (3.889 \cdot 10^{-6}+5.91\cdot 10^{-12}))\\
&+ O^*\left(1.342\cdot 10^{-5}\right)\cdot 
\left(0.64787 + \frac{\log r}{4 r} + \frac{0.425}{r}\right) \geq 8.70517
.\end{aligned}\]

Lemma \ref{lem:drujal} thus gives us that
\begin{equation}\label{eq:celine}\begin{aligned}
\int_{\mathfrak{M}_{8,r_0}} \left|S_{\eta_+}(\alpha,x)\right|^2 d\alpha
&= (8.70524+O^*(0.00007)) x + O^*(0.075272) x\\ &= (8.7052+O^*(0.0754)) x
\leq 8.7806 x.
\end{aligned}\end{equation}

\subsection{The total major-arc contribution}

First of all, we must bound from below
\begin{equation}\label{eq:ausbeuter}
C_0 = \prod_{p|N} \left(1 - \frac{1}{(p-1)^2}\right) \cdot
\prod_{p\nmid N} \left(1 + \frac{1}{(p-1)^3}\right).\end{equation}
The only prime that we know does not divide $N$ is $2$. Thus, we use the
bound
\begin{equation}\label{eq:arnar}
C_0 \geq 2 \prod_{p>2} \left(1 - \frac{1}{(p-1)^2}\right)
\geq 1.3203236 .\end{equation}

The other main constant is $C_{\eta_\circ,\eta_*}$, 
which we defined in (\ref{eq:vulgo})
and already started to estimate in (\ref{eq:jaram}):
\begin{equation}\label{eq:karlmarx}
C_{\eta_\circ,\eta_*} =
|\eta_\circ|_2^2 \int_0^{\frac{N}{x}} \eta_*(\rho) d\rho + 
2.71 |\eta_\circ'|_2^2 \cdot O^*\left(
\int_0^{\frac{N}{x}} ((2-N/x)+\rho)^2 \eta_*(\rho) d\rho
\right)\end{equation}
provided that $N\geq 2 x$. 
Recall that $\eta_* = (\eta_2 \ast_M \varphi)(\varkappa t)$, where
$\varphi(t) = t^2 e^{-t^2/2}$. Therefore,
\[\begin{aligned}
\int_0^{N/x} \eta_*(\rho) d\rho &= 
\int_0^{N/x} (\eta_2\ast \varphi)(\varkappa \rho) d\rho = 
\int_{1/4}^1 \eta_2(w) \int_0^{N/x} \varphi\left(\frac{\varkappa
    \rho}{w}\right) d\rho \frac{dw}{w}\\
&= \frac{|\eta_2|_1 |\varphi|_1}{\varkappa} - \frac{1}{\varkappa}
\int_{1/4}^1 \eta_2(w) \int_{\varkappa N/x w}^{\infty} \varphi(\rho) d\rho dw.
\end{aligned}\]
Now
\[\int_y^\infty \varphi(\rho) d\rho = y e^{-y^2/2} + 
\sqrt{2} \int_{y/\sqrt{2}}^\infty e^{-t^2} dt < \left(y +
  \frac{2}{y}\right)
e^{-y^2/2}\]
by \cite[(7.8.3)]{MR2723248}. Hence
\[\int_{\varkappa N/x w}^{\infty} \varphi(\rho) d\rho
\leq \int_{2 \varkappa}^{\infty} \varphi(\rho) d\rho < 
\left(2\varkappa +
  \frac{1}{\varkappa}\right)
e^{-2 \varkappa^2}\]
and so, since $|\eta_2|_1=1$,
\begin{equation}\label{eq:sasa}
\begin{aligned}
\int_0^{N/x} \eta_*(\rho) d\rho &\geq
\frac{|\varphi|_1}{\varkappa} - \int_{1/4}^1 \eta_2(w) dw \cdot \left(2 +
  \frac{1}{\varkappa^2}\right) e^{-2 \varkappa^2}\\ &\geq
\frac{|\varphi|_1}{\varkappa} - \left(2 +
  \frac{1}{\varkappa^2}\right) e^{-2 \varkappa^2}.\end{aligned}\end{equation}

Let us now focus on the second integral in (\ref{eq:karlmarx}).
Write $N/x = 2 + c_1/\varkappa$. Then the integral equals
\[\begin{aligned}
\int_0^{2+c_1/\varkappa} &(- c_1/\varkappa+ \rho)^2 \eta_*(\rho) d\rho \leq
\frac{1}{\varkappa^3} \int_0^\infty (u-c_1)^2\; (\eta_2\ast_M \varphi)(u)\;
du\\ &=
\frac{1}{\varkappa^3} \int_{1/4}^1 \eta_2(w)
\int_0^\infty (v w - c_1)^2 \varphi(v) dv dw\\
&= \frac{1}{\varkappa^3} \int_{1/4}^1 \eta_2(w)
\left(3 \sqrt{\frac{\pi}{2}} w^2 - 2\cdot 2 c_1 w 
+ c_1^2 \sqrt{\frac{\pi}{2}}\right) dw \\
&= \frac{1}{\varkappa^3}
\left(\frac{49}{48}\sqrt{\frac{\pi}{2}} - \frac{9}{4} c_1 +
\sqrt{\frac{\pi}{2}} c_1^2\right).
\end{aligned}\]
It is thus best to choose $c_1 = (9/4)/\sqrt{2 \pi} = 0.89762\dotsc$.
Looking up $|\eta_\circ'|_2^2$ in (\ref{eq:melancho}), we obtain
\[\begin{aligned}
2.71 |\eta_\circ'|_2^2 \cdot 
&\int_0^{\frac{N}{x}} ((2-N/x)+\rho)^2 \eta_*(\rho) d\rho \\ &\leq
7.4188 \cdot \frac{1}{\varkappa^3}
\left(\frac{49}{48} \sqrt{\frac{\pi}{2}} - 
\frac{(9/4)^2}{2 \sqrt{2 \pi}}\right)
\leq \frac{2.0002}{\varkappa^3}.\end{aligned}\]
We conclude that
\[C_{\eta_\circ,\eta_*} \geq \frac{1}{\varkappa} |\varphi|_1 |\eta_\circ|_2^2 
- |\eta_\circ|_2^2  \left(2 + \frac{1}{\varkappa^2}\right)
e^{-2 \varkappa^2} - \frac{2.0002}{\varkappa^3}.\]
Setting \[\varkappa = 49\] and using (\ref{eq:lopez}), we obtain
\begin{equation}\label{eq:barbar}
C_{\eta_\circ,\eta_*} \geq 
\frac{1}{\varkappa} (|\varphi|_1 |\eta_\circ|_2^2
- 0.000834).\end{equation}
Here it is useful to note that $|\varphi|_1 =  \sqrt{\frac{\pi}{2}}$, and so,
by (\ref{eq:lopez}), $|\varphi|_1 |\eta_\circ|_2^2 = 0.80237\dotsc$.

We have finally chosen $x$ in terms of $N$:
\begin{equation}\label{eq:warwar}
x = \frac{N}{2 + \frac{c_1}{\varkappa}}  = \frac{N}{2 + 
\frac{9/4}{\sqrt{2 \pi}} \frac{1}{49}}  = 0.495461\dotsc \cdot N.\end{equation}
Thus, we see that, since we are assuming $N\geq 10^{27}$,
we in fact have $x\geq 4.95461\dotsc \cdot 10^{26}$, and so, in
particular, 
\begin{equation}\label{eq:kalm}
x\geq 4.9\cdot 10^{26},\;\;\;\; \frac{x}{\varkappa} \geq 10^{25}.\end{equation}

Let us continue with our determination of the major-arcs total. 
We should compute the quantities in (\ref{eq:vulgato}). We already
have bounds for $E_{\eta_+,r,\delta_0}$, $A_{\eta_+}$ (see (\ref{eq:celine})),
$L_{\eta,r,\delta_0}$ and $K_{r,2}$. 
By \cite[Cor.~1.3]{HelfMaj},
we have
\begin{equation}\label{eq:fabienne}\begin{aligned}
E_{\eta_*,r,8} &\leq 
\mathop{\mathop{\max_{\chi \mo q}}_{q\leq r\cdot \gcd(q,2)}}_{|\delta|\leq
  \gcd(q,2) \delta_0 r/2 q}
\sqrt{q} |\err_{\eta_*,\chi^*}(\delta,x)|\\
&\leq \frac{1}{\varkappa} \left(
4.269\cdot 10^{-14} + \frac{1}{\sqrt{x_+}}\left(380600+76
\sqrt{300000}\right)\right)\\
&\leq \frac{1.9075 \cdot 10^{-8}}{\varkappa},
\end{aligned}\end{equation}
where the factor of $\varkappa$ comes from the scaling in 
$\eta_*(t) = (\eta_2 \ast_M \varphi)(\varkappa t)$ (which in effect
divides $x$ by $\varkappa$).
It remains only to bound the more harmless terms of type
$Z_{\eta,2}$ and $LS_\eta$.

Clearly, $Z_{\eta_+^2,2}\leq (1/x) \sum_n \Lambda(n) (\log n) \eta_+^2(n/x)$.
Now, by \cite[Prop.~1.5]{HelfMaj},
\begin{equation}\label{eq:malavita}
\begin{aligned}
\sum_{n=1}^\infty &\Lambda(n) (\log n) \eta^2(n/x)\\ &= 
\left(0.640206 + O^*\left(2\cdot 10^{-6} + 
\frac{310.84}{\sqrt{x}}\right)\right) x \log x - 0.021095 x\\
&\leq (0.640206 + O^*(3\cdot 10^{-6})) x \log x - 0.021095 x.
\end{aligned}\end{equation}
Thus,
\[Z_{\eta_+^2,2}\leq 0.640209 x \log x.\]
We will proceed a little more crudely for $Z_{\eta_*^2,2}$:
\begin{equation}\label{eq:bavette}\begin{aligned}
Z_{\eta^2_*,2} &= \frac{1}{x} \sum_n \Lambda^2(n) \eta_*^2(n/x) \leq
\frac{1}{x} \sum_n \Lambda(n) \eta_*(n/x) \cdot (\eta_*(n/x) \log n)\\
&\leq
(|\eta_*|_1 + |\err_{\eta_*,\chi_T}(0,x)|)
\cdot (|\eta_*(t) \cdot \log^+(\varkappa t)|_\infty + |\eta_*|_\infty \log
(x/\varkappa)),\end{aligned}\end{equation}
where $\log^+(t) := \max(0,\log t)$. 
It is easy to see that
\begin{equation}\label{eq:macadam}
|\eta_*|_\infty =  |\eta_2\ast_M \varphi|_\infty \leq 
\left|\frac{\eta_2(t)}{t}\right|_1 |\varphi|_\infty
\leq 4 (\log 2)^2 \cdot \frac{2}{e} \leq 1.414,
\end{equation}
and, since $\log^+$ is non-decreasing and $\eta_2$ is supported on
a subset of $\lbrack 0,1\rbrack$,
\[\begin{aligned}
|\eta_*(t) \cdot \log^+(\varkappa t)|_\infty &= |(\eta_2\ast_M \varphi) \cdot
\log^+|_\infty \leq |\eta_2 \ast_M (\varphi\cdot \log^+)|_\infty\\
&\leq \left|\frac{\eta_2(t)}{t}\right|_1 
 \cdot |\varphi \cdot \log^+|_\infty \leq 1.921813\cdot 0.381157 \leq
0.732513\end{aligned}\]
where we bound $|\varphi\cdot \log^+|_\infty$ by the bisection method with
$25$ iterations. We already know that 
\begin{equation}\label{eq:marldoro}
|\eta_*|_1 = \frac{|\eta_2|_1 |\varphi|_1}{\varkappa} = 
\frac{|\varphi|_1}{\varkappa} = \frac{\sqrt{\pi/2}}{\varkappa}.\end{equation}
By \cite[Cor.~1.3]{HelfMaj},
\[|\err_{\eta_*,\chi_T}(0,x)| \leq
4.269 \cdot 10^{-14} + \frac{1}{\sqrt{x_+}} (380600+76)
\leq 1.71973 \cdot 10^{-8}.
\]
We conclude that 
\begin{equation}\label{eq:julie}
Z_{\eta_*^2,2} \leq (\sqrt{\pi/2}/49 + 1.71973\cdot 10^{-8}) (0.732513 +
1.414 \log(x/49))\leq 0.0362 \log x.\end{equation}

We have bounds for $|\eta_*|_\infty$ and $|\eta_+|_\infty$. We can also bound
\[|\eta_* \cdot t|_\infty = \frac{|(\eta_2 \ast_M \varphi)\cdot t|_\infty}{\kappa}
\leq  \frac{|\eta_2|_1 \cdot |\varphi \cdot t|_\infty}{\kappa}
 \leq \frac{3^{3/2} e^{-3/2}}{\kappa};\]
we quote the bound 
\begin{equation}\label{eq:muthit}
|\eta_+\cdot t|_\infty = 1.064735+3.25312\cdot (1+(4/\pi) \log 200)/200 
\leq 1.19073\end{equation} from \cite[\S~B.5]{HelfMaj}.

We can now bound $LS_\eta(x,r)$ for $\eta = \eta_*,\eta_+$:
\[\begin{aligned}
LS_{\eta}(x,r) &= \log r \cdot \max_{p\leq r} \sum_{\alpha\geq 1} \eta\left(
\frac{p^\alpha}{x}\right) \leq
(\log r) \cdot \max_{p\leq r} \left( 
\frac{\log x}{\log p} |\eta|_\infty + \mathop{\sum_{\alpha\geq 1}}_{p^\alpha\geq x} \frac{|\eta \cdot t|_\infty}{p^\alpha/x}\right)\\
&\leq (\log r) \cdot \max_{p\leq r} \left(\frac{\log x}{\log p} |\eta|_\infty +
\frac{|\eta\cdot t|_\infty}{1-1/p}\right) \\ &\leq
\frac{(\log r) (\log x)}{\log 2} |\eta|_\infty + 2 (\log r)
|\eta\cdot t|_\infty,
\end{aligned}\]
and so
\begin{equation}\label{eq:alisa}\begin{aligned}
LS_{\eta_*}&\leq 
\left(\frac{1.414}{\log 2} \log x + 2\cdot \frac{(3/e)^{3/2}}{49}\right) \log r 
\leq 24.32 \log x + 0.57,\\
\\
LS_{\eta_+}&\leq \left(\frac{1.07996}{\log 2} \log x + 2\cdot 1.19073
\right) \log r\leq 18.57\log x + 28.39.\end{aligned}\end{equation}

We can now start to put together all terms in (\ref{eq:opus111}). 
Let $\epsilon_0 = |\eta_+-\eta_\circ|_2/|\eta_\circ|_2$. Then,
by (\ref{eq:sanchez}),
\[\epsilon_0 |\eta_\circ|_2 \leq
|\eta_+ - \eta_\circ|_2 \leq 2.43\cdot 10^{-6}. 
\]
Thus,
\[2.82643 |\eta_\circ|_2^2 (2+\epsilon_0)\cdot \epsilon_0
+ \frac{4.31004 |\eta_\circ|_2^2 +
0.0012 \frac{|\eta_\circ^{(3)}|_1^2}{\delta_0^5}}{r} \]
is at most
\[\begin{aligned}
&2.82643\cdot 2.43\cdot 10^{-6} \cdot (2\cdot 0.80013 + 
2.43\cdot 10^{-6}) \\ &+ 
\frac{4.3101\cdot 0.80013^2 + 0.0012 \cdot \frac{32.503^2}{8^5}}{150000}
\leq 2.9387\cdot 10^{-5}\end{aligned}\]
by (\ref{eq:lopez}), (\ref{eq:halr}), and (\ref{eq:marldoro}).

Since $\eta_* = (\eta_2 \ast_M \varphi)(\varkappa x)$ 
and $\eta_2$ is supported on $\lbrack 1/4,1\rbrack$,
\[\begin{aligned}
|\eta_*|_2^2 &= \frac{|\eta_2 \ast_M \varphi|_2^2}{\varkappa}
= \frac{1}{\varkappa} \int_0^\infty \left(\int_0^\infty \eta_2(t)
\varphi\left(\frac{w}{t}\right) \frac{dt}{t}\right)^2 dw\\
&\leq \frac{1}{\varkappa}
\int_0^\infty \left(1 - \frac{1}{4}\right)
 \int_0^\infty \eta_2^2(t)
\varphi^2\left(\frac{w}{t}\right) \frac{dt}{t^2} dw \\ &=  \frac{3}{4\varkappa}
\int_0^\infty \frac{\eta_2^2(t)}{t} \left(
\int_0^\infty 
\varphi^2\left(\frac{w}{t}\right) \frac{dw}{t}\right)
dt\\ &= \frac{3}{4 \varkappa}
|\eta_2(t)/\sqrt{t}|_2^2 \cdot |\varphi|_2^2
= \frac{3}{4 \varkappa}\cdot 
\frac{32}{3} (\log 2)^3 \cdot \frac{3}{8} \sqrt{\pi} 
\leq \frac{1.77082}{\varkappa}.
\end{aligned}\]

Recalling the bounds on $E_{\eta_*,r,\delta_0}$ and $E_{\eta_+,r,\delta_0}$
we obtained in (\ref{eq:zakone2}) and (\ref{eq:fabienne}), we conclude that
the second line of (\ref{eq:opus111}) is at most $x^2$ times
\[\begin{aligned}
\frac{1.9075\cdot 10^{-8}}{\varkappa} \cdot 8.7806 
 &+ 2.3921\cdot 10^{-8} \cdot 1.6812 \\&\cdot (\sqrt{8.7806} + 1.6812
\cdot 0.80014) \sqrt{\frac{1.77082}{\varkappa}}
&\leq \frac{1.7815 \cdot 10^{-6}}{\varkappa},\end{aligned}\]
where we are using the bound $A_{\eta_+}\leq 8.8013$ we obtained in
(\ref{eq:celine}). (We are also using the bounds on norms in (\ref{eq:sazar}).)

By the bounds (\ref{eq:bavette}), (\ref{eq:julie}) and (\ref{eq:alisa}), we see that the
 third line of (\ref{eq:opus111}) is at most
\[\begin{aligned}
& 2\cdot (0.640209 \log x)
\cdot (24.32\log x + 0.57)\cdot x\\
&+ 4 \sqrt{0.640209 \log x \cdot 0.0362 \log x} (18.57 \log x + 28.39)
x \leq 43 (\log x)^2 x,
\end{aligned}\]
where we just use the very weak assumption $x\geq 10^{15}$ to simplify, though we can by
now assume (\ref{eq:kalm}).

Using the assumption $x\geq x_+= 4.9\cdot 10^{26}$,
we conclude that, for $r=150000$, the integral over the major arcs
\[\int_{\mathfrak{M}_{8,r}} S_{\eta_+}(\alpha,x)^2 S_{\eta_*}(\alpha,x) e(-N \alpha) d\alpha\]
is 
\begin{equation}\label{eq:margot}\begin{aligned}
&C_0 \cdot C_{\eta_0,\eta_*} x^2
+ O^*\left(2.9387\cdot 10^{-5} \cdot \frac{\sqrt{\pi/2}}{\varkappa} x^2 + 
\frac{1.7815\cdot 10^{-6}}{\varkappa} x^2 + 43 (\log x)^2 x\right)\\
&= 
C_0 \cdot C_{\eta_0,\eta_*} x^2 + O^*\left(\frac{3.8613\cdot 10^{-5}\cdot x^2}{\varkappa}\right)
=
C_0 \cdot C_{\eta_0,\eta_*} x^2 + O^*(7.881\cdot 10^{-7} x^2),
\end{aligned}\end{equation}
where $C_0$ and $C_{\eta_0,\eta_*}$ are as in (\ref{eq:vulgo}).
Notice that $C_0 C_{\eta_0,\eta_*} x^2$ is the expected asymptotic
for the integral over all of $\mathbb{R}/\mathbb{Z}$.

Moreover, by (\ref{eq:arnar}), (\ref{eq:barbar}) and (\ref{eq:lopez}),
as well as $|\varphi|_1 = \sqrt{\pi/2}$,
\[\begin{aligned}
C_0 \cdot C_{\eta_0,\eta_*} &\geq 1.3203236 \left(\frac{|\varphi|_1 |\eta_\circ|_2^2}{\varkappa} - \frac{0.000834}{\varkappa}\right)\\ &\geq
\frac{1.0594001}{\varkappa} - \frac{0.001102}{\varkappa} 
\geq \frac{1.058298}{49}.\end{aligned}\]
Hence
\begin{equation}\label{eq:juventud}
\int_{\mathfrak{M}_{8,r}} S_{\eta_+}(\alpha,x)^2 S_{\eta_*}(\alpha,x) e(-N \alpha) 
d\alpha
\geq \frac{1.058259}{\varkappa} x^2,\end{equation}
where, as usual, $\varkappa=49$. This is our total major-arc bound.

\subsection{The minor-arc total: explicit version}

We need to estimate the quantities $E$, $S$, $T$, $J$, $M$
in Theorem \ref{thm:ostop}. Let us start by bounding the constants
in (\ref{eq:malus}). The constants $C_{\eta_+,j}$, $j=0,1,2$, will appear
only in the minor term $A_2$, and so crude bounds on them will do.

By (\ref{eq:sazar}) and (\ref{eq:muthit}),
\[\sup_{r\geq t} \eta_+(r) \leq
\min\left(1.07996,\frac{1.19073}{t}\right)\]
for all $t\geq 0$.
Thus,
\[\begin{aligned}
C_{\eta_+,0} &= 0.7131 \int_0^\infty \frac{1}{\sqrt{t}} 
\left(\sup_{r\geq t} \eta_+(r)\right)^2 dt\\ &\leq
0.7131 \left(\int_0^1  \frac{1.07996^2}{\sqrt{t}} dt  + \int_1^\infty
\frac{1.19073^2}{t^{5/2}} dt\right)
\leq 2.3375.\end{aligned}\]
Similarly,
\[\begin{aligned}
C_{\eta_+,1} &\leq 0.7131 \int_1^\infty \frac{\log t}{\sqrt{t}} 
\left(\sup_{r\geq t} \eta_+(r)\right)^2 dt\\ &\leq
0.7131 \int_1^\infty \frac{1.19073^2 \log t}{t^{5/2}} dt \leq
0.4494.\end{aligned}\]
Immediately from (\ref{eq:sazar}),
\[C_{\eta_+,2} = 0.51941 |\eta_+|_\infty^2 \leq 0.60579.\]

We get
\begin{equation}\label{eq:dubistdie}\begin{aligned}
E &\leq \left((2.3375+0.60579) \log x + (2\cdot 2.3375 + 0.4494)\right) 
\cdot x^{1/2}\\
&\leq (2.9433 \log x + 5.1244) \cdot x^{1/2}
\leq 8.4031\cdot 10^{-12} \cdot x
,\end{aligned}\end{equation}
where $E$ is defined as in (\ref{eq:georgic}), and where
we are using the assumption $x\geq x_+ = 4.9\cdot 10^{26}$.
Using (\ref{eq:fabienne}) and (\ref{eq:marldoro}), we see that
\[S_{\eta_*}(0,x) = (|\eta_*|_1 + O^*(ET_{\eta_*,0})) x =
\left(\sqrt{\pi/2} + O^*(1.9075\cdot 10^{-8})\right) \frac{x}{\varkappa}
.\]
Hence
\begin{equation}\label{eq:hexelor}
S_{\eta_*}(0,x) \cdot E  \leq 1.0532 \cdot 10^{-11}\cdot
\frac{x^2}{\varkappa}
.\end{equation}

We can bound
\begin{equation}\label{eq:felipa}
S \leq \sum_n \Lambda(n) (\log n) \eta_+^2(n/x) \leq
0.640209 x \log x - 0.021095 x\end{equation}
by (\ref{eq:malavita}). Let us now estimate $T$.
Recall that $\varphi(t) = t^2 e^{-t^2/2}$. Since
\[\int_0^u \varphi(t) dt = \int_0^u t^2 e^{-t^2/2} dt \leq \int_0^u t^2 dt =
\frac{u^3}{3},\]
we can bound
\[C_{\varphi,3}\left(\log \frac{x}{\varkappa}\right) 
= \frac{1.04488}{\sqrt{\pi/2}} \int_0^{\frac{2}{\log x/\varkappa}}
t^2 e^{-t^2/2} dt \leq \frac{0.2779}{((\log x/\varkappa)/2)^3}.\]

By (\ref{eq:celine}), we already know that
$J = (8.7052 + O^*(0.0754)) x$. Hence
\begin{equation}\label{eq:je}\begin{aligned}
(\sqrt{J}-\sqrt{E})^2 &=
(\sqrt{(8.7052 + O^*(0.0754)) x} - \sqrt{8.4031\cdot 10^{-12} \cdot x})^2\\
&\geq 8.6297 x , 
\end{aligned}\end{equation}
and so
\[\begin{aligned}
T &= C_{\varphi,3}\left(\frac{1}{2} \log \frac{x}{\varkappa}\right) 
\cdot (S - (\sqrt{J}-\sqrt{E})^2)\\
&\leq \frac{8\cdot 0.2779}{(\log x/\varkappa)^3} \cdot 
(0.640209 x \log x - 0.021095 x
- 8.6297 x) \\ &\leq 0.17792 \frac{8 x \log x}{(\log x/\varkappa)^3} - 2.40405 \frac{8 x}{(\log x/\varkappa)^3}
\\ &\leq 1.42336 \frac{x}{(\log x/\varkappa)^2} - 13.69288 \frac{x}{(\log x/\varkappa)^3}.
\end{aligned}\]
for $\varkappa=49$. Since $x/\varkappa \geq 10^{25}$, this implies that
\begin{equation}\label{eq:lamber}
T\leq 3.5776\cdot 10^{-4} \cdot x.
\end{equation}

It remains to estimate $M$. Let us first look at $g(r_0)$; here
$g=g_{x/\varkappa,\varphi}$, where $g_{x/\varkappa,\varphi}$ is defined as in 
(\ref{eq:basia}) and $\phi(t) = t^2 e^{-t^2/2}$, as usual. Write $y=x/\varkappa$.
 We must estimate the constant $C_{\varphi,2,K}$ defined
in (\ref{eq:midin}):
\[\begin{aligned}
C_{\varphi,2,K} &= - \int_{1/K}^1 \varphi(w) \log w\; dw \leq
- \int_{0}^1 \varphi(w) \log w\; dw \\ &\leq
-\int_0^1 w^2 e^{-w^2/2} \log w\; dw \leq 0.093426,\end{aligned}\]
where again we use VNODE-LP for rigorous numerical integration.
Since $|\varphi|_1= \sqrt{\pi/2}$ and $K = (\log y)/2$, this implies that
\begin{equation}\label{eq:dukas}
\frac{C_{\varphi,2,K}/|\varphi|_1}{\log K} \leq \frac{0.07455}{\log \frac{\log y}{2}}
\end{equation}
and so \begin{equation}\label{eq:calvino}
R_{y,K,\varphi,t} = \frac{0.07455}{\log \frac{\log y}{2}} R_{y/K,t} +
\left(1 - \frac{0.07455}{\log \frac{\log y}{2}}\right) R_{y,t}.\end{equation}

Let $t = 2 r_0 = 300000$; we recall that $K = (\log y)/2$. Recall from (\ref{eq:kalm}) that $y=x/\varkappa
\geq 10^{25}$; thus, $y/K\geq 3.47435\cdot 10^{23}$ and 
$\log((\log y)/2) \geq 3.35976$.
Going back to the definition of $R_{x,t}$ in (\ref{eq:veror}),
we see that
\begin{equation}\label{eq:mali1}
\begin{aligned}
R_{y,,2r_0} &\leq 0.27125 \log \left(1 + \frac{\log (8\cdot 150000)}{2\log \frac{9\cdot (10^{25})^{1/3}}{2.004\cdot 2\cdot 150000}}\right) + 0.41415
\leq 0.58341,
\end{aligned}\end{equation}
\begin{equation}\label{eq:mali2}\begin{aligned}
R_{y/K,2r_0} &\leq 0.27125 \log \left(1 + \frac{\log (8\cdot 150000)}{2\log \frac{9\cdot (3.47435\cdot 10^{23})^{1/3}}{2.004\cdot 2\cdot 150000}}\right) + 0.41415
\leq 0.60295,
\end{aligned}\end{equation}
and so
\[R_{y,K,\varphi,2 r_0} \leq \frac{0.07455}{3.35976} 0.60295 +
\left(1 - \frac{0.07455}{3.35976}\right) 0.58341 \leq 0.58385.\]

Using
\[\digamma(r) = e^{\gamma} \log \log r + \frac{2.50637}{\log \log r} \leq
5.42506,\]
we see from (\ref{eq:veror}) that
\[L_{r_0} = 5.42506\cdot \left(\log 2^{\frac{7}{4}} 150000^{\frac{13}{4}} + 
\frac{80}{9}\right) + \log 2^{\frac{16}{9}} 150000^{\frac{80}{9}} + \frac{111}{5}
\leq 394.316.\]
Going back to (\ref{eq:basia}), we sum up and obtain that
\[\begin{aligned} g(r_0) &= \frac{(0.58385\cdot \log 300000+0.5) \sqrt{5.42506} + 2.5}{
\sqrt{2\cdot 150000}} + \frac{394.316}{150000} + 3.2 \left(\frac{\log y}{2 y}\right)^{1/6}\\ &\leq 0.041014.\end{aligned}\]
Using again the bound $x\geq 4.9\cdot 10^{26}$, we
obtain
\[\begin{aligned}&\frac{\log(150000+1)+c^+}{\log \sqrt{x} + c^-} \cdot S -
(\sqrt{J} - \sqrt{E})^2 \\ &\leq 
\frac{13.6164}{\frac{1}{2} \log x + 0.6294} \cdot (0.640209 x \log x - 
0.021095 x) 
- 8.6297 x \\ &\leq 
17.4347 x - \frac{11.2606 x}{\frac{1}{2} \log x + 0.6294} 
- 8.6297 x \\ &\leq (17.4347-8.6297) x \leq 8.805 x,
\end{aligned}\]
where $c^+ = 2.3912$ and $c^- = 0.6294$.
Therefore,
\begin{equation}\label{eq:roussel}\begin{aligned}
g(r_0)\cdot \left(\frac{\log(150000+1)+c^+}{\log \sqrt{x} + c^-} \cdot S -
(\sqrt{J} - \sqrt{E})^2\right) &\leq 0.041061\cdot 8.805 x\\ &\leq	
0.36155 x.\end{aligned}\end{equation}
This is one of the main terms.

Let $r_1 = (3/8) y^{4/15}$, where, as usual, $y = x/\varkappa$ and
$\varkappa=49$. Then
\begin{equation}\label{eq:mali3}\begin{aligned}
R_{y,2r_1} &= 
0.27125 \log \left(1 + \frac{\log \left(8 \cdot
\frac{3}{8} y^{4/15}\right)}{2 \log \frac{9 y^{1/3}}{2.004\cdot 
\frac{3}{4} y^{4/15}}}\right) + 0.41415\\
&= 0.27125 \log \left(1 + \frac{\frac{4}{15} \log y + \log 3}{
2 \left(\frac{1}{3}-\frac{4}{15}\right) \log y + 2 \log \frac{9}{2.004\cdot
\frac{3}{4}}}\right)
+ 0.41415\\
&\leq 0.27125 \log\left(1+\frac{\frac{4}{15}}{2 \left(\frac{1}{3}-
\frac{4}{15}\right)}
\right) + 0.41415\leq 0.71215.
\end{aligned}\end{equation}
Similarly, for $K=(\log y)/2$ (as usual),
\begin{equation}\label{eq:kalda}\begin{aligned}
R_{y/K,2r_1} &= 
0.27125 \log \left(1 + \frac{\log \left(8 \cdot
\frac{3}{8} y^{4/15}\right)}{2 \log \frac{9 (y/K)^{1/3}}{2.004\cdot 
\frac{3}{4} y^{4/15}}}\right) + 0.41415\\
&= 0.27125 
\log \left(1 + \frac{\frac{4}{15} \log y + \log 3}{
\frac{2}{15} \log y - \frac{2}{3} \log \log y + 2 \log \frac{9 \cdot 2^{1/3}}{2.004\cdot \frac{3}{4}}}\right)
+0.41415\\ &=
 0.27125 
\log \left(3 + 
 \frac{\frac{4}{3} \log \log y - c}{
\frac{2}{15} \log y - \frac{2}{3} \log \log y + 2 \log 
\frac{12\cdot 2^{1/3}}{2.004}}
\right) + 0.41415,
\end{aligned}\end{equation}
where $c = 4 \log(12\cdot 2^{1/3}/2.004) - \log 3$.
Let
\[f(t) = \frac{\frac{4}{3} \log t - c
}{\frac{2}{15} t - \frac{2}{3} \log t + 2 \log \frac{12\cdot 2^{1/3}}{2.004}}.\]
The bisection method with $32$ iterations shows that
\begin{equation}\label{eq:golrof}
f(t) \leq 0.019562618\end{equation}
for $180\leq t\leq 30000$; since $f(t)<0$ for $0<t<180$
(by $(4/3) \log t - c <0$) and since, by $c>20/3$, we have 
$f(t)<(5/2) (\log t)/t$ as soon as $t> (\log t)^2$ (and so, in particular, for
$t>30000$), we see that (\ref{eq:golrof}) is valid for all $t>0$. Therefore,
\begin{equation}\label{eq:mali4}
R_{y/K,2 r_1} \leq 0.71392,\end{equation}
 and so, by (\ref{eq:calvino}), we conclude that
\[R_{y,K,\varphi,2 r_1} \leq \frac{0.07455}{3.35976} \cdot
 0.71392 +
\left(1 - \frac{0.07455}{3.35976}\right) \cdot 0.71215 \leq
0.71219.
\]

Since $r_1 = (3/8) y^{4/15}$ and $\digamma(r)$ is increasing for $r\geq 27$,
 we know that
\begin{equation}\label{eq:corple}\begin{aligned}
\digamma(r_1) &\leq \digamma(y^{4/15})
= e^\gamma \log \log y^{4/15} + \frac{2.50637}{\log \log y^{4/15}} \\
&= e^{\gamma} \log \log y + \frac{2.50637}{\log \log y - \log \frac{15}{4}} 
- e^{\gamma} \log \frac{15}{4}
\leq e^{\gamma} \log \log y - 1.43644
\end{aligned}\end{equation}
for $y\geq 10^{25}$. Hence,
(\ref{eq:veror}) gives us that
\[\begin{aligned}
L_{r_1} &\leq (e^{\gamma} \log \log y - 1.43644) \left(\log
\left(2^{\frac{7}{4}} \left(\frac{3}{8}\right)^{\frac{13}{4}} y^{\frac{13}{15}}
\right) + \frac{80}{9}\right) \\ & +
\log \left(2^{\frac{16}{9}} \left(\frac{3}{8}\right)^{\frac{80}{9}}
 y^{\frac{64}{27}}\right) + \frac{111}{5} \leq
\frac{13}{15} e^\gamma \log y \log \log y +
1.1255 \log y \\ &+  12.3147 \log \log y + 4.78195
\leq (1.8213 \log y + 13.49459) \log \log y.\end{aligned}\]
Moreover, again by (\ref{eq:corple})
\[\sqrt{\digamma(r_1)} \leq
\sqrt{e^{\gamma} \log \log y} - \frac{1.43644}{2 \sqrt{e^{\gamma} \log
\log y}}\] and so, by $y\geq 10^{25}$,
\[\begin{aligned}
&(0.71219 \log \frac{3}{4} y^{\frac{4}{15}}+0.5)
\sqrt{\digamma(r_1)}\\ &\leq
(0.18992 \log y + 0.29512) \left(
\sqrt{e^{\gamma} \log \log y} - \frac{1.43644}{2 \sqrt{e^{\gamma} \log
\log y}}\right)\\
&\leq 0.19505 \sqrt{e^{\gamma} \log \log y} - 
\frac{0.19505\cdot 1.43644\log y}{2 \sqrt{e^\gamma \log \log y}}\\
&\leq
0.26031 \log y \sqrt{\log \log y} - 
3.00147
.\end{aligned}\]
Therefore, by (\ref{eq:basia}),
\[\begin{aligned}g_{y,\varphi}(r_1) &\leq
\frac{0.26031 \log y
\sqrt{\log \log y} +2.5 - 3.00147}{\sqrt{\frac{3}{4} y^{\frac{4}{15}}}} \\ &+ 
\frac{(1.8213 \log y + 13.49459) \log \log y}{\frac{3}{8} y^{\frac{4}{15}}}
+ \frac{3.2 ((\log y)/2)^{1/6}}{y^{1/6}}\\
&\leq
\frac{0.30059 \log y
\sqrt{\log \log y}}{y^{\frac{2}{15}}} + 
\frac{5.48127 \log y \log \log y}{y^{\frac{4}{15}}} +
\frac{0.84323 (\log y)^{1/6}}{y^{1/6}}\\
&\leq \frac{0.30782 \log y \sqrt{\log \log y}}{y^{\frac{2}{15}}},
\end{aligned}\]
where
we use $y\geq 10^{25}$
and verify that the functions 
$t\mapsto 
 (\log t)^{1/6}/t^{1/6-2/15}$ and
$t\mapsto \sqrt{\log \log t}/t^{4/15 - 2/15}$
are decreasing for $t\geq y$ (just taking derivatives).

Since $\varkappa = 49$, one of the terms in (\ref{eq:gypo}) simplifies
easily:
\[\frac{7}{15} + \frac{-2.14938+ \frac{8}{15} \log \varkappa}{\log x + 2 c^-}
\leq \frac{7}{15}.\]
By (\ref{eq:felipa}) and $y = x/\varkappa = x/49$, we conclude that
\begin{equation}\label{eq:comrade}\begin{aligned}
\frac{7}{15} g(r_1) S &\leq
\frac{7}{15} \cdot \frac{0.30782 \log y \sqrt{\log \log y}}{y^{\frac{2}{15}}} 
 \cdot
(0.640209 \log x - 0.021095) x\\
&\leq \frac{0.14365 \log y \sqrt{\log \log y}}{y^{\frac{2}{15}}} 
(0.640209 \log y + 2.4705) x \leq 0.30386 x,
\end{aligned}\end{equation}
where we are using the fact that $y\mapsto (\log y)^2 \sqrt{\log \log y}/y^{2/15}$
is decreasing for $y\geq 10^{25}$ (because 
$y\mapsto (\log y)^{5/2}/y^{2/15}$ is decreasing
for $y\geq e^{75/4}$ and $10^{25}>e^{75/4}$).

It remains only to bound 
\[\frac{2 S}{\log x + 2 c^{-}} \int_{r_0}^{r_1} \frac{g(r)}{r} dr\]
in the expression (\ref{eq:gypo}) for $M$.
We will use the bound on the integral given in (\ref{eq:byrne}).
The easiest term to bound there is $f_1(r_0)$, defined in (\ref{eq:cymba}),
since it depends only on $r_0$: for $r_0 = 150000$,
\[f_1(r_0) = 0.0163662\dotsc.\]
It is also not hard to bound $f_2(r_0,x)$, also defined in (\ref{eq:cymba}):
\[\begin{aligned}f_2(r_0,y) &= 3.2 \frac{((\log y)/2)^{1/6}}{y^{1/6}} \log \frac{
\frac{3}{8} x^{\frac{4}{15}}}{r_0}\\&\leq  
3.2 \frac{(\log y)^{1/6}}{(2 y)^{1/6}}
\left(\frac{4}{15} \log y + 0.05699 - \log r_0\right),\end{aligned}\]
and so, since $r_0 = 150000$ and $y\geq 10^{25}$,
\[f_2(r_0,y) \leq 0.001332.\]

Let us now look at the terms $I_{1,r}$, $c_\varphi$ in (\ref{eq:waslight}).
We already saw in (\ref{eq:dukas}) that
\[c_\varphi = \frac{C_{\varphi,2}/|\varphi|_1}{\log K} \leq \frac{0.07455}{\log 
\frac{\log y}{2}}\leq 0.02219.\]
Since $F(t) = e^{\gamma} \log t + c_\gamma$ with
$c_\gamma = 1.025742$,
\begin{equation}\label{eq:charlemagne}
I_{1,r_0} = F(\log r_0) + \frac{2 e^{\gamma}}{\log r_0} = 5.73826\dotsc
\end{equation}
It thus remains only to estimate $I_{0,r_0,r_1,z}$ for $z=y$ and $z=y/K$,
where $K = (\log y)/2$.

We already know that 
\[\begin{aligned}R_{y,2 r_0} &\leq 0.58341,\;\;\;\;R_{y/K,2r_0} \leq 0.60295,\\
R_{y,2 r_1}&\leq 0.71215,\;\;\;\;R_{y/K,2 r_1} \leq 0.71392\end{aligned}\]
by (\ref{eq:mali1}), (\ref{eq:mali2}), (\ref{eq:mali3}) and (\ref{eq:mali4}).
We also have the trivial bound $R_{z,t}\geq 0.41415$ valid for any $z$ and $t$
for which $R_{z,t}$ is defined. 

Omitting negative terms from (\ref{eq:waslight}), we easily get 
 the following bound, crude but useful enough:
\[
I_{0,r_0,r_1,z} \leq R_{z,2 r_0}^2 \cdot \frac{P_2(\log 2
  r_0)}{\sqrt{r_0}}
+ \frac{R_{z, 2r_1}^2 - 0.41415^2}{\log \frac{r_1}{r_0}}
\frac{P_2^-(\log 2 r_0)}{\sqrt{r_0}},\]
where $P_2(t) = t^2+4t+8$ and $P_2^-(t) = 2 t^2 + 16 t + 48$.
For $z=y$ and $r_0 = 150000$, this gives
\[\begin{aligned}
I_{0,r_0,r_1,y} &\leq 0.58341^2\cdot
\frac{P_2(\log 2
  r_0)}{\sqrt{r_0}}
+ \frac{0.71215^2 - 0.41415^2}{\log \frac{3 y^{4/15}}{8 r_0} 
}\cdot
\frac{P_2^-(\log 2 r_0)}{\sqrt{r_0}}\\
&\leq 0.19115 +
\frac{0.49214}{\frac{4}{15} \log y - \log 800000};
\end{aligned}\]
for $z = y/K$, we proceed in the same way, and obtain
\[I_{0,r_0,r_1,y/K} \leq 0.20416 + \frac{0.49584}{\frac{4}{15} \log y - \log 800000}.\]
This gives us
\begin{equation}\label{eq:marasm}\begin{aligned}
(1-c_\varphi) &\sqrt{I_{0,r_0,r_1,y}} + c_\varphi \sqrt{I_{0,r_0,r_1,\frac{2 y}{\log y}}}\\
&\leq 0.97781\cdot \sqrt{0.19115 +
\frac{0.49214}{\frac{4}{15} \log y - \log 800000}}\\ &+ 0.02219
\sqrt{0.20416 + \frac{0.49584}{\frac{4}{15} \log y - \log 800000}}.
\end{aligned}\end{equation}
We can now conclude the argument in one of two ways. First, we can simply use
the fact that $y\geq 10^{25}$, and obtain that
\[(1-c_\varphi) \sqrt{I_{0,r_0,r_1,y}} + c_\varphi \sqrt{I_{0,r_0,r_1,\frac{2 y}{\log y}}}
\leq 0.68659.\]
Therefore, by (\ref{eq:cymba}),
\[f_0(r_0,y) \leq 
 0.68659\cdot \sqrt{\frac{2}{\sqrt{r_0}} 
5.73827} \leq 0.11819.\]
Again, this is crude,
but it would be just about enough for our purposes.

The alternative is to apply a bound such as
(\ref{eq:marasm}) only for $y$ large. Assume for a moment that
$y\geq 10^{150}$, say. Then
\[\begin{aligned}
R_{y,r_0} &\leq 0.27125\log\left(1 + \frac{\log 4 r_0}{
2 \log \frac{9 (10^{150})^{1/3}}{2.004 r_0}}\right) + 0.41415
\leq 0.43086,
\end{aligned}\]
and, similarly, $R_{2 y/\log y} \leq 0.43113$. Since 
\[0.43086^2\cdot \frac{P_2(\log 2 r_0)}{\sqrt{r_0}} \leq
0.10426,
\;\;\;\;\;\;\;\;
0.43113^2\cdot \frac{P_2(\log 2 r_0)}{\sqrt{r_0}} \leq
0.10439,
\]
we obtain that
 \begin{equation}\label{eq:jijona}\begin{aligned}
(1-c_\varphi) &\sqrt{I_{0,r_0,r_1,y}} + c_\varphi \sqrt{I_{0,r_0,r_1,\frac{2 y}{\log y}}}\\
&\leq 0.97781\cdot \sqrt{0.10426 +
\frac{0.49214}{\frac{4}{15} \log y - \log 800000}}\\ &+ 0.02219
\sqrt{0.10439 + \frac{0.49584}{\frac{4}{15} \log y - \log 800000}}\leq
0.33247
\end{aligned}\end{equation}
for $y\geq 10^{150}$.
For $y$ between $10^{25}$ and $10^{150}$, we evaluate
the left side of (\ref{eq:jijona}) directly, using the definition
(\ref{eq:waslight}) of $I_{0,r_0,r_1,z}$ instead, as well as the bound
$c_\varphi\leq 0.07455/\log ((\log y)/2)$ from (\ref{eq:dukas}).
(It is clear from the second and third lines of (\ref{eq:basmed}) that 
$I_{0,r_0,r_1,z}$ is decreasing on $z$ for $r_0$, $r_1$ fixed, and so
the upper bound for $c_\varphi$ does give the worst case.)
The bisection method
(applied to the interval $\lbrack 25,150\rbrack$ with $30$ iterations,
including $30$ initial iterations)
gives us that
\begin{equation}\label{eq:hasmo}
(1-c_\varphi) \sqrt{I_{0,r_0,r_1,y}} + c_\varphi \sqrt{I_{0,r_0,r_1,\frac{2 y}{\log y}}}
\leq 0.4153461
\end{equation}
for $10^{25}\leq y\leq 10^{140}$. By (\ref{eq:jijona}),
(\ref{eq:hasmo}) is also true for $y>10^{150}$. Hence
\[f_0(r_0,y) \leq 
 0.4153461\cdot \sqrt{\frac{2}{\sqrt{r_0}} 
5.73827} \leq 0.069219.
\]

By (\ref{eq:byrne}), we conclude that
\[\int_{r_0}^{r_1} \frac{g(r)}{r} dr
\leq 0.069219 + 0.016367 + 0.001332 \leq 0.086918.\]
By (\ref{eq:felipa}),
\[\frac{2 S}{\log x + 2 c^-} \leq \frac{2 (0.640209 x \log x - 0.021095 x)}{
\log x + 2 c^-} \leq 2\cdot 0.640209 x = 1.280418 x,\]
where we recall that $c^-=0.6294>0$.
Hence
\begin{equation}\label{eq:casbah}
\frac{2 S}{\log x + 2 c^-} \int_{r_0}^{r_1} \frac{g(r)}{r} dr
\leq 0.111292 x.
\end{equation}

Putting (\ref{eq:roussel}), (\ref{eq:comrade}) 
and (\ref{eq:casbah}) together, we conclude that the quantity $M$
defined in (\ref{eq:gypo}) is bounded by
\begin{equation}\label{eq:bustier}
M\leq 0.36155 x + 0.30386 x + 0.111292 x \leq 0.77671 x.
\end{equation}

Gathering the terms from (\ref{eq:hexelor}),
(\ref{eq:lamber})
and (\ref{eq:bustier}), we see that Theorem \ref{thm:ostop} states
that the minor-arc total
\[Z_{r_0} = \int_{(\mathbb{R}/\mathbb{Z})\setminus \mathfrak{M}_{8,r_0}}
|S_{\eta_*}(\alpha,x)| |S_{\eta_+}(\alpha,x)|^2 d\alpha\]
is bounded by
\begin{equation}\label{eq:rozoj}
\begin{aligned}Z_{r_0} &\leq
\left(\sqrt{\frac{|\varphi|_1 x}{\varkappa} (M+T)} + 
\sqrt{S_{\eta_*}(0,x)\cdot E}\right)^2\\
&\leq \left(\sqrt{|\varphi|_1 (0.77671 + 3.5776\cdot 10^{-4})}
\frac{x}{\sqrt{\varkappa}} + 
\sqrt{1.0532\cdot 10^{-11}} \frac{x}{\sqrt{\varkappa}}
\right)^2\\
&\leq 0.97392 \frac{x^2}{\varkappa}
\end{aligned}\end{equation}
for $r_0=150000$, $x\geq 4.9\cdot 10^{26}$, where we use yet again the
fact that $|\varphi|_1 = \sqrt{\pi/2}$. This is our total minor-arc bound.

\subsection{Conclusion: proof of main theorem}

As we have known from the start,
\begin{equation}\label{eq:masd}\begin{aligned}
\sum_{n_1+n_2+n_3=N} &\Lambda(n_1) \Lambda(n_2) \Lambda(n_3) \eta_+(n_1)
\eta_+(n_2) \eta_*(n_3) \\ &=
\int_{\mathbb{R}/\mathbb{Z}} S_{\eta_+}(\alpha,x)^2 S_{\eta_*}(\alpha,x)
e(-N \alpha) d\alpha.\end{aligned}\end{equation}
We have just shown that, assuming $N\geq 10^{27}$, $N$ odd,
\[\begin{aligned}
\int_{\mathbb{R}/\mathbb{Z}} &S_{\eta_+}(\alpha,x)^2 S_{\eta_*}(\alpha,x)
e(-N \alpha) d\alpha \\ &= 
\int_{\mathfrak{M}_{8,r_0}} S_{\eta_+}(\alpha,x)^2 S_{\eta_*}(\alpha,x)
e(-N \alpha) d\alpha\\ &+ O^*\left(
\int_{(\mathbb{R}/\mathbb{Z})\setminus 
\mathfrak{M}_{8,r_0}} |S_{\eta_+}(\alpha,x)|^2 |S_{\eta_*}(\alpha,x)|
d\alpha\right) \\ &\geq 1.058259 \frac{x^2}{\varkappa}
+ O^*\left(0.97392 \frac{x^2}{\varkappa}\right) \geq
0.08433 \frac{x^2}{\varkappa}
\end{aligned}\]
for $r_0=150000$, where $x = N/(2+9/(196 \sqrt{2 \pi}))$, as in
(\ref{eq:warwar}).
 (We are using (\ref{eq:juventud}) and (\ref{eq:rozoj}).) Recall
that $\varkappa = 49$ and $\eta_*(t) = (\eta_2\ast_M \varphi)(\varkappa t)$,
where $\varphi(t) = t^2 e^{-t^2/2}$.

It only remains to show that the contribution of terms with $n_1$, $n_2$
or $n_3$ non-prime to the sum in (\ref{eq:masd}) is negligible.
(Let us take out $n_1$, $n_2$, $n_3$ equal to $2$ as well, since some
prefer to state the ternary Goldbach conjecture as follows:
every odd number $\geq 9$ is the sum of three {\em odd} primes.)
Clearly
\begin{equation}\label{eq:duke}\begin{aligned}
\mathop{\sum_{n_1+n_2+n_3=N}}_{\text{$n_1$, $n_2$ or $n_3$ even or non-prime}}
 \Lambda(n_1) &\Lambda(n_2) \Lambda(n_3) \eta_+(n_1)
\eta_+(n_2) \eta_*(n_3)\\ \leq
3 |\eta_+|_\infty^2 |\eta_*|_\infty
&\mathop{\sum_{n_1+n_2+n_3=N}}_{\text{$n_1$ even or non-prime}}
 \Lambda(n_1) \Lambda(n_2) \Lambda(n_3)\\
\leq 3 |\eta_+|_\infty^2 |\eta_*|_\infty\cdot 
&(\log N) 
\mathop{\sum_{\text{$n_1\leq N$ non-prime}}}_{\text{or $n_1=2$}}
\Lambda(n_1)
\sum_{n_2\leq N} \Lambda(n_2).
 \end{aligned}\end{equation}
By (\ref{eq:sazar}) and (\ref{eq:macadam}), 
$|\eta_+|_\infty\leq 1.079955$ and $|\eta_*|_\infty \leq 1.414$.
By \cite[Thms. 12 and 13]{MR0137689},
\[\begin{aligned}
\mathop{\sum_{\text{$n_1\leq N$ non-prime}}}_{\text{or $n_1=2$}}
\Lambda(n_1) &< 1.4262 \sqrt{N} + \log 2 < 1.4263 \sqrt{N},\\
\mathop{\sum_{\text{$n_1\leq N$ non-prime}}}_{\text{or $n_1=2$}}
\Lambda(n_1) \sum_{n_2\leq N}
\Lambda(n_2) &= 1.4263 \sqrt{N} \cdot 1.03883 N 
\leq 1.48169 N^{3/2}.
\end{aligned}\]
Hence, the sum on the first line of (\ref{eq:duke}) is at most
\[7.3306 N^{3/2} \log N.\]

Thus, for $N\geq 10^{27}$ odd, 
\[\begin{aligned}
\mathop{\sum_{n_1+n_2+n_3=N}}_{\text{$n_1$, $n_2$, $n_3$ odd primes}}
 &\Lambda(n_1) \Lambda(n_2) \Lambda(n_3) \eta_+(n_1)
\eta_+(n_2) \eta_*(n_3) \\&\geq
0.08433 \frac{x^2}{\varkappa}
- 7.3306 N^{3/2} \log N\\
&\geq 0.00042248 N^2 - 1.4412\cdot 10^{-11} \cdot N^2 \geq 0.000422 N^2
\end{aligned}\]
by $\varkappa=49$ and (\ref{eq:warwar}).
Since $0.000422 N^2>0$, this shows that every odd number $N\geq 10^{27}$
can be written as the sum of three odd primes.

Since the ternary Goldbach conjecture has already been checked for all
$N\leq 8.875\cdot 10^{30}$ \cite{HelPlat},
we conclude that
 every odd number $N>7$ can
be written as the sum of three odd primes, and every odd number $N>5$ can
be written as the sum of three primes. The main theorem is hereby proven:
the ternary Goldbach conjecture is true.

\appendix


\section{Sums over primes}

Here we treat some sums of the type $\sum_n \Lambda(n) \varphi(n)$, where
$\varphi$ has compact support. Since the sums are over all integers (not
just an arithmetic progression) and there is no phase $e(\alpha n)$
involved, the treatment is relatively straightforward.

The following is standard. 
\begin{lem}[Explicit formula]\label{lem:expfor}
Let $\varphi:\lbrack 1,\infty)\to \mathbb{C}$ be continuous and piecewise $C^1$
with $\varphi'' \in \ell_1$; let it also be of compact support contained in
$\lbrack 1,\infty)$. 
Then
\begin{equation}\label{eq:zety}
\sum_n \Lambda(n) \varphi(n) 
 = \int_1^{\infty} \left(1 - \frac{1}{x (x^2-1)} \right) \varphi(x) dx 
- \sum_\rho (M\varphi)(\rho),
\end{equation}
where $\rho$ runs over the non-trivial zeros of $\zeta(s)$.
\end{lem}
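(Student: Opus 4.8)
The plan is to prove the explicit formula (Lemma~\ref{lem:expfor}) by the standard contour-integration argument, combining Mellin inversion with the known analytic properties of $\zeta(s)$. First I would express the left-hand side in terms of the Mellin transform of $\varphi$. Since $\varphi$ is continuous, piecewise $C^1$, with $\varphi''\in\ell_1$ and compact support in $[1,\infty)$, its Mellin transform $M\varphi(s)$ is entire (the compact support kills any convergence issue at $\infty$, and the support being bounded away from, or at worst touching, $1$ controls the behavior as $\Re(s)\to-\infty$), and by two integrations by parts we get the decay bound $M\varphi(s)=O(1/|s|^2)$ on vertical strips, with the implied constant controlled by $|\varphi''|_1$. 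By Mellin inversion,
\[
\varphi(n)=\frac{1}{2\pi i}\int_{(\sigma)} M\varphi(s)\, n^{-s}\, ds
\]
for $\sigma>1$, and summing against $\Lambda(n)$ and interchanging sum and integral (justified by absolute convergence, using the $1/|s|^2$ decay and $\sum_n \Lambda(n) n^{-\sigma}<\infty$) gives
\[
\sum_n \Lambda(n)\varphi(n) = \frac{1}{2\pi i}\int_{(\sigma)} \left(-\frac{\zeta'}{\zeta}(s)\right) M\varphi(s)\, ds.
\]

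Next I would shift the contour from $\Re(s)=\sigma$ leftward to $\Re(s)=-\infty$ (or, more carefully, to a sequence of lines $\Re(s)=-(2k+1)$ with $k\to\infty$), picking up residues. The residue at $s=1$ comes from the simple pole of $\zeta$, i.e. the simple pole of $-\zeta'/\zeta$ with residue $1$; it contributes $M\varphi(1)=\int_1^\infty \varphi(x)\,dx$. Each non-trivial zero $\rho$ contributes $-(\text{mult})\cdot M\varphi(\rho)$ (here I would note $-\zeta'/\zeta$ has a simple pole with residue equal to minus the multiplicity of the zero, and the sum over $\rho$ in the statement is understood with multiplicity). The trivial zeros at $s=-2,-4,-6,\dots$ contribute $-\sum_{k\ge1} M\varphi(-2k)$; one recognizes $\sum_{k\ge1} x^{-2k} = \frac{1}{x^2-1}$ for $x>1$, so this sum equals $-\int_1^\infty \frac{1}{x(x^2-1)}\varphi(x)\,dx$ after interchanging sum and integral (again legitimate since $\varphi$ is supported in $[1+\epsilon,\infty)$ for some $\epsilon>0$, or one handles the endpoint by the integrability hypotheses). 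There is also a contribution from the pole structure near $s=0$: $\zeta(0)\neq0$ so $-\zeta'/\zeta$ is regular at $s=0$, hence no residue there, consistent with the stated formula. Combining, the main term $\int_1^\infty (1 - \frac{1}{x(x^2-1)})\varphi(x)\,dx$ emerges, and the sum over non-trivial zeros appears with the correct sign.

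The main obstacle, as usual with the explicit formula, is justifying the contour shift rigorously: one must show that the integrals over the horizontal segments (at heights $\pm T_j$ for a suitable sequence $T_j\to\infty$ chosen to avoid zeros) tend to $0$, and that the integral over the far-left vertical line $\Re(s)=-(2k+1)$ tends to $0$ as $k\to\infty$. For the horizontal pieces I would invoke the classical bound $\zeta'/\zeta(\sigma+iT) = O((\log T)^2)$ uniformly for $-1\le\sigma\le 2$ away from zeros (choosing $T_j$ in gaps between ordinates of zeros, using the zero-density estimate $N(T+1)-N(T)=O(\log T)$), together with the $M\varphi(s)=O(1/|s|^2)$ decay, so the horizontal contributions are $O((\log T_j)^2/T_j)\to0$. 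For the left vertical line, the functional equation gives $\zeta'/\zeta(s)$ polynomial growth in $|\Im s|$ and at most like $\log|s|$ in the real part direction on those special lines, which is again killed by the quadratic decay of $M\varphi$; alternatively one shifts to $-\infty$ only in the sense of summing the residue series, checking its absolute convergence via $\sum_k |M\varphi(-2k)| < \infty$ (immediate from compact support: $M\varphi(-2k) = \int_1^\infty \varphi(x) x^{-2k-1}dx$ decays geometrically). Finally I would remark that the sum over $\rho$ converges (conditionally, paired as $\rho$ and $\bar\rho$) by the same $1/|\rho|^2$ decay of $M\varphi$ combined with $\sum_\rho 1/|\rho|^2 < \infty$, so the right-hand side of \eqref{eq:zety} is well-defined, and the identity is proved.
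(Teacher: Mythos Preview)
Your proof is correct and is precisely the standard contour-integration argument; the paper does not actually give a proof, labeling the lemma as ``standard'' and referring to \cite[\S 5.5, Exercise~5]{MR2061214}, with only the added remark that $\varphi''\in L^1$ already yields quadratic decay of $M\varphi(\sigma+it)$ in $t$ and hence absolute convergence of $\sum_\rho (M\varphi)(\rho)$. Your sketch supplies exactly that omitted argument, including the same decay observation.
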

The non-trivial zeros of $\zeta(s)$ are, of course, those in the critical
strip
$0<\Re(s)< 1$.

\begin{Rem}
Lemma \ref{lem:expfor} appears as exercise 5 in
\cite[\S 5.5]{MR2061214}; the condition there that $\varphi$ be smooth
can be relaxed, since already the weaker 
assumption that $\varphi''$ be in $L^1$
implies that the Mellin transform $(M\varphi)(\sigma + i t)$ 
decays quadratically on $t$ as $t\to \infty$, thereby guaranteeing 
that the sum $\sum_\rho (M\varphi)(\rho)$ converges absolutely.
\end{Rem}

\begin{lem}\label{lem:crepe}
Let $x\geq 10$. Let $\eta_2$ be as in (\ref{eq:eta2}). Assume that all
non-trivial zeros of $\zeta(s)$ with $|\Im(s)|\leq T_0$ 
lie on the critical line.

Then
\begin{equation}\label{eq:sucre}\begin{aligned}
\sum_n \Lambda(n) \eta_2\left(\frac{n}{x}\right) = x + 
O^*\left(0.135 x^{1/2} + \frac{9.7}{x^2}\right)
+ \frac{\log \frac{e T_0}{2\pi}}{T_0} \left( \frac{9/4}{2 \pi}
+ \frac{6.03}{T_0}\right)  x .
\end{aligned}\end{equation}

In particular, with $T_0 = 3.061\cdot 10^{10}$ 
in the assumption, we have,
for $x\geq 2000$,
\[\sum_n \Lambda(n) \eta_2\left(\frac{n}{x}\right) =
(1+O^*(\epsilon)) x + O^*(0.135 x^{1/2}),
\]
where $\epsilon = 2.73 \cdot 10^{-10}$.
\end{lem}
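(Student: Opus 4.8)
The plan is to feed the smooth, compactly supported weight $\varphi(t)=\eta_2(t/x)$ into the explicit formula of Lemma~\ref{lem:expfor} and then estimate the resulting sum over the zeros $\rho$ of $\zeta$ by separating those that have been numerically checked to lie on the critical line from the rest. First I would check that $\varphi(t)=\eta_2(t/x)$ is admissible. Since $\eta_2$ is supported on $\lbrack 1/4,1\rbrack$, the function $\varphi$ is continuous, piecewise linear, and supported on $\lbrack x/4,x\rbrack\subset\lbrack 1,\infty)$ for $x\geq 10$. Its second derivative is a finite combination of point masses (at the three corners of $\eta_2$) rather than an $L^1$ function, but the Mellin transform $M\varphi(s)=x^{s}M\eta_2(s)=x^{s}\bigl(2(1-2^{-s})/s\bigr)^{2}$ decays like $x^{\Re s}/|s|^{2}$, so $\sum_\rho M\varphi(\rho)$ converges absolutely and the conclusion of Lemma~\ref{lem:expfor} persists for $\eta_2$ (either by approximating $\eta_2$ by smooth functions, or by repeating the contour-shift proof directly, the quadratic decay of $M\varphi$ being exactly what is required). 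This gives
\[
\sum_n\Lambda(n)\,\eta_2\!\left(\frac{n}{x}\right)=\int_1^{\infty}\left(1-\frac{1}{t(t^{2}-1)}\right)\eta_2\!\left(\frac{t}{x}\right)dt-\sum_\rho x^{\rho}\,M\eta_2(\rho).
\]

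For the first term, $\int_1^{\infty}\eta_2(t/x)\,dt=x\int_0^{\infty}\eta_2(u)\,du=x\,|\eta_2|_1=x$, using that the weight is supported in $\lbrack x/4,x\rbrack$ (away from $1$) and that $|\eta_2|_1=1$. The correction $\int_{x/4}^{x}\eta_2(t/x)/(t(t^{2}-1))\,dt$ is elementary: after the substitution $u=t/x$ one bounds $t^{2}-1$ from below on the support and is left with a finite integral of $\eta_2(u)/u$, yielding a bound of the shape $O^*(9.7/x^{2})$ for $x\geq 10$.

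The substance is the zero sum $\sum_\rho x^{\rho}M\eta_2(\rho)$; here it is essential that $M\eta_2(\rho)=4(1-2^{-\rho})^{2}/\rho^{2}$ carries a $\rho^{2}$ (not $\rho$) in the denominator, which is precisely why $\eta_2$, rather than a sharp cutoff, was chosen. I would split according to whether $|\Im\rho|\leq T_0$ or $|\Im\rho|>T_0$. For $|\Im\rho|\leq T_0$ the hypothesis forces $\Re\rho=1/2$, so $|x^{\rho}|=x^{1/2}$, and the contribution is at most $x^{1/2}\sum_{|\Im\rho|\leq T_0}|M\eta_2(1/2+i\Im\rho)|\leq x^{1/2}\sum_\rho|M\eta_2(1/2+i\Im\rho)|$; the latter is a convergent numerical series, and using $|M\eta_2(1/2+it)|=4|1-2^{-1/2-it}|^{2}/(1/4+t^{2})$ together with the explicit ordinates of the first several zeros (from the verified list) and a zero-density estimate for the rest, I would bound it by $0.135$. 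For $|\Im\rho|>T_0$ I would use the trivial bound $|x^{\rho}|\leq x^{\Re\rho}<x$, the estimate $|M\eta_2(\rho)|\ll|\Im\rho|^{-2}$, and an explicit form of the zero-counting function, $N(T)=\frac{T}{2\pi}\log\frac{T}{2\pi e}+O(\log T)$, to get $\sum_{|\Im\rho|>T_0}|M\eta_2(\rho)|\ll\frac{\log(eT_0/2\pi)}{T_0}$ with the explicit constants $\frac{9/4}{2\pi}$ (from the leading density term $\frac{1}{2\pi}\log\frac{t}{2\pi}$ integrated against the bound for $|M\eta_2|$) and $6.03/T_0$ (from the secondary term), exactly as in \eqref{eq:sucre}.

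The displayed special case is then immediate: with $T_0=3.061\cdot 10^{10}$ one has $\frac{\log(eT_0/2\pi)}{T_0}\bigl(\frac{9/4}{2\pi}+\frac{6.03}{T_0}\bigr)\leq 2.73\cdot 10^{-10}$, which I would fold into an $O^*(\epsilon x)$ term, and for $x\geq 2000$ the term $O^*(9.7/x^{2})$ is dominated by $O^*(0.135\,x^{1/2})$. The main obstacle throughout is keeping the constants explicit and small enough: the bound $0.135$ for the low-zero series needs the genuine numerical contribution of the first zeros, since a crude triangle-inequality bound combined with $\sum_\rho((\Im\rho)^{2}+1/4)^{-1}$ is too lossy, and the constants $9/4$ and $6.03$ in the tail require a sharp explicit upper bound for $N(T)$.
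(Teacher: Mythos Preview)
Your overall plan---explicit formula, main term $x$, small integral correction, then split the zero sum by height---matches the paper. The crucial difference is how the zero sum is organized. You bound $|x^{\rho}M\eta_2(\rho)|$ termwise, separating $|x^{\rho}|$ and $|M\eta_2(\rho)|$. The paper instead expands the numerator algebraically:
\[
M\eta_2(\rho)\,x^{\rho}=\left(\frac{1-2^{-\rho}}{\rho}\right)^{2}x^{\rho}=\frac{x^{\rho}-2(x/2)^{\rho}+(x/4)^{\rho}}{\rho^{2}},
\]
so that $\sum_\rho M\eta_2(\rho)x^{\rho}=S_1(x)-2S_1(x/2)+S_1(x/4)$ with $S_1(y)=\sum_\rho y^{\rho}/\rho^{2}$. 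This reduces everything to the single standard object $S_1$, for which one can quote Rosser's constant $\sum_\rho|\rho|^{-2}\le 0.0463$ and the explicit tail bound of Ramar\'e--Saouter for $\sum_{|\Im\rho|>T_0}|\rho|^{-2}$. The constants then fall out mechanically: the $x^{1/2}$-term picks up the factor $1+\sqrt{2}+1/2=3/2+\sqrt{2}$, giving $(3/2+\sqrt{2})\cdot 0.0463<0.135$; the $x$-term picks up $1+2\cdot\tfrac12+\tfrac14=9/4$, producing exactly the $9/4$ and $6.03=\tfrac94\cdot 2.68$ in \eqref{eq:sucre}. The paper also uses the functional-equation symmetry $\rho\leftrightarrow 1-\overline{\rho}$ to halve the contribution of hypothetical zeros off the line with $|\Im\rho|>T_0$.

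Your high-zero treatment has a gap as written. With only $|x^{\rho}|<x$ and a pointwise bound on $|M\eta_2(\rho)|$ you cannot reach the constant $9/4$: on the open strip one only has $|1-2^{-\rho}|^{2}\le (1+2^{-\Re\rho})^{2}\le 4$, and even on $\Re\rho=1/2$ the supremum is $(1+2^{-1/2})^{2}=3/2+\sqrt{2}\approx 2.914>9/4$. So ``integrating the density against the bound for $|M\eta_2|$'' does not yield $9/4$; you would get a larger constant and hence not the $\epsilon$ stated. (Your low-zero bound is fine and in fact coincides with the paper's, since $(1+2^{-1/2})^{2}=3/2+\sqrt{2}$.) The missing idea is precisely the $S_1(x)-2S_1(x/2)+S_1(x/4)$ decomposition together with the symmetry halving; once you insert that, the constants in \eqref{eq:sucre} come out directly from the cited lemmas rather than from an ad hoc numerical computation over zero ordinates.
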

The assumption that all non-trivial zeros up to $T_0 = 3.061\cdot
10^{10}$ lie on the critical line
was proven rigorously in \cite{Plattpi}; higher values of
$T_0$ have been reached elsewhere (\cite{Wed}, \cite{GD}).
\begin{proof}
By Lemma \ref{lem:expfor},
\[\sum_n \Lambda(n) \eta_2\left(\frac{n}{x}\right) =
\int_1^{\infty} 
 \eta_2\left(\frac{t}{x}\right) dt - 
\int_1^{\infty} \frac{\eta_2(t/x)}{t (t^2-1)} dt
- \sum_\rho (M \varphi)(\rho),\]
where $\varphi(u) = \eta_2(u/x)$ and
$\rho$ runs over all non-trivial zeros of $\zeta(s)$. Since $\eta_2$
is non-negative,
$\int_1^\infty \eta_2(t/x) dt = x |\eta_2|_1 = x$, while
\[\int_1^\infty \frac{\eta_2(t/x)}{t (t^2-1)} dt = O^*\left(
\int_{1/4}^1 \frac{\eta_2(t)}{t x^2 (t^2-1/100)} dt\right) =
O^*\left(\frac{9.61114}{x^2}\right).\]
By (\ref{eq:envy}),
\[\sum_\rho (M\varphi)(\rho) = \sum_\rho M\eta_2(\rho) \cdot x^{\rho} = 
\sum_\rho \left(\frac{1-2^{-\rho}}{\rho}\right)^2 x^{\rho} =
S_1(x) - 2 S_1(x/2) + S_1(x/4),\]
where 
\begin{equation}\label{eq:gormo}
S_m(x) = \sum_\rho \frac{x^{\rho}}{\rho^{m+1}}.\end{equation}
Setting aside the contribution of all $\rho$ with $|\Im(\rho)|\leq T_0$ and
all $\rho$ with $|\Im(\rho)|>T_0$ and $\Re(s)\leq 1/2$, and using the
symmetry provided by the functional equation, we obtain
\[\begin{aligned}
|S_m(x)| &\leq x^{1/2}\cdot \sum_{\rho} \frac{1}{|\rho|^{m+1}} +
x \cdot \mathop{\mathop{\sum_{\rho}}_{|\Im(\rho)|>T_0}}_{|\Re(\rho)| > 1/2} 
\frac{1}{|\rho|^{m+1}}\\
&\leq x^{1/2}\cdot \sum_{\rho} \frac{1}{|\rho|^{m+1}} +
\frac{x}{2} \cdot \mathop{\sum_{\rho}}_{|\Im(\rho)|>T_0}
\frac{1}{|\rho|^{m+1}}.\end{aligned}\]
We bound the first sum by \cite[Lemma 17]{MR0003018} and the second sum
by \cite[Lemma 2]{MR1950435}. We obtain
\begin{equation}\label{eq:shim}|S_m(x)| 
\leq \left(\frac{1}{2 m \pi T_0^m} + \frac{2.68}{T_0^{m+1}}\right)
 x \log \frac{e T_0}{2 \pi} + \kappa_{m} x^{1/2},\end{equation}
where $\kappa_1 = 0.0463$, $\kappa_2=0.00167$ and $\kappa_3 = 0.0000744$.

Hence
\[\begin{aligned}
\left|\sum_\rho (M \eta)(\rho) \cdot x^\rho\right| &\leq
\left(\frac{1}{2 \pi T_0} + \frac{2.68}{T_0^2}\right) \frac{9 x}{4}
\log \frac{e T_0}{2 \pi} + \left(\frac{3}{2} + \sqrt{2}\right) \kappa_1 x^{1/2}.
\end{aligned}\]
For $T_0 = 3.061\cdot 10^{10}$ and $x\geq 2000$, we obtain
\[\sum_n \Lambda(n) \eta_2\left(\frac{n}{x}\right) =
(1+O^*(\epsilon)) x + O^*(0.135 x^{1/2}),
\]
where $\epsilon = 2.73 \cdot 10^{-10}$.
\end{proof}

\begin{cor}\label{cor:austeria}
Let $\eta_2$ be as in (\ref{eq:eta2}). 
Assume that all non-trivial zeros of $\zeta(s)$ with $|\Im(s)|\leq T_0$,
$T_0 = 3.061\cdot 10^{10}$, lie on the critical line.
Then, for all $x\geq 1$,
\begin{equation}\label{eq:jotra}
\sum_n \Lambda(n) \eta_2\left(\frac{n}{x}\right) \leq 
\min\left((1+\epsilon)
x + 0.2 x^{1/2}, 1.04488 x\right),\end{equation}
where $\epsilon = 2.73 \cdot 10^{-10}$.
\end{cor}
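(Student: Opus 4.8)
The plan is to split the range of $x$ at a moderate threshold, treating large $x$ by Lemma~\ref{lem:crepe} and small $x$ by a direct computation. Both halves use the hypothesis that all non-trivial zeros of $\zeta(s)$ with $|\Im(s)|\leq T_0 = 3.061\cdot 10^{10}$ lie on the critical line, which is exactly the hypothesis of Lemma~\ref{lem:crepe}.

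First, for $x\geq 10$ I would invoke the explicit estimate of Lemma~\ref{lem:crepe}: there the error term $9.7/x^2$ is at most $0.097$, and the zero-sum term $\frac{\log(eT_0/2\pi)}{T_0}\bigl(\frac{9/4}{2\pi}+\frac{6.03}{T_0}\bigr)x$ is bounded by $\epsilon x$ with $\epsilon = 2.73\cdot 10^{-10}$, so that $\sum_n \Lambda(n)\eta_2(n/x) \leq (1+\epsilon)x + 0.135\, x^{1/2} + 0.097$. Since $0.097 \leq 0.065\sqrt{10}$, for all $x\geq 10$ this is $\leq (1+\epsilon)x + 0.2\, x^{1/2}$, giving the first bound in (\ref{eq:jotra}); and since $0.135\,x^{1/2}+0.097 \leq (0.04488-\epsilon)x$ as soon as $x$ exceeds a small explicit bound (a one-line check shows $x\geq 14$ suffices), we also get $\sum_n\Lambda(n)\eta_2(n/x)\leq 1.04488\,x$ there. (For $x\geq 2000$ one may instead quote the clean form $(1+O^*(\epsilon))x + O^*(0.135\,x^{1/2})$ directly.)

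It then remains to verify (\ref{eq:jotra}) for $1\leq x\leq 14$. Here $\eta_2$ is supported on $[1/4,1]$, so the sum $\sum_n \Lambda(n)\eta_2(n/x)$ runs only over integers $n$ with $x/4\leq n\leq x$ — at most fourteen of them — and $\eta_2(t)=4\max(\log 2-|\log 2t|,0)$ is completely explicit. I would evaluate this finite sum as a function of $x$ on $[1,14]$ and check the inequality against both $(1+\epsilon)x+0.2\,x^{1/2}$ and $1.04488\,x$; since the summands vary piecewise real-analytically with $x$, this is done rigorously by subdividing $[1,14]$ and using interval arithmetic, as elsewhere in the paper. The value $1.04488$ is essentially forced by the worst case of this computation, which occurs near the point where a prime power first meets the peak $t=1/2$ of $\eta_2$.

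There is no real obstacle here; the only mild care needed is (a) confirming that the error terms in Lemma~\ref{lem:crepe} are genuinely dominated by the slack $0.2\,x^{1/2}$ and $(0.04488-\epsilon)x$ for all $x$ above the chosen threshold, and (b) carrying out the small-$x$ verification over a continuum of $x$ rather than at finitely many points, which interval arithmetic handles routinely.
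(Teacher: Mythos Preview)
Your proposal is correct and follows essentially the same strategy as the paper: invoke Lemma~\ref{lem:crepe} for large $x$ and verify small $x$ by direct computation. The only differences are cosmetic --- the paper splits at $x=2000$ (using the clean form of the lemma) and handles $x<2000$ by evaluating on the grid $(1/1000)\mathbb{Z}\cap[0,2000]$ with a Lipschitz bound $|\eta_2'|_\infty=16$, then dispatching the few residual intervals $(205,207)$, $(9.5,10.5)$, $(13.5,14.5)$ by a derivative check, whereas you push the analytic bound down to $x\approx 14$ and finish with generic interval arithmetic on $[1,14]$.
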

\begin{proof}
Immediate from Lemma \ref{lem:crepe} for $x\geq 2000$.
For $x<2000$, we use computation as follows. Since $|\eta_2'|_\infty = 16$ and 
$\sum_{x/4 \leq n\leq x} \Lambda(n)\leq x$ for all $x\geq 0$, computing
$\sum_{n\leq x} \Lambda(n) \eta_2(n/x)$ only for $x\in (1/1000) \mathbb{Z} \cap
\lbrack 0,2000\rbrack$ results in an inaccuracy of at most $(16\cdot 0.0005/
0.9995)x \leq 0.00801 x$. This resolves the matter at all points outside
$(205,207)$ (for the first estimate) or outside $(9.5,10.5)$ and
$(13.5,14.5)$ (for the second estimate). In those intervals,
the prime powers $n$ involved do not change (since whether $x/4 < n \leq x$
depends only on $n$ and $\lbrack x\rbrack$), and thus we can
find the maximum of the sum in (\ref{eq:jotra}) just
by taking derivatives. 
\end{proof}

\section{Sums involving $\phi(q)$}\label{sec:sumphiq}
We need estimates for several sums involving $\phi(q)$ in the denominator.

The easiest are convergent sums, such as
$\sum_q \mu^2(q)/(\phi(q) q)$. We can express this as
$\prod_p (1 + 1/(p (p-1)))$. This is a convergent product,
 and the main task is to bound a tail: for $r$ an integer,
\begin{equation}\label{eq:maloso}\log \prod_{p>r} \left(1 + \frac{1}{p (p-1)}\right)
\leq \sum_{p>r} \frac{1}{p (p-1)} \leq \sum_{n>r} \frac{1}{n (n-1)} = 
\frac{1}{r}.\end{equation}
A quick computation\footnote{Using D. Platt's integer arithmetic package.} 
now suffices to give
\begin{equation}\label{eq:nagasa}
2.591461 \leq \sum_q \frac{\gcd(q,2) \mu^2(q)}{\phi(q) q} <
2.591463\end{equation} and so
\begin{equation}\label{eq:nagasa2}
1.295730 \leq \sum_{\text{$q$ odd}} \frac{\mu^2(q)}{\phi(q) q} <
1.295732,
\end{equation}
since the expression bounded in (\ref{eq:nagasa2}) is exactly half of
that bounded in (\ref{eq:nagasa}).

Again using (\ref{eq:maloso}), we get that
\begin{equation}\label{eq:massacre}
2.826419
 \leq \sum_q \frac{\mu^2(q)}{\phi(q)^2} < 2.826421.\end{equation}
In what follows, we will use values for convergent sums obtained in
much the same way -- an easy tail bound followed by a computation. 


By \cite[Lemma 3.4]{MR1375315},
\begin{equation}\label{eq:ramo}\begin{aligned}
\sum_{q\leq r} \frac{\mu^2(q)}{\phi(q)} 
&= \log r + c_E + O^*(7.284
r^{-1/3}),\\
\mathop{\sum_{q\leq r}}_{\text{$q$ odd}} \frac{\mu^2(q)}{\phi(q)} &= \frac{1}{2} \left(
\log r + c_E + \frac{\log 2}{2}\right) + O^*(4.899 r^{-1/3}),\\
\end{aligned}\end{equation}
where 
\[c_E = \gamma + \sum_p \frac{\log p}{p (p-1)} = 1.332582275+O^*(10^{-9}/3)\]
by \cite[(2.11)]{MR0137689}.
As we already said in (\ref{eq:charpy}), this, supplemented by a
computation for $r\leq 4\cdot 10^7$, gives
\[\log r + 1.312 \leq \sum_{q\leq r} \frac{\mu^2(q)}{\phi(q)} \leq
\log r + 1.354\]
for $r\geq 182$.
In the same way, we get that
\begin{equation}\label{eq:marmo}\frac{1}{2} \log r + 0.83 \leq 
\mathop{\sum_{q\leq r}}_{\text{$q$ odd}} \frac{\mu^2(q)}{\phi(q)} \leq
\frac{1}{2} \log r + 0.85\end{equation}
for $r\geq 195$.
(The numerical verification here goes up to $1.38\cdot 10^8$; for $r>
3.18\cdot 10^8$, use \ref{eq:marmo}.)

Clearly
\begin{equation}\label{eq:dsamo}
\mathop{\sum_{q\leq 2r}}_{\text{$q$ even}} \frac{\mu^2(q)}{\phi(q)} =
\mathop{\sum_{q\leq r}}_{\text{$q$ odd}} \frac{\mu^2(q)}{\phi(q)}.
\end{equation}

We wish to obtain bounds for the sums
\[\sum_{q\geq r} \frac{\mu^2(q)}{\phi(q)^2},\;\;\;\;
\mathop{\sum_{q\geq r}}_{\text{$q$ odd}} \frac{\mu^2(q)}{\phi(q)^2},\;\;\;\;
\mathop{\sum_{q\geq r}}_{\text{$q$ even}} \frac{\mu^2(q)}{\phi(q)^2},\]
where $N\in \mathbb{Z}^+$ and $r\geq 1$. To do this, it will be helpful
to express some of the quantities within these sums as
convolutions.\footnote{The
author would like to thank O. Ramar\'e for teaching him this technique.}
For $q$ squarefree and $j\geq 1$,
\begin{equation}\label{eq:merleau}
\frac{\mu^2(q) q^{j-1}}{\phi(q)^j} = \sum_{ab=q} \frac{f_j(b)}{a},
\end{equation}
where $f_j$ is the multiplicative function defined by
\[f_j(p) = \frac{p^j - (p-1)^j}{(p-1)^j p},\;\;\;\;\;
f_j(p^k) = 0\;\;\;\;\;\text{for $k\geq 2$.}\]
 
We will also find the following estimate useful.
\begin{lem}\label{lem:sidio}
Let $j\geq 2$ be an integer and $A$ a positive real. 
Let $m\geq 1$ be an integer.
Then
\begin{equation}\label{eq:alaspe}
\mathop{\sum_{a\geq A}}_{(a,m)=1} \frac{\mu^2(a)}{a^j}\leq \frac{\zeta(j)/\zeta(2
  j)}{A^{j-1}} \cdot \prod_{p|m} \left(1 + \frac{1}{p^j}\right)^{-1}
.\end{equation}
\end{lem}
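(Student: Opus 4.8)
The plan is to reduce at once to the case of integer $A$. If $A\le 1$ the bound is trivial, since then the left-hand side is the full sum $C_m:=\sum_{(a,m)=1}\mu^2(a)/a^j$ while the right-hand side is $\ge C_m$; so assume $A>1$, and, since the index of summation runs over integers, replace $A$ by $n:=\lceil A\rceil\ge 2$. This only strengthens the claim, because $n^{j-1}\ge A^{j-1}$. Next, by the Euler product,
\[
C_m=\sum_{\substack{a\ge 1\\(a,m)=1}}\frac{\mu^2(a)}{a^j}=\prod_{p\nmid m}\Bigl(1+\frac1{p^j}\Bigr)=\frac{\zeta(j)}{\zeta(2j)}\prod_{p\mid m}\Bigl(1+\frac1{p^j}\Bigr)^{-1},
\]
so the constant claimed in \eqref{eq:alaspe} is exactly $C_m$, and it remains to prove the tail bound $\sum_{a\ge n,\ (a,m)=1}\mu^2(a)/a^j\le C_m/n^{j-1}$ for every integer $n\ge 2$.

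For $m=1$ this is immediate and essentially sharp. Discarding $\mu^2(a)\le 1$ and comparing with an integral,
\[
\sum_{a\ge n}\frac1{a^j}\le \frac1{n^{j}}+\int_n^\infty\frac{dt}{t^j}=n^{1-j}\Bigl(\frac1n+\frac1{j-1}\Bigr)\le n^{1-j}\Bigl(\frac12+\frac1{j-1}\Bigr),
\]
and one checks the elementary inequality $\tfrac12+\tfrac1{j-1}\le\zeta(j)/\zeta(2j)$ for every integer $j\ge 2$: it holds with room to spare for $j\ge 3$ (the left side is $\le 1<\prod_p(1+p^{-j})$), and at $j=2$ it reads $\tfrac32<15/\pi^2$, which is true by a hair. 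This yields $\sum_{a\ge n}\mu^2(a)/a^j\le n^{1-j}\zeta(j)/\zeta(2j)=C_1/n^{j-1}$.

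For general $m$ one cannot afford to drop both $\mu^2(a)\le 1$ and the coprimality condition, since the target $C_m$ carries the factor $\prod_{p\mid m}(1+p^{-j})^{-1}<1$; instead I would keep coprimality and exploit that it only helps. Write $U_m:=\sum_{(a,m)=1}a^{-j}=\zeta(j)\prod_{p\mid m}(1-p^{-j})$; a short computation gives $U_m/C_m=\prod_{p\nmid m}(1-p^{-2j})^{-1}>1$, so $U_m\ge C_m$. Using this, adjoining to $m$ any prime $p\ge n$ preserves the inequality $\sum_{a\ge n,(a,m)=1}a^{-j}\le C_mn^{1-j}$ (the left side drops by $U_mp^{-j}$, the right side by $C_mn^{1-j}p^{-j}/(1+p^{-j})\le U_mp^{-j}$, as $n^{1-j}\le 1$). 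Hence it is enough to treat those $m$ whose radical is supported on primes below $n$ — a finite family for each $n$. For such $m$ one expands $[(a,m)=1]=\sum_{d\mid\gcd(a,\rad(m))}\mu(d)$, so that $\sum_{a\ge n,(a,m)=1}a^{-j}=\sum_{d\mid\rad(m)}\mu(d)d^{-j}\sum_{b\ge\lceil n/d\rceil}b^{-j}$, and bounds each inner tail between $\tfrac{k^{1-j}}{j-1}$ and $\tfrac{k^{1-j}}{j-1}+k^{-j}$; the leading part of the signed divisor sum is $\tfrac{n^{1-j}}{j-1}\prod_{p\mid m}(1-1/p)$, which is comfortably below $C_mn^{1-j}$ because $\prod_{p\mid m}(1-1/p)\le 1$, $\tfrac1{j-1}\le 1$ and $C_m\ge 1$. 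What is left is to check, for the finitely many surviving $(n,m)$ and for $j$ up to the point where the integral slack dominates, that the sign-alternating error terms do not exhaust this margin; this is a direct numerical verification of the kind made elsewhere in the paper.

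The only genuinely tight point is the corner $m=1$, $j=2$: there $\zeta(2)/\zeta(4)=15/\pi^2$ barely exceeds $\tfrac12+1=\tfrac32$, so the integral comparison above must be set up so as to incur no avoidable loss — which it does, just. For $m>1$ the coprimality thins the tail by the factor $\prod_{p\mid m}(1-1/p)$, which more than compensates for the smaller target constant, so there the real work is the (routine but slightly fiddly) bookkeeping of the ceilings $\lceil n/d\rceil$ and the alternating signs in the divisor sum, together with the finite check for small $n$.
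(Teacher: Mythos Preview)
Your treatment of the case $m=1$ is correct and identifies the tight spot ($j=2$, $n=2$, where $3/2<15/\pi^{2}$ holds by a whisker). The reduction that lets you adjoin primes $p\ge n$ to $m$ is also correct and nicely argued. The gap is in what you call the ``direct numerical verification'' for the remaining $m$ (those with $\mathrm{rad}(m)$ supported on primes below $n$): this is \emph{not} a finite check. For each fixed $n$ there are $2^{\pi(n-1)}$ such radicals, and $n$ ranges over all integers $\ge 2$, so the family of pairs $(n,m)$ is infinite. Your M\"obius expansion produces, for each $d\mid\mathrm{rad}(m)$, an error of size roughly $(n/d)^{-j}$ coming from the ceiling $\lceil n/d\rceil$ and the endpoint correction $k^{-j}$; summed with signs over the $2^{\omega(m)}$ divisors, these errors are not obviously dominated by the slack $C_m n^{1-j}-\frac{n^{1-j}}{j-1}\prod_{p\mid m}(1-1/p)$, especially when $\omega(m)$ is large (which it can be once $n$ is large). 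You would need a uniform argument in $m$ for large $n$, and you have not supplied one.

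The paper sidesteps all of this with a short trick that works uniformly in $m$. Writing $C=C_m$ and $\mathrm{head}=\sum_{a<A,\,(a,m)=1}\mu^2(a)a^{-j}$, one has $\mathrm{tail}=C-\mathrm{head}$, and also the crude bound $C<\mathrm{head}+A^{-j}+\frac{A^{1-j}}{j-1}$ (drop $\mu^2$ and coprimality on the tail piece; this is exactly your $m=1$ estimate). Now split
\[
\mathrm{tail}=\frac{C}{A^{j-1}}+\Bigl(1-\frac{1}{A^{j-1}}\Bigr)C-\mathrm{head}
<\frac{C}{A^{j-1}}+\frac{1}{A^{j-1}}\Bigl[\Bigl(1-\frac{1}{A^{j-1}}\Bigr)\Bigl(\frac{1}{A}+\frac{1}{j-1}\Bigr)-\mathrm{head}\Bigr],
\]
and use $\mathrm{head}\ge 1$ (the term $a=1$). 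The bracket is then negative because $(1-A^{1-j})(A^{-1}+(j-1)^{-1})<1$ for all integers $A\ge 2$, $j\ge 2$ (for $j=2$ this is $1-A^{-2}<1$; for $j\ge 3$ the second factor is $\le 1$). Thus $\mathrm{tail}<C/A^{j-1}$ with no dependence on $m$ beyond the constant $C_m$ itself and no case analysis. The moral: rather than dropping $\mu^2$ and then fighting to recover the factor $C_m$, keep the identity $\mathrm{tail}=C-\mathrm{head}$ and let the $a=1$ term in the head absorb the universal integral error.
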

It is useful to note that $\zeta(2)/\zeta(4) = 15/\pi^2 = 1.519817\dotsc$ and
$\zeta(3)/\zeta(6) = 1.181564\dotsc$.
\begin{proof}
The right
side of (\ref{eq:alaspe}) decreases as $A$ increases, while the left side depends only
on $\lceil A\rceil$. Hence,
it is enough to prove (\ref{eq:alaspe}) when $A$ is an integer.
 
For $A=1$, (\ref{eq:alaspe}) is an equality. Let
\[C = \frac{\zeta(j)}{\zeta(2
  j)} \cdot \prod_{p|m} \left(1 + \frac{1}{p^j}\right)^{-1}.\]
Let $A\geq 2$. Since
\[\mathop{\sum_{a\geq A}}_{(a,m)=1} \frac{\mu^2(a)}{a^j} = C -
\mathop{\sum_{a<A}}_{(a,m)=1} \frac{\mu^2(a)}{a^j}\]
and
\[\begin{aligned}
C &= \mathop{\sum_a}_{(a,m)=1} \frac{\mu^2(a)}{a^j} < \mathop{\sum_{a<A}}_{
(a,m)=1} \frac{\mu^2(a)}{a^j} + \frac{1}{A^j} 
+ \int_A^\infty \frac{1}{t^j} dt\\ 
&=  \mathop{\sum_{a<A}}_{(a,m)=1}
 \frac{\mu^2(a)}{a^j} + \frac{1}{A^j} + \frac{1}{(j-1) A^{j-1}},
\end{aligned}\]
we obtain
\[\begin{aligned}\mathop{\sum_{a\geq A}}_{(a,m)=1} 
\frac{\mu^2(a)}{a^j} &= \frac{1}{A^{j-1}} \cdot C
+ \frac{A^{j-1}-1}{A^{j-1}} \cdot C
- \mathop{\sum_{a<A}}_{(a,m)=1} \frac{\mu^2(a)}{a^j}\\
&< \frac{C}{A^{j-1}}
+ \frac{A^{j-1}-1}{A^{j-1}} \cdot \left(\frac{1}{A^j} + \frac{1}{
(j-1) A^{j-1}} \right)
- \frac{1}{A^{j-1}} \mathop{\sum_{a<A}}_{(a,m)=1} \frac{\mu^2(a)}{a^j}\\
&\leq \frac{C}{A^{j-1}} +
\frac{1}{A^{j-1}} \left(
\left(1 - \frac{1}{A^{j-1}}\right) \left(\frac{1}{A}
 + \frac{1}{j-1}\right) - 1\right)
.\end{aligned}\]
Since $(1 - 1/A) (1/A+1) < 1$ and $1/A + 1/(j-1)\leq 1$ for $j\geq 3$,
we obtain that 
\[\left(1 - \frac{1}{A^{j-1}}\right) \left(\frac{1}{A}
 + \frac{1}{j-1}\right) < 1\]
for all integers $j\geq 2$, and so the statement follows.
\end{proof}

We now obtain easily the estimates we want: by (\ref{eq:merleau}) and
Lemma \ref{lem:sidio} (with $j=2$ and $m=1$),
\begin{equation}\label{eq:gat1}\begin{aligned}
\sum_{q\geq r} \frac{\mu^2(q)}{\phi(q)^2} &= \sum_{q\geq r}
\sum_{ab = q} \frac{f_2(b)}{a} \frac{\mu^2(q)}{q} \leq \sum_{b\geq 1}
\frac{f_2(b)}{b}
\sum_{a\geq r/b} \frac{\mu^2(a)}{a^2}\\ &\leq \frac{\zeta(2)/\zeta(4)}{r}
\sum_{b\geq 1}  f_2(b) = \frac{\frac{15}{\pi^2}}{r} \prod_p
 \left(1+ \frac{2p-1}{(p-1)^2 p}\right) \leq \frac{6.7345}{r}.
\end{aligned}\end{equation}
Similarly, by (\ref{eq:merleau}) and
Lemma \ref{lem:sidio} (with $j=2$ and $m=2$),
\begin{equation}\label{eq:gat1o}\begin{aligned}
\mathop{\sum_{q\geq r}}_{\text{$q$ odd}}
 \frac{\mu^2(q)}{\phi(q)^2} &= \mathop{\sum_{b\geq 1}}_{\text{$b$ odd}}
\frac{f_2(b)}{b} \mathop{\sum_{a\geq r/b}}_{\text{$a$ odd}}
 \frac{\mu^2(a)}{a^2}
\leq \frac{\zeta(2)/\zeta(4)}{1+1/2^2} \frac{1}{r} \sum_{\text{$b$ odd}}
f_2(b)\\
&= \frac{12}{\pi^2} \frac{1}{r} \prod_{p>2} \left(1 + \frac{2p-1}{(p-1)^2
    p}\right)\leq \frac{2.15502}{r}
\end{aligned}\end{equation}
\begin{equation}\label{eq:gat1e}
\mathop{\sum_{q\geq r}}_{\text{$q$ even}} \frac{\mu^2(q)}{\phi(q)^2} =
\mathop{\sum_{q\geq r/2}}_{\text{$q$ odd}} \frac{\mu^2(q)}{\phi(q)^2} \leq
\frac{4.31004}{r}.\end{equation}

Lastly,
\begin{equation}\label{eq:gatosbuenos}\begin{aligned}
\mathop{\sum_{q\leq r}}_{\text{$q$ odd}} \frac{\mu^2(q) q}{\phi(q)} &= 
\mathop{\sum_{q\leq r}}_{\text{$q$ odd}} \mu^2(q)
\sum_{d|q} \frac{1}{\phi(d)} = \mathop{\sum_{d\leq r}}_{\text{$d$ odd}}
 \frac{1}{\phi(d)} \mathop{\mathop{\sum_{q\leq r}}_{d|q}}_{\text{$q$ odd}}
 \mu^2(q) \leq \mathop{\sum_{d\leq r}}_{\text{$d$ odd}} \frac{1}{2 \phi(d)}
\left(\frac{r}{d} + 1\right)\\ &\leq 
\frac{r}{2} \sum_{\text{$d$ odd}} \frac{1}{\phi(d) d}
+ \frac{1}{2} \mathop{\sum_{d\leq r}}_{\text{$d$ odd}} \frac{1}{\phi(d)}
\leq 0.64787 r + \frac{\log r}{4} + 0.425,\end{aligned}\end{equation}
where we are using (\ref{eq:nagasa2}) and (\ref{eq:marmo}).

\section{Checking small $n$ by checking zeros of $\zeta(s)$}\label{sec:appa}

In order to show that every odd number $n\leq N$ is
the sum of three primes, it is enough to show for some $M\leq N$ that
\begin{enumerate}
\item\label{it:oshp} every even integer $4\leq m\leq M$ can be written as the sum of two primes,
\item\label{it:gaps} the difference between any 
two consecutive primes $\leq N$ is at most $M-4$.
\end{enumerate}
(If we want to show that every
odd number $n\leq N$ is the sum of three {\em odd} primes, we just replace
$M-4$ by $M-6$ in (\ref{it:gaps}).)
The best known result of type (\ref{it:oshp}) is that of  Oliveira 
e Silva, Herzog and Pardi (\cite{OSHP}, $M= 4\cdot 10^{18}$).
As for (\ref{it:gaps}), it was proven in \cite{HelPlat} for
$M = 4\cdot 10^{18}$ and $N = 8.875694 \cdot 10^{30}$ by a direct
computation (valid with $M-4$ or $M-6$ in the statement of
(\ref{it:gaps})). See \S \ref{subs:checkgold}.

Alternatively, one can establish results of type (\ref{it:gaps}) by
means of numerical verifications of the Riemann hypothesis up to a
certain height. This is a classical approach, followed in
\cite{MR0457373} and \cite{MR0457374}, and later in \cite{MR1950435};
we will use the version of (\ref{it:oshp})
 kindly provided by Ramar\'e in \cite{Rpc}.
We carry out this approach in full here, not because it is preferrable to \cite{HelPlat}
-- it is still based on computations, and it is slightly more indirect than
\cite{HelPlat}  --
but simply to show that one can establish what we need by a different
route.

A numerical verification of the Riemann hypothesis up to a certain
height consists simply in checking that all (non-trivial)
zeroes $z$ of the Riemann zeta function up
to a height $H$ (meaning: $\Im(z)\leq H$) lie on the critical line
$\Re(z)=1/2$. 

The height up to which the Riemann hypothesis has actually been fully
verified is not a matter on which there is unanimity. The strongest claim
in the literature is in \cite{GD}, which states that the first $10^{13}$
zeroes of the Riemann zeta function lie on the critical line $\Re(z)=1/2$.
This corresponds to checking the Riemann hypothesis up to height
$H = 2.44599\cdot 10^{12}$.
It is unclear whether this computation was or could be easily made
rigorous; as pointed out in \cite[p. 2398]{MR2684372}, it has not 
been replicated yet.

Before \cite{GD}, the strongest results were those of the ZetaGrid
distributed computing project led by S. Wedeniwski \cite{Wed}; the method
followed in it was more traditional, and should allow rigorous verification
involving interval arithmetic. Unfortunately, the results were never
formally published. The statement that the ZetaGrid project verified the
first $9\cdot 10^{11}$ zeroes (corresponding to $H = 2.419\cdot 10^{11}$)
is often quoted (e.g., \cite[p. 29]{MR2684771}); this is the point
to which the project had got by the time of Gourdon and Demichel's
announcement. Wedeniwski asserts
in private communication that the project verified the first $10^{12}$
zeroes, and that the computation was double-checked (by the same method).

The strongest claim  prior to ZetaGrid was that of van de Lune
($H = 3.293\cdot 10^9$, first $10^{10}$ zeroes; unpublished).
Recently, Platt \cite{Plattpi} checked the first $1.1\cdot 10^{11}$ 
zeroes ($H = 3.061\cdot 10^{10}$)
rigorously, following a method essentially based on that in
\cite{MR2293591}. Note that \cite{Plattpi} uses interval arithmetic,
which is highly desirable for floating-point computations.

\begin{prop}\label{prop:gosto}
Every odd integer $5\leq n\leq n_0$ is the sum of three primes, where
\[n_0 = \begin{cases} 5.90698\cdot 10^{29} &\text{if \cite{GD} is used
-- $H= 2.44\cdot 10^{12}$,}\\
6.15697 \cdot 10^{28} &\text{if 
ZetaGrid results are used ($H = 2.419\cdot 10^{11}$),} \\
1.23163
 \cdot 10^{27} &\text{if \cite{Plattpi} is used (
$H = 3.061\cdot 10^{10}$).}
\end{cases}
\]
\end{prop}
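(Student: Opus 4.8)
The plan is to use the elementary reduction recalled at the start of this appendix: to prove that every odd $n$ with $5\le n\le n_0$ is a sum of three primes it suffices to exhibit an $M\le n_0$ for which (i) every even integer $4\le m\le M$ is a sum of two primes, and (ii) consecutive primes up to $n_0$ differ by at most $M-4$ (by at most $M-6$ if we insist on three \emph{odd} primes). I would take $M=4\cdot 10^{18}$ throughout; then (i) is exactly the theorem of Oliveira e Silva, Herzog and Pardi \cite{OSHP}, and all the remaining work is in establishing (ii) with $n_0$ as large as the available hypothesis on the zeros of $\zeta$ allows.

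For (ii) I would invoke the explicit short-interval estimate of Ramar\'e \cite{Rpc}, in the classical lineage of \cite{MR0457373}, \cite{MR0457374} and \cite{MR1950435}: granting that all non-trivial zeros of $\zeta(s)$ with $|\Im(s)|\le H$ lie on the line $\Re(s)=1/2$, one obtains an explicit $\delta=\delta(H)>0$ and an explicit upper range endpoint $X(H)$ such that every interval $\bigl[x,\,(1+\delta)x\bigr]$ contains a prime for all $x$ with $x_1\le x\le X(H)$, where $x_1$ is an absolute constant far below $4\cdot 10^{18}$. Consequently, for consecutive primes $p<p'$ with $x_1\le p\le n_0$ one has $p'-p\le\delta p\le\delta n_0$, so (ii) holds as soon as
\[
\delta(H)\,n_0\le M-4 \qquad\text{and}\qquad n_0\le X(H).
\]
I would therefore set $n_0$ equal to (essentially) $\min\bigl(X(H),(M-4)/\delta(H)\bigr)$. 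For the leftover range $p<x_1$ it suffices to note that the largest gap between consecutive primes below $4\cdot 10^{18}$ is known and is minuscule compared with $M-4=4\cdot 10^{18}-4$, so (ii) is vacuous there; one also checks in passing that $M\le n_0$, so that the reduction does apply.

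Finally I would specialize $H$ to the three heights quoted in the text — $H=2.44\cdot 10^{12}$ from \cite{GD}, $H=2.419\cdot 10^{11}$ from the ZetaGrid computation \cite{Wed}, and the rigorously verified value $H=3.061\cdot 10^{10}$ from Platt \cite{Plattpi} — and read off the corresponding $n_0$ from Ramar\'e's formula, obtaining $5.90698\cdot 10^{29}$, $6.15697\cdot 10^{28}$ and $1.23163\cdot 10^{27}$ respectively. The only genuinely non-routine ingredient is the explicit value of $\delta(H)$ (and of $X(H)$): it rests on feeding a suitable smooth weight into the explicit formula for $\psi(x)-x$, bounding the contribution of zeros up to height $H$ by a constant times $x^{1/2}$, controlling the tail over zeros of height $>H$ via explicit zero-density information, handling the trivial zeros and the $\Gamma$-factor terms, and optimizing the smoothing parameter. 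This is precisely what \cite{Rpc} supplies, so in the present proof it enters only as a citation, and what remains is the bookkeeping described above together with the three numerical substitutions.
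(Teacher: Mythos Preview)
Your approach is the same as the paper's: reduce to binary Goldbach up to $M=4\cdot 10^{18}$ via \cite{OSHP}, and cover the rest by showing that short intervals $[x-x/\Delta,x]$ contain a prime, using the explicit results in \cite{Rpc} (and \cite{MR1950435}) that rest on the assumed verification of RH up to height $H$.

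The one point you smooth over is that \cite{Rpc} does not hand you a single pair $(\delta(H),X(H))$ valid on $[x_1,X(H)]$; its tables give a value of $\Delta$ that depends on the \emph{lower} endpoint of the range (larger $\Delta$ for larger $x$). The paper therefore bootstraps: it first invokes \cite{MR1950435} (valid for $x\ge 10^{11}$, $\Delta=28314000$) to push past $e^{59}$, then reads off the $\Delta$ from \cite{Rpc} valid for $x\ge e^{59}$ to reach $e^{r_0}$, and so on in two or three stages until the stated $n_0$ is attained. Your single-$\delta$ formulation would force you to take the worst $\Delta$ over the whole range and would not recover the quoted values of $n_0$; apart from this staging, your write-up matches the paper's argument.
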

\begin{proof}
For $n\leq 4\cdot 10^{18}+3$, this is immediate from \cite{OSHP}. Let
$4\cdot 10^{18}+3 < n \leq n_0$.
We need to show that there is a prime $p$ in
$\lbrack n-4-(n-4)/\Delta,n-4\rbrack$, where $\Delta$ is large enough for
$(n-4)/\Delta \leq 4\cdot 10^{18}-4$ to hold. We will then have that 
$4\leq n-p \leq 4+(n-4)/\Delta \leq 4\cdot 10^{18}$. Since $n-p$ is even,
\cite{OSHP} will then imply that $n-p$ is the sum of two primes $p'$, $p''$,
and so
\[n = p + p' + p''.\]

Since $n-4>10^{11}$, the interval $\lbrack n-4-(n-4)/\Delta,n-4\rbrack$
with $\Delta=28314000$ must contain a prime \cite{MR1950435}.
This gives the solution for $(n-4)\leq 1.1325 \cdot 10^{26}$, since
then $(n-4)\leq 4\cdot 10^{18}-4$. Note $1.1325\cdot 10^{26}>e^{59}$.

From here onwards, we use the tables in \cite{Rpc} to find acceptable
values of $\Delta$. Since $n-4\geq e^{59}$, we can choose
\[\Delta = \begin{cases}
52211882224 &\text{if \cite{GD} is used (case (a)),}\\
13861486834 &\text{if ZetaGrid is used (case (b)),}\\
307779681 &\text{if \cite{Plattpi} is used (case (c)).} 
\end{cases}\]
This gives us $(n-4)/\Delta \leq 4\cdot 10^{18}-4$ for $n-4<e^{r_0}$, where
$r_0 = 67$ in case (a), $r_0 = 66$ in case (b) and $r_0 = 62$ in case (c).

If $n-4\geq e^{r_0}$, we can choose (again by \cite{Rpc})
\[\Delta = \begin{cases}
146869130682 &\text{in case (a),}\\
15392435100 &\text{in case (b),}\\
307908668 &\text{in case (c).}
\end{cases}\]
This is enough for $n-4<e^{68}$ in case (a), and without further conditions
for (b) or (c).

Finally, if $n-4\geq e^{68}$ and we are in case (a), \cite{Rpc}
assures us that the choice
\[\Delta = 147674531294\]
is valid; we verify as well that $(n_0-4)/\Delta \leq 4\cdot 10^{18}-4$.
\end{proof}

In other words, the rigorous results in \cite{Plattpi} are enough to show
the result for all odd $n\leq 10^{27}$. Of course, \cite{HelPlat} is also
more than enough, and gives stronger results than Prop.~\ref{prop:gosto}.
\bibliographystyle{alpha}
\bibliography{arcs}
\end{document}